\newcolumntype{d}{D{.}{.}{-1}}
\tikzset{
    vertex/.style={circle, draw, fill=black!50, inner sep=0pt, minimum width=5pt}
}
\numberwithin{equation}{section}
\newtheorem{theorem}{Theorem}[section]
\newtheorem{corollary}[theorem]{Corollary}
\newtheorem{lemma}[theorem]{Lemma}
\newtheorem{proposition}[theorem]{Proposition}
\newtheorem{remark}[theorem]{Remark}
\newtheorem{mainthm}{Theorem}
\newtheorem{mainprop}{Proposition}
\newtheorem*{definition*}{Definition}
\newcommand{\dd}[2]{\frac{d #1}{d #2}}
\newcommand{\dds}[2]{\frac{d^2 #1}{d #2^2}}
\newcommand{\grad}{\nabla}
\newcommand{\oneover}[1]{\frac{1}{#1}}
\newcommand{\half}{\frac{1}{2}}
\newcommand{\R}{\mathbb{R}}
\DeclareMathOperator*{\argmax}{arg\,max}
\newcommand{\minv}{\min_{|v|=1}}
\newcommand{\maxv}{\max_{|v|=1}}
\newcommand{\weak}[1]{\mbox{Weak}(#1)}
\newcommand{\strong}[1]{\mbox{Strong}(#1)}
\newcommand{\Sball}[1]{\mathbb S^{#1-1}}
\title{Almost-rigidity of frameworks}
\author{Miranda Holmes-Cerfon
\thanks{Courant Institute of Mathematical Sciences, New York University. \texttt{holmes@cims.nyu.edu}}
\and 
Louis Theran%
\thanks{School of Mathematics and Statistics, University of St Andrews. \texttt{lst6@st-and.ac.uk}}
\and 
Steven J. Gortler\thanks{School of Engineering and Applied Sciences, Harvard University. \texttt{sjg@cs.harvard.edu}. 
}}
\date{}                    
\begin{document}

\maketitle

\begin{abstract}
    We extend the mathematical theory of rigidity of frameworks (graphs embedded in $d$-dimensional space) to consider nonlocal rigidity and flexibility properties. We provide conditions on a framework under which (I) as the framework flexes continuously it must remain inside a small ball, a property we call ``almost-rigidity''; (II) any other framework with the same edge lengths must lie outside a much larger ball; (III) if the framework deforms by some given amount, its edge lengths change by a minimum amount; (IV) there is a nearby framework that is prestress stable, and thus rigid. The conditions can be tested efficiently using semidefinite programming. The test  is a slight extension of the test for prestress stability of a framework, and gives analytic expressions for the radii of the balls and the edge length changes.  Examples illustrate how the theory may be applied in practice, and we provide an algorithm to test for rigidity or almost-rigidity. We briefly discuss how the theory may be applied to tensegrities. 
\end{abstract}

\section{Introduction}

Frameworks are graphs with fixed edge lengths embedded in a Euclidean space. Studied for their mathematical properties for decades, they also arise as models in a great many applications: from engineering macroscale structures, such as sculptures \cite{tensegritree}, play structures \cite{Pietroni:2017epa}, or the Kurilpa bridge in Brisbane \cite{kurilpa};  to designing the microstructure of materials with novel properties
\cite{Paulose:2015hd,RayneauKirkhope:2017cy,Borcea:2017jf,bertholdi}, 
to studying arrangements of jammed particles \cite{Liu:2010jx,sticky,henkes}, 
to understanding allostery in biology \cite{Yan:2017cu,Rocks:2017bu}; to studying the properties of molecules \cite{Whiteley:2005hj,Sartbaeva:2006ks,HolmesCerfon:2017hz}; 
to analyzing the structure of proteins \cite{Jacobs:2001we}; 
to studying origami folding \cite{Demaine:2007jh}. 
A question of interest in all of these areas is whether a framework is \emph{locally rigid}, meaning the only continuous motions of the vertices that preserve the lengths of the edges are rigid-body motions. 
Mathematical rigidity theory provides various sufficient
conditions under which a framework is guaranteed to be
locally rigid. For so-called generic embeddings of the graph (roughly, random edge lengths), it is often sufficient to test a combinatorial property of the graph itself, but for specific embeddings the outcome of the tests depends on the particular edge lengths of interest. 

A challenge that arises when applying these tests to the problems  above, is that often one does not only wish to understand rigidity properties for a given, fixed set of edge lengths; 
one may wish to understand nearby configurations, or to allow the edge lengths to change. For example, in molecular dynamics, bonds between particles are never fixed at exactly one length, but rather they vibrate around that length, and this can change the geometry or topology of their configuration space. In engineering applications, one may design a framework on a computer to be rigid, but when one builds it the edges are always slightly different from the designed ones, so it may not be as rigid. 
Even when testing for rigidity on a computer, calculations can only be performed to finite precision, and even small rounding errors can change the outcome of a rigidity test. 


Fortunately, the strongest sufficient condition for local rigidity, infinitesimal rigidity, is robust to small perturbations.
However, a weaker but still important condition, called prestress stability, is not robust to any size of perturbation. Prestress stability 
has nothing to say about the rigidity or flexibility of nearby configurations, nor about how these might change if the edges can change lengths. In fact, as we show later, a prestress stable framework that is perturbed by even the smallest amount can become flexible.

Our main contribution is to introduce a theory aimed at understanding the rigidity properties of frameworks that are close to a framework of interest, and at controlling the edge changes for frameworks that undergo small deformations.  
We call this theory \emph{almost rigidity}, because it allows us to talk about frameworks that are not perfectly rigid, but nevertheless cannot flex very far in configuration space. The theory extends the theory of prestress stability to extract quantitative information about nonlinear deformations of a framework near a rigid configuration. 

Our contribution is laid out in four main theorems, labelled (I)--(IV), which are introduced informally here and  stated more precisely in Section \ref{sec:mainresults}. The theorems are illustrated  schematically in Figure \ref{fig:theorems}. 
The theorems concern a framework, which is a pair $(p,\mathcal E)$ such that $p=(p_1,p_2,\ldots,p_n)^T\in \R^{dn}$ is the \emph{configuration} of the framework, representing the positions of $n$ vertices in $d$-dimensional space, and 
$\mathcal E = \{(i_1,j_1),\ldots, (i_m,j_m)\}$ lists the $m$ edges of a graph connecting the vertices.  
We show that when certain sufficient conditions
are established for the framework, conditions that are similar to but less restrictive than the conditions for prestress stability \cite{Connelly:1996vja},
then:
\begin{enumerate}[(I)]
\item There is a ball of radius $\eta_1$, such that if $p$ flexes continuously with fixed edge lengths to some other configuration $p'$, then $|p'-p|\leq \eta_1$, where $|\cdot|$ is the $l_2$-norm. 
\end{enumerate}
We call a framework satisfying the conditions of (I) \emph{almost-rigid}, because it implies that even though the framework is flexible, it can't flex very much (since $\eta_1$ is usually much smaller than a typical edge length.) So (I) is a form of rigidity in that it constrains a configuration to stay in a small region of configuration space. 
\begin{enumerate}[(I)]\setcounter{enumi}{1}
\item There is a ball of radius $\eta_2>\eta_1$ such that if configuration $q$ has the same edge lengths as $p$, and $|q-p| > \eta_1$, then $|q-p|\geq \eta_2$. 
\end{enumerate}
This theorem is an extension of the concept of global rigidity, which concerns frameworks with exactly one embedding. When this theorem holds, there may be other embeddings of the framework (i.e. different from $p$), but if they are not sufficiently close to $p$, then they must lie sufficiently far away. 
\begin{enumerate}[(I)]\setcounter{enumi}{2}
\item If $p$ is deformed continuously to some configuration $q$ with the same edge lengths as $p$ and such that $|q-p| > \eta_2$, there must be some point $q'$ on the deformation path such that the vector of squared edge lengths, $e(q')$, satisfies $|e(q')-e(p)| \geq e_{\rm min}^*$. 
\end{enumerate}
This theorem gives a lower bound for how much we must stretch the edges of a framework to deform it into another, 
sufficiently distant, embedding with the same edge lengths. 
\begin{enumerate}[(I)]\setcounter{enumi}{3}
\item There is a nearby framework $q$ with $|q-p|\leq \eta_1$, such that $q$ is rigid. 
\end{enumerate}
This last theorem may be used as a numerical test for rigidity, if we are in the unfortunate case when the framework of interest is rigid, but when represented on a computer it is flexible. We give examples of such  frameworks in Section \ref{sec:introexamples}. 

We give analytic expressions for $\eta_1$,$\eta_2$, and $e_{\rm min}^*$, 
that can be evaluated with 
linear algebra and semidefinite programming.
The results in this paper can be applied without either testing exactly 
for zero, or determining in advance a cutoff value below which numerical quantities are treated as zero.  Any observed measurement of a real configuration or numerical approximation to coordinates can be used to obtain a meaningful conclusion. 


We prove Theorems (I)--(IV) by constructing an
energy function, $H(q)$, that depends only on the edge lengths of a configuration $q$.
We derive conditions under which  $H(q) > H(p)$ for $q$ in an annulus, $|q-p| \in (\eta_1,\eta_2)$. This is sufficient to prove Theorems (I),(II). To prove Theorem (III) we calculate a lower bound for $H(q)-H(p)$ along any path across the annulus. For Theorem (IV), which depends on stronger conditions than the other theorems, we show that $H(q)$ has an isolated local minimum at $p_{\rm pss}$ sufficiently close to $p$, which is sufficient for $p_{\rm pss}$ to be locally rigid. 
The function $H(q)$ has a physical interpretation as an energy, since it is constructed by pretending the edges of the framework are springs and some of the springs are under tension or compression. 

The rest of the paper proceeds as follows. First, we review various tests for local rigidity in Section \ref{sec:rigidity}, and give examples to show their limitations. We set up and state our main theorems in Section \ref{sec:mainresults}, and illustrate them on several examples in Section \ref{sec:examples}. The rest of the paper will be devoted to proving these theorems. To do this we develop some results concerning the shape of a general three-times continuously differentiable function
near a point that is nearly a critical point, in Section \ref{sec:energypropositions}. These results, which extend the second derivative test, may be of independent interest. In Section \ref{sec:energyfunction} we  construct a particular energy function $H(q)$ associated with a framework $(p,\mathcal E)$ and discuss its properties. Finally in Section \ref{sec:theoremproofs} we prove our main theorems by applying our general propositions from Section \ref{sec:energypropositions} to $H(q)$. 
We briefly explain how to extend these results to tensegrities in Section \ref{sec:tens}, and we conclude in Section \ref{sec:conclusion}. 

    \definecolor{myblue}{rgb}{0.2, 0.5, 0.91}  
    \definecolor{myblue1}{rgb}{0.4, 0.6, 0.8}  
    \definecolor{myblue2}{rgb}{0.36, 0.54, 0.66} 
    \definecolor{myblue3}{rgb}{0.19, 0.55, 0.91}  
    \definecolor{myblue4}{rgb}{0.0, 0.53, 0.74}  
    \definecolor{myblue5}{rgb}{0.29, 0.59, 0.82} 
    \definecolor{myblue6}{rgb}{0.0, 0.48, 0.65} 

    \definecolor{myred0}{rgb}{0.6, 0.25, 0.16} 
    \definecolor{myred1}{rgb}{0.54, 0.2, 0.14} 
    \definecolor{myred}{rgb}{0.7, 0.11, 0.11} 
    
    \definecolor{mygreen0}{rgb}{0.1, 0.7, 0.2} 
    \definecolor{mygreen1}{rgb}{0.0, 0.42, 0.24} 
    \definecolor{mygreen}{rgb}{0.0, 0.5, 0.0} 
    \definecolor{mygreen}{rgb}{0.2, 0.7, 0.2} 
     	
    \definecolor{mypurple1}{rgb}{0.36, 0.22, 0.33} 
    \definecolor{mypurple2}{rgb}{0.45, 0.31, 0.59} 
    \definecolor{mypurple}{rgb}{0.41, 0.21, 0.61} 

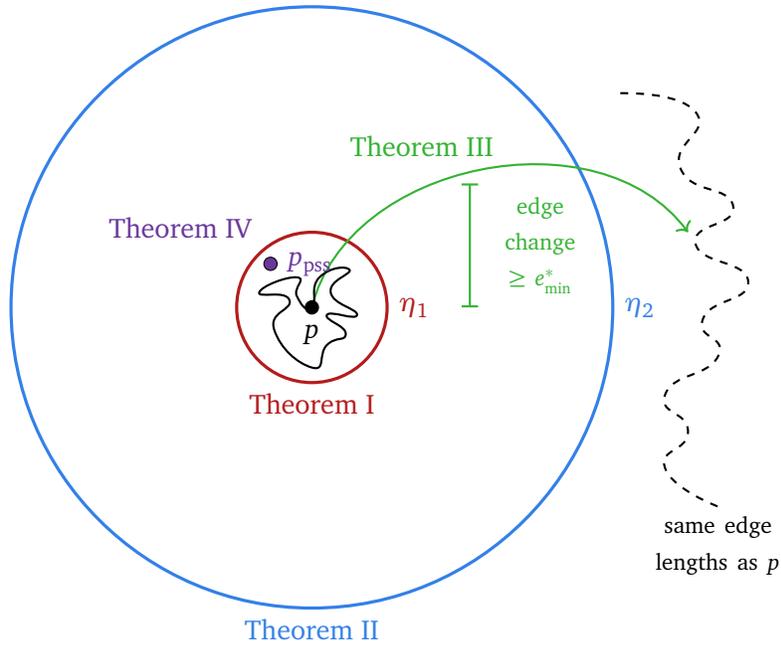
\begin{figure}
    \centering
    \begin{tikzpicture}
   \node[draw,circle,myred,very thick,minimum size=2cm,inner sep=0pt,label={[myred]right:$\eta_1$},label={[myred]below:Theorem \ref{thm:framework}}] at (0,0) {};
   \node[draw,circle,myblue,very thick,minimum size=8cm,inner sep=0pt,label={[myblue]right:$\eta_2$},label={[myblue]below:Theorem \ref{thm:outerbound}}] at (0,0) {};
 \draw[->,mygreen,thick] (0,0) to [bend left=65] node[pos=0.55, anchor=south east] {Theorem \ref{thm:emin}} (5,1) ;  
  \draw[{Bar[]}-{Bar[]},mygreen,thick] (2.1cm,0) -- node[right,text width=1.6cm,align=center] {\footnotesize edge change $\geq e_{\rm min}^*$} (2.1cm,1.65) ;
    \draw[thick] plot [smooth cycle, tension=1] coordinates {(0,0) (0.1,0.4) (0.5,0.5) (0.25,0.2) (0.6,-0.1) (0.2,-0.1) (0.45,-0.35) (0.2,-0.5) (0,-0.8) 
    (-0.5, -0.3) (-0.3,0.1) (-0.7,0.2)  (-0.2,0.4) (-0.2, 0.15)};
    \draw[xshift=4.6cm, yshift=-0.15cm, thick, dashed] plot [smooth, tension=1] coordinates {(-0.5,3) (0.5,2.75) (0.3, 2) (1, 1.5) (0.5, 1) (1.2, 0.5) (0.6, 0) (0.8, -0.5) (0.1, -1) (0.4, -1.5) (0.1, -2) (0.8, -2.5)} node[below,text width=2cm,align=center] {\footnotesize same edge lengths as $p$};
    \node[circle,draw,fill=black, inner sep=0pt, minimum width=5pt,label=below:{$p$}] (p) at (0,0) {};
    \node[circle,draw,fill=mypurple, inner sep=0pt, minimum width=5pt,label={[mypurple]right:{$p_{\rm pss}$}},
    label={[mypurple,label distance=.15cm]120:Theorem \ref{thm:pss}}] (ppss) at (-0.55,0.58) {};
    \end{tikzpicture}
    
  
  

    \caption{Schematic of our Theorems presented in Section \ref{sec:mainresults}, concerning the configuration space $\mathcal C^p\subset\R^{dn}$ near a given framework $p\in\R^{dn}$. 
   Framework $p$ may be flexible (solid black path), but {\color{myred} Theorem \ref{thm:framework}} says that if it flexes continuously with fixed edge lengths, it stays within a ball of radius {\color{myred} $\eta_1$.}
   There may be other configurations with the same edge lengths as $p$ (dashed black line), but {\color{myblue} Theorem \ref{thm:outerbound}} says that if they are not in the ${\color{myred}\eta_1}$-ball, then they lie outside a ball of radius {\color{myblue} $\eta_2$} $ > {\color{myred}\eta_1}$. 
   To deform continuously from $p$ to some configuration $q$ outside the ball of radius {\color{myblue} $\eta_2$}, {\color{mygreen} Theorem \ref{thm:emin}} says that the $l_2$-norm of the list of squared edge lengths must change by at least an amount {\color{mygreen}$e_{\rm min}^*$}. 
   Finally, {\color{mypurple} Theorem \ref{thm:pss}} says there is a configuration {\color{mypurple} $p_{\rm pss}$} within the ball of radius {\color{myred} $\eta_1$} that is prestress stable (and rigid). 
   All of these results depend on verifying conditions on $p$, which generalize the test for prestress stability, and the results of this test gives analytic formulas for ${\color{myred}\eta_1}$, {\color{myblue} $\eta_2$}, {\color{mygreen}$e_{\rm min}^*$}. 
    }
    \label{fig:theorems}
\end{figure}

\section{Review of mathematical rigidity theory}\label{sec:rigidity}

\subsection{First-order rigidity, second-order rigidity, and prestress stability}

\begin{figure}
    \centering

\begin{tikzpicture}[scale=1.25]
    \filldraw[color=yellow!80,fill=yellow!50, very thick] (0.95,1) ellipse (2.9 and 2.5);
    \filldraw[color=cyan!80,fill=cyan!40, very thick] (0.55,0.7) ellipse (2.25 and 1.85);
    \filldraw[color=blue!60,fill=blue!30, very thick] (0.25,0.5) ellipse (1.4 and 1.3);
  \filldraw[color=red!80,fill=red!30, very thick] (-0.15,0.2) ellipse (0.75 and 0.7) node[align=center,color=black] {1$^{\rm st}$ order\\rigid};
  \node[align=center] at (3.1,1.7) {rigid};
  \node[align=center] at (2,1.25) {2$^{\rm nd}$ order\\rigid};
  \node[align=center] at (0.9,0.9) {prestress\\stable};
\end{tikzpicture}

\begin{tikzpicture}[thick,scale=2]
\node at (-0.3, 1.2) {(A)};
\node[vertex,label=below:{1}] (1) at (0,0) {};
\node[vertex,label=below:{2}] (2) at (1,0) {};
\node[vertex,label=above:{3}] (3) at (1,1) {};
\node[vertex,label=above:{4}] (4) at (0,1) {};
\draw[ultra thick] (1) -- (2);
\draw[ultra thick] (2) -- (3);
\draw[ultra thick] (3) -- (4);
\draw[ultra thick] (4) -- (1);
\draw[ultra thick] (1) -- (3);
\end{tikzpicture}
\qquad
\begin{tikzpicture}[thick,scale=2.5]
\newcommand\ys{0}
\node at (-0.0, 0.96) {(B)};
\node[vertex,label=below:{1}] (1a) at (0,0-\ys) {};
\node[vertex,label=below:{2}] (2a) at (1,0-\ys) {};
\node[vertex,label=above:{3}] (3a) at (0.5,1-\ys) {};
\node[vertex,label=below:{4}] (4a) at (0.5,0.5-\ys) {};
\node[vertex,label=below:{5}] (5a) at (1/3,0-\ys) {};
\node[vertex,label=below:{6}] (6a) at (2/3,0-\ys) {};
\draw[ultra thick] (1a) -- (3a);
\draw[ultra thick] (2a) -- (3a);
\draw[ultra thick] (1a) -- (4a);
\draw[ultra thick] (2a) -- (4a);
\draw[ultra thick] (3a) -- (4a);
\draw[ultra thick] (1a) -- (5a);
\draw[ultra thick] (2a) -- (6a);
\draw[ultra thick] (5a) -- (6a);
\end{tikzpicture}
\qquad
\begin{tikzpicture}[thick,scale=2.5]
\node at (-0.0, 0.96) {(C)};
\node[vertex,label=below:{1}] (1) at (0,0) {};
\node[vertex,label=below:{2}] (2) at (1,0) {};
\node[vertex,label=above:{3}] (3) at (0.5,1) {};
\node[vertex,label=below:{4}] (4) at (0.5,0.5) {};
\node[vertex,label=below:{5}] (5) at (1/3,0) {};
\node[vertex,label=below:{6}] (6) at (2/3,0) {};
\draw[ultra thick,blue!25] (1) -- (3);
\draw[ultra thick,blue!25] (2) -- (3);
\draw[ultra thick,red!50] (1) -- (4);
\draw[ultra thick,red!50] (2) -- (4);
\draw[ultra thick,red] (3) -- (4);
\draw[ultra thick,blue!37.5] (1) -- (5);
\draw[ultra thick,blue!37.5] (2) -- (6);
\draw[ultra thick,blue!37.5] (5) -- (6);
\draw[->,black!30!yellow] (1/3+0.025,0) -- (1/3+0.025,0.2);
\draw[->,black!30!yellow] (6) -- (2/3,-0.2);
\draw[->,black!30!green] (5) -- (1/3,0.2);
\draw[->,black!30!green] (6) -- (2/3,0.2);
\end{tikzpicture}
\qquad
\begin{tikzpicture}[thick,scale=2.5]
\newcommand\ys{0}
\node at (-0.0, 0.96) {(D)};
\node[vertex,label=below:{1}] (1b) at (0,0-\ys) {};
\node[vertex,label=below:{2}] (2b) at (1,0-\ys) {};
\node[vertex,label=above:{3}] (3b) at (0.5,1-\ys) {};
\node[vertex,label=below:{4}] (4b) at (0.5,0.5-\ys) {};
\node[vertex,label=below:{5}] (5b) at (1/3,0.05-\ys) {};
\node[vertex,label=below:{6}] (6b) at (2/3,-0.05-\ys) {};
\draw[ultra thick] (1b) -- (3b);
\draw[ultra thick] (2b) -- (3b);
\draw[ultra thick] (1b) -- (4b);
\draw[ultra thick] (2b) -- (4b);
\draw[ultra thick] (3b) -- (4b);
\draw[ultra thick] (1b) -- (5b);
\draw[ultra thick] (2b) -- (6b);
\draw[ultra thick] (5b) -- (6b);
\end{tikzpicture}
    \caption{Schematic illustrating different kinds of local rigidity and how they are related. Examples show frameworks with different kinds of rigidity. (A) First-order rigid in $\R^2$ (but flexible in $\R^3$.) (B) Prestress stable in $\R^2$. (C) Same as B, with two infinitesimal flexes plotted as arrows (green, yellow) 
    and colors on the bars are proportional to values of the self-stress. Bright blue is the most positive (like a cable, that wants to shrink), and bright red the most negative (like a strut, that wants to expand.) (D) Same as B, but vertices 5, 6 have been perturbed vertically. 
    This framework is flexible in $\R^2$, because vertices 5 and 6 can move up and down, but they can't move by very much.}
    \label{fig:rigidity}
\end{figure}
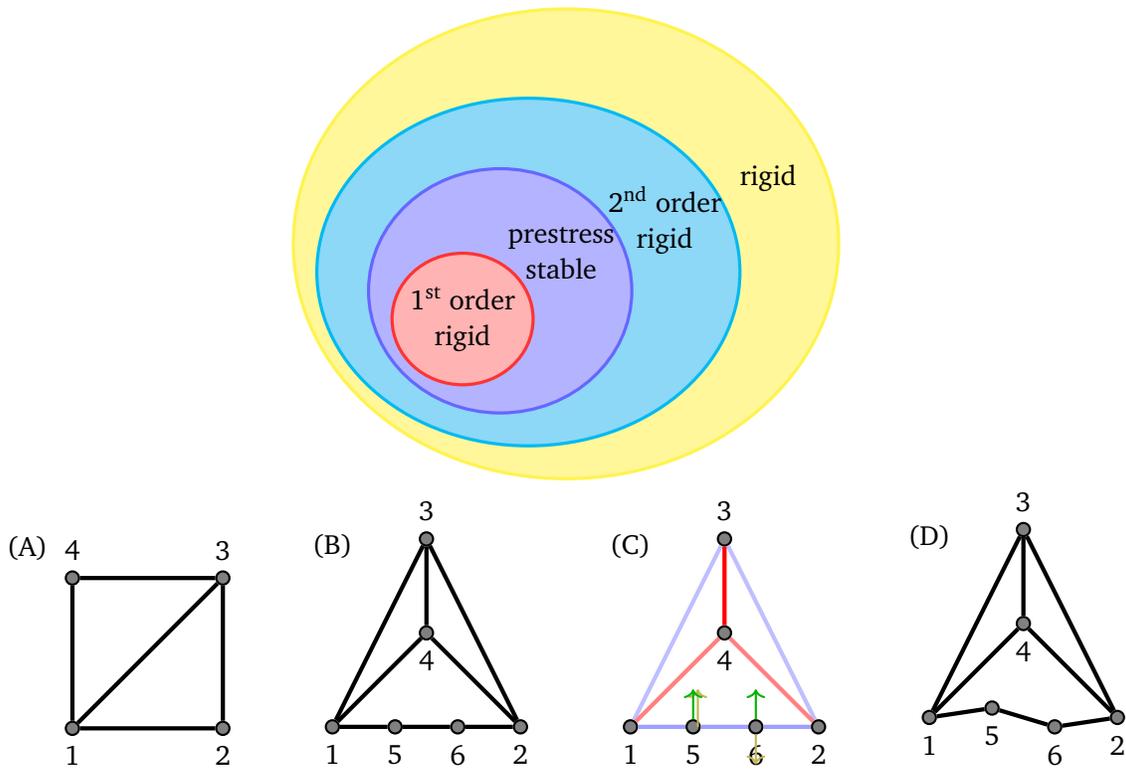

In this section we review the mathematical theory of local rigidity. We start from the definition that a framework $(p,\mathcal E)$ with edge lengths $d_{ij}$ 
is \emph{locally rigid} if it is an isolated solution (modulo translations and rotations) to the following system of algebraic equations: 
\begin{equation}\label{fij}
    f_{ij}(p) := |p_i-p_j|^2 - d_{ij}^2 = 0\,\qquad (i,j)\in \mathcal E\,.
\end{equation}
Testing for local rigidity is co-NP-hard in general \cite{Demaine:2007jh,Abbott:2008tj}. 
This leads to the study of sufficient conditions for rigidity that are easier to test.

To motivate these conditions it is helpful to consider a continuous, analytic deformation $p(t)$ with $p(0) = p$ (we are abusing notation here), and trying to solve \eqref{fij} locally.  Taking $\dd{}{t}$ of \eqref{fij} and evaluating at $t=0$ gives 
\begin{equation}\label{fijt}
    (p_i-p_j)\cdot (v_i-v_j) = 0\,\qquad (i,j)\in \mathcal E\,,
\end{equation}
where $v = p'(0)$ and $v_i\in \R^d$ are the components of $v$ corresponding to the $i$th vertex. A vector $v$ which solves \eqref{fijt} is called an \emph{infinitesimal flex}. Physically, a vector $v\in \R^{dn}$ corresponds to a set of velocities for the vertices. It is an infinitesimal flex if, when the framework is deformed as $x\to x+\delta v$ for some small $\delta$, the edge lengths do not change at $O(\delta)$. 
An infinitesimal flex corresponding to an infinitesimal rigid-body motion is called a \emph{trivial flex}; we write the subspace of trivial flexes $\mathcal T = \mathcal T(p)$. This space depends on the particular configuration, but we sometimes omit this dependence in the notation. 

Another way to write \eqref{fijt} is 
\begin{equation}\label{Rt}
    R(p)v = 0\,,
\end{equation}
where $R(p) \in \R^{m\times dn}$ is the \emph{rigidity matrix}. If the $k$th edge is $(i_k,j_k)$, then the $k$th row of the rigidity matrix is constructed as $R_{k,(d-1)i_k+1:di_k} = p_{i_k}-p_{j_k}$, $R_{k,(d-1)j_k+1:dj_k} = -(p_{i_k}-p_{j_k})$, with zeros everywhere else. 

If there are no  solutions $v$ to \eqref{Rt} except trivial flexes, then we say that $(p,\mathcal E)$ is \emph{first-order rigid} (also called \emph{infinitesimally rigid}.) 
An example of a framework that is first-order rigid is given in Figure \ref{fig:rigidity} (A). 
A theorem from rigidity theory  says that if $(p,\mathcal E)$ is first-order rigid, then it is rigid \cite[e.g.][]{Asimow:1978ena}. 
Clearly, testing for first-order rigidity is efficient whenever computing the null space of a matrix is efficient.  

A technical but important point is that in practice, to remove trivial flexes from consideration, one typically defines a subspace $\mathcal C$ to be a subspace complementary to $\mathcal T$, i.e. such that $\mbox{Span}\{\mathcal C,\mathcal T\} = \R^{dn}$ and $\mathcal C\cap \mathcal T = \{0\}$. 
Some possibilities include setting $\mathcal C = \mathcal T^\perp$, or pinning (freezing the values of) $d(d+1)/2$ vertex coordinates \cite{whitewhiteley,anchored}, or letting $\mathcal C$ be a random $dn-d(d+1)/2$-dimensional subspace. Then a framework is first-order rigid if $\mbox{Null}(R(p))\cap\mathcal C$ is empty.

If $(p,\mathcal E)$ is not first-order rigid, then it could be either rigid or flexible. To go further, take $\dds{}{t}$ of \eqref{fij} and evaluate at $t=0$: 
\begin{equation}\label{fijtt}
    (p_i-p_j)\cdot (a_i - a_j) + (v_i-v_j)\cdot (v_i-v_j) = 0\,\qquad (i,j)\in \mathcal E\,.
\end{equation}
Here $a = p''(0)$ has the physical interpretation of the acceleration of the vertices. 
Another way to write \eqref{fijtt} is 
\begin{equation}\label{Rtt}
    R(p)a = -R(v)v\,.
\end{equation}
If there are no solutions $(v,a)$ with non-trivial flexes $v$ to the system \eqref{Rt},\eqref{Rtt}, then the framework $(p,\mathcal E)$ is \emph{second-order rigid}. A theorem from rigidity theory says that if $(p,\mathcal E)$ is second-order rigid, then it is rigid \cite{Connelly:1980joa}. Neither this theorem, nor the one for first-order rigidity, are trivial statements, because it could be that $p$ lies at a ``cusp'' in configuration space, so that there is a continuous deformation $p(t)$ but its first $k$ derivatives are zero, i.e. $p'(0) = p''(0) = \cdots = p^{(k)}(0) = 0$, $p^{(k+1)} \neq 0$ \cite{Connelly:1994fz}.

Testing for second-order rigidity seems to be difficult.
To show second-order rigidity, one has to show that for each $v\in \mbox{Null}(R(p))\cap\mathcal C$, there is no solution $a$ to \eqref{Rtt}. 
If we fix $v$, then  \eqref{Rtt} becomes a linear system in $a$.  Since the column span of $R(p)$ is orthogonal to $\mbox{Null}(R^T(p))$, either
there is a solution or 
\begin{equation}\label{fredholm}
\exists \omega \in \mbox{Null}(R^T(p)) : \quad\; \omega^TR(v)v > 0\,,
\end{equation}
which witnesses that $R(v)v$ is not in the column span of $R(p)$.  (This is the Fredholm alternative.)

In principle, one could show that there is no solution to \eqref{Rtt} for \emph{any} $v\in \mbox{Null}(R(p))\cap \mathcal C$, by finding an $\omega$ solving \eqref{fredholm} for each such $v$, but 
without fixing $v$,
this is a nonlinear problem with no known efficient
algorithm to solve it.

Therefore one usually considers a version of rigidity that is stronger than
second-order rigidity, but still weaker than first-order rigidity.
Suppose that
\begin{equation}\label{pss0}
  \exists \omega \in \mbox{Null}(R^T(p)): \qquad  \omega^TR(v)v>0 \qquad\forall v \in \mbox{Null}(R(p))\cap \mathcal C\,.
\end{equation}
That is, a single $\omega$ works for all $v$ in \eqref{fredholm}, so clearly \eqref{fredholm} holds for each $v$. So, $(p,\mathcal E)$ is second-order rigid, and hence rigid. A framework satisfying \eqref{pss0} is said to be \emph{prestress stable}. An example of a framework that is prestress stable is given in Figure \ref{fig:rigidity} (B). This figure also shows how the various flavours of local rigidity are nested within each other. 

An important element in \eqref{pss0} is the vector $\omega$. Any vector $\omega\in \R^m$ is called a \emph{stress}, and if in addition $\omega \in \mbox{Null}(R^T(p))$ it is called a \emph{self-stress} (also called an \emph{equilibrium stress}.)
Physically, a stress is an assignment of tensions to the edges, so the edges behave like springs which are under tension or compression. It is a self-stress if the forces on the edges are balanced at each node, so the framework is local force equilibrium with that assignment of tensions. To see the connection between forces and the left null space of the rigidity matrix, write $(\omega^TR(p))_i = \sum_{j:(i,j)\in \mathcal E}\omega_{ij}(p_i-p_j)$, where $\omega_{ij}$ is the component of $\omega$ associated with edge $(i,j)$. Since $\omega_{ij}(p_i-p_j)$ is the force applied at vertex $i$ by the edge $(i,j)$, if $\omega$ is a self-stress, the sum of the forces at each vertex is zero.

One major advantage of prestress stability is that it is possible to test
this property 
using semidefinite programming, which is efficient for many problems. 
To see how, observe that the term $\omega^TR(v)v$ is quadratic in velocities and linear in stresses. Therefore, we can write this term as $v^T\Omega(\omega)v$, where $\Omega(\omega)\in \R^{dn\times dn}$ is the \emph{stress matrix} associated with a stress $\omega$, defined so that it acts on flexes as $u^T\Omega v = \sum_{(i,j)\in\mathcal E} \omega_{ij}(u_i-u_j)\cdot(v_i-v_j)$. Clearly the stress matrix is symmetric. 
Then, \eqref{pss0} can be written as 
\begin{equation}\label{pss}
    \exists \omega \in \mathcal W^0 : \quad\; v^T\Omega(\omega)v > 0 \qquad\forall v \in \mathcal V^0\,,
\end{equation}
where we defined the linear spaces of nontrivial infinitesimal flexes, and self-stresses respectively, as 
\begin{equation}\label{VW0}
 \mathcal V^0 = \mbox{Null}(R(p))\cap \mathcal C\,,\qquad    \mathcal W^0 = \mbox{Null}(R^T(p))\,.
\end{equation} 
That is, we must find a matrix $\Omega$ within a linear space of symmetric matrices, that is positive definite on a linear subspace. This is a convex problem, so may be solved using algorithms that have polynomial complexity and are often efficient in practice. 

Here is one algorithm to solve it. 
Let $n_v = \mbox{dim}(\mathcal V^0)$, $n_w = \mbox{dim}(\mathcal W^0)$; we assume  $n_v,n_w\geq 1$. Let $I_{n_v}\in \R^{n_v\times n_v}$ be the identity matrix. 
Let $V$ be a matrix whose columns form an orthonormal basis of $\mathcal V^0$, and let $\{w_i\}_{i=1}^{n_w}$ be an orthonormal basis of $\mathcal W^0$. Let $M_i = V^T\Omega(w_i)V$.\footnote{In practice it helps to symmetrize this matrix, as $M_i \to (M_i+M_i^T)/2$, to get rid of imaginary eigenvalues that can arise because of numerical rounding errors.}
Solve the following problem for unknown variables $t\in \R$, $a\in \R^{n_w}$, $X\in \R^{n_v\times n_v}$: 
\begin{equation}\label{pssalg}
\begin{aligned}
& \underset{t,a,X}{\text{maximize}}
& & t \\
& \text{subject to}
& & X = \sum_{i=1}^{n_w}a_iM_i - t I_{n_v}, \\
&&& X \succeq 0, \\
&&& \left( \sum_{i=1}^{n_w}a_i^2\right) ^{1/2} \leq 1 . 
\end{aligned}
\end{equation}
The notation $A\succeq\lambda$ ($A\succ\lambda$) for a symmetric matrix $A$ means that $A-\lambda I$ is positive semidefinite (positive definite), where $I$ is the identity matrix with the same dimensions as $A$. Equivalently, $\lambda_{\rm min}(A) \geq \lambda$, where $\lambda_{\rm min}(A)$ is the smallest eigenvalue of a matrix $A$.

The above optimization program can be input nearly verbatim into standard convex optimization software; we use CVX for the examples in this paper \cite{cvx,gb08}. The optimal stress and eigenvalue are extracted from the optimal solution $(t_{\rm opt}, a_{\rm opt}, X_{\rm opt})$ as 
\[
\omega = \sum_{i=1}^{n_w}(a_{\rm opt})_iw_i\,,\qquad
\lambda_0 = t_{\rm opt}\,.
\]
The test is successful if $t_{\rm opt} > 0$. 

There are two special cases for which the convex optimization program \eqref{pssalg} need not be run:
\begin{itemize}
\item If $n_w = 1$, so there is only one self- stress $w_1$, then we merely check the eigenvalues of $M_1$. If they are all of the same sign, say with absolute values $0 < \lambda_1 \leq \lambda_2 \leq \cdots$, then we set $\lambda_0 = \lambda_1$, and $\omega = \mbox{sgn}(\lambda_1) w_1$. If the eigenvalues are not all of the same sign, or if some are zero, the test is unsuccessful. 

\item If $n_v = 1$, so there is only one infinitesimal flex, then each $M_i$ is a scalar. If one of the $M_i$ is nonezro, set $\lambda_0 = \max_i|M_i|$, and set $\omega = \mbox{sgn}(M_i) w_i$, where $i=\argmax_i|M_i|$. If all the $M_i$ are zero, the test is unsuccessful. 

\end{itemize}

\subsection{A challenge in testing for prestress stability}\label{sec:introexamples}

\begin{figure}
    \centering
    \includegraphics[trim={3cm 0 3cm 0},clip,width=0.3\textwidth]{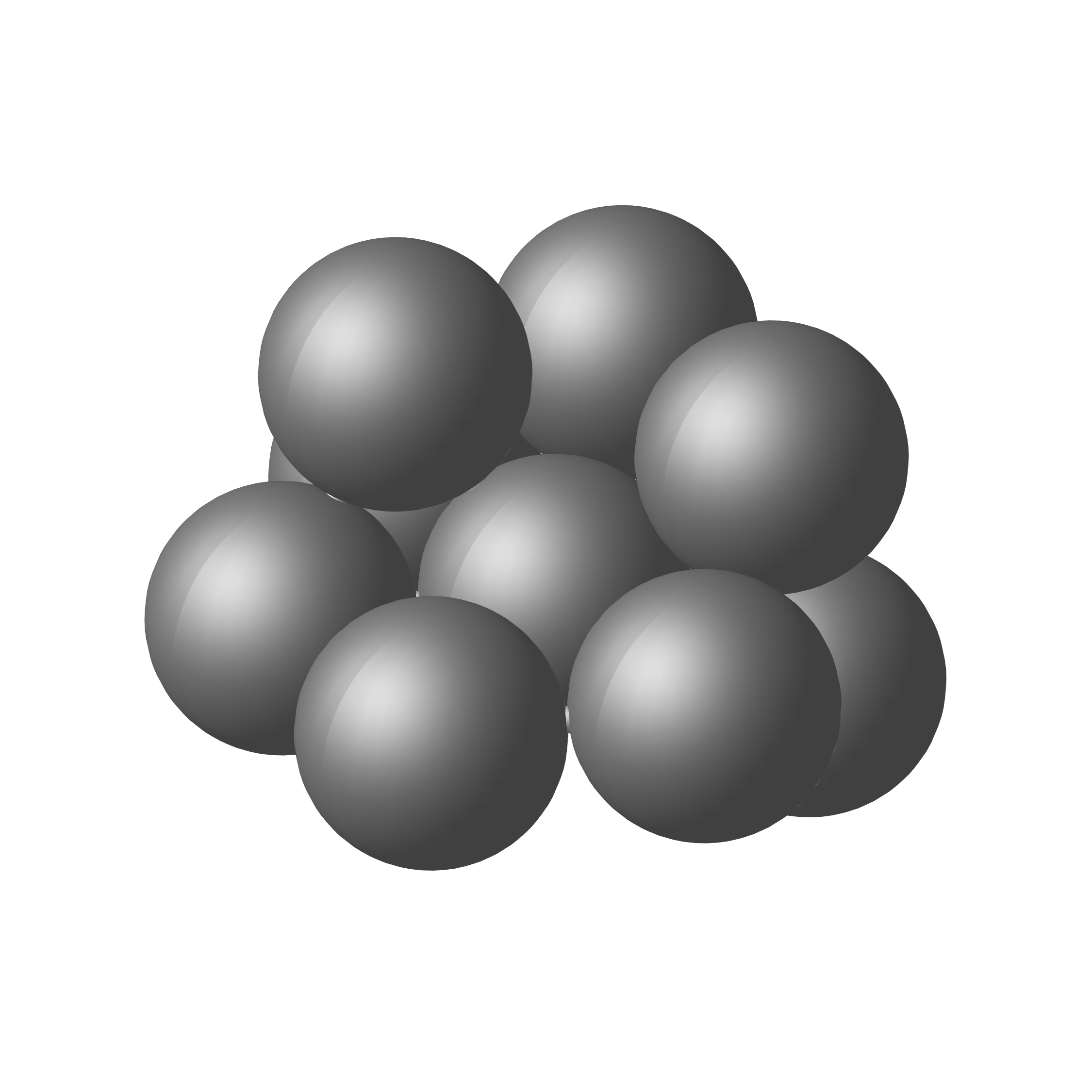}\hfill
    \includegraphics[trim={4cm 0 4cm 0},clip,width=0.3\textwidth]{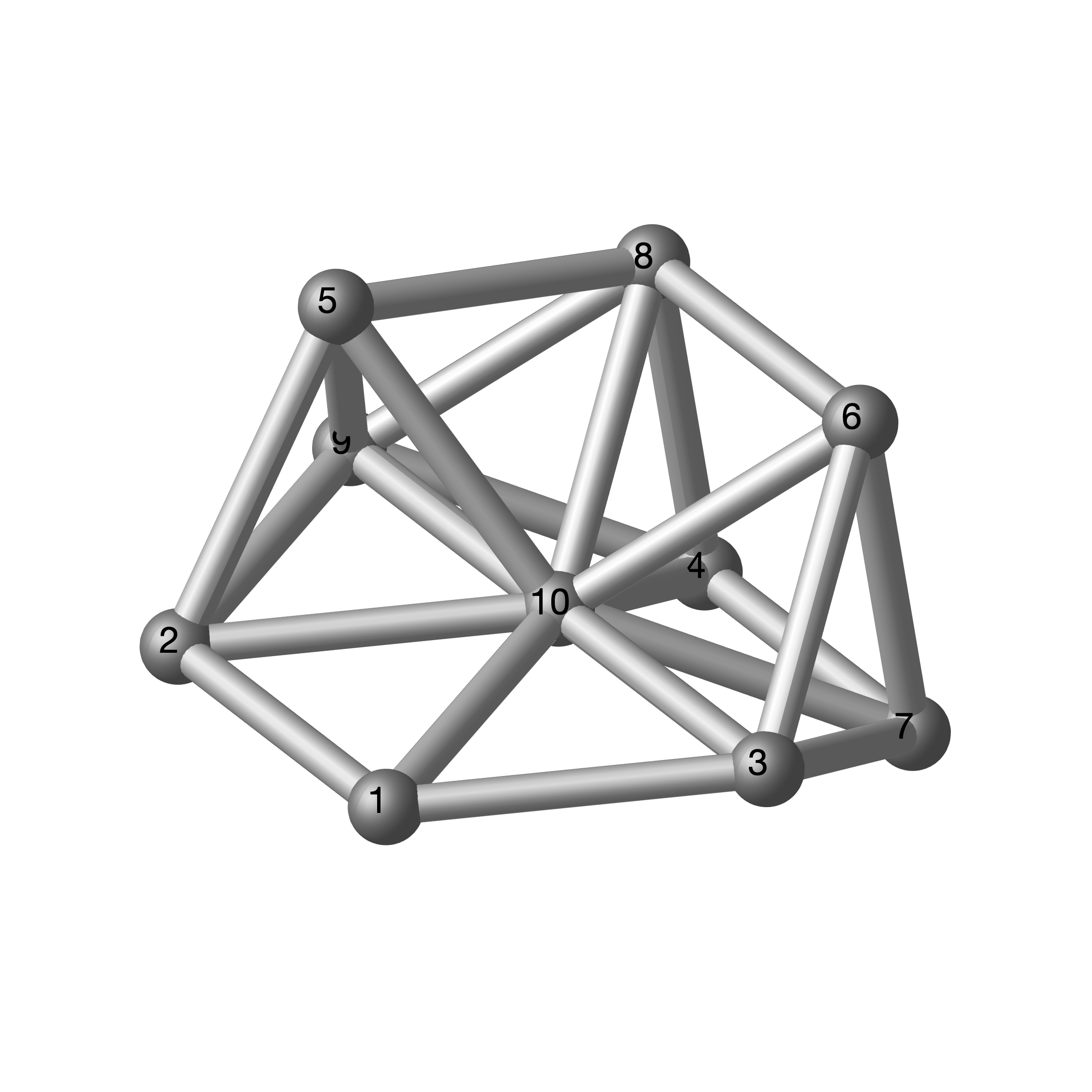}\hfill
    \includegraphics[trim={4cm 0 4cm 0},clip,width=0.3\textwidth]{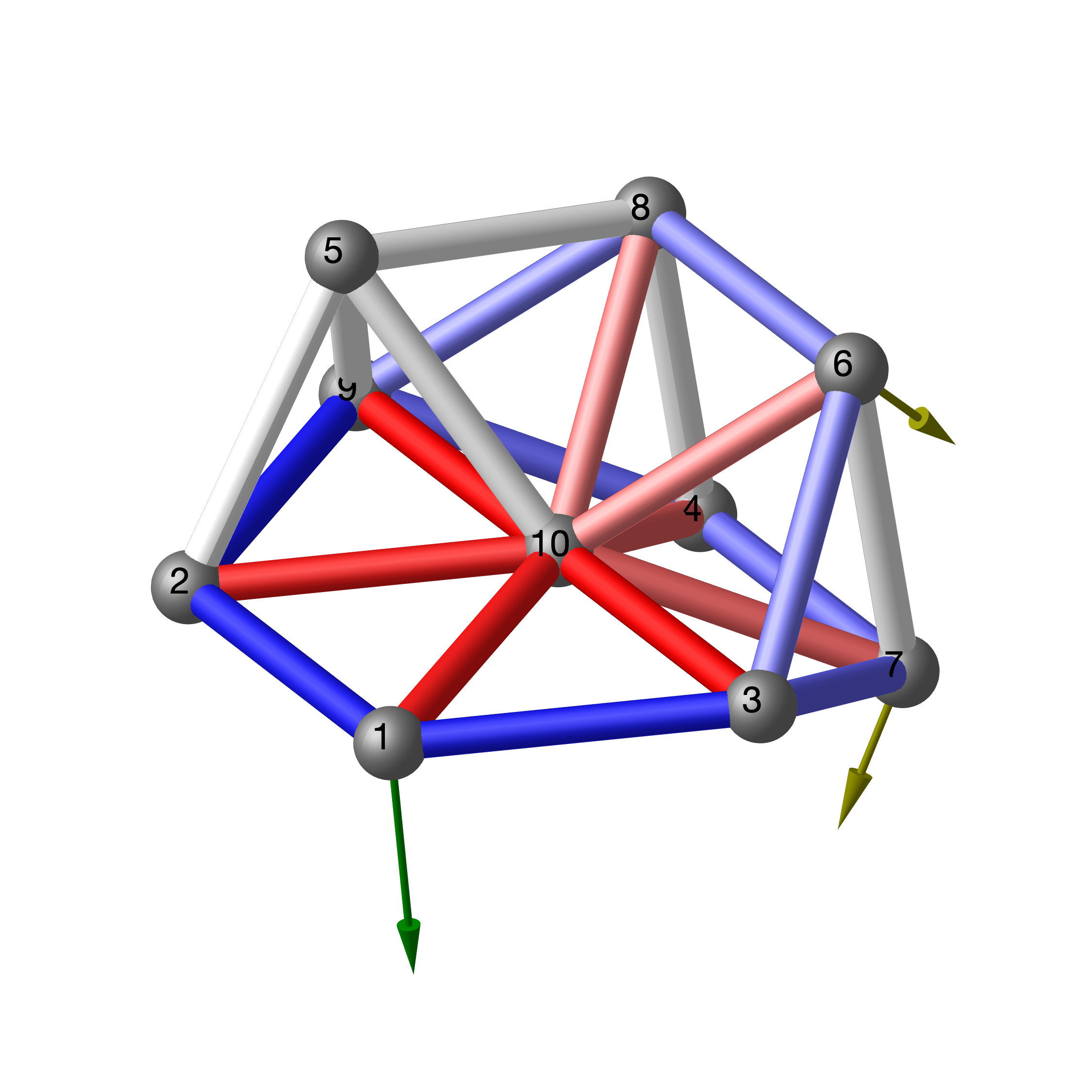}
    \caption{Left: A packing of $N=10$ unit spheres that has $3N-7=23$ contacts, fewer than the $3N-6$ needed generically to be rigid. Nevertheless, this packing is rigid. Middle: a representation of the packing as a framework, with vertices at the sphere centers and edges between spheres in contact. Right: the self-stress, as colors on bars, (blue is positive, like a cable that wants to shrink, and red is negative, like a strut that wants to expand), and the two infinitesimal flexes (yellow and green arrows.) The infinitesimal flexes are calculated assuming spheres 2,10,3 are pinned.}
    \label{fig:n10hypo}
\end{figure}

In this section we introduce two examples to illustrate a challenge that can occur in testing for rigidity when a prestress stable framework is perturbed. 

\subsubsection{Example 1}\label{sec:example1}
Consider framework (B) in Figure \ref{fig:rigidity}. This framework has 6 vertices, 9 edges, and 3 rigid-body degrees of freedom. When the vertices are placed 
generically (e.g. randomly), it has $6\times 2 - 3 - 8 = 1$ nontrivial degree of freedom, making it flexible. However, for the particular embedding in the figure, where vertices $1,5,6,2$ are colinear, the framework acquires one self-stress and a second non-trivial
infinitesimal-flex (Figure \ref{fig:rigidity}(C)), and the corresponding stress matrix is positive definite on the flex space as in \eqref{pss0}. Therefore, this framework is prestress stable, and hence rigid. 

If we perturb this framework, however, it is no longer rigid. For example, in Figure \ref{fig:rigidity}(D), vertices 5,6 have been perturbed vertically by 5\% of the length of edge 1-2, so they are no longer colinear with vertices 1,2. We calculated the left and right null spaces of the rigidity matrix $R(p)$ on a computer, and found the framework has a one-dimensional space of nontrivial flexes, and a zero-dimensional space of self-stresses. Therefore it cannot be prestress stable; indeed it 
flexible. This is true no matter how little vertices 5,6 are perturbed vertically;
even a perturbation as small as $10^{-14}$ destroys the extra flex and stress (results from \cite{whitewhiteley} give a formal proof.)

Notably, in this example, it is also clear that if vertices 1,2 are pinned (to remove the trivial Euclidean motions), then vertices 5,6 \emph{can} move, \emph{but not very far}. As the framework deforms, it remains ``close to'' the prestress stable configuration B, even though it never adopts this configuration.  Yet, the rigidity test above is blind to the fact that the framework is constrained to move in a small region, nor can it detect there is a nearby prestress stable configuration.

This example is somewhat contrived, so we turn to a more physically motivated example with the same basic properties. 

\subsubsection{Example 2} \label{sec:example2}
Consider the cluster formed from $N=10$ unit spheres in 3 dimensions shown in Figure \ref{fig:n10hypo}.  Clusters such as these are of interest in materials science, since the arrangements of small groups of particles can explain certain macroscopic properties of materials, such as how they form gels and glassy phases \cite{PatrickRoyall:2008fz,Manoharan:2015ko,Robinson:2019et}. This particular sphere cluster was discovered in \cite{HolmesCerfon:2016wa}, which presented a nearly complete dataset of rigid packings of $N\leq 14$ unit spheres. 

The sphere cluster can be represented as a framework by putting a vertex at the center of each sphere, and an edge between spheres in contact. The induced framework has 10 vertices, so requires 30 variables to describe the positions of the vertices. It has 23 edges, and when the vertices are placed 
generically, this framework  has 30-23=7 degrees of freedom. Of these,  6 correspond to rigid-body motions (three rotations and three translations), so generically this framework is flexible, with 1 nontrivial degree of freedom. 
Nevertheless, when this framework is formed from edges that are all unit length, it can be argued to be rigid (see \cite{HolmesCerfon:2016wa}.)


The numerical configuration of the framework that is plotted was found on a computer by solving \eqref{fij} numerically 
with $d_{ij} = 1$. 
Because computers have finite precision, the solver did not find the exact solution $p_{\rm true}$. Rather, it found an approximate solution $p$ such that 
\begin{equation}\label{approxedgeeqn}
|f_{ij}(p)| =  \big||p_i-p_j|^2 - d_{ij}^2\big| \leq \delta\,,\qquad (i,j)\in \mathcal E\,,
\end{equation} 
where $\delta>0$ is a numerical tolerance parameter, chosen to be $\delta = 4\varepsilon \approx 9\times 10^{-16}$ where $\varepsilon\approx 2.2204\times 10^{-16}$ is machine precision. The numerical solution $p$ is a perturbation of $p_{\rm true}$, so it should behave like a generic embedding, and should be flexible. Indeed, we calculated the rigidity matrix $R(p)$ and found it has a one-dimensional right null space, and a zero-dimensional left null space, so $p$ cannot be prestress stable. 

However, if we look at the singular values of $R(p)$, they tell a more nuanced story. The singular values in increasing order 
are
\begin{gather*}
6.6\times 10^{-9},\; 
    0.47,\;
    0.54,\;
    0.76,\;
    0.82,\;
    0.90,\;
    1.01,\;
    1.15,\;
    1.18,\;
    1.19,\;
    1.28,\;
    1.30,\;
    1.35,\; \\
    1.45,\;
    1.48,\;
    1.59,\;
    1.66,\;
    1.72,\;
    1.77,\;
    1.86,\;
    1.90,\;
    2.22,\;
    2.33\,.
\end{gather*}
The smallest singular value, $\sigma_0=6.6\times 10^{-9}$, is many orders of magnitude smaller than the second-smallest, $0.47$. 
The right and left singular vectors corresponding to $\sigma_0$, call them $v$ and $w$ respectively, are ``almost'' a true flex and stress. That is, if we flex the framework in direction $v$, the edge lengths change at a rate proportional to $\sigma_0$, hence very little. Similarly, if we put the edges under tension according to stress $w$, the imbalance of forces at each node is proportional to $\sigma_0$, hence very small. Can the almost-flex and almost-stress be used to gain more information about how framework $p$ can actually flex? 

A natural idea is to include the almost-flex $v$ and almost-stress $w$ in the test  \eqref{pss} for prestress stability. 
To this end we formed the  ``almost flex space'' $\mathcal V=(\mbox{Null}(R(p)) \oplus v)\ \cap \mathcal C$, and the ``almost self-stress space'' $\mathcal W=\mbox{Null}(R^T(p))\oplus w$. 
Then $\mbox{dim}(\mathcal V) = 2$, $\mbox{dim}(\mathcal W) = 1$. 
We performed test \eqref{pss} using $\mathcal V,\mathcal W$ in the place of $\mathcal V^0,\mathcal W^0$, by forming the ``almost'' self-stress matrix $\Omega = \Omega(w)$ (setting $|w|=1$), and computing the eigenvalues of $V^T\Omega V$, where $V$ is an orthonormal basis of $\mathcal V$. We obtained
\[
\mbox{eig}(V^T\Omega V) = \{0.33, 0.46\}\,.
\]
The almost-self-stress matrix $\Omega$ is positive definite on the almost-flex space $\mathcal V$. 

\medskip

Unfortunately, 
this test doesn't show that $p$ is rigid, since $\mathcal V$, $\mathcal W$ are not true flexes and self-stresses. What then does it show? 

One approach to answering this question (an approach we will not follow in this paper) would be to assume that $|p-p_{\rm true}|$ is sufficiently small, and in addition, that any sufficiently small singular values of $R(p)$ are perturbations of zero singular values of $R(p_{\rm true})$. Then, $v$ and $w$ are simply perturbations of a true flex $v_{\rm true}$ and a true stress $w_{\rm true}$ of $p_{\rm true}$. One can then use perturbation bounds from linear algebra to show that $|v-v_{\rm true}|$, $|w-w_{\rm true}|$ are both small, so the eigenvalues of $V^T\Omega V$ are close to those of $V^T_{\rm true}\Omega_{\rm true} V_{\rm true}$. Therefore, if $V^T\Omega V$ is sufficiently positive definite, then  $V^T_{\rm true}\Omega_{\rm true} V_{\rm true}$ is also positive definite. Hence, $p_{\rm true}$ is prestress stable. 

 We will not pursue this approach here, because it requires making the unsatisfying assumption that all small singular values are actually perturbations from zero, an assumption which could never be verified in practice. Instead, we will take a more global approach, and ask what information we can extract from the set of small singular values of $R(p)$ about the amount by which a framework can deform, without making any assumptions about where these small singular values came from. 
 This approach will allow us to prove statements about the rigidity and flexibility of nearby frameworks, and to make more global statements about how much the edges can deform while remaining close to $p$. We will introduce a new notion of rigidity that applies to frameworks that are only slightly flexible, such as the ones in our examples, a notion we call \emph{almost-rigidity}.

\section{Main results}\label{sec:mainresults}

In this section we present our main theorems. They are illustrated schematically in Figure \ref{fig:theorems}. 

\subsection{Setup}\label{sec:setup}

We start with a framework $(p,\mathcal E)$ and the corresponding rigidity matrix $R(p)$. 
We identify a subspace $\mathcal C \subset \R^{dn}$ that is complementary to the space  $\mathcal T$ of trivial motions at $p$. We will almost always use $\mathcal C = \mathcal T^\perp$ in our examples, but the theory holds with more general choices of $\mathcal C$  (for example, by fixing coordinates of some of the points.)
%
To remove trivial degrees of freedom, we restrict the framework's configuration space to the affine space $\mathcal C^p$ defined by 
\begin{equation}\label{Cdef}
    \mathcal C^p = \{q\in \R^{dn}: q=p+v, v\in \mathcal C\}\,.
\end{equation}  
We would like $\mathcal C^p$ to be homeomorphic to the quotient space $\R^{dn}$ modulo the space of $d$-dimensional rotations and translations of the framework, which is a manifold almost everywhere. 
Although this cannot hold globally, 
we expect  $\mathcal C^p$ to have this property
in a sufficiently large neighborhood of $p$.\footnote{For example, if we construct $\mathcal C^p$ for a two-dimensional framework by pinning one vertex to the origin, and another vertex to the $x$-axis, then when these two vertices are coincident, $\mathcal C^p$ contains all the rotations of this configuration about the origin. Conversely, for some choices
of $\mathcal C^p$, this space could fail to include some configurations and its Euclidean transformations entirely.}

We aim to generalize the test \eqref{pss} for prestress stability. Recall that this test required as input subspaces 
$\mathcal V^0,\mathcal W^0$ containing the infinitesimal flexes and  self-stresses, see \eqref{VW0}. 
Suppose we enlargen these subspaces to $\mathcal V\subset \mathcal C$, $\mathcal W$, in such a way that $\mathcal V$ contains the infinitesimal flexes, $ \mathcal V^0\subset \mathcal V$. We impose no condition on $\mathcal W$. 
We call $\mathcal V$ the \emph{almost-flex space} and $\mathcal W$ the \emph{almost-self-stress space}, because we have in mind that these spaces contain velocities and stresses that are almost, but not quite, infinitesimal flexes and self-stresses. That is, if $v\in \mathcal V$, $w\in \mathcal W$, we want $|R(p)v|$, $|w^TR(p)|$ to be ``small'' in some sense.
One way to construct these subspaces is to  choose a singular value $\sigma$ of $R(p)$, 
then build the spaces out of singular vectors corresponding to singular values less than or equal to $\sigma$, as 
\begin{equation}\label{VWsig}
    \mathcal V^{\sigma} = \{v\in \mathcal C : \frac{|R(p)v|}{|v|}\leq \sigma\}\,,\qquad
    \mathcal W^{\sigma} = \{w\in \R^m : \frac{|w^TR(p)|}{|w|}\leq \sigma\}\,.
\end{equation}

Now, choose $\omega\in \mathcal W$ with stress matrix $\Omega=\Omega(\omega)$, such that $\Omega$ is positive definite on $\mathcal V$. That is, if $V$ is a matrix whose columns are an orthonormal basis of $\mathcal V$, we require that 
\begin{equation}\label{OmegaV}
V^T\Omega V \succ 0\,.
\end{equation}
It is possible to find such an $\omega$, if it  exists, by solving \eqref{pss} using $\mathcal V$, $\mathcal W$ in place of $\mathcal V^0$, $\mathcal W^0$, with the algorithm given in \eqref{pssalg}.  

We remark that we don't actually need a subspace $\mathcal W$ for the theorems to hold in theory -- we only need a stress solving \eqref{OmegaV}. We introduce $\mathcal W$ because this lets us find $\omega$ in practice.

Let $\lambda_0$ be the minimum eigenvalue of $\Omega$ on $\mathcal V$, and let $\mu_0$ be the (possibly negative) minimum eigenvalue of $\Omega$ over all of $\mathcal C$:
\begin{equation}\label{lam0mu0}
\lambda_0 = \lambda_{\rm min}(V^T\Omega V)\,, \qquad
\mu_0 = \lambda_{\rm min}(C^T\Omega C) \,.
\end{equation}
Here $C$ is a matrix whose columns form an orthonormal basis of $\mathcal C$. 
Choose $\lambda\in(0,\lambda_0)$, and let $\kappa$ be the solution to the following problem: 
\begin{equation}\label{kappa}
\min \kappa : \quad\text{s.t. }\quad C^T \Omega C + 2\kappa C^TR(p)^TR(p)C \succeq \lambda\,.
\end{equation}
The left-hand side of \eqref{kappa} will turn out to be half the Hessian at $p$ of the function $H:\R^{dn}\to \R$ that we use to prove our theorems, and the quantity $\kappa$ is the smallest number that makes this Hessian sufficiently positive definite.
The solution to \eqref{kappa} exists and is nonnegative by Lemma \ref{lem:AB} in Section \ref{sec:theoremproofs}, applied with $A=\Omega$, $B=2R(p)^TR(p)$. 
It can be found efficiently by solving a convex optimization problem:
\begin{equation*}
\begin{aligned}
& \underset{\kappa,X}{\text{minimize}}
& & \kappa \\
& \text{subject to}
& & X = 2\kappa C^TR^T(p)R(p)C + C^T\Omega C - \lambda I, 
\quad X \succeq 0.
\end{aligned}
\end{equation*}
Here $I\in \R^{(nd-d(d+1)/2) \times (nd-d(d+1)/2)}$ is the identity matrix.

To summarize the ingredients necessary to state our theorems: we start with $(p,\mathcal E)$, and choose: (i) a subspace $\mathcal C$ complementary to the trivials (ii) an almost flex space $\mathcal V$, and (iii) an almost self-stress space $\mathcal W$. Then we find $\omega$ such that $\Omega$ solves \eqref{OmegaV}, which in turn determines  $\lambda_0,\mu_0$.  We choose $\lambda\in(0,\lambda_0)$ and solve \eqref{kappa} for $\kappa$. 

\medskip 

Two combinations of these numbers that will prove useful are
\begin{equation}\label{Lmu}
L=\left(\frac{\lambda}{8 z \kappa}\right)^{1/2},\qquad
\bar\mu = 1-\frac{\mu_0}{\lambda}\,.
\end{equation}
Here $z$ is the maximum adjacency of the graph, i.e. the maximum number of edges coming out of any vertex. The quantity $L$ will turn out to have the dimensions of length, as we discuss in Section \ref{sec:energyfunction}, while $\bar\mu$ is dimensionless. 

Our theorems will be proved by constructing a function $H:\R^{nd}\to R$ that depends on the ingredients above and is physically motivated. 
Writing $q_{ij}^2 = |q_i-q_j|^2$ for the squared edge length between vertices $i,j$ of configuration $q$, the function is 
\begin{equation}
H(q) = \sum_{(i,j)\in\mathcal E} \half\kappa(q_{ij}^2-p_{ij}^2)^2 + \omega_{ij}q_{ij}^2\,.
\end{equation}
(See equation \eqref{H} and surrounding discussion for more details.)
We call $H$ an \emph{energy function}, because 
it is constructed by pretending the edges of the framework are spring-like with spring constant $\kappa$, and the edges are under tension $\omega$. 
The tensions are such that the associated stress matrix $\Omega$ is not positive definite on all of $\mathcal C$, so these tensions induce forces that cause the energy to decrease along some flexes $v\in \mathcal C$; in other words $\grad H(p) \neq 0$.
The  minimization problem \eqref{kappa} that gives $\kappa$ asks us to increase the stiffness of the springs just enough so that even if the energy does decrease in some directions initially, the springs stretch enough so that after a short distance the energy rapidly increases and becomes positive again; in other words $\grad\grad H(p)$ is sufficiently positive definite.

\subsection{Theorems}

We are now ready to state our main theorems. They are proved in Section \ref{sec:theoremproofs}. 

\begin{mainthm}\label{thm:framework}
Given a framework $(p,\mathcal E)$ and $\mathcal C, \mathcal V, \omega, \lambda, L, \bar\mu$ as in the setup above. 
Define a radius
\begin{equation}\label{eta1}
\eta_1=\frac{4|\omega^TR(p)|}{\lambda}
\end{equation}
and define
\begin{equation}\label{D}
D = \frac{\eta_1}{L}\left(\bar\mu^{1/2} + \frac{\eta_1}{L} \right)\,.
\end{equation}
Suppose 
\begin{equation}\label{Framework1}
 D< \frac{1}{2}\,.
\end{equation}
Then any continuous path $q(t)\in\mathcal C^p$ with $q(0)=p$ which preserves the edge lengths of $p$ remains within a distance $\eta_1$ of $p$: $|q(t)-p|\leq\eta_1$. 
\end{mainthm}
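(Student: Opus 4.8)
The plan is to work inside the affine slice $\mathcal C^p$, parametrized by $q = p + Cx$ with $C$ an orthonormal basis matrix of $\mathcal C$ (so $|q-p| = |x|$), and to exploit the energy function $H$ of \eqref{H} via the auxiliary function $\tilde H(x) := H(p+Cx)$. I would first establish four facts about $H$. (a) $H$ is a degree-four polynomial in $q$, hence $C^\infty$. (b) $H(q) = H(p)$ whenever $q$ has the same edge lengths as $p$, since then each $\half\kappa(q_{ij}^2 - p_{ij}^2)^2$ vanishes and $\sum\omega_{ij}q_{ij}^2 = \sum\omega_{ij}p_{ij}^2 = H(p)$. (c) $\grad H(p) = 2R(p)^T\omega$, so $|\grad\tilde H(0)| = |C^T\grad H(p)| \le |\grad H(p)| = 2|\omega^TR(p)| = \tfrac12\lambda\eta_1$. (d) The key quantitative fact: for $q\in\mathcal C^p$ with $|q-p| = r$,
\[
\tfrac12\grad^2\tilde H(x) = C^T\Bigl(\Omega + 2\kappa R(q)^TR(q) + \sum_{(i,j)\in\mathcal E}\kappa(q_{ij}^2 - p_{ij}^2)\,\Omega_{ij}\Bigr)C \;\succeq\; \lambda\bigl(1 - D(r/L)\bigr),
\]
where $\Omega_{ij}$ is the unit stress matrix of edge $(i,j)$ and $D(s) := s(\bar\mu^{1/2}+s)$, so that $D(\eta_1/L)$ is precisely the quantity $D$ of \eqref{D}; at $r=0$ this is exactly the defining inequality \eqref{kappa} for $\kappa$.

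Given (d): since $D(\cdot)$ is increasing and $D(\eta_1/L) = D < \tfrac12$ by hypothesis \eqref{Framework1}, we get $\tfrac12\grad^2\tilde H(x) \succeq \lambda(1-D) > \tfrac12\lambda$ for all $|x|\le\eta_1$. Taylor's theorem with integral remainder along the segment from $0$ to any $x$ with $|x| = \eta_1$ (which lies in this ball), together with (c), gives
\begin{equation*}
\tilde H(x) \;\ge\; \tilde H(0) + \grad\tilde H(0)\cdot x + \lambda(1-D)|x|^2 \;\ge\; \tilde H(0) - \tfrac12\lambda\eta_1|x| + \lambda(1-D)|x|^2,
\end{equation*}
and at $|x| = \eta_1$ the right-hand side equals $\tilde H(0) + \lambda\eta_1^2(\tfrac12 - D) > \tilde H(0)$. (If $\omega^TR(p) = 0$ then $\eta_1 = 0$ and the theorem only asserts that the path is constant, which follows since then $\grad H(p) = 0$ and $H$ is strictly convex near $p$; so assume $\eta_1 > 0$.) Hence $H(q) > H(p)$ for every $q\in\mathcal C^p$ with $|q-p| = \eta_1$.

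Now the topological conclusion. Let $q(t)\in\mathcal C^p$ be a continuous path with $q(0) = p$ preserving the edge lengths of $p$; by (b), $H(q(t)) \equiv H(p)$. If $|q(t_1) - p| > \eta_1$ for some $t_1$, then since $t\mapsto|q(t)-p|$ is continuous and vanishes at $t=0$, the intermediate value theorem gives $s^*\in(0,t_1)$ with $|q(s^*)-p| = \eta_1$; but then $H(q(s^*)) > H(p)$ by the previous paragraph, contradicting $H(q(s^*)) = H(p)$. So $|q(t)-p|\le\eta_1$ for all $t$.

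The one real obstacle is fact (d). After subtracting the $r=0$ value, one must bound $2\kappa C^T\bigl(R(q)^TR(q) - R(p)^TR(p)\bigr)C + \sum_{(i,j)}\kappa(q_{ij}^2-p_{ij}^2)C^T\Omega_{ij}C$ below by $-\lambda D(r/L)$. Since $R$ is linear in the configuration, $R(q)-R(p) = R(q-p)$ has operator norm $\le 2z^{1/2}|q-p|$, and the combination of $z$, $\kappa$ and $\lambda$ is exactly what is packaged into $L$ by $L^2 = \lambda/(8z\kappa)$, producing the $(r/L)^2$ contribution. The cross term $R(p)^TR(q-p) + R(q-p)^TR(p)$ and the stress-perturbation sum both carry factors of the edge vectors $p_i-p_j$, which are absorbed against the positive-definite part $2\kappa C^TR(p)^TR(p)C$ of the Hessian at $p$ using the Cauchy--Schwarz inequality $R(p)^TR(p) \preceq \sum_{(i,j)}|p_i-p_j|^2\,\Omega_{ij}$, while the worst-case negativity $\mu_0 = \lambda_{\min}(C^T\Omega C)$ of the stress matrix on $\mathcal C$ is what produces the coefficient $\bar\mu^{1/2} = (1-\mu_0/\lambda)^{1/2}$ of the linear-in-$r$ term. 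Making every constant line up so that the critical radius is precisely $\eta_1 = 4|\omega^TR(p)|/\lambda$ and the hypothesis is precisely $D<\tfrac12$ is the delicate bookkeeping; everything else is soft.
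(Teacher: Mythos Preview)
Your overall strategy---use the energy $H$ of \eqref{H}, show $H>H(p)$ on the sphere $|q-p|=\eta_1$ in $\mathcal C^p$, then conclude by the intermediate value theorem---is exactly the paper's. The gap is in fact~(d). As a \emph{matrix} inequality $\tfrac12\nabla^2\tilde H(x)\succeq\lambda(1-D(r/L))$, this is too strong: controlling $v^T\nabla^2 H(q)v$ for arbitrary $v$ while $q$ moves in an unrelated direction $u=(q-p)/|q-p|$ forces you to bound $\kappa|R(p)u|$, and for $u\neq v$ you cannot absorb this into $|R(p)v|^2$ as your sketch suggests---you are stuck with $|R(p)u|\le 2z^{1/2}|p|$, producing the extra $|p|/L$ term. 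That is precisely the content of the paper's remark following the proof of Theorem~\ref{thm:pss}: the full-Hessian bound leads to the stronger hypothesis $D_{\rm pss}<\tfrac12$, not $D<\tfrac12$.

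What saves your argument is that the Taylor expansion along the segment from $0$ to $x$ only uses $x^T\nabla^2\tilde H(sx)\,x$, i.e.\ the \emph{directional} case $u=v=x/|x|$. In that case the problematic term becomes $\kappa|R(p)v|$, which Lemma~\ref{lem:Rbounds}(2) bounds by $b(v)\cdot\bar\mu^{1/2}/(\text{const}\cdot L)$ without reference to $|p|$. The paper organizes this differently: rather than bound the Hessian along the ray, it Taylor-expands $H(p+tv)$ to third order (with Lagrange remainder in the cubic coefficient $a_t(v)$), invokes the cubic-polynomial Lemma~\ref{lem:cubic} to get positivity on an interval $[t_*(v),t_1^+(v))$, and packages the third-derivative bound as $b(v)/2\bar a(v,r)\ge\eta_0(r)$ via Lemma~\ref{lem:Hbounds}. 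The condition $D<\tfrac12$ then appears as $\eta_1/\eta_0(\eta_1)<1$ in Corollary~\ref{cor1}. Your second-order Taylor with a ray-directional Hessian bound is equivalent once you work through the constants (indeed, since $H$ is exactly quartic you could even expand to fourth order and drop the nonnegative quartic term), but your write-up should state~(d) only in its directional form and route its proof through Lemma~\ref{lem:Rbounds}(2).
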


When Theorem \ref{thm:framework} holds, $(p, \mathcal{E})$ may be flexible, but as it deforms it has to stay in a small ball of radius $\eta_1$. We say that  a $p$ satisfying the conditions of Theorem \ref{thm:framework} is \emph{almost-rigid}. 

\begin{remark}
In the case that $\omega$ is actually a
self-stress, then 
$\omega^TR(p)=0$, 
$\eta_1=0$, and this ball shrinks down to a single point. The stress solves \eqref{pss} so the framework is prestress stable.
In this sense, our notion of almost-rigidity is a generalization
of prestress-stability. 
\end{remark}

In this theorem, $\eta_1$ shows that $\omega$ should be
an almost-stress in order for $\eta_1$ to be small.
Additionally, if we do not include almost-flexes in
our chosen space
$\mathcal V$, then $\kappa$, and thus $D$ will be large.

The proof of this theorem
will be based on studying the landscape of a certain 
fictitious energy function. A critical point of this energy
that passes the second-derivative test will be prestress-stable.
A point that is not critical, but has sufficiently well
behaved second and third derivatives, will still behave
almost like an energy minimum.

\medskip

Our second theorem studies configurations that have the same edge lengths as $p$. 
For example, if $(p,\mathcal E)$ is the framework in Figure \ref{fig:rigidity} (A), and $q$ is the same configuration but where vertex 4 is on top of vertex 2, then $q$ has the same edge lengths as $p$. Clearly, $q$ is not achievable as a continuous, edge-length preserving deformation of $p$.

\begin{mainthm}\label{thm:outerbound}
Given the setup in Theorem \ref{thm:framework} and suppose that \eqref{Framework1} holds. 
 Define
\begin{equation}\label{eta2}
\eta_2 = \begin{cases}
L\,\frac{(\bar\mu+4)^{1/2}-\bar\mu^{1/2}}{2} - \eta_1 & \text{if }  D < \frac{1}{4} + \frac{1}{8}\left( \bar\mu^{1/2}(\bar\mu+4)^{1/2} - \bar\mu\right)\\
L\,\frac{(\bar\mu+2)^{1/2}-\bar\mu^{1/2}}{2} & \text{otherwise.} 
\end{cases}
\end{equation}
Then $\eta_2>\eta_1$, and any configuration $q\in\mathcal C^p$ with the same edge lengths as $p$ such that $|q-p|>\eta_1$, must additionally satisfy $|q-p|\geq\eta_2$. 
\end{mainthm}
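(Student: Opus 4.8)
The plan is to deduce Theorem~\ref{thm:outerbound} from a single estimate on the energy $H$ --- that $H(q) > H(p)$ for every $q \in \mathcal C^p$ in the open annulus $\eta_1 < |q-p| < \eta_2$ --- which is the same estimate that drives Theorem~\ref{thm:framework}, and which is established in Section~\ref{sec:theoremproofs} by feeding the properties of $H$ (Section~\ref{sec:energyfunction}) into the quantitative second-derivative propositions of Section~\ref{sec:energypropositions}. Granting that estimate, the theorem is almost immediate, so I record the reduction first.

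The elementary point is that $H$ is a function of the squared edge lengths alone: since $H(q) - H(p) = \sum_{(i,j)\in\mathcal E}\big(\tfrac{\kappa}{2}(q_{ij}^2-p_{ij}^2)^2 + \omega_{ij}(q_{ij}^2-p_{ij}^2)\big)$, any configuration $q$ with the same edge lengths as $p$ satisfies $q_{ij}^2 = p_{ij}^2$ on every edge, hence $H(q) = H(p)$. Therefore, if such a $q$ lies in $\mathcal C^p$ with $|q-p| > \eta_1$, it cannot lie in the open annulus $\eta_1 < |q-p| < \eta_2$ (where $H(q) > H(p)$, contradicting $H(q) = H(p)$), so $|q-p| \ge \eta_2$, which is the conclusion. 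The remaining claim $\eta_2 > \eta_1$ is a finite computation with \eqref{eta1}--\eqref{eta2} under the standing hypothesis \eqref{Framework1}: setting $x := \eta_1/L \ge 0$, one has $D = x^2 + \bar\mu^{1/2} x$, which is strictly increasing in $x$, so $D < \tfrac12$ is equivalent to $x < \tfrac12\big((\bar\mu+2)^{1/2} - \bar\mu^{1/2}\big)$, and a short calculation shows that the first-case threshold $\tfrac14 + \tfrac18\big(\bar\mu^{1/2}(\bar\mu+4)^{1/2} - \bar\mu\big)$ is precisely the value of $D$ at $x = \tfrac14\big((\bar\mu+4)^{1/2} - \bar\mu^{1/2}\big)$; hence in the first case $\eta_2/L = \tfrac12\big((\bar\mu+4)^{1/2} - \bar\mu^{1/2}\big) - x > \tfrac14\big((\bar\mu+4)^{1/2} - \bar\mu^{1/2}\big) > x$, and in the second case $\eta_2/L = \tfrac12\big((\bar\mu+2)^{1/2} - \bar\mu^{1/2}\big) > x$, so $\eta_2 > Lx = \eta_1$ in both.

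The substance is the annulus estimate itself. Writing $q = p + v$ with $v \in \mathcal C$ and $g := e(q) - e(p)$ for the vector of squared-edge-length changes, one has the exact identity $H(q) - H(p) = v^T \Omega v + 2\,\omega^T R(p)\, v + \tfrac{\kappa}{2}|g|^2$; substituting $g = 2 R(p) v + R(v) v$ exhibits $H(q)-H(p)$ as a quartic in $v$ whose linear part has coefficient $\grad H(p) = 2 R(p)^T \omega$ of norm $\tfrac{\lambda}{2}\eta_1$, whose quadratic part $v^T\big(\Omega + 2\kappa R(p)^T R(p)\big) v$ is $\ge \lambda |v|^2$ on $\mathcal C$ by the defining property \eqref{kappa} of $\kappa$, whose quartic part $\tfrac{\kappa}{2}\sum_{(i,j)}|v_i - v_j|^4$ is nonnegative, and whose cubic part $2\kappa\,(R(p)v)^T R(v)v$ is the only obstruction to positivity. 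These are exactly the inputs required by the propositions of Section~\ref{sec:energypropositions} --- a quantitative second-derivative test asserting that a $C^3$ function which is ``nearly critical'' at $p$, is positive definite to second order there with half-Hessian $\succeq \lambda$ on $\mathcal C$, and has its higher-order part controlled on the length scale $L$ of \eqref{Lmu} (into which the maximum adjacency $z$ and the spring constant $\kappa$ enter) --- exceeds $H(p)$ throughout an annulus whose inner radius scales like $|\grad H(p)|/\lambda$, recovering $\eta_1$, and whose outer radius is pushed out as far as the control on the non-quadratic part allows, recovering $\eta_2$. The dimensionless $\bar\mu = 1 - \mu_0/\lambda$ measures how far $\Omega$ is from being positive definite on all of $\mathcal C$, i.e. how fast $H$ may initially decrease as one leaves $p$, which is exactly what caps the outer radius; the two cases of \eqref{eta2} come from a case split in this estimate, the first case being the regime $\eta_1/L < \tfrac14\big((\bar\mu+4)^{1/2} - \bar\mu^{1/2}\big)$ identified above.

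I expect the delicate step to be the handling of the cubic term $2\kappa\,(R(p)v)^T R(v)v$: because $H$ is genuinely non-convex and $p$ need not be a critical point, the crude Taylor-remainder bound is too lossy to reach the sharp outer radius in \eqref{eta2}, so one must combine the nonnegativity of the quartic term with a careful completion of squares --- or the dedicated lemmas of Section~\ref{sec:energypropositions} --- to absorb this cubic term without spending the Hessian positivity supplied by the $2\kappa R(p)^T R(p)$ block. With that estimate in hand, the two assertions of Theorem~\ref{thm:outerbound} follow as above.
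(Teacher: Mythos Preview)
Your reduction is correct and matches the paper: once one knows $H(q)>H(p)$ on the open annulus $\eta_1<|q-p|<\eta_2$, the theorem is immediate because equal edge lengths force $H(q)=H(p)$. Your verification that $\eta_2>\eta_1$ via $x=\eta_1/L$ and the monotone map $x\mapsto D=x^2+\bar\mu^{1/2}x$ is also clean and agrees with the paper's derivation (there the inequality is obtained in the course of the proof rather than checked at the outset).

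The gap is in ``the substance.'' You correctly exhibit $H(q)-H(p)$ as a quartic and point to Section~\ref{sec:energypropositions}, but you never say what actually pins down the \emph{specific} outer radius \eqref{eta2}. In the paper the propositions of Section~\ref{sec:energypropositions} are applied through Corollary~\ref{cor2}, which needs a concrete lower bound $\eta_0(r)$ for $\min_{|v|=1} b(v)/2\bar a(v,r)$. That bound is Lemma~\ref{lem:Hbounds}, $\eta_0(r)=\tfrac{L}{2}\big(\bar\mu^{1/2}+r/L\big)^{-1}$, and its proof is where the genuinely delicate work sits: it uses Lemma~\ref{lem:Rbounds}(2), which optimises $\kappa|R(p)v|\big/(2\kappa|R(p)v|^2+v^T\Omega v)$ over the level sets of $v^T\Omega v$ and produces the factor $\bar\mu^{1/2}$. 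This is not a completion of squares against the quartic term --- the quartic is in fact discarded --- and a cruder bound on $|R(p)v|$ alone gives the much worse $\tilde\eta_0(r)$ of \eqref{eta0bad} with $|p|/L$ in place of $\bar\mu^{1/2}$. With $\eta_0(r)$ in hand, the paper then uses the one-parameter family $l_w(r)=(1+w)\eta_0(r)-w\eta_1$ of Lemma~\ref{lem:lw}; solving $r_w=(1+w)\eta_0(r_w)$ and setting $\eta_2(w)=r_w-w\eta_1$ yields the two cases of \eqref{eta2} at $w=1$ and $w=0$, and the case split in the theorem is exactly the condition $\eta_2(1)>\eta_1$. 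None of this mechanism appears in your proposal, and your suggested alternative (``completion of squares to absorb the cubic without spending the $2\kappa R(p)^TR(p)$ block'') does not obviously reproduce the $\bar\mu^{1/2}$ dependence or the two-case structure.
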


The upper bound for $D$ in the first case of \eqref{eta2} is an increasing function of $\bar\mu$, which equals $1/4$ for $\bar\mu=0$, and asymptotes to $1/2$ as $\bar\mu\to\infty$. 

When Theorem \ref{thm:outerbound} holds, any configuration $q$ with the same edge lengths as $p$ that is not within $\eta_1$ of $p$, must be sufficiently far away from it, at least a distance of $\eta_2$ away.
Therefore, there is an annulus around $p$ with radii $(\eta_1,\eta_2)$ where all configurations have edge lengths distinct from those of $p$. 

\begin{remark}
It is tempting to ask whether $\eta_2=\infty$, which would imply global rigidity if $\eta_1=0$ or perhaps ``almost global rigidity'' if $\eta_1>0$. Unfortunately, we doubt this will happen, and if it does it wouldn't be useful. We can only have $\eta_2=\infty$ when $\kappa=0$, which is only possible when $\Omega(\omega)$ is positive definite everywhere on $\mathcal C$. If $\omega$ is a self-stress then $\Omega(\omega)$ is never positive definite since it gives a zero inner product along scaling transformations, which are not ruled out by $\mathcal C$. If $\omega$ is only an almost-stress, then $\eta_1$ should be large, because in this case $H$ is quadratic and has a global minimum at $q=0$. Therefore, $\eta_1$ must be at least $2|p|$ to ensure the $\eta_1$-ball contains the level set $H(q)=H(p)$, and this is likely too large to be useful. Another way to see this, is that the reflection of $p$ has the same edge lengths but is not a continuous edge-preserving deformation of it. Therefore if $\eta_2=\infty$, then the $\eta_1$-ball must be large enough to contain this reflection.
\end{remark}

In what follows, we write $q_{ij} := q_i-q_j\in\R^{d}$ for the vector between vertices $(i,j)$, and write $q_{ij}^2 := q_{ij}\cdot q_{ij}$ for the squared length of that vector.

\begin{mainthm}\label{thm:emin}
Given the setup in Theorem \ref{thm:framework}, and suppose that 
\begin{equation}\label{Framework3}
    D \quad < \quad
    \frac{1}{9}\left(\frac{8}{3} + \bar \mu^{1/2} \Big(\bar\mu+\frac{8}{3}\Big)^{1/2} - \bar \mu \right)\,.
\end{equation}
Define
\begin{equation}\label{eta3}
\eta_3 = \frac{L}{2}\Big(\Big(\bar\mu+\frac{8}{3}\Big)^{1/2}-\bar\mu^{1/2}\Big)\,.
\end{equation}
Then $\eta_3 > (3/2)\eta_1$. 
Suppose $q\in \mathcal C^p$, let $|q-p| = \eta$, and suppose $\frac{3}{2}\eta_1< \eta \leq \eta_3$. 
Let $e(q) = (q_{ij}^2)_{(i,j)\in\mathcal E}$ 
be the vector of squared edge lengths at configuration $q$. 
Then $|e(q)-e(p)| \geq e_{\rm min}(\eta)$, where  $|\cdot|$ is the $l_2$-norm, and 
\begin{equation}\label{emin}
 e_{\rm min}(\eta) = 
 \begin{cases}
    \sqrt{\big(\frac{|\omega|}{\kappa}\big)^2 + \frac{2}{3}\frac{\lambda}{\kappa}  \eta^2\big( 1-\frac{3}{2}\frac{\eta_1}{\eta}\big)}  \;-\; \frac{|\omega|}{\kappa} & \text{ if } \kappa > 0\\
   \frac{1}{3}\frac{\lambda}{|\omega|}\eta^2\big( 1-\frac{3}{2}\frac{\eta_1}{\eta}\big) & \text{ if }  \kappa = 0\,.
    \end{cases}
\end{equation}
Furthermore, if $p$ is deformed continuously to some configuration $q\in \mathcal C^p$ with the same edge lengths as $p$ but such that $|q-p| > (3/2)\eta_1$, so that $|q-p| > \eta_2$ (by Theorem \ref{thm:outerbound}), 
then there is some point $q'$ on the deformation path at which $|e(q')-e(p)| \geq e_{\rm min}^*$, given by 
\begin{equation}\label{estar}
    e_{\rm min}^* = e_{\rm min}(\eta_3)\,.
\end{equation}
\end{mainthm}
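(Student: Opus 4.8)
The plan is to realize all three parts of the theorem as consequences of lower bounds on the energy $H$ constructed in Section \ref{sec:energyfunction}, combined with the general ``near-critical-point'' propositions from Section \ref{sec:energypropositions}. The key observation is that $H(q)$ depends only on the squared edge lengths $e(q)$: writing $H(q) = G(e(q))$ where $G(e) = \sum_{(i,j)}\tfrac12\kappa(e_{ij}-p_{ij}^2)^2 + \omega_{ij}e_{ij}$, one has by Cauchy--Schwarz that $H(q) - H(p) = G(e(q)) - G(e(p)) \le \sup_{\text{segment}} |\nabla G|\cdot |e(q)-e(p)|$, where the supremum of $|\nabla G|$ over the segment from $e(p)$ to $e(q)$ is controlled because $\nabla G(e) = \kappa(e - e(p)) + \omega$, so $|\nabla G(e)| \le \kappa|e-e(p)| + |\omega|$. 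Thus an upper bound of the form $|e(q)-e(p)| \le s$ together with a lower bound $H(q)-H(p)\ge h$ forces $h \le (\kappa s + |\omega|)s$, and solving this quadratic inequality for $s$ yields exactly the two branches of \eqref{emin} (the $\kappa=0$ case being the degenerate linear one). So the real content is: \emph{for $\tfrac32\eta_1 < \eta = |q-p| \le \eta_3$, prove $H(q)-H(p) \ge \tfrac23\tfrac{\lambda}{1}\,\eta^2\big(1-\tfrac32\tfrac{\eta_1}{\eta}\big)\cdot(\text{normalization})$}, i.e. a coercivity lower bound for $H$ on a ball of radius $\eta_3$.

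The steps I would carry out, in order. First, recall from Section \ref{sec:setup} that $\tfrac12\nabla^2 H(p) = C^T\Omega C + 2\kappa C^T R(p)^T R(p) C \succeq \lambda$ on $\mathcal C$ by the defining property \eqref{kappa}, that $|\nabla H(p)| = 2|\omega^T R(p)|$ (so $\eta_1 = 2|\nabla H(p)|/\lambda$, matching \eqref{eta1} up to the factor), and that $L = (\lambda/(8z\kappa))^{1/2}$ is the length scale governing the third derivative — specifically, the third-order Taylor remainder of $H$ is controlled by a bound of the form $|H(q) - H(p) - \nabla H(p)\cdot(q-p) - \tfrac12(q-p)^T\nabla^2H(p)(q-p)| \le (\text{const}\cdot\kappa z/L^{?})|q-p|^3$; the precise constant is what makes $L$ appear. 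Second, invoke the relevant proposition from Section \ref{sec:energypropositions} (the quantitative second-derivative test for a near-critical point): given $\nabla^2 H(p) \succeq 2\lambda$ on $\mathcal C$, given $|\nabla H(p)| \le \tfrac12\lambda\eta_1$, and given the third-derivative bound with scale $L$ and the eigenvalue spread encoded by $\bar\mu = 1 - \mu_0/\lambda$, one gets $H(q) - H(p) \ge \lambda\big(\tfrac13\eta^2 - \tfrac12\eta_1\eta - (\text{cubic-in-}\eta/L\text{ correction})\big)$ for all $q$ with $|q-p| = \eta$ in the range where this expression is still increasing — and the threshold on $\eta$ where it stops being a useful bound is precisely $\eta_3$ in \eqref{eta3}, the root obtained from optimizing the quadratic-vs-cubic tradeoff (this is why $\eta_3$ has the algebraic form $\tfrac{L}{2}((\bar\mu + \tfrac83)^{1/2} - \bar\mu^{1/2})$, and why condition \eqref{Framework3} — which is exactly $D < $ the value making $\eta_3$ dominate $\tfrac32\eta_1$ — is needed; the inequality $\eta_3 > \tfrac32\eta_1$ then follows by rearranging \eqref{Framework3} using the definition $D = \tfrac{\eta_1}{L}(\bar\mu^{1/2} + \tfrac{\eta_1}{L})$). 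Third, feed this lower bound $h = h(\eta)$ into the Cauchy--Schwarz inequality above and solve $h(\eta) \le \kappa s^2 + |\omega| s$ for $s$ to obtain $e_{\rm min}(\eta)$; matching constants gives \eqref{emin}. Fourth, for the final ``deformation path'' statement: if $q$ has the same edge lengths as $p$ then $H(q) = H(p)$, so $q$ cannot satisfy the strict lower bound $H(q) > H(p)$ that holds throughout the open annulus $(\tfrac32\eta_1, \eta_3)$; hence $|q-p| \notin (\tfrac32\eta_1,\eta_3)$, and since $|q-p| > \tfrac32\eta_1$ we must have $|q-p| \ge \eta_3$, in particular $|q-p| \ge \eta_2$ once one checks $\eta_3 \ge \eta_2$ (comparing \eqref{eta2} and \eqref{eta3}; both are of the form $\tfrac L2((\bar\mu + c)^{1/2} - \bar\mu^{1/2})$ possibly minus $\eta_1$, with the constant for $\eta_3$ at least as large). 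A continuous path from $p$ to such a $q$ is itself a continuous path in $\mathcal C^p$, so by the intermediate value theorem applied to $t\mapsto |q(t)-p|$ it passes through the sphere of radius $\eta_3$; at that point $q'$ one applies the bound $|e(q')-e(p)| \ge e_{\rm min}(\eta_3) = e_{\rm min}^*$.

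I expect the main obstacle to be Step 2 — establishing the sharp coercivity bound $H(q) - H(p) \ge \lambda(\tfrac13\eta^2 - \tfrac12\eta_1\eta - \cdots)$ with exactly the constants that make $\eta_3$ come out as \eqref{eta3}. This is where the full strength of the general propositions in Section \ref{sec:energypropositions} is needed: one must carefully track how the gradient term $-|\nabla H(p)|\,\eta \ge -\tfrac12\lambda\eta_1\eta$, the Hessian term $+\tfrac12(2\lambda)\eta^2$ restricted appropriately, the negative-eigenvalue directions (contributing the $\bar\mu$ dependence through $\mu_0$), and the third-order remainder $\sim \kappa z\,\eta^3 \sim \tfrac{\lambda}{8L^2}\eta^3$ combine, and then identify the largest $\eta$ for which the net bound stays positive and increasing — optimizing a function of the shape $a\eta^2 - b\eta_1\eta - c\eta^3/L^2$. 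The bookkeeping is delicate precisely because the constants $\tfrac83$, $\tfrac23$, $\tfrac32$ appearing in \eqref{eta3}, \eqref{emin} are not arbitrary but are pinned down by this optimization; getting them to agree with the statement is the crux, and everything else (Cauchy--Schwarz, IVT, the comparison $\eta_3 > \tfrac32\eta_1$, the equality $H(q)=H(p)$ for iso-edge-length configurations) is routine once that bound is in hand.
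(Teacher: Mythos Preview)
Your approach is essentially the paper's: establish the energy lower bound $H(q)-H(p)\ge \tfrac13\lambda\eta^2(1-\tfrac32\eta_1/\eta)$ on the annulus $(\tfrac32\eta_1,\eta_3]$ via the machinery of Section~\ref{sec:energypropositions} (specifically Corollary~\ref{cor3}, packaged as Proposition~\ref{prop:energybarrier}), convert this into a lower bound on $|e(q)-e(p)|$ using the explicit form of $H$ in terms of squared edge lengths, and finish the path statement with the intermediate value theorem.

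Two small corrections. First, your mean-value bound $H(q)-H(p)\le(\sup|\nabla G|)\,s\le(\kappa s+|\omega|)s$ is not sharp enough to recover \eqref{emin} with the stated constants. Since $G$ is exactly quadratic in $e$ there is no need to pass through a gradient supremum: directly,
\[
H(q)-H(p)=\tfrac12\kappa|e(q)-e(p)|^2+\omega\cdot(e(q)-e(p))\le \tfrac12\kappa s^2+|\omega|\,s,
\]
and it is \emph{this} quadratic (with the factor $\tfrac12$) whose positive root is $e_{\min}(\eta)$ as in \eqref{emin}. Your version would produce a weaker bound off by a factor in the $\kappa$-term. Second, your parenthetical claim that $\eta_3\ge\eta_2$ is backwards; typically $\eta_2>\eta_3$ (compare the constants $4$ versus $8/3$ in \eqref{eta2} and \eqref{eta3}). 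Fortunately this does not matter: the assertion ``$|q-p|>\eta_2$'' in the theorem statement is merely a citation of Theorem~\ref{thm:outerbound}, not something you need to re-derive from $\eta_3$. What you do need, and correctly argue, is that $|q-p|\ge\eta_3$, so the path crosses the sphere of radius $\eta_3$.
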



Condition \eqref{Framework3} 
is similar to \eqref{Framework1}, but with a different right-hand side. The right-hand side of \eqref{Framework3} is an increasing function of $\bar\mu$, equal to $8/27$ when $\bar\mu=0$ and asymptoting to 
$12/27$ as $\bar \mu\to \infty$, ever so slightly smaller than the constant $1/2$ on the right-hand side of \eqref{Framework1}. 

The first part of Theorem \ref{thm:emin} gives a specific lower bound on the amount the edges of $p$ have to deform, in order to adopt a specific configuration within a distance   $\eta \in ((3/2)\eta_1,\eta_3)$ from $p$. 
It holds under a slightly stronger condition than Theorems \ref{thm:framework} and \ref{thm:outerbound}, since the upper bound for $D$ in \eqref{Framework3} is never quite 1/2. Usually $D$ will be much smaller than its upper bound, and then comparing \eqref{eta2}, \eqref{eta3}, we see that $\eta_2>\eta_3$.  There could be small regions of parameter space where $\eta_2 < \eta_3$, since the theorems were proved using slightly different assumptions, however we have not come across these regions in our examples. 

If instead we want to know how far $q$ is from $p$, given that $e(q)$ is close to $e(p)$, an upper bound on this distance is given by the following corollary.

\begin{corollary}\label{cor:emin}
Suppose the conditions of Theorem \ref{thm:emin} hold, and suppose $\epsilon=|e(q)-e(p)| < e_{\rm min}^*$, where $e_{\rm min}^*$ is given in  \eqref{estar}. Define
\begin{equation}\label{etamax}
    \eta_{\rm max}(\epsilon) = \frac{3}{4}\eta_1 + \sqrt{\Big(\frac{3}{4}\eta_1\Big)^2 + 3\frac{|\omega|}{\lambda}\epsilon + \frac{3}{2}\frac{\kappa}{\lambda}\epsilon^2} \,.
\end{equation}
Then either $|q-p| < \eta_{\rm max}(\epsilon)$, or $|q-p| > \eta_3$, where $\eta_3$ is given in \eqref{eta3}. 
\end{corollary}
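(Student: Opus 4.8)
\textbf{Proof plan for Corollary~\ref{cor:emin}.}
The plan is to obtain the corollary purely by \emph{inverting} the monotone lower bound of Theorem~\ref{thm:emin}, with no new analytic input. The first step is to record that, on $\eta\in(\tfrac32\eta_1,\eta_3]$, the function $\eta\mapsto e_{\rm min}(\eta)$ defined in \eqref{emin} is continuous and strictly increasing, with $e_{\rm min}(\tfrac32\eta_1)=0$. All of this reduces to the elementary fact that the quantity $\eta^2\big(1-\tfrac32\tfrac{\eta_1}{\eta}\big)=\eta(\eta-\tfrac32\eta_1)$ is nonnegative and strictly increasing for $\eta>\tfrac32\eta_1$ (its derivative is $2\eta-\tfrac32\eta_1>0$): in the branch $\kappa>0$, $e_{\rm min}$ is this quantity fed through an affine map, a square root, and another affine shift, all increasing on the relevant range; in the branch $\kappa=0$ it is an increasing affine function of it. Since Theorem~\ref{thm:emin} gives $\eta_3>\tfrac32\eta_1$, we get $e_{\rm min}^*=e_{\rm min}(\eta_3)>0$, so there is genuine room for the hypothesis $\epsilon<e_{\rm min}^*$.

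Next I would solve $e_{\rm min}(\eta^*)=\epsilon$ for $\eta^*$. In the case $\kappa>0$, moving $|\omega|/\kappa$ to the left and squaring turns this into the quadratic $\eta^{*2}-\tfrac32\eta_1\eta^*-\big(\tfrac{3|\omega|}{\lambda}\epsilon+\tfrac{3\kappa}{2\lambda}\epsilon^2\big)=0$, whose positive root is exactly $\eta_{\rm max}(\epsilon)$ of \eqref{etamax}; the case $\kappa=0$ yields the same quadratic with the $\epsilon^2$ term dropped, again matching \eqref{etamax}. Because $e_{\rm min}$ is continuous and strictly increasing on $[\tfrac32\eta_1,\eta_3]$ with $e_{\rm min}(\tfrac32\eta_1)=0\le\epsilon<e_{\rm min}^*=e_{\rm min}(\eta_3)$, this $\eta^*=\eta_{\rm max}(\epsilon)$ is the unique solution and satisfies $\tfrac32\eta_1\le\eta_{\rm max}(\epsilon)<\eta_3$.

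Finally I would assemble the dichotomy. Write $\eta=|q-p|$. If $\eta>\eta_3$ we are in the second alternative. If $\eta\le\tfrac32\eta_1$, then $\eta\le\tfrac32\eta_1\le\eta_{\rm max}(\epsilon)$ and we are in the first. In the remaining range $\tfrac32\eta_1<\eta\le\eta_3$, Theorem~\ref{thm:emin} gives $\epsilon=|e(q)-e(p)|\ge e_{\rm min}(\eta)$; if $\eta>\eta_{\rm max}(\epsilon)=\eta^*$ then strict monotonicity would force $e_{\rm min}(\eta)>e_{\rm min}(\eta^*)=\epsilon$, a contradiction, so $\eta\le\eta_{\rm max}(\epsilon)$. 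Hence in every case $|q-p|\le\eta_{\rm max}(\epsilon)$ or $|q-p|>\eta_3$, which is the claim. The main thing to be careful about is exactly this gluing at the threshold $\tfrac32\eta_1$, where Theorem~\ref{thm:emin} is silent: one must verify $\eta_{\rm max}(\epsilon)\ge\tfrac32\eta_1$ (immediate from \eqref{etamax}) so that the two regimes join without a gap. There is no deeper obstacle; the only loose end is the strict versus non-strict inequality at $\eta=\eta_{\rm max}(\epsilon)$, which either follows from a strict form of the bound in Theorem~\ref{thm:emin} for $\eta<\eta_3$, or is immaterial since the two versions of the excluded set differ by a single point.
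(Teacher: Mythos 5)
Your proposal is correct and follows essentially the same route as the paper: invert the monotone bound $e_{\rm min}(\eta)$ from Theorem~\ref{thm:emin} to identify $\eta_{\rm max}(\epsilon)$ as the unique solution of $e_{\rm min}(\eta)=\epsilon$ on $(\tfrac{3}{2}\eta_1,\eta_3]$, then derive a contradiction for $|q-p|\in(\eta_{\rm max}(\epsilon),\eta_3]$. Your explicit quadratic inversion and the careful gluing at $\tfrac{3}{2}\eta_1$ simply fill in details the paper leaves to the reader, including the same strict-versus-nonstrict quibble at $\eta=\eta_{\rm max}(\epsilon)$, which the paper's own proof likewise does not resolve.
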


\begin{proof}
Since $\epsilon=|e(q)-e(p)| < e_{\rm min}(\eta_3)$, and $e_{\rm min}(\eta)$ is a continuous increasing function of $\eta$ such that $e_{\rm min}(\eta)=0$ at $\eta=(3/2)\eta_1$, there is some value of $\eta>(3/2)\eta_1$ such that $e_{\rm min}(\eta) = \epsilon$; one can verify that $\eta_{\rm max}(\epsilon)$ is this value, for both $\kappa>0$ and $\kappa=0$; furthermore $\eta_{\rm max}(\epsilon) > (3/2)\eta_1$. 
Now suppose that $|q-p| \in (\eta_{\rm max}(\epsilon),\eta_3]$. Then by Theorem \ref{thm:emin}, $|e(q)-e(p)| \geq e_{\rm min}(\epsilon)$, a contradiction. 
\end{proof}
This corollary describes an even more robust form of
almost-rigidity.
In this situation, we know that even if the edge lengths are allowed to
change a little, the configuration can still not change very much.

\medskip

If a framework is almost rigid, it is natural to wonder whether there is a nearby framework that is rigid. 
Our final theorem gives conditions when this is the case. 


\begin{mainthm}\label{thm:pss}
Given the setup in Theorem \ref{thm:framework}, and let
\begin{equation}\label{Dpss}
    D_{\rm pss} = \frac{\eta_1}{L} \left( \bar\mu^{1/2} + \frac{3}{2}\frac{\eta_1}{L} +   \frac{1}{2}\frac{|p|}{L}\right) \,.
\end{equation}
Suppose that
\begin{equation}\label{PssBound}
D_{\rm pss}  < \frac{1}{2}\,.
\end{equation}
Then there exists a framework $(p_{\rm pss},\mathcal E)\in \mathcal C^p$ with the same edge lengths as $p$, such that $|p_{\rm pss}-p|<\eta_1$ and $p_{\rm pss}$ is prestress stable.
\end{mainthm}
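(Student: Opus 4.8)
The plan is to realize $(p_{\rm pss},\mathcal E)$ as a strict local minimum of the energy $H$ restricted to $\mathcal C^{p}$, lying inside the $\eta_{1}$-ball, and then to read off prestress stability from the vanishing gradient and the positive-definite Hessian there. Although $p$ itself need not be a critical point of $H$ (since $\omega$ is only an \emph{almost}-stress), it will be a \emph{nearly} critical point, and the quantitative second-derivative test developed in Section~\ref{sec:energypropositions} is designed precisely to manufacture a genuine minimum near such a point.

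\textbf{Derivatives of $H$.} I would first record the derivatives. Writing $e(q)=(q_{ij}^{2})_{(i,j)\in\mathcal E}$ and $\tilde\omega(q)=\kappa\bigl(e(q)-e(p)\bigr)+\omega$, direct differentiation of $H(q)=\sum_{(i,j)\in\mathcal E}\half\kappa(q_{ij}^{2}-p_{ij}^{2})^{2}+\omega_{ij}q_{ij}^{2}$ gives
\begin{align}
\nabla H(q)&=2\,R(q)^{T}\tilde\omega(q),\\
\nabla^{2}H(q)&=4\kappa\,R(q)^{T}R(q)+2\,\Omega\bigl(\tilde\omega(q)\bigr),
\end{align}
and, since $H$ is a quartic polynomial, $\nabla^{3}H(q)$ is affine in $q$ with norm bounded on a neighbourhood of $p$ by a constant multiple of $z\kappa\bigl(|p|+|q-p|\bigr)$ --- the graph degree $z$ entering because each vertex block of $R^{T}R$ and of $\Omega$ collects at most $z$ edges. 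Evaluating at $q=p$, where $\tilde\omega(p)=\omega$, one gets $\nabla H(p)=2R(p)^{T}\omega$, hence $|C^{T}\nabla H(p)|\le 2|\omega^{T}R(p)|=\tfrac12\lambda\eta_{1}$, while $C^{T}\nabla^{2}H(p)C=2\bigl(C^{T}\Omega C+2\kappa\,C^{T}R(p)^{T}R(p)C\bigr)\succeq 2\lambda I$ by the defining property \eqref{kappa} of $\kappa$. So on $\mathcal C^{p}$ the function $H$ has at $p$ a small gradient, a uniformly positive-definite Hessian, and a controlled Hessian-Lipschitz constant; the length scale $L$ (via $L^{2}=\lambda/8z\kappa$) and the dimensionless $\bar\mu=1-\mu_{0}/\lambda$ (measuring how negative $\Omega$ can be on $\mathcal C$, hence how fast $\Omega(\tilde\omega(q))$ can drift as $e(q)$ moves off $e(p)$) are exactly the bookkeeping parameters for these bounds.

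\textbf{Producing $p_{\rm pss}$ and concluding prestress stability.} I would then feed these three facts into the general proposition of Section~\ref{sec:energypropositions}, applied to $v\mapsto H(p+Cv)$ on $\mathcal C$. That proposition yields, under a smallness hypothesis, a unique nearby critical point $p_{\rm pss}\in\mathcal C^{p}$ of $H$ at which the restricted Hessian is still positive definite, together with an explicit bound on $|p_{\rm pss}-p|$; the hypothesis --- gradient $\le\tfrac12\lambda\eta_{1}$, Hessian $\succeq 2\lambda$, third derivative $\lesssim\tfrac{\lambda}{8L^{2}}(|p|+\eta_{1})$ over the relevant region --- is precisely what the quantity $D_{\rm pss}$ of \eqref{Dpss} packages, and $D_{\rm pss}<\tfrac12$ is the condition under which it fires with $|p_{\rm pss}-p|<\eta_{1}$. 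Because $H$ depends only on edge lengths it is invariant under rigid motions, so $\nabla H(q)\perp\mathcal T(q)$ for every $q$; since $\mathcal C$ is still complementary to $\mathcal T(p_{\rm pss})$ for $p_{\rm pss}$ close enough to $p$, the equation $C^{T}\nabla H(p_{\rm pss})=0$ upgrades to $\nabla H(p_{\rm pss})=0$, i.e.\ $R(p_{\rm pss})^{T}\omega_{\rm pss}=0$ with $\omega_{\rm pss}:=\tilde\omega(p_{\rm pss})$, so $\omega_{\rm pss}$ is a genuine self-stress of $(p_{\rm pss},\mathcal E)$. On $\mathcal C\cap\mathrm{Null}\,R(p_{\rm pss})$ the term $4\kappa R(p_{\rm pss})^{T}R(p_{\rm pss})$ vanishes, so positive-definiteness of $C^{T}\nabla^{2}H(p_{\rm pss})C$ forces $\Omega(\omega_{\rm pss})\succ0$ there, which is exactly the prestress-stability condition \eqref{pss} for $p_{\rm pss}$; hence $(p_{\rm pss},\mathcal E)$ is prestress stable and therefore rigid. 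To arrange $e(p_{\rm pss})=e(p)$ exactly (rather than merely $|e(p_{\rm pss})-e(p)|$ small, which follows from $|p_{\rm pss}-p|<\eta_{1}$ and the local Lipschitz bound on $e$), I would invoke robustness of prestress stability under small edge-length perturbations: homotope the target squared-edge-length vector from $e(p_{\rm pss})$ to $e(p)$ and track the prestress-stable configuration, the total displacement being absorbed by the extra slack that $D_{\rm pss}<\tfrac12$ leaves inside the $\eta_{1}$-ball compared with the bare $D<\tfrac12$.

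\textbf{Main obstacle.} The delicate step is the constant-chasing in the previous paragraph: checking that the hypotheses of the general proposition, with the genuinely correct Lipschitz constant for $\nabla^{2}H$ --- which is where the new term $\tfrac12|p|/L$ in \eqref{Dpss} originates, since the third derivative of the quartic springs involves the ambient coordinates of $p$ --- translate exactly into $D_{\rm pss}<\tfrac12$ and into $|p_{\rm pss}-p|<\eta_{1}$ rather than into a weaker or differently-scaled radius. A secondary wrinkle is the edge-length-matching step, which needs a clean statement of the stability of prestress stability under edge-length perturbation and a budget computation ensuring the homotoped configuration stays in the $\eta_{1}$-ball.
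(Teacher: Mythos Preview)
Your overall strategy --- find a critical point of $H|_{\mathcal C^p}$ inside the $\eta_1$-ball and show its Hessian stays positive definite, then read off prestress stability --- matches the paper. But two aspects diverge from what the paper actually does.

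\textbf{Existence of $p_{\rm pss}$.} You invoke a ``general proposition of Section~\ref{sec:energypropositions}'' that delivers a unique nearby critical point. No such proposition is stated there; Propositions~\ref{prop:energy}--\ref{prop:energy3} only control the sign and size of $H(q)-H(p)$ on annuli. The paper instead reuses the topological surface $S\subset\bar B_{\eta_1}(p)$ built in the proof of Proposition~\ref{prop:energy}, on which $H>H(p)$, and obtains $p_{\rm pss}$ as the minimizer of $H$ on the compact set $\overline{\mathrm{int}(S)}$. The minimum cannot sit on $S$ (since $H(p)=H(p)$ undercuts the boundary values), so it lies in the open interior and is therefore a critical point. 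This is simpler than a contraction/uniqueness argument and does not require any extra hypothesis beyond what Theorem~\ref{thm:framework} already used.

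\textbf{Where $|p|/L$ really comes from.} You locate the new term in the Lipschitz constant for $\nabla^2 H$ via the pure third directional derivative. The paper is more specific, and this is the technical heart of the proof: to keep $\nabla^2 H$ positive definite at $p_{\rm pss}=p+tu$, one must bound $\frac{d}{dt}H''|_{p+tu}(v)$ for \emph{distinct} directions $u\neq v$. For $u=v$ the refined estimate \eqref{Rpvbound} applies and yields only the $\bar\mu^{1/2}$ term (as in Lemma~\ref{lem:Hbounds}); for $u\neq v$ one is stuck with the crude bound $\kappa|R(p)u|\le 2z^{1/2}\kappa|p|$, and that is precisely the origin of the extra $\tfrac12|p|/L$ in $D_{\rm pss}$. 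The paper carries out this mixed-direction estimate explicitly rather than citing any proposition from Section~\ref{sec:energypropositions}; once it is in hand, the condition $D_{\rm pss}<\tfrac12$ is exactly the statement that the Hessian cannot drop to zero anywhere in $\bar B_{\eta_1}(p)$, and Connelly--Whiteley's definition of prestress stability (a strict local minimum of a quadratic edge-energy) is invoked directly.

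\textbf{Edge-length matching.} Your homotopy step is not in the paper: the proof simply takes $p_{\rm pss}$ to be the energy minimizer and stops after establishing $\nabla H(p_{\rm pss})=0$ and $\nabla^2 H(p_{\rm pss})\succ 0$ on $\mathcal C$. (In particular, the clause ``same edge lengths as $p$'' in the theorem statement is not actually argued in the paper's proof, so you should not spend effort constructing a separate homotopy to enforce it.)
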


Condition \eqref{PssBound} is similar to condition \eqref{Framework1} required to show the existence of $\eta_1$, but it is stronger. 
The main difference is the third term in $D_{\rm pss}$, proportional to $|p|$. One can minimize this term by translating $p$ so its center of mass lies at the origin, however, this term could still pose a problem for large frameworks. It will require $\eta_1$ to be small, typically much smaller than needed in Theorem \ref{thm:framework}, in order to show the existence of a nearby prestress stable framework. 

We end with two corollaries that apply the theorems to specific cases. Our first corollary handles first-order rigid clusters. 

\begin{corollary}\label{cor:firstorder}
Suppose that $(p,\mathcal E)$ is first-order rigid. Let $\sigma_0$ be the smallest nonzero singular value of 
$R(p)C$, where $C$ is a matrix whose columns form an orthonormal basis of $\mathcal C$. 
Then any other configuration $q\in\mathcal C^p$ with the same edge lengths as $p$ must remain at least a distance of 
\begin{equation}\label{eta2firstorder}
\eta_2 = \frac{\sigma_0}{2z^{1/2}}\frac{\sqrt{5}-1}{2} \;\;\approx\;\; 0.618\frac{\sigma_0}{2z^{1/2}} 
\end{equation}
from $p$. Furthermore, to deform the framework continuously from $p$ to any other configuration $q\in\mathcal C^p$ with the same edge 
lengths as $p$, there must be some point $q$ on the deformation path at which the vector of squared edge lengths, $e(q)$, has changed by at least
\begin{equation}\label{lengthsfirstorder}
|e(q)-e(p)|\;\;\geq\;\;  e_{\rm min}^* = \sigma_0^2 \frac{((11/3)^{1/2} - 1)}{2\sqrt{3}z^{1/2}}\;\;\approx\;\; 0.528\frac{\sigma_0^2}{2z^{1/2}} \,.
\end{equation}
\end{corollary}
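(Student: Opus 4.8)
The plan is to specialize the general setup in Section \ref{sec:setup} to the case where $(p,\mathcal E)$ is first-order rigid, and then simply feed the resulting quantities into Theorems \ref{thm:outerbound} and \ref{thm:emin}. The key observation is that first-order rigidity means $\mathcal V^0 = \operatorname{Null}(R(p)) \cap \mathcal C = \{0\}$, so we may take the almost-flex space $\mathcal V = \{0\}$ as well. With $\mathcal V = \{0\}$ there is no positivity constraint \eqref{OmegaV} to satisfy, so we are free to choose the stress $\omega = 0$. This forces $\Omega(\omega) = 0$, hence $\lambda_0$, $\mu_0$ are both $0$ (or, more carefully, the constraint $V^T\Omega V \succ 0$ on the empty space is vacuous, and we should instead track $\mu_0 = 0$ directly), and crucially $\omega^T R(p) = 0$, so that $\eta_1 = 4|\omega^T R(p)|/\lambda = 0$.

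Next I would work out $\kappa$ and the derived constants. With $\Omega = 0$, the minimization problem \eqref{kappa} reads: find the smallest $\kappa$ with $2\kappa\, C^T R(p)^T R(p) C \succeq \lambda I$. Since $C^T R(p)^T R(p) C$ has smallest eigenvalue $\sigma_0^2$ (the square of the smallest nonzero singular value of $R(p)C$ — nonzero precisely because of first-order rigidity), this gives $\kappa = \lambda/(2\sigma_0^2)$. Then from \eqref{Lmu}, $L = (\lambda/(8z\kappa))^{1/2} = (\sigma_0^2/(4z))^{1/2} = \sigma_0/(2z^{1/2})$, and $\bar\mu = 1 - \mu_0/\lambda = 1$. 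Since $\eta_1 = 0$ we get $D = 0$ from \eqref{D}, so conditions \eqref{Framework1} and \eqref{Framework3} hold trivially. Plugging $\bar\mu = 1$, $\eta_1 = 0$ into \eqref{eta2}: the first-case threshold for $D$ at $\bar\mu=1$ is $1/4 + \tfrac18(\sqrt5 - 1) > 0 = D$, so we are in the first case and $\eta_2 = L\,\frac{\sqrt5 - 1}{2} = \frac{\sigma_0}{2z^{1/2}}\cdot\frac{\sqrt5-1}{2}$, which is \eqref{eta2firstorder}. Similarly $\eta_3 = \frac{L}{2}((1 + 8/3)^{1/2} - 1) = \frac{L}{2}((11/3)^{1/2}-1)$ from \eqref{eta3} with $\bar\mu=1$.

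For the edge-length bound, I apply the $\kappa > 0$ branch of Theorem \ref{thm:emin} with $|\omega| = 0$ and $\eta_1 = 0$: the formula \eqref{emin} collapses to $e_{\rm min}(\eta) = \sqrt{\tfrac{2}{3}\tfrac{\lambda}{\kappa}\eta^2} = \eta\sqrt{\tfrac{2}{3}\cdot 2\sigma_0^2}$ using $\lambda/\kappa = 2\sigma_0^2$, i.e. $e_{\rm min}(\eta) = \tfrac{2}{\sqrt3}\,\sigma_0\,\eta$. Evaluating at $\eta = \eta_3 = \frac{\sigma_0}{2z^{1/2}}\cdot\frac{(11/3)^{1/2}-1}{2}$ gives $e_{\rm min}^* = \tfrac{2}{\sqrt3}\sigma_0\cdot\frac{\sigma_0}{2z^{1/2}}\cdot\frac{(11/3)^{1/2}-1}{2} = \frac{\sigma_0^2((11/3)^{1/2}-1)}{2\sqrt3\,z^{1/2}}$, which is \eqref{lengthsfirstorder}. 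Finally, Theorem \ref{thm:outerbound} gives: any $q$ with the same edge lengths and $|q-p| > \eta_1 = 0$ satisfies $|q-p| \geq \eta_2$; since any $q \neq p$ with the same edge lengths has $|q-p| > 0$, this yields the first claim. Theorem \ref{thm:emin} gives the second claim, since a continuous edge-preserving deformation to any $q \neq p$ has $|q-p| > 0 = (3/2)\eta_1$.

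I do not anticipate a serious obstacle here; this is a direct substitution. The one point requiring a little care is the degenerate choice $\mathcal V = \{0\}$, $\omega = 0$: I should check that the machinery of Section \ref{sec:energyfunction} — in particular the definitions of $\lambda_0$ and the requirement $\lambda \in (0,\lambda_0)$ — still makes sense, or rephrase so that we track $\mu_0 = 0$ and pick any $\lambda > 0$ (the final constants are independent of $\lambda$ since $\kappa \propto \lambda$, $L$ and $\bar\mu$ are $\lambda$-independent, and $e_{\rm min}$ depends only on $\lambda/\kappa$). If the formal setup insists on $\mathcal V \neq \{0\}$, an alternative is to take $\mathcal V = \mathcal V^\sigma$ for a tiny $\sigma$ and let $\sigma \to 0$, but the cleaner route is to note that the proofs in Section \ref{sec:theoremproofs} only ever use the existence of a stress with $V^T\Omega V \succeq \lambda_0$ on $\mathcal V$ and that this is vacuous — hence harmless — when $\mathcal V = \{0\}$.
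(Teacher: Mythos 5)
Your proposal is correct and follows essentially the same route as the paper: set $\omega=0$ (so $\eta_1=0$, $D=0$, $\bar\mu=1$), solve \eqref{kappa} to get $\lambda/\kappa=2\sigma_0^2$ and hence $L=\sigma_0/(2z^{1/2})$, and substitute into Theorems \ref{thm:outerbound} and \ref{thm:emin}. The only cosmetic difference is that the paper fixes the normalization $\kappa=1$, $\lambda=2\sigma_0^2$ outright, whereas you keep $\lambda$ free and observe the final constants are $\lambda$-independent; both yield identical conclusions.
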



It is possible that $R(p)C$ has zero singular values but no null space, for example if the framework is overconstrained. This is why we require $\sigma_0$ to be the smallest nonzero singular value, and not just the smallest singular value. 

\begin{proof}
Choose 
\begin{equation}\label{firtorderconstants}
    \omega= 0\,,\qquad 
    \kappa=1\,,\qquad \lambda = 2\sigma_0^2\,.
\end{equation}
 Then \eqref{kappa} holds with these choices. 
 Since $\eta_1 = 0$,  \eqref{Framework1} holds. 
Therefore we may apply Theorem \ref{thm:outerbound}, calculating $\eta_2$ using the first case in \eqref{eta2}. 
We have, by \eqref{Lmu} and because $\mu_0 = 0$,
\begin{equation}
    L = \frac{\sigma_0}{2z^{1/2}}\,,\qquad \bar \mu = 1\,,
\end{equation}
so we obtain \eqref{eta2firstorder} for the outer radius. 

For second statement, notice that \eqref{Framework3} holds, and therefore we may apply Theorem \ref{thm:emin} to say that if  $|q-p|=\eta_3$, where 
\[
\eta_3 = \frac{\sigma_0}{2z^{1/2}}\frac{((11/3)^{1/2} - 1)}{2}\;\;\approx\;\; 0.457\frac{\sigma_0}{2z^{1/2}}\,,
\]
then 
\[
|e(q)-e(p)| \geq e_{\rm min}(\eta_3) = \frac{2}{\sqrt{3}}\sigma_0\eta_3   \,.
\]
Since  $\eta_3 < \eta_2$, any continuous path from $p$ to some configuration $q\in\mathcal C$ such that $|q-p| \geq \eta_2$, must have some point $q'$ on the path such that $|q'-p| = \eta_3$, at which point $|e(q')-e(p)| \geq \frac{2}{\sqrt{3}}\sigma_0\eta_3$, the lower bound given in \eqref{lengthsfirstorder}. 
\end{proof}

Our next corollary gives the asymptotic behaviour of $\eta_{\rm max}(\epsilon)$ for small $\epsilon$. The asymptotic relation $f(\epsilon) \sim g(\epsilon) + O(\epsilon^m)$ as $\epsilon \to 0$ means $\limsup_{\epsilon\to 0}\frac{|f(\epsilon) - g(\epsilon)|}{\epsilon^m} < \infty$.

\begin{corollary}
Suppose that $p$ satisfies the conditions of Theorem \ref{thm:emin}, and that $q\in\mathcal C^p$ is such that
$\epsilon = |e(q)-e(p)| < e_{\rm min}^*$, where $e_{\rm min}^*$ is defined in \eqref{estar}. 
If $|q-p| < \eta_3$, where $\eta_3$ is defined in \eqref{eta3}, then $|q-p| < \eta_{\rm max}(\epsilon)$, where $\eta_{\rm max}(\epsilon)$ has the following behaviour as $\epsilon \to 0$: 
\begin{itemize}
    \item If $p$ is first-order rigid, then  
    \[
    \eta_{\rm max}(\epsilon) = \frac{\sqrt{3}}{2\sigma_0}\epsilon\,,
    \]
    where $\sigma_0$ is the smallest nonzero singular value of $R(p)C$;
    \item If $p$ is prestress stable but not first-order rigid, then 
    \[
    \eta_{\rm max}(\epsilon) =\sqrt{\frac{3|\omega|}{\lambda}}\epsilon^{1/2}\left(1 + \frac{\kappa}{2|\omega|}\epsilon\right)^{1/2} \sim \sqrt{\frac{3|\omega|}{\lambda}}\epsilon^{1/2}  + O(\epsilon^{3/2}) \qquad \text{as }\quad  \epsilon \to 0\,.
    \]
\end{itemize}
\end{corollary}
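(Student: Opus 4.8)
The plan is to read $\eta_{\rm max}(\epsilon)$ off the explicit formula \eqref{etamax} in each of the two cases, using the fact that $\eta_1$ vanishes in both, and then extract the leading-order behaviour as $\epsilon\to 0$.

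First I would record that $\eta_1=0$ in both cases. For a first-order rigid framework this is by construction: as in Corollary \ref{cor:firstorder} we take $\omega=0$, $\kappa=1$, $\lambda=2\sigma_0^2$ with $\sigma_0$ the smallest nonzero singular value of $R(p)C$, so that $\eta_1=4|\omega^TR(p)|/\lambda=0$. For a prestress stable framework that is not first-order rigid, we run the setup of Section \ref{sec:setup} with $\mathcal V=\mathcal V^0$ and $\omega$ a genuine self-stress witnessing \eqref{pss0}; then $\omega^TR(p)=0$, so again $\eta_1=0$. In particular $D=0$, so the hypotheses of Theorem \ref{thm:emin} and of Corollary \ref{cor:emin} hold; and under the added assumption $|q-p|<\eta_3$ the dichotomy of Corollary \ref{cor:emin} collapses to the single conclusion $|q-p|<\eta_{\rm max}(\epsilon)$. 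Thus only the computation of $\eta_{\rm max}(\epsilon)$ remains.

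Substituting $\eta_1=0$ into \eqref{etamax} gives $\eta_{\rm max}(\epsilon)=\sqrt{3\tfrac{|\omega|}{\lambda}\epsilon+\tfrac{3}{2}\tfrac{\kappa}{\lambda}\epsilon^2}$. In the first-order rigid case $\omega=0$ kills the linear term, and plugging in $\kappa=1$, $\lambda=2\sigma_0^2$ yields $\eta_{\rm max}(\epsilon)=\sqrt{3\epsilon^2/(4\sigma_0^2)}=\tfrac{\sqrt 3}{2\sigma_0}\epsilon$, an exact identity as stated. In the prestress stable, non-first-order-rigid case I would first verify $|\omega|>0$ and $\kappa>0$: a zero stress cannot make $\Omega$ positive definite on the nontrivial flex space $\mathcal V^0$, so $|\omega|>0$; and since $\omega$ is a self-stress, $\Omega(\omega)\bar p=0$ where $\bar p$ is the projection of $p$ onto $\mathcal C$ (using $\Omega(\omega)p=(\omega^TR(p))^T=0$ and $\Omega(\omega)t=0$ for trivial $t$), while $R(p)\bar p=R(p)p\neq 0$ because the scaling direction changes every edge length. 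Hence the kernel of $C^T\Omega C$ meets $\mathcal C$ along a direction on which $C^TR(p)^TR(p)C$ is strictly positive, which forces the optimal $\kappa$ in \eqref{kappa} to be strictly positive. With $|\omega|,\kappa>0$, factoring out $\sqrt{3|\omega|\epsilon/\lambda}$ gives $\eta_{\rm max}(\epsilon)=\sqrt{3|\omega|/\lambda}\,\epsilon^{1/2}\bigl(1+\tfrac{\kappa}{2|\omega|}\epsilon\bigr)^{1/2}$, and the asymptotic statement follows from $(1+x)^{1/2}=1+O(x)$ as $x\to 0$, which contributes an $O(\epsilon^{3/2})$ error after multiplication by $\epsilon^{1/2}$.

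There is no genuine obstacle here: the corollary is essentially a substitution into \eqref{etamax} once one knows $\eta_1=0$. The only step requiring more than algebra is the claim $\kappa>0$ in the prestress stable case, which I would justify via the identity $\Omega(\omega)p=(\omega^TR(p))^T=0$ together with the observation that the scaling direction is not an infinitesimal flex (and that $\bar p\neq 0$ for any non-degenerate framework).
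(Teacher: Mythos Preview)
Your proof is correct and follows the same route as the paper: apply Corollary~\ref{cor:emin} to get $|q-p|<\eta_{\rm max}(\epsilon)$, then substitute $\eta_1=0$ (and in the first-order case also $\omega=0$, $\kappa=1$, $\lambda=2\sigma_0^2$ from Corollary~\ref{cor:firstorder}) into \eqref{etamax}. Your additional justification that $\kappa>0$ in the prestress-stable case is more than the paper provides, and in fact is not needed for the stated formula or its asymptotic expansion to hold.
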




This corollary is useful when solving for a rigid framework 
$(p,\mathcal E)$ on a computer, given the desired edge lengths. 
One does this by numerically solving \eqref{fij} for a given a set of edge lengths $\{d_{ij}\}_{(i,j)\in\mathcal E}$. 
Since \eqref{fij} is nonlinear, we cannot typically find the exact solution $p_{\rm true}$, assuming this exists, but rather we find an approximate solution $p$ that satisfies \eqref{approxedgeeqn}, where  $\delta$ is a desired numerical tolerance. 
The edge lengths can be made very close to the desired edge lengths by controlling $\delta$, but how close can we expect $p$ to be to the true solution $p_{\rm true}$, as a function of $\delta$? 

The difference in squared edge lengths between the approximate and true solutions satisfies $|e(p)-e(p_{\rm true})| < m\delta \sim  O(\delta)$, where $m$ is the number of edges. Therefore, if  $p_{\rm true}$ 
is first-order rigid, and we believe we are close enough to it, then $|p-p_{\rm true}| \sim O(\delta)$. However, if $p_{\rm true}$
is only prestress stable, but not first-order rigid, then we expect 
$|p-p_{\rm true}| \sim O(\delta^{1/2})$ in general. If $\delta \ll 1$, the error is much bigger for a framework that is not first-order rigid. For example, if $\delta = 10^{-8}$, then we may expect about 8 digits of precision for the configuration of a first-order rigid framework, but only 4 digits of precision for that of a prestress stable framework. 

\begin{proof}
Straightforward application of Corollary \ref{cor:emin} gives that  $|q-p| < \eta_{\rm max}(\epsilon)$. Consider the specific form of $\eta_{\rm max}(\epsilon)$ for each of the cases. 
If $p$ is first-order rigid, then $\omega=0$, $\kappa=1$, $\lambda=2\sigma_0^2$, $\eta_1=0$, as in the proof of Corollary \ref{cor:firstorder}. Directly evaluating $\eta_{\rm max}(\epsilon)$ leads to the given result. 

If $p$ is prestress stable but not first-order rigid, then $\eta_1 = 0$ but $\omega \neq 0$. Again we may directly evaluate $\eta_{\rm max}(\epsilon)$ under this condition. 
%
\end{proof}

\section{Examples}\label{sec:examples}

Now we give some examples to show how the theorems in Section \ref{sec:mainresults} may be applied. 

In all of these examples we chose $\mathcal C = \mathcal T^\perp$, unless otherwise stated.  The choice of $\mathcal V, \mathcal W$ depends on the example, but we generally construct them using singular vector subspaces as in \eqref{VWsig}. With these choices, we find $\omega$ using the algorithm given in \eqref{pssalg}. 
This sets $\lambda_0, \mu_0$; we then must choose $\lambda \in (0,\lambda_0)$. While we could optimize over $\lambda$ to choose the value that maximizes the lengthscale $L$ for each example (see \eqref{Lmu}), we instead fix its value, and address the sensitivity of our results to $\lambda$ in Section \ref{sec:LvsLam}. 
Finally, we solve \eqref{kappa} for $\kappa$. 

\subsection{Revisiting the introductory examples}

We now apply our theory to the examples from Section \ref{sec:introexamples}. 

\subsubsection{Example 2}
Consider the underconstrained cluster of $N=10$ unit spheres from Section \ref{sec:example2}. We found that $V^T\Omega V \geq 0.33=\lambda_0$, i.e. the almost-stress was positive definite on the almost-flex space. To apply our theory we chose $\lambda = \lambda_0 / 2$ and solved \eqref{kappa} to find $\kappa = 3.26$, giving a lengthscale of $L = 0.0265$. The minimum eigenvalue of the stress matrix on the whole space is $\mu_0 = \lambda_{\rm min}(\Omega) = -2.25$, so $\bar \mu = 14.7$. 

From these numbers we can compute the various radii. The inner radius is $\eta_1 = 1.6\times 10^{-7}$. The left-hand side of \eqref{Framework1} is $D=2.3\times 10^{-7} < 0.5$, so Theorem \ref{thm:framework} applies.  Therefore, as the numerically-computed framework $p$ flexes continuously along some path $p(t)\in\mathcal C^p$, it must remain within a distance $\eta_1$ from $p$: 
\[
|p(t) - p| \leq 1.6\times 10^{-7}\,.
\]
This is an extremely small radius of containment, much smaller than either of the lengthscales $|p| = 2.97$, or the edges, which are all unit length. Even though $p$ is not locally rigid, it is still rigid in a broader sense, in that it is confined to a very small region in configuration space. 

Theorem \ref{thm:outerbound} also applies, and gives an outer radius $\eta_2 = 0.0065$. This means that any other framework $q\in\mathcal C$ with the same edge lengths as $p$, that is not within a distance of $\eta_1$ of $p$, must be at least this distance from $p$, 
\[
|q-p| \geq 0.0065\,.
\]
This is significantly further than  the distance $\eta_1$ that $p$ can naturally flex. 

We found $\eta_3 = 0.0044159$, and $e_{\rm min}^* = 1.07\times 10^{-6}$. This means that to continuously deform the framework from $p$ to a configuration that is a distance of at least $\eta_3$ from $p$, such as a configuration with the same edge lengths, there must be some point $q$ on the path at which the squared edge lengths deform by at least $\Big(\sum_{(i,j)\in \mathcal E} (q_{ij}^2 - p_{ij}^2)^2\Big)^{1/2} \geq 1.07\times 10^{-6}$. Writing $q_{ij}^2 - p_{ij}^2 = (|q_{ij}| - |p_{ij}|)(|q_{ij}| + |p_{ij}|)\approx 2(|q_{ij}| - |p_{ij}|)\approx 2\Delta l$, where $\Delta l$ is a typical change in edge length, shows that each edge must change on average by at least 
\[
\Delta l \approx \frac{e_{\rm min}^*}{\sqrt{4\cdot23}} \approx 10^{-7}\,.
\]
This barrier is not particularly large compared to the actual edge lengths; our theory gives a minimum bound for the change in edge lengths, but does not say how close this bound is to the true barrier. 

What do these calculations tell us about the actual framework we are interested in? We calculated $D_{\rm pss}=0.00036 < 0.5$ in \eqref{PssBound}, so Theorem \ref{thm:pss} applies and says there is a prestress stable framework $p_{\rm pss}$ that is very close to $p$: $|p_{\rm pss} - p| \leq 1.6\times 10^{-7}$. Of course, we don't know that $p_{\rm pss}$ has the desired edge lengths, although we do know they will all be very close to unit length. The existence of a nearby prestress stable framework follows without making any assumptions on whether the small singular values of $R(p)$ are perturbations of a zero singular value or not.

\medskip

What happens when we solve for the cluster with a less stringent edge length tolerance? 
We perturbed our starting cluster by a random amount, and then solved \eqref{approxedgeeqn} using different values of $\delta$. For each perturbation and each $\delta$, we computed $\eta_1,\eta_2,e_{\rm min}^*$. Figure \ref{fig:hypoperturbations} shows the average of each quantity at each fixed $\delta$ over 20 random perturbations, as well as the estimated error bars. The inner radius is a straight line with slope $1/2$ on a log-log plot; a best-fit line to the data gives $\eta_1 = e^{\delta / 2 + 2}$. The outer radius $\eta_2$ and edge length barrier $e_{\rm min}^*$ are nearly constant with $\delta$, until $\delta \gtrapprox 10^{-8}$, after which they decrease rapidly (data points where they drop below zero are not shown.) 

We also calculated  $D$, 
$D_{\rm pss}$ in \eqref{Framework1}, 
\eqref{PssBound}, also shown in Figure \ref{fig:hypoperturbations}. Since $D$ crosses $0.5$ at about $\delta=10^{-3}$, the various radii are only meaningful up to this point. 
The constant $D_{\rm pss}$ required to show there is a nearby prestress stable framework crosses two orders of magnitude earlier, at about $\delta=10^{-9}$. 
These numbers give a sense of how accurately one must solve \eqref{approxedgeeqn}, in order for the theorems to apply. 

\subsubsection{Example 1} 

We also applied the theory to the example in Section \ref{sec:example1},  Figure \ref{fig:rigidity} (B), and obtained radii of a similar order of magnitude (See Table \ref{tbl:intro}.) 
When we perturbed the framework, by moving vertices 5,6 vertically by $\pm h$ respectively, we obtained a small enough $D$ for the theorems to hold when the vertical perturbation was around $h=5\times 10^{-4}$ or less. This is a limitation of our theory, since geometrical arguments would show that the framework is constrained to a small ball under much larger perturbations.  

For this example, we can calculate the maximum distance the framework can flex analytically, assuming that vertex 1 is pinned to the origin, and the horizontal coordinate of vertex 4 is pinned. (This effectively pins vertices 1,2,3,4.) 
For $h=10^{-4}$, we calculated $\eta_1 = 1.4\times 10^{-3}$ for the pinned framework, and the maximum distance it can actually flex when pinned is $10^{-4}$. Note that $\eta_1$ is larger than this distance, as it must be, but not too much larger. 


\begin{figure}
\centering
\includegraphics[width=0.4\linewidth]{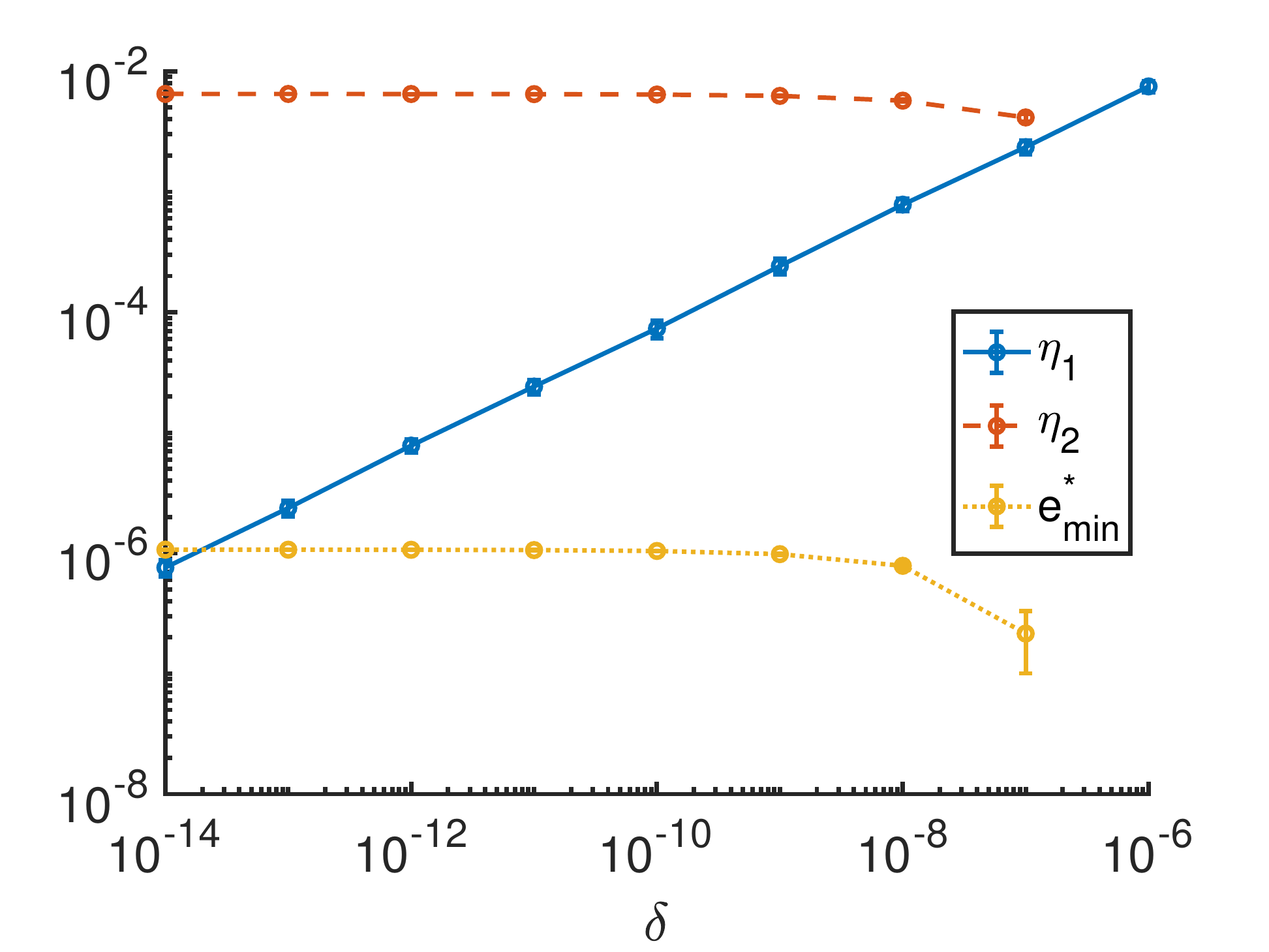}\quad
\includegraphics[width=0.4\linewidth]{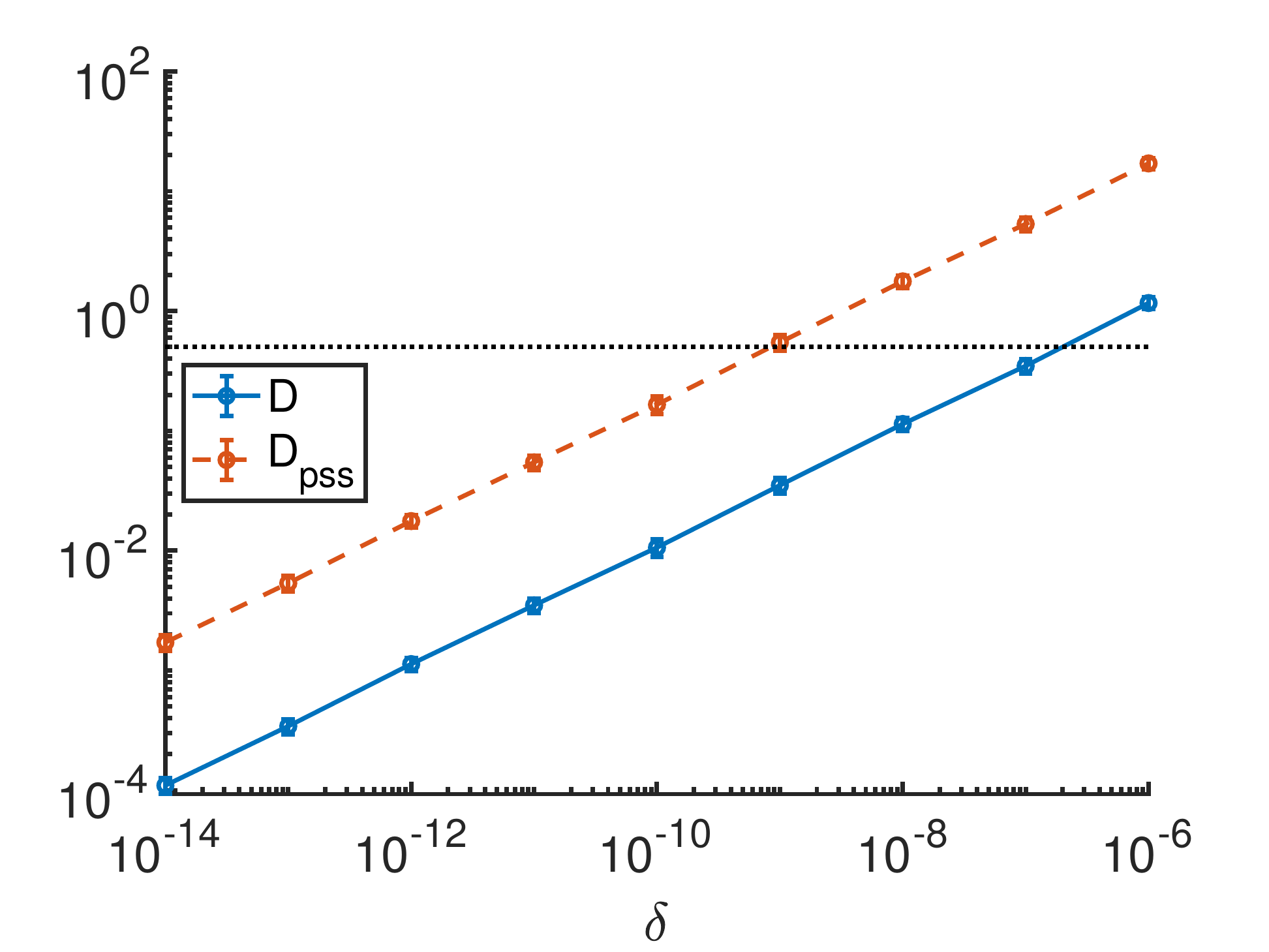}
\caption{
Characteristic lengthscales and theorem constants for the cluster of $N=10$ spheres shown in Figure \ref{fig:n10hypo} and discussed in Section \ref{sec:introexamples}. The cluster was randomly perturbed and then \eqref{approxedgeeqn} was solved with varying tolerance parameters $\delta$ for the squared edge lengths. 
The graphs show the average of each quantity  and estimated error bars (one standard deviation) over 20 independent perturbations at each value of $\delta$. 
}\label{fig:hypoperturbations}
\end{figure}

\begin{table}
    \centering
    \begin{tabular}{c p{7.5cm} p{4cm}}
    image   &  comments & theorem lengthscales \\\hline\\
\raisebox{2ex - \height}{  
\begin{tikzpicture}[thick,scale=1.25]
\newcommand\ys{0}
\node[vertex] (1b) at (0,0-\ys) {};
\node[vertex] (2b) at (1,0-\ys) {};
\node[vertex] (3b) at (0.5,1-\ys) {};
\node[vertex] (4b) at (0.5,0.5-\ys) {};
\node[vertex] (5b) at (1/3,0.0-\ys) {};
\node[vertex] (6b) at (2/3,-0.0-\ys) {};
\draw[ultra thick] (1b) -- (3b);
\draw[ultra thick] (2b) -- (3b);
\draw[ultra thick] (1b) -- (4b);
\draw[ultra thick] (2b) -- (4b);
\draw[ultra thick] (3b) -- (4b);
\draw[ultra thick] (1b) -- (5b);
\draw[ultra thick] (2b) -- (6b);
\draw[ultra thick] (5b) -- (6b);
\end{tikzpicture}
}
& 
(a) 6 vertices, 8 edges ($<$ isostatic)\newline
(see Figure \ref{fig:rigidity} (B))
& 
$\eta_1=0$\newline 
$\eta_2 = 0.014$,  $\eta_3 = 0.0093$ \newline
$e_{\rm min}^* = 5.7\times 10^{-6}$\newline
\\
\raisebox{2ex - \height}{\includegraphics[trim={4cm 0 4cm 7cm},clip,width=2.2cm]{cluster_n10_framework} }
& 
(b) 10 vertices, 23 bonds ($<$ isostatic), unit edges \newline
solved on computer to tolerance $\delta = 9\times 10^{-16}$ \newline
1 flex, 1 almost-flex, 1 almost-stress\newline
(see Figure \ref{fig:n10hypo})
& 
$\eta_1=1.6\times 10^{-7}$\newline
$\eta_2 = 0.0065$,  $\eta_3 = 0.0044$ \newline
$e_{\rm min}^* = 1.1\times 10^{-6}$\newline
\\\hline
    \end{tabular}
    \caption{Lengthscales from Theorems \ref{thm:framework}-\ref{thm:emin} calculated for our introductory examples (Section \ref{sec:introexamples}), using $\lambda = \lambda_0/2$. }
    \label{tbl:intro}
\end{table}

\subsection{A collection of examples} 

\begin{table}
    \centering
    \begin{tabular}{c p{7.5cm} p{4cm}}
    image   &  comments & theorem lengthscales \\\hline\\
    \raisebox{2ex - \height}{ 
\begin{tikzpicture}[thick,scale=1]
\node[vertex] (1) at (0,0) {};
\node[vertex] (2) at (1,0) {};
\node[vertex] (3) at (1,1) {};
\node[vertex] (4) at (0,1) {};
\draw (1) -- (2);
\draw (2) -- (3);
\draw (3) -- (4);
\draw (4) -- (1);
\draw (1) -- (3);
\end{tikzpicture}   }
& 
(c) square, first-order rigid, unit height \newline
distance to other embeddings =\newline \phantom{1.222} 1.22 (fold), 2 (reflect)
& 
$\eta_1=0$\newline
$\eta_2 = 0.16$, $\eta_3 = 0.12$ \newline
$e_{\rm min}^* = 0.12$\newline
\\
\raisebox{2ex - \height}{\includegraphics[trim={10cm 0 0 10cm},clip, width=1.5cm]{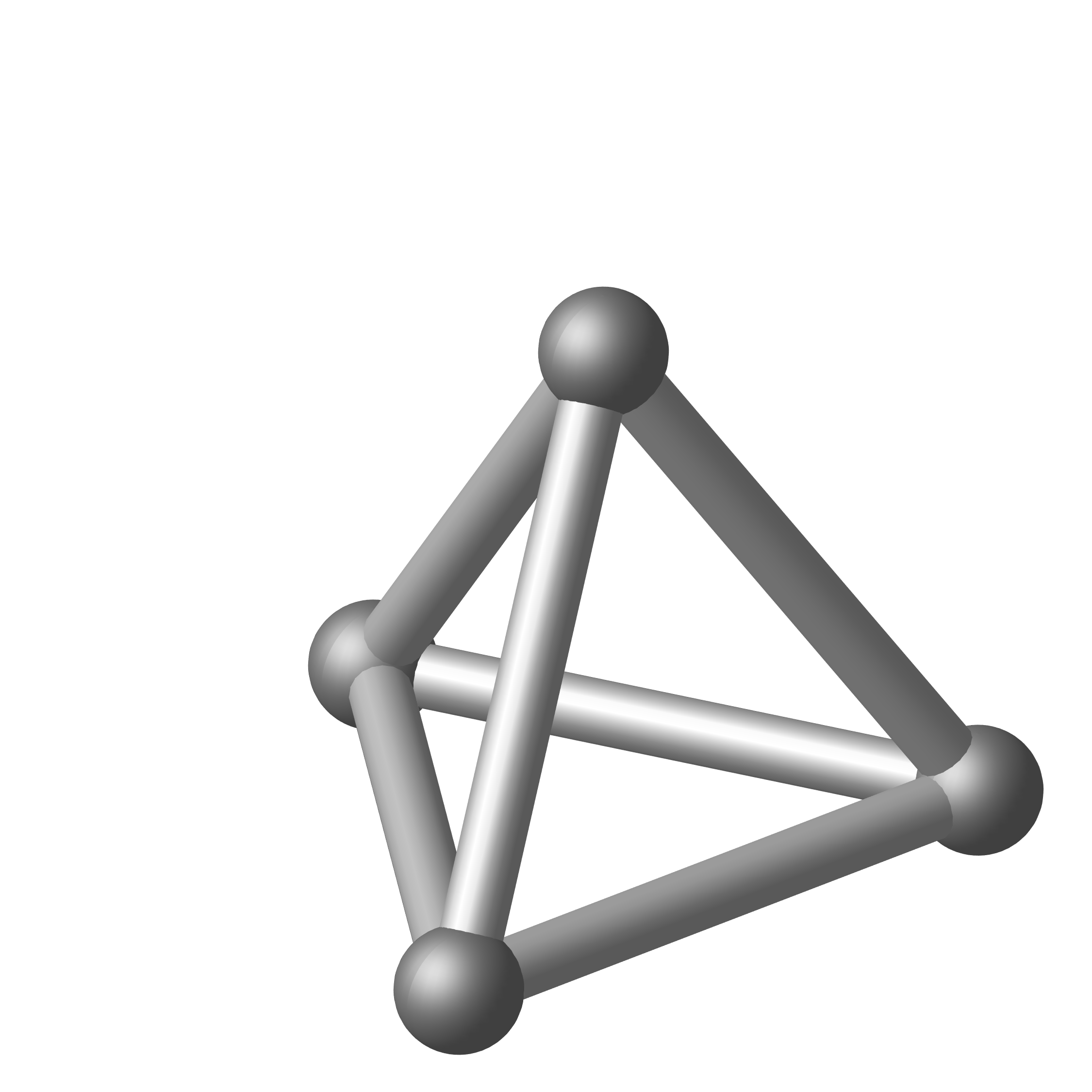} }
& 
(d) tetrahedron, unit edges \newline
first-order rigid \newline
distance to reflection = 1.06
& 
$\eta_1=0$\newline
$\eta_2 = 0.18$,  $\eta_3 = 0.13$\newline
$e_{\rm min}^* = 0.15$\newline
\\
\raisebox{2ex - \height}{\includegraphics[trim={4cm 4cm 4cm 4cm},clip, width=1.6cm]{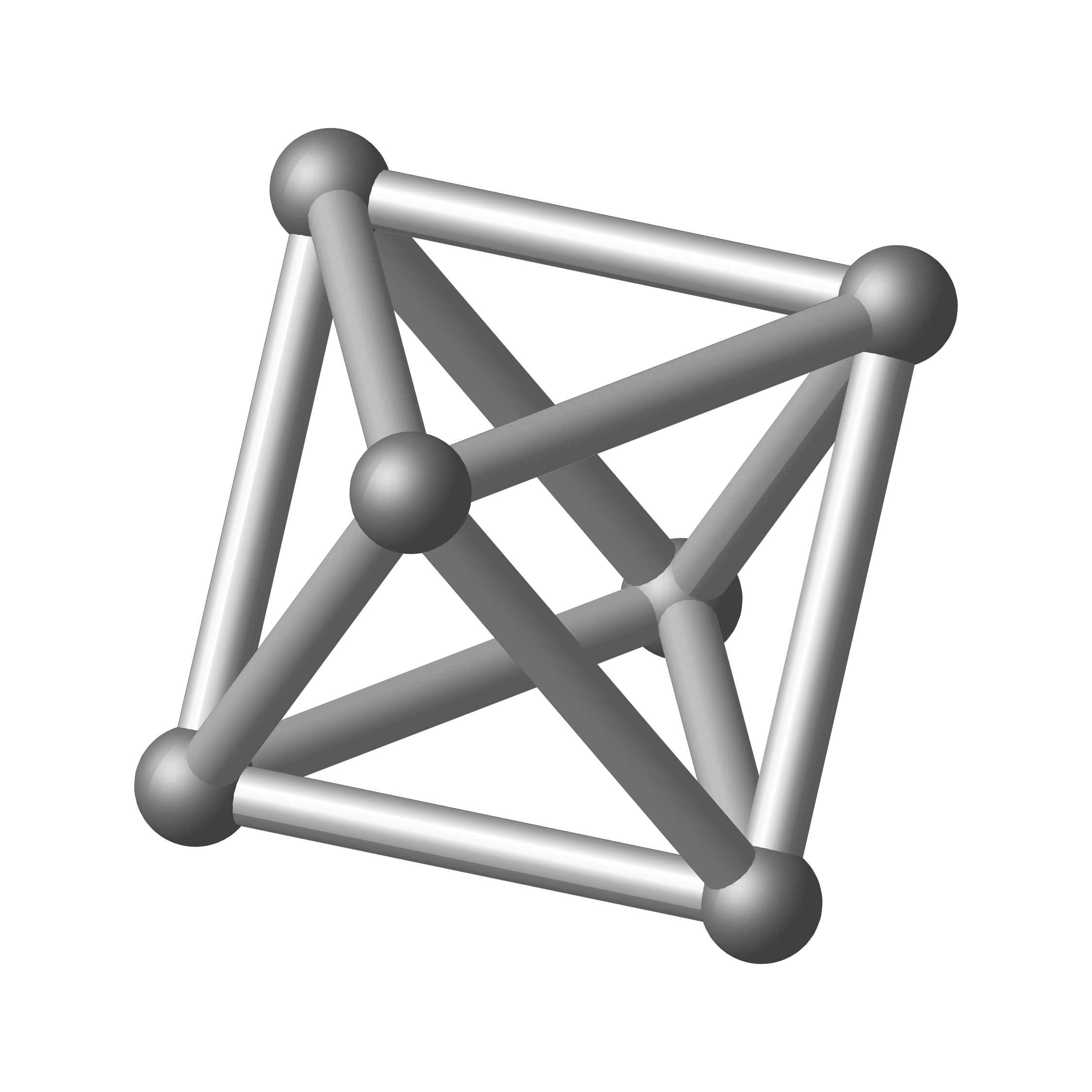}}
& 
(e) octahedron, unit edges \newline
first-order rigid 
& 
$\eta_1=0$\newline
$\eta_2 = 0.15$,  $\eta_3 = 0.11$\newline
$e_{\rm min}^* = 0.13$\newline
\\
\raisebox{2ex - \height}{\includegraphics[trim={1cm 9cm 7cm 6cm},clip, width=2.1cm]{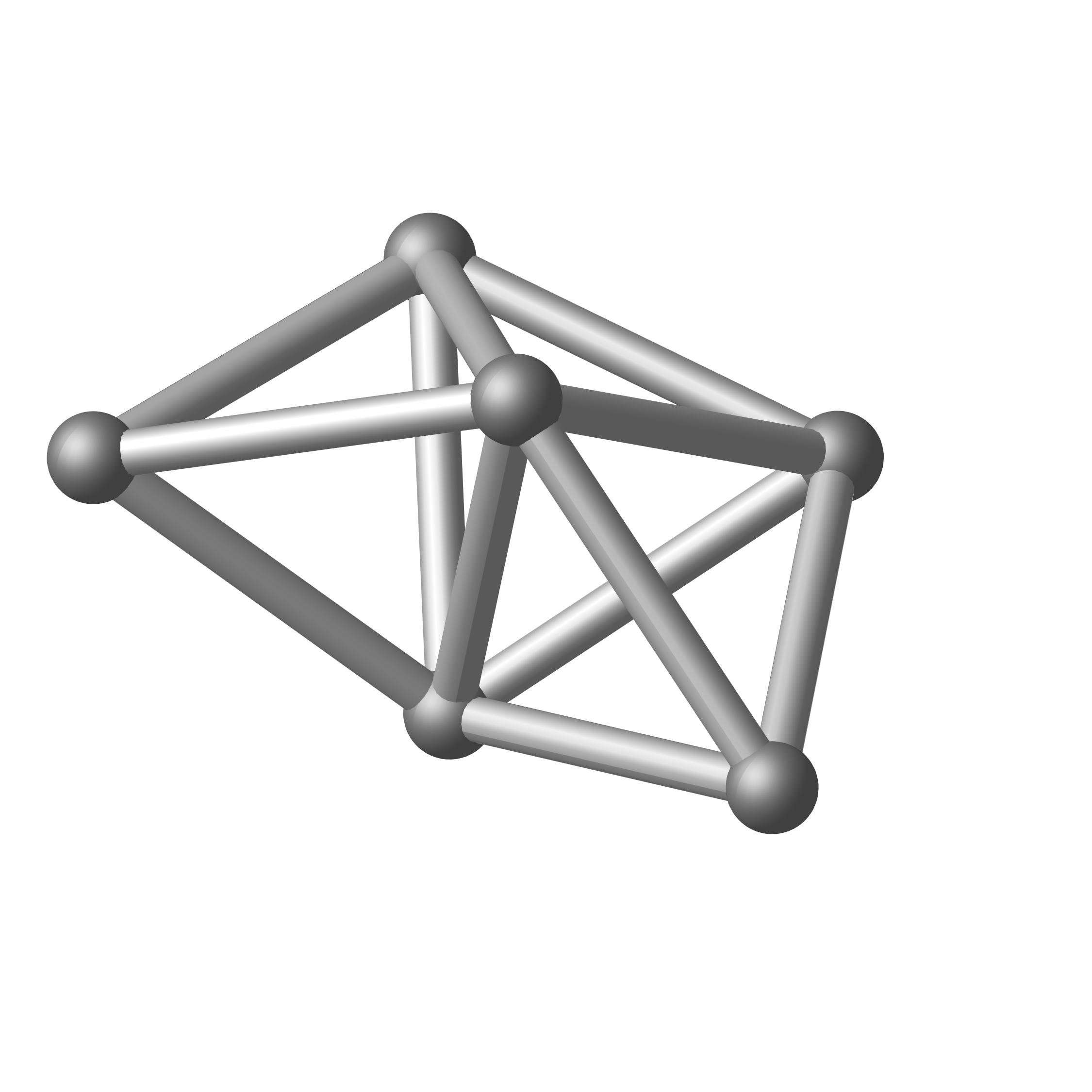}}
& 
(f) polytetrahedron, unit edges \newline
first-order rigid 
& 
$\eta_1=0$\newline
$\eta_2 = 0.078$,  $\eta_3 = 0.057$\newline
$e_{\rm min}^* = 0.037$\newline
\\
\raisebox{2ex - \height}{\includegraphics[trim={1cm 6cm 3cm 6.5cm},clip,width=2.2cm]{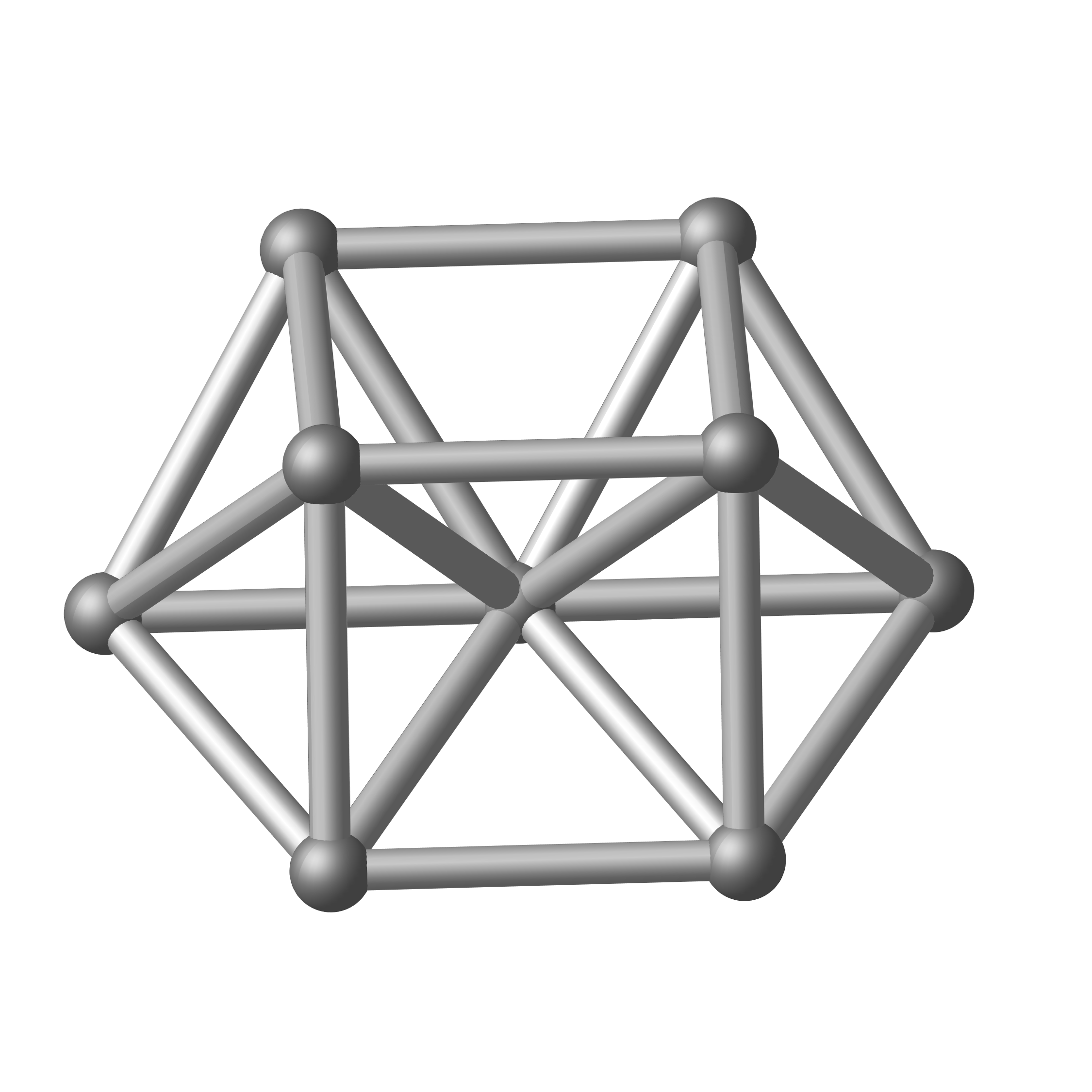}}
& 
(g) isostatic (9 vertices, 21 edges), unit edges\newline
solved numerically to tolerance $\delta = 9\times10^{-16}$\newline
1 almost-flex, 1 almost-stress 
& 
$\eta_1=1.7\times 10^{-7}$\newline
$\eta_2 = 0.0145$,  $\eta_3 = 0.0099$ \newline
$e_{\rm min}^* = 4.3\times 10^{-6}$\newline
\\
\raisebox{2ex - \height}{
\begin{tikzpicture}[thick,scale=0.5]
\node[vertex] (1) at (0,0) {};
\node[vertex] (2) at (0,2) {};
\node[vertex] (3) at (1,1) {};
\node[vertex] (4) at (3,1) {};
\node[vertex] (5) at (4,2) {};
\node[vertex] (6) at (4,0) {};
\draw (1) -- (2);
\draw (1) -- (3);
\draw (2) -- (3);
\draw (3) -- (4);
\draw (2) -- (5);
\draw (4) -- (5);
\draw (1) -- (6);
\draw (4) -- (6);
\draw (5) -- (6);
\end{tikzpicture}
}
& 
(h) isostatic (6 vertices, 9 edges)\newline 
prestress stable, not first-order rigid\newline
unit height
& 
$\eta_1=0$\newline
$\eta_2 = 0.13$, $\eta_3 = 0.096$ \newline
$e_{\rm min}^* = 8.9\times 10^{-4}$\newline
\\
\raisebox{2ex - \height}{\includegraphics[trim={3cm 8cm 7cm 7.5cm},clip,width=2.4cm]{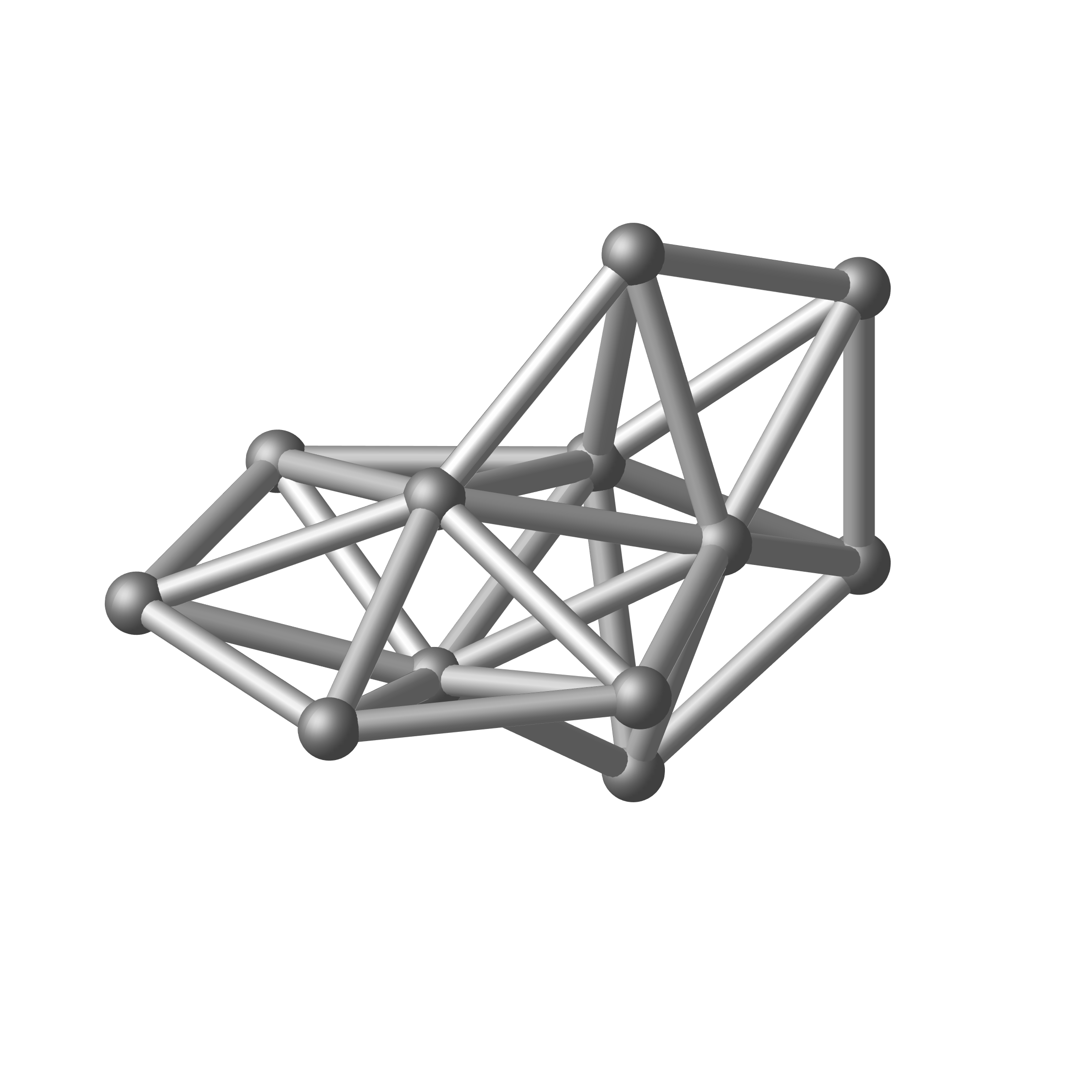}}
& 
(i) Siamese dipyramid, first embedding. \newline
12 vertices, unit edges
& 
$\eta_1=0$\newline
$\eta_2 = 0.0064$,  $\eta_3 = 0.0047$ \newline
$e_{\rm min}^* = 3.0\times 10^{-4}$\newline
\\
\raisebox{2ex - \height}{\includegraphics[trim={3.5cm 8cm 7cm 7cm},clip,width=2.4cm]{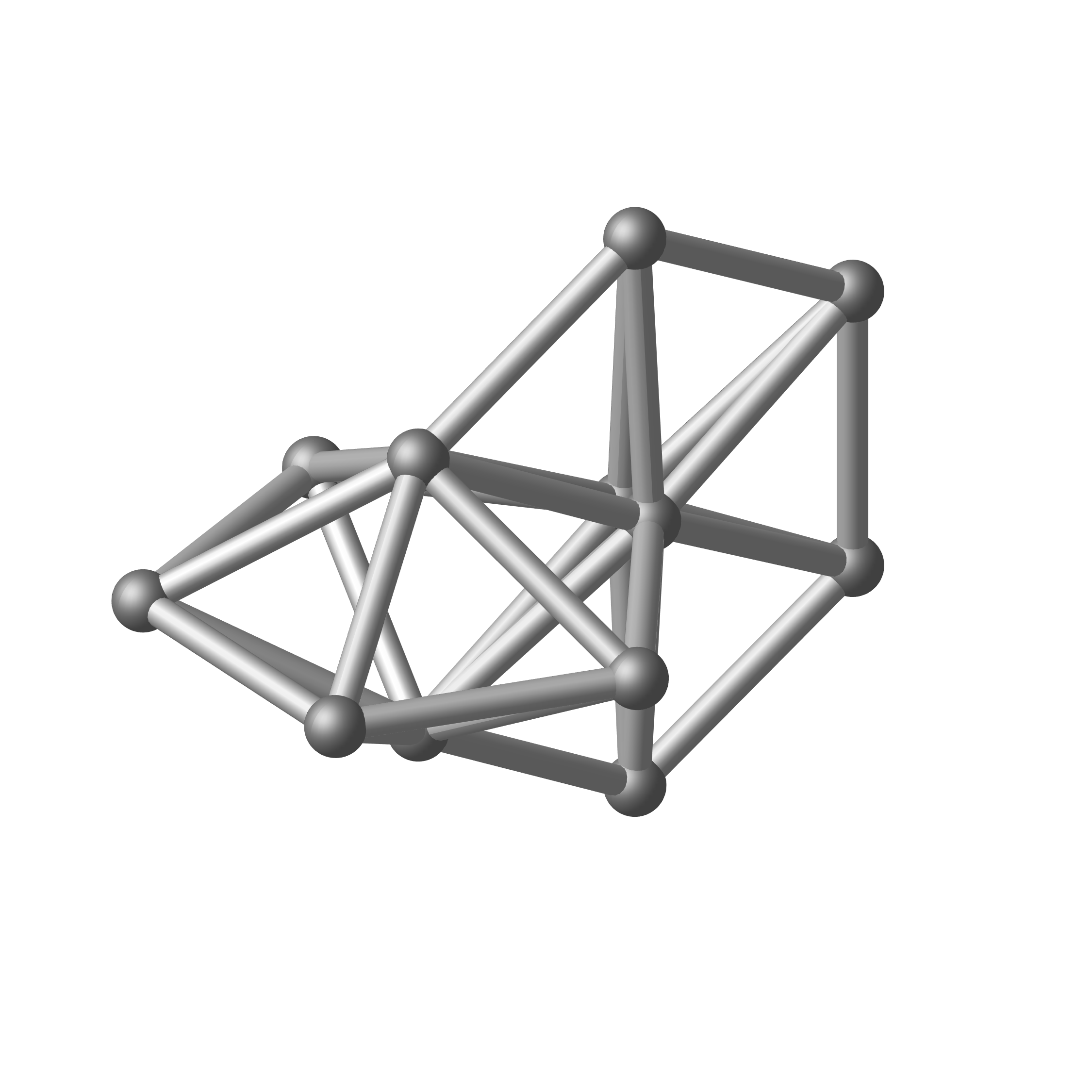}}
& 
(j) Siamese dipyramid, second embedding. \newline
Distance to first embedding = 0.49.
& 
$\eta_1=0$\newline
$\eta_2 = 0.0068$, $\eta_3 = 0.0050$ \newline
$e_{\rm min}^* = 3.3\times10^{-4}$\newline
\\
\raisebox{2ex - \height}{
\begin{tikzpicture}[thick,scale=0.75]
\node[vertex] (1) at (1,0) {};
\node[vertex] (2) at (0.6235,0.7818) {};
\node[vertex] (3) at (-0.2225,0.9749) {};
\node[vertex] (4) at (-0.9010, 0.4339) {};
\node[vertex] (5) at (-0.9010,  -0.4339) {};
\node[vertex] (6) at (-0.2225, -0.9749) {};
\node[vertex] (7) at (0.6235, -0.7818) {};
\draw (1) -- (4);
\draw (1) -- (5);
\draw (1) -- (6);
\draw (1) -- (7);
\draw (2) -- (4);
\draw (2) -- (5);
\draw (2) -- (6);
\draw (2) -- (7);
\draw (3) -- (4);
\draw (3) -- (5);
\draw (3) -- (6);
\draw (3) -- (7);
\end{tikzpicture}
}
& 
(k) $K_{3,4}$ (7 vertices, 12 edges), vertices on a circle\newline
globally rigid for generic $p$\newline
this $p$ is prestress stable, not first-order rigid
& 
$\eta_1=0$\newline
$\eta_2 = 0.027$, $\eta_3 = 0.019$ \newline
$e_{\rm min}^* = 1.3\times 10^{-5}$\newline
\\
\hline
    \end{tabular}
    \caption{Some frameworks and their lengthscales given by Theorems \ref{thm:framework}-\ref{thm:emin}, calculated using $\lambda = \lambda_0/2$. }
    \label{tbl:example}
\end{table}

Table \ref{tbl:example} shows $\eta_1,\eta_2,\eta_3,e_{\rm min}^*$ for a collection of frameworks with different properties, that we discuss in turn. 

\subsubsection{Small first-order rigid frameworks}

Examples (c)-(f) in Table \ref{tbl:example} are all first-order rigid, so $\eta_1=0$. They have significantly larger values of $\eta_2$, $e_{\rm min}^*$ than for our introductory examples, which were not first-order rigid. For Examples (c) and (d), a two-dimensional square with a diagonal edge and a three-dimensional tetrahedron, we calculated the distance to the other embeddings of each framework (i.e. those with the same edge lengths), using the Kabsch algorithm to find the minimum distance over rigid body motions \cite{Kabsch}. The distance is larger than $\eta_2$, as it must be, about 7-10 times larger for these examples.

\subsubsection{An isostatic framework that is not first-order rigid}

\begin{figure}
    \centering
   \raisebox{-0.8cm}{\includegraphics[trim={7cm 6cm 6cm 7cm},clip, width=0.28\linewidth]{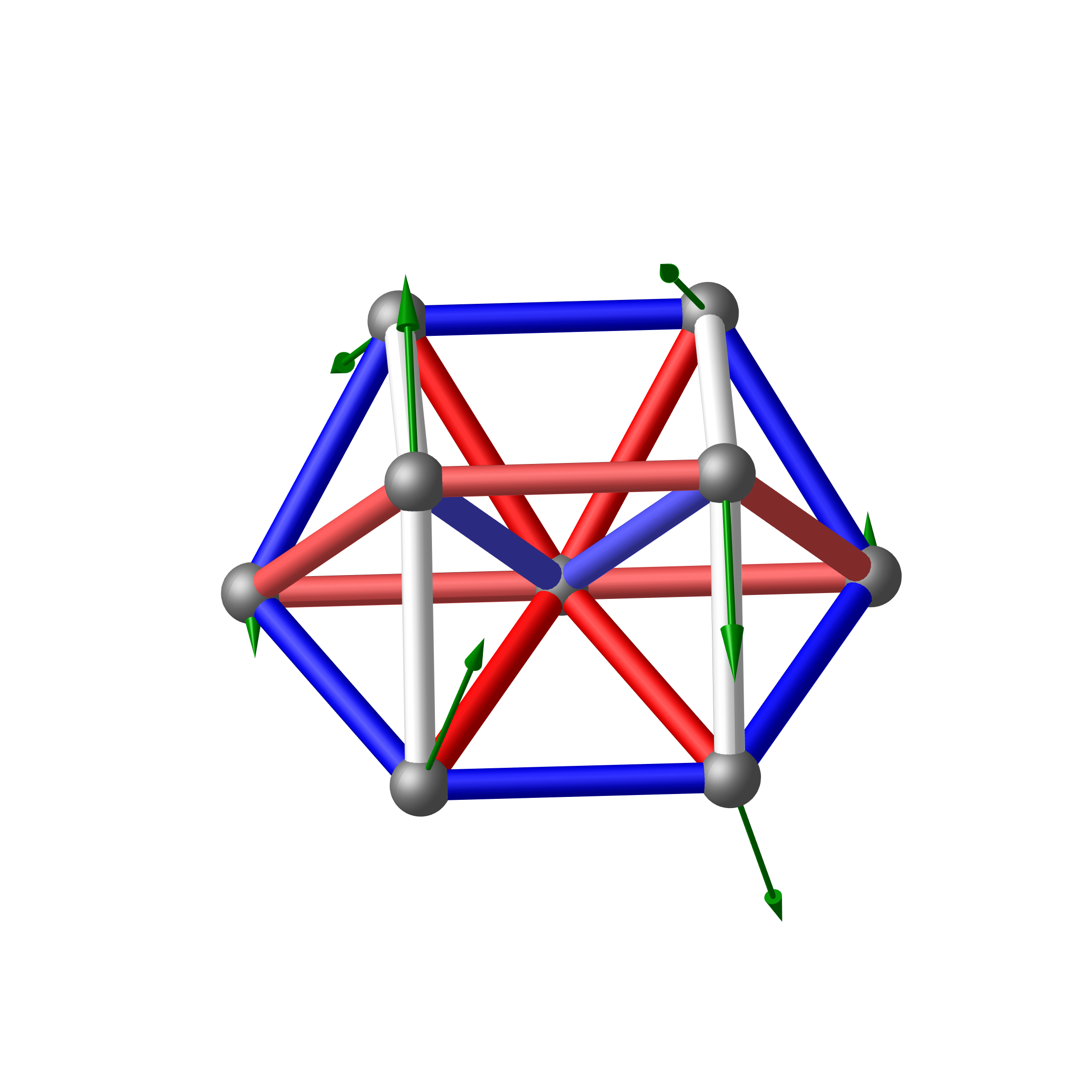}}
\quad
%
%
    \begin{tikzpicture}[thick,scale=1]
\node[vertex] (1) at (0,0) {};
\node[vertex] (2) at (0,2) {};
\node[vertex] (3) at (1,1) {};
\node[vertex] (4) at (3,1) {};
\node[vertex] (5) at (4,2) {};
\node[vertex] (6) at (4,0) {};
\draw[ultra thick,red!50] (1) -- (2);
\draw[ultra thick,blue!100] (1) -- (3);
\draw[ultra thick,blue!100] (2) -- (3);
\draw[ultra thick,blue!100] (3) -- (4);
\draw[ultra thick,red!25] (2) -- (5);
\draw[ultra thick,blue!100] (4) -- (5);
\draw[ultra thick,red!25] (1) -- (6);
\draw[ultra thick,blue!100] (4) -- (6);
\draw[ultra thick,red!50] (5) -- (6);
\draw[->,black!30!green] (1) -- ($ (1) + 1.5*(-0.3769,-0.0754) $);
\draw[->,black!30!green] (2) -- ($ (2) + 1.5*(0.3769,-0.0754) $);
\draw[->,black!30!green] (3) -- ($ (3) + 1.5*(0,-0.4523) $);
\draw[->,black!30!green] (4) -- ($ (4) + 1.5*(0,0.4523) $);
\draw[->,black!30!green] (5) -- ($ (5) + 1.5*(0.3769,0.0754) $);
\draw[->,black!30!green] (6) -- ($ (6) + 1.5*(-0.3769,0.0754) $);
\end{tikzpicture}
%
%
\quad
\raisebox{-0.25cm}{
    \begin{tikzpicture}[ultra thick,scale=1.5]
\node[vertex] (1) at (1,0) {};
\node[vertex] (2) at (0.6235,0.7818) {};
\node[vertex] (3) at (-0.2225,0.9749) {};
\node[vertex] (4) at (-0.9010, 0.4339) {};
\node[vertex] (5) at (-0.9010,  -0.4339) {};
\node[vertex] (6) at (-0.2225, -0.9749) {};
\node[vertex] (7) at (0.6235, -0.7818) {};
\draw[blue!44] (1) -- (4);
\draw[red!100] (2) -- (4);
\draw[blue!100] (3) -- (4);
\draw[red!40] (1) -- (5);
\draw[blue!72] (2) -- (5);
\draw[red!50] (3) -- (5);
\draw[red!34] (1) -- (6);
\draw[blue!50] (2) -- (6);
\draw[red!28] (3) -- (6);
\draw[blue!90] (1) -- (7);
\draw[red!90] (2) -- (7);
\draw[blue!40] (3) -- (7);
\draw[->,black!30!green,thick] (1) -- ($ (1) + 1.1*(0.2956,-0.2474) $);
\draw[->,black!30!green,thick] (2) -- ($ (2) + 1.1*(0.11,0.1380) $);
\draw[->,black!30!green,thick] (3) -- ($ (3) + 1.1*(-0.3070,0.2332) $);
\draw[->,black!30!green,thick] (4) -- ($ (4) + 1.1*(0.2468,-0.4613) $);
\draw[->,black!30!green,thick] (5) -- ($ (5) + 1.1*(0.2468,-0.0335) $);
\draw[->,black!30!green,thick] (6) -- ($ (6) + 1.1*(-0.0876,0.2332) $);
\draw[->,black!30!green,thick] (7) -- ($ (7) + 1.1*(-0.5047,0.1380) $);
\end{tikzpicture}
}

    \caption{Left, Middle: Isostatic frameworks that are prestress stable but not first-order rigid. Each has a single infinitesimal flex (green arrows) and a single self-stress (colors on bars, where blue is positive and red is negative.) 
    Right: $K_{3,4}$ with vertices on a circle. 
    For generic configurtations, it has one self-stress 
    and is globally rigid.
 When the vertices lie on a circle, it acquires an infinitesimal flex (green arrows) and extra self-stress. The self-stress plotted is the one that makes the framework prestress stable. }
    \label{fig:isostatic}
\end{figure}

Next consider Example (g) in Table \ref{tbl:example}. This has unit edges and can be formed by putting spheres with unit diameters in contact. 
It has 9 vertices and 21 edges, so  generically it should be first-order rigid, since the number of variables required to describe the configuration, $9\times 3 = 27$, equals the number of edges plus the number of rigid-body degrees of freedom. We say such a configuration is \emph{isostatic}. In three dimensions a graph with $N$ vertices is isostatic when it has $3N-6$ edges, and in two dimensions it is isostatic when it has $2N-3$ edges. 

However, for this particular configuration, there is an additional infinitesimal flex and self-stress; the infinitesimal flex corresponds to ``twisting'' the two halves of the framework relative to each other (Figure \ref{fig:isostatic}; see also \cite{Kallus:2017hi}.) One can argue that the framework with perfect unit edges is prestress stable; in fact, it is the smallest packing of identical spheres that is prestress stable but not first-order rigid \cite{HolmesCerfon:2016wa}. 

When we represent the framework on a computer, we don't obtain perfect unit edge lengths, so the numerically represented framework $p$ should behave like a generic configuration, and hence should be  first-order rigid. Indeed, we solved \eqref{approxedgeeqn} for $p$ using a tolerance of $\delta = 4\epsilon\approx 9 \times 10^{-16}$ and a random initial condition. The singular values of $R(p)$ were
\begin{gather*}
 1.62 \times 10^{-9},\;
0.61,\;0.67,\;0.78,\;0.87,\;1.02,\;1.06,\;1.14,\;1.19,\;1.30,\;1.31,\; 1.35,\;1.41,\;\\
1.59,\;1.61,\;1.75,\;1.75,\;1.79,\;1.99,\;2.04,\;2.26.
\end{gather*}
The smallest singular value is $\sigma_0=1.62 \times 10^{-9} >0$, so $p$ is first-order rigid, i.e. $\eta_1=0$. We applied Corollary \ref{cor:firstorder} to calculate the other radii using this singular value, and found the outer radius to be $\eta_2 = 2.8\times 10^{-10}$, and the edge length barrier to be $e_{\rm min}^* = 6\times 10^{-19}$, both of which are minuscule. 

Can we do better, by using the almost-flex $v$ and almost-self-stress $w$ in our test? 
We repeated the test, choosing $\mathcal V = \mathcal V^{\sigma_0}$, $\mathcal W = \mathcal W^{\sigma_0}$ (see \eqref{VWsig}.)
With these choices, $\eta_1 > 0$ (see Table \ref{tbl:example} (g)), so the cluster is not provably rigid using this test. However, $D_{\rm pss} = 8.6\times 10^{-5}<0.5$, so there is a nearby prestress stable cluster.
The other lengthscales are $\eta_2 = 0.0145$, $e_{\rm min}^* = 4.3\times 10^{-6}$,  much bigger than they were using the first-order test above. 
In this case, although we know that the perturbed $p$ is rigid, from our first test, the utility of $\eta_1$ in the second test is to constrain the distance to nearby configurations with the same edge lengths. 
It could be the case -- and probably is -- that there is a nearby rigid configuration with the same edge lengths as the perturbed $p$, however, this nearby configuration must lie within $\eta_1$ of $p$; any other configuration is at least a distance of $\eta_2$ away, hence, much further. 
This example illustrates how one can choose $\mathcal V$, $\omega$ in different ways to prove different kinds of statements. 


Example (h) in Table \ref{tbl:example} is an isostatic cluster in two dimensions which is also prestress stable, but not first-order rigid.

\subsubsection{Siamese dipyramid}

Next we consider a polyhedron that is provably rigid (as a polyhedron), but, such that physical models of it ``feel'' flexible: they can be deformed by some finite amount, without any noticeable stretching or bending of the faces. 
This family of examples, called Siamese dipyramids, was introduced to the mathematics literature by Goldberg \cite{Goldberg:1978df}, and partially analyzed recently by Gorkavyy \& Fesenko \cite{Gorkavyy:2018gp}. 
The authors built our own physical model of a Siamese dipyramid using cardboard for the faces and scotch tape for the hinges, and it did not feel at all rigid; we could deform it significantly without noticeably bending or creasing the cardboard or ripping the hinges. 

We consider Siamese dipyramids with $N=12$ nodes and unit edge lengths. There are two such examples, shown in Table \ref{tbl:example} (i),(j), where they are plotted as  frameworks, with nodes at the vertices of the polyhedron and edges at the boundaries of the faces. The two examples differ in the widths of the pac-man-like discs that make them up: in example (i), the discs are the same width, so the framework can be superimposed on itself by a reflection and a rotation by $\pi/2$. In example (j), one disc has been flattened and one has been thickened. See \cite{Gorkavyy:2018gp} for a description of how to construct these examples. 


We were curious if our theory could give insight into why these examples feel so flexible. We treated each example as a framework and calculated the various lengthscales. For example (i), call its configuration $p_1$, the smallest singular value of $R(p_1)$ was $\sigma_0=0.0546$. Since $\sigma_0>0$, the framework is first-order rigid, however the singular value is significantly smaller than for a typical first-order rigid cluster. 
The outer radius and energy barrier are correspondingly small, $\eta_2 = 0.00638$, $e_{\rm min}^* = 2.98\times 10^{-4}$; the outer radius in particular is much smaller than for the other first-order rigid 
examples
we have considered, and smaller even than for some of the prestress stable sphere clusters. The constants for example (j), call its configuration $p_2$, were of a similar magnitude, $\sigma_0 = 0.0579$, $\eta_2 = 0.00676$, $e_{\rm min}^* = 3.35\times 10^{-4}$. 
Such small outer radii and edge length barriers suggest the two embeddings could be close to each other in configuration space, and that one can deform these examples without changing the edge lengths by much.

We calculated the actual distance between the configurations (using the Kabsch algorithm to find the minimum distance over rigid body motions) and found it to be about $|p_1-p_2|\approx 0.49$, significantly larger than either value of $\eta_2$. Therefore, although $\eta_2$ may be correlated with how rigid or flexible a structure feels in practice, it does not quantitatively predict the location of other embeddings.

\subsubsection{$K_{3,4}$}

The example above showed that the radii in our theory can correlate with how flexible a framework ``feels'', at least qualitatively. To show that feeling flexible does not always correlate with having another solution nearby, consider the graph  $K_{3,4}$ in $\R^2$, See Table \ref{tbl:example} (k).
This graph is generically globally rigid: for a generic configuration\footnote{
Generic means there is no algebraic relation with integer coefficients between coordinates.}, there is only one embedding of the graph in
the given dimension.  That $K_{3,4}$ is generically globally rigid in dimension $2$ follows from the classification of such graphs in \cite{jacksonjordan}.

However, when the vertices lie on a circle, the the $K_{3,4}$ framework acquires an extra infinitesimal flex and self-stress (Figure \ref{fig:isostatic}.) We took the vertices on a regular heptagon on the unit circle, and calculated the various lengthscales, see Table \ref{tbl:example}. When we perturb the vertices by small random amounts, these numbers don't change much, however for a random perturbation the resulting 
framework is globally rigid. Therefore, even though its outer radius is only $\eta_2 = 0.027$, there is actually no other embedding with the same edge 
lengths at all. 



\subsection{Clusters of unit spheres}

\begin{figure}
    \centering
    \includegraphics[width=0.48\linewidth]{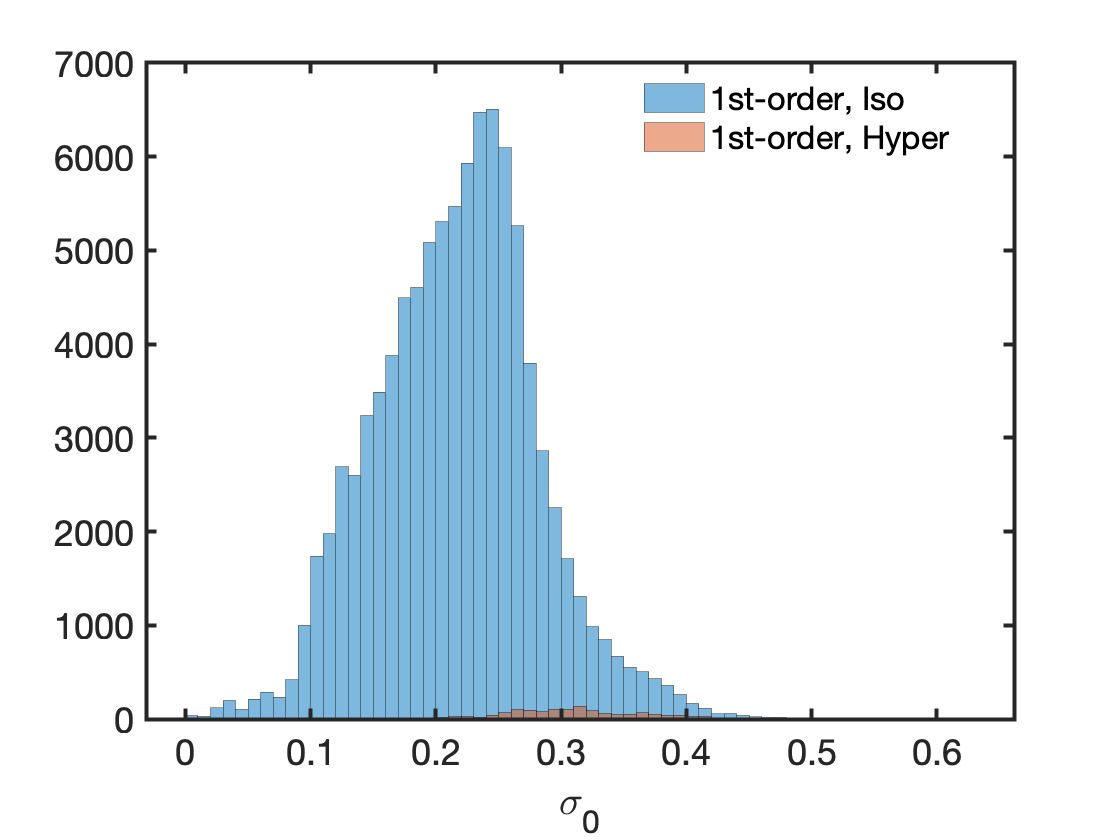}\hfill
    \includegraphics[width=0.48\linewidth]{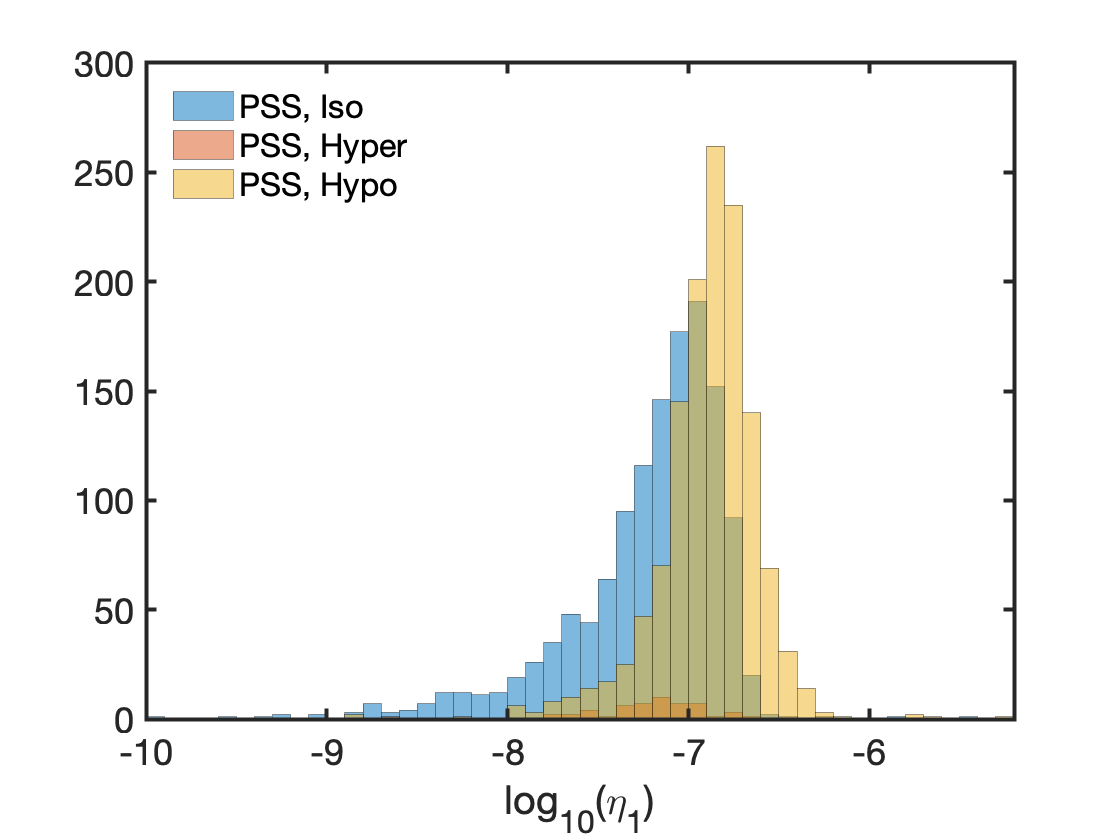}
    \\
    \includegraphics[width=0.48\linewidth]{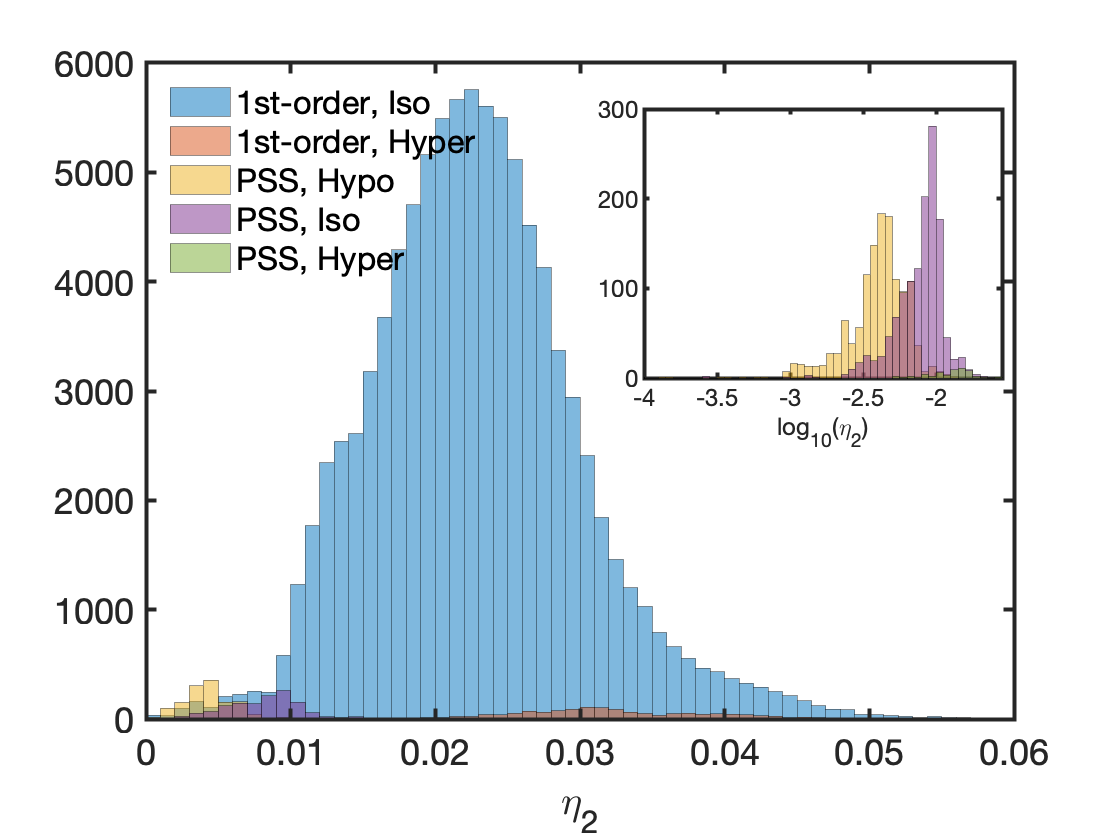}\hfill
    \includegraphics[width=0.48\linewidth]{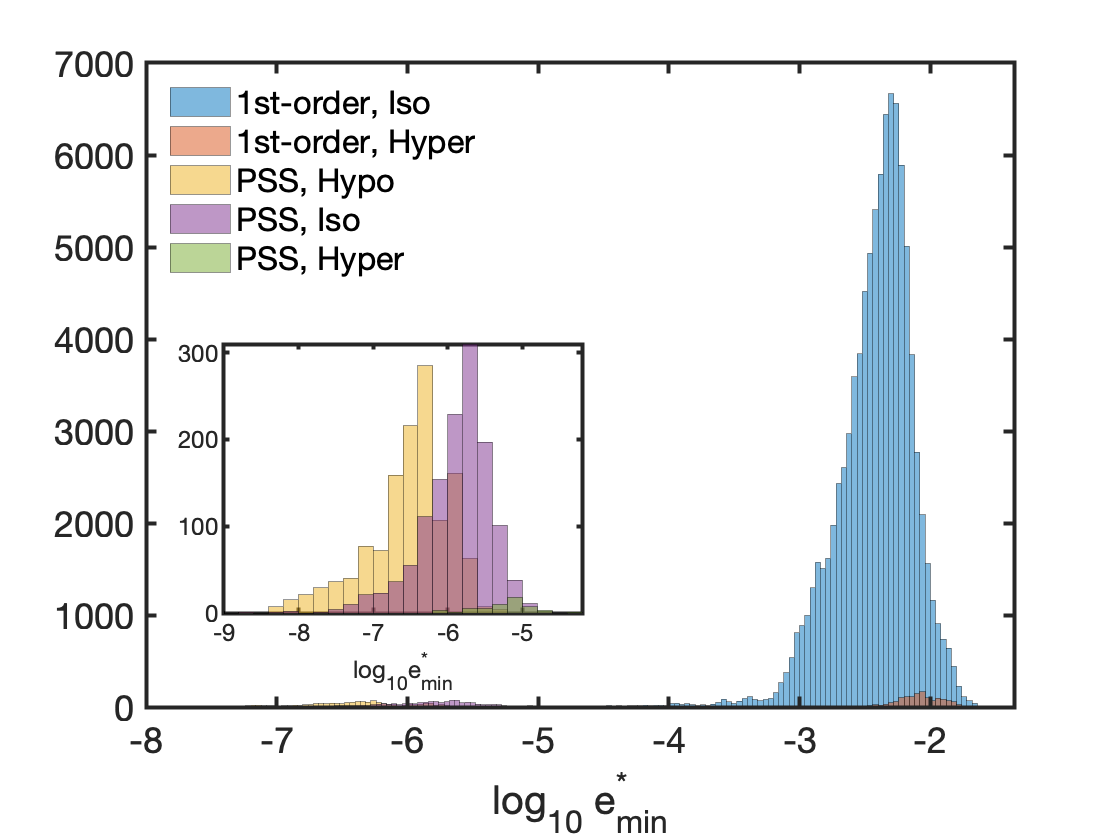}
    \caption{Lengthscales calculated for frameworks made from rigid clusters of $N=13$ identical spheres. The histograms are calculated for each type of cluster, indicated in the legends: ``1st-order'' (first-order rigid), ``PSS'' (prestress stable but not first-order rigid), ``Iso'' (isostatic, $3N-6$ edges), ``Hyper'' (hypostatic/overconstrained, $>3N-6$ edges), ``Hypo'' (hypostatic/underconstrained, $<3N-6$ contacts.) 
    }
    \label{fig:n13}
\end{figure}

For a more global picture of how the lengthscales behave for different types of frameworks, we turn to the set of rigid frameworks made from $N=13$ unit spheres in contact. A list of 98,529 such frameworks was presented in \cite{HolmesCerfon:2016wa}; this list is thought to be nearly complete. The clusters were found by solving \eqref{approxedgeeqn} numerically for different edge sets $\mathcal E$, using a tolerance of $\delta  \approx 9\times 10^{-16}$. 

Figure \ref{fig:n13} shows a histogram of the smallest singular value $\sigma_0$ for all first-order rigid clusters in the dataset. The average was $0.22$ with standard deviation $0.066$, so most singular values were close to the mean. However, the largest observed value was $\sigma_0 = 0.626$, and the smallest was $\sigma_0 = 0.00106$, so even within the first-order rigid clusters, the lengthscales $\eta_2, e_{\rm min}^*$ can vary by more than two orders of magnitude. 

We calculated $\eta_1,\eta_2,e_{\rm \min}^*$ for all clusters, using a cutoff of $10^{-7}$ for the singular values to build $\mathcal V,\mathcal W$, and choosing $\lambda = 0.8\lambda_0$.  Figure \ref{fig:n13} shows the histograms of values, grouped by type of cluster: first-order rigid or not, and number of edges: \emph{hypostatic} ($<3N-6$ edges), \emph{isostatic} ($3N-6$ edges), \emph{hyperstatic} ($>3N-6$ edges.) 
We see that hyperstatic  first-order rigid  clusters on average have slightly higher lengthscales than isostatic first-order rigid clusters. 
Non-first-order rigid clusters  have values of $\eta_2$ that are about one to two orders of magnitude smaller than first-order rigid clusters, and values of $e_{\rm min}^*$ that are about 3-4 orders of magnitude smaller. Within the non-first-order rigid clusters, these values are on average slightly smaller for hypostatic clusters, and slightly larger for hyperstatic clusters. Figure \ref{fig:n13} also shows the values of $\eta_1$ for the non-first-order rigid clusters, which are nonzero presumably because of numerical tolerance in solving \eqref{approxedgeeqn} and because we include small singular vectors in our tests. We expect the distribution of $\eta_1$-values reflects the distribution of tolerances with which we solve \eqref{approxedgeeqn}, since our numerical solver does not produce exactly the same error for each framework.


To verify that the theorems do indeed apply, we calculated $D$, $D_{\rm pss}$ from \eqref{D}, \eqref{Dpss}. 
The 20 largest values of $D$ were: 
\begin{gather*}
0.61,\;
    0.27,\;
    0.042,\;
    0.027,\;
    0.016,\;
    0.0035,\;
    0.0020,\;
    0.0018,\;
    0.0014,\;
    0.0009,\;
    0.0008,\;
    0.0007,\;\\
    0.0006,\;
    0.0004,\;
    0.0003,\;
    0.0003,\;
    0.0003,\;
    0.0003,\;
    0.0003,\;
    0.0003,\ldots
\end{gather*}
All clusters but one had values of $D$ less than $0.5$, and therefore Theorems \ref{thm:framework}, \ref{thm:outerbound} apply. The cluster with the largest value of $D$ (cluster 45601 in the dataset), for which the theorems don't apply, was isostatic, with one small flex $v$ and one small stress $w$ with singular value $\sigma_0=1.37\times10^{-10}$, and the corresponding inner product was $v^T\Omega(w)v = 2.03\times 10^{-4}$. Since this inner product is so tiny, the cluster had a small outer radius $\eta_2=1.7\times 10^{-6}$, and a large eigenvalue factor, $\bar\mu = 14569$, which causes $D$ to be large. We computed $D$ with different values of $\lambda$, but it never fell under the threshhold for the theorems. 
The cluster with the second-largest $D$-value (cluster 10386) was structurally similar to the one with the largest $D$. It had a similarly large value of $\bar \mu$, but nevertheless its $D$-value fell under the threshhold.
The authors built these clusters out of magnetic rods and ball-bearings, and they both felt very floppy. 

The 20 largest values of $D_{\rm pss}$  were
\begin{gather*}
23.7,\;
   10.2,\;
    4.5,\;
    1.4,\;
    0.48,\;
    0.15,\;
    0.11,\;
    0.11,\;
    0.092,\;
    0.055,\;
    0.050,\;
    0.020,\;
    0.018,\;\\
    0.015,\;
    0.014,\;
    0.013,\;
    0.013,\;
    0.013,\;
    0.011,\;
    0.011,\ldots
\end{gather*}
All but four clusters had $D_{\rm pss} < 0.5$. Therefore, Theorem \ref{thm:pss} guarantees that there is a nearby prestress stable cluster for all but these four clusters. 


\subsection{$L$ vs $\lambda/\lambda_0$}\label{sec:LvsLam}

\begin{figure}
    \centering
    \includegraphics[width=0.48\linewidth]{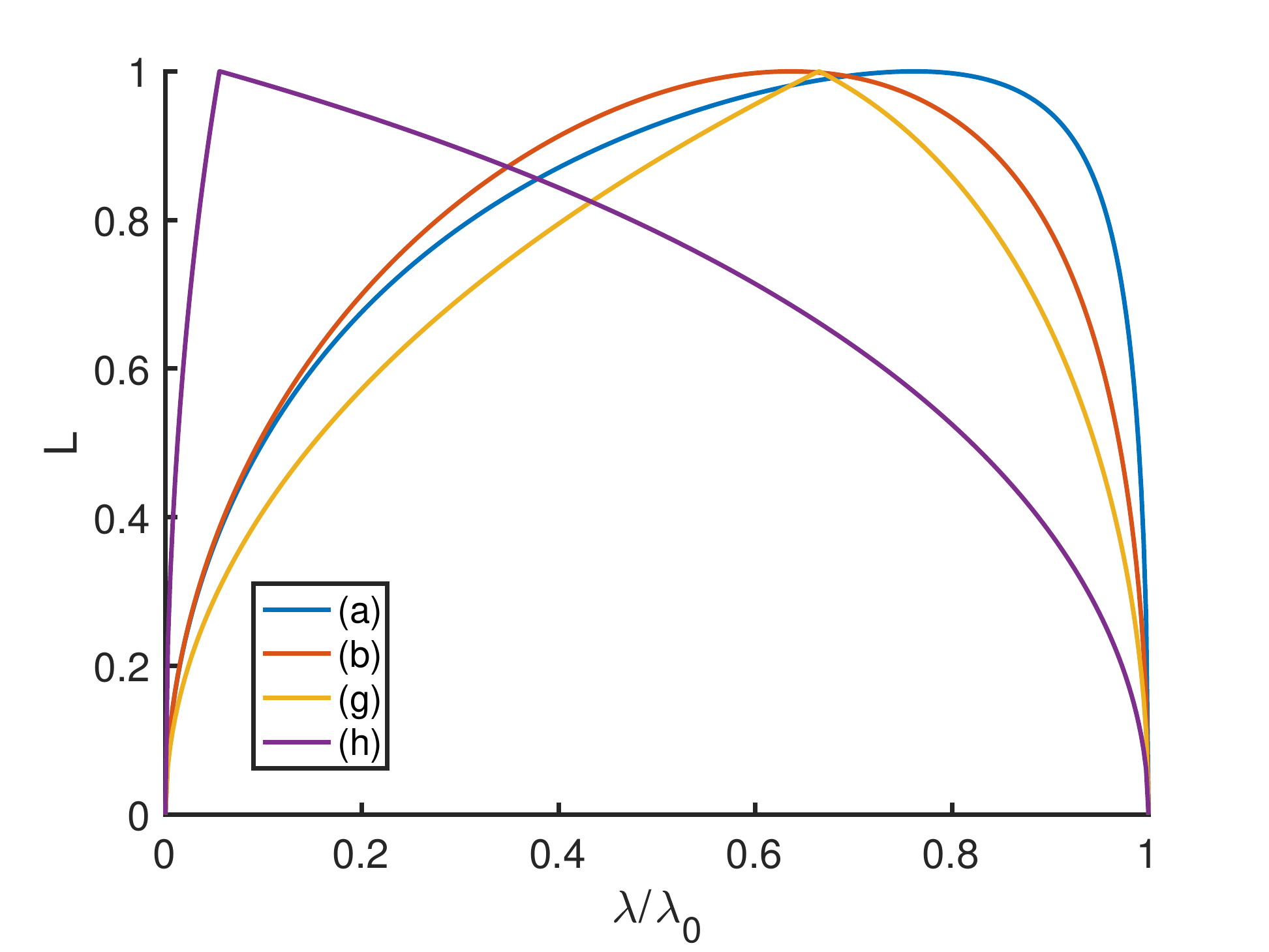}
    \caption{Lengthscale $L$ from \eqref{Lmu} for examples (a),(c),(g),(h) in Table \ref{tbl:example}, as a function of $\lambda$. All curves have been scaled so their maximum value is 1. }
    \label{fig:L}
\end{figure}


Implementing the theory requires choosing a value $\lambda\in (0,\lambda_0)$. 
Larger values of $\lambda$ lead to smaller $\eta_1$ (see \eqref{eta1}), which is usually desirable. 
However, $\lambda$ also changes the lengthscale $L$ (see \eqref{Lmu}) which sets $\eta_2,e_{\rm min}^*$, and it does so in a nontrivial way, since $\kappa$ also depends on $\lambda$. 

We computed $L(\lambda)$ for different values of $\lambda$ for examples (a),(b),(g),(h) in Tables \ref{tbl:intro}, \ref{tbl:example}. Figure \ref{fig:L} shows that there is no common features to the dependence. Although we must have $L(\lambda)=0$ for $\lambda=0,1$, 
the maximum of $L(\lambda)$ could be anywhere: for the first three examples it is somewhere in the interval $(0.6,0.8)$, but for the last example it is near $\lambda\approx 0.055$. However, $L(\lambda)$ does not change by more than about a factor of 2, within the range $\lambda\in (0.2,0.8)$. 
Interestingly, $L(\lambda)$ does not appear to always be a differentiable function of $\lambda$.

\section{Properties of a function that is ``almost'' at a critical point}\label{sec:energypropositions}

This section collects general results about a three-times continuously differentiable function that almost but doesn't quite pass the second derivative test. These results will be used to prove the theorems in Section \ref{sec:mainresults}, but they may additionally be of independent interest. 

Consider a function $H\in C^3(\R^n)$, which we think of as an energy. 
The number $n$ in this section can be any dimension and does not have to be related to frameworks.
Let $\Sball{n} = \{v\in \R^{n} : |v|=1\}$ 
be the the unit sphere  in $\R^n$. 
When we write $|v|=1$, we always mean $v\in \Sball{n}$. 
Let the $k$th directional derivative in direction $v\in \Sball{n}$  evaluated at point $q$ be $H^{(k)}|_q(v)$. 
This derivative can be evaluated using the chain rule as 
\begin{equation}\label{directionalderiv}
H^{(k)}|_{q}(v) = \frac{d^k}{dt^k}\Bigg|_{t=0} H(q + tv)\,.
\end{equation}
We sometimes write the first and second derivatives as $H'|_q(v)=\grad H(q)\cdot v$, $H''|_q(v)=v^T\grad\grad H(q)v$. 
Let $B_r(p)$ be the open ball of radius $r$ centered at $p$, and let $\bar B_r(p)$ be its closure.




We are interested in the setting where $|\grad H(p)|$ is sufficiently small,  $\grad \grad H(p)$ is sufficiently positive definite, and  $|H'''|_{p+rv}(v)|$ is not too large for $r$ within a certain range, in a way to be defined precisely later. 
These assumptions imply that $p$ is ``almost'' a positive definite critical point, and that the third derivative doesn't alter the positive definiteness of the Hessian in a sufficiently large ball. 
Under appropriate assumptions we will show three main  propositions, which are stated informally as:

\begin{enumerate}[(I)]
    \item There is a radius $\eta_1$ such that any continuous path starting at $p$ on which the energy doesn't increase, cannot leave the ball of radius $\eta_1$. 
    \item There is a radius $\eta_2>\eta_1$ such that the energy is positive on the annulus $(\eta_1,\eta_2)$. Hence, any continuous path on which the energy is not larger than $H(p)$, and such that the path contains no points in the ball of radius $\eta_1$, must lie entirely outside  the ball of radius $\eta_2$.
    \item A continuous path from $p$ to a point $q$ with $|q-p|=\eta$ must cross an energy barrier of at least $\Delta H_{\rm min}(\eta)$, a function of $\eta$ given explicitly from  the properties of the energy. 
\end{enumerate}\medskip

The numbering of the propositions parallels the numbering of the theorems they will be used to prove. 

Proving these propositions is technical so we do this in steps. 
We start by studying the behaviour of cubic univariate
polynomials as the cubic coefficient varies, in Section \ref{sec:lemmas}. We show that for a polynomial $at^3+bt^2+ct$, where $b,c$ are fixed with $b>0$, and $a$ varies within limits, 
there is a fixed interval in $|t|$ over which the polynomial remains positive. The argument is detailed but not deep; the main theoretical tool is the quadratic formula. 

Next, in Section \ref{sec:energypropositionscubic}, we study multivariate cubic functions.
We use the results about cubic univariate polynomials to study the behaviour of the function along lines in different directions, to obtain an interval that is independent of direction over which the function is positive on each line. 
The main ideas behind the more general propositions are contained in this section, so these propositions should be possible to understand from reading this section only.

A reader undaunted by technical detail can then continue to Section \ref{sec:energypropositionsgeneral}, which considers arbitrary functions that are three-times continuously  differentiable. By Taylor-expanding a function along each direction we obtain a cubic univariate polynomial with a varying cubic term, to which we may apply results from Section \ref{sec:lemmas}. The additional challenge here is that the bounds on the cubic term, and hence the estimates of the interval over which the function is positive, depend on the distance to which the Taylor expansion is valid, which in turn depend on the interval of positivity that we calculate. Therefore, we have to be careful to avoid circular definitions.


\subsection{Cubic univariate function} \label{sec:lemmas}

We start with some lemmas regarding the behaviour of cubic univariate
functions as the cubic term varies. 

\begin{lemma}\label{lem:cubic}
Consider the function
\begin{equation}\label{gt}
g(t,a) = at^3+bt^2+ct\,.
\end{equation}
Suppose $b>0$ and $c\neq 0$, and let $t_* = \frac{2|c|}{b}$.  Let $\bar a$ be a number such that $0<\bar a < \frac{b^2}{4|c|}$, and let $t_1^+(a) = \frac{b}{2|a|}(1+\sqrt{1-4|ac|/b^2})$. 
Then 
\begin{equation}\label{gta}
g(t,a) > 0 \qquad \forall\: (t,a) \; \text{ such that } \; |t|\in [t_*, t_1^+(\bar a)), \quad  |a| \leq \bar a\,,
\end{equation}
and furthermore this interval is non-empty. 
\end{lemma}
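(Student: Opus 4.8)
The plan is to analyze the sign of $g(t,a) = at^3 + bt^2 + ct = t(at^2 + bt + c)$ by factoring out $t$ and studying the quadratic factor $h(t,a) = at^2 + bt + c$. Since we only care about $|t| \geq t_* > 0$, the sign of $g$ equals the sign of $t \cdot h(t,a)$, so I would split into the two cases $c > 0$ and $c < 0$ (equivalently, track the sign of $c$ via $|c|$ as in the statement). The key observation is that the ``dangerous'' region is where $h$ has real roots and changes sign; bounding $|a| \leq \bar{a} < \frac{b^2}{4|c|}$ controls the discriminant $b^2 - 4ac$.

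\textbf{Case $c < 0$.} Here $g(t,a) = t \cdot h(t,a)$ and we want $g > 0$, i.e. $t$ and $h(t,a)$ have the same sign. Since $h(0,a) = c < 0$, and we look at $t \geq t_* > 0$ (the $t > 0$ branch), I would show $h(t,a) > 0$ for all $t \in [t_*, t_1^+(\bar a))$ and all $|a| \leq \bar a$. When $a \geq 0$: $h$ is convex (or linear/affine if $a=0$), $h(t_*, a) = a t_*^2 + b t_* + c \geq b t_* + c = 2|c| - |c| = |c| > 0$ using $c<0$ so $|c|=-c$, hence $bt_*+c = \frac{2|c|}{b}\cdot b - |c| = |c|$; and $h$ is increasing for $t \geq t_*$ (since vertex is at $-b/(2a) < 0$), so $h(t,a) > 0$ on the whole ray. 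When $-\bar a \leq a < 0$: $h$ is concave with two real roots (discriminant $b^2 - 4ac = b^2 + 4|a||c| > 0$); the larger root is $t_1^+(a) = \frac{b + \sqrt{b^2 + 4|a||c|}}{2|a|}$ — wait, I need to reconcile with the stated $t_1^+(a) = \frac{b}{2|a|}(1 + \sqrt{1 - 4|ac|/b^2})$, which is the formula relevant to the $c>0$ case; for $c<0$ the relevant root is larger, so $[t_*, t_1^+(\bar a))$ is safely inside the positivity interval. I'd verify $h(t_*,a) > 0$ for $a<0$ too and that $h$ stays positive up to its larger root, which exceeds $t_1^+(\bar a)$. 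The $t < 0$ branch is handled by the symmetric argument (we need $h(t,a) < 0$ there, which holds near $t=0$ since $h(0,a)=c<0$).

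\textbf{Case $c > 0$.} Now $h(0,a) = c > 0$. For $t \geq t_* > 0$ we need $h(t,a) > 0$. When $a \geq 0$, $h$ is convex with $h(0)=c>0$ and positive slope region, and one checks $h > 0$ throughout $t > 0$ when the discriminant $b^2 - 4ac < 0$, i.e. $a < b^2/(4c)$, which holds since $a \leq \bar a < b^2/(4|c|)$. When $a < 0$, $h$ is concave with $h(0) = c > 0$; its positive (larger) root is exactly $t_1^+(|a|) = \frac{b}{2|a|}(1 + \sqrt{1 - 4|a|c/b^2})$ — here the discriminant condition $1 - 4|a|c/b^2 > 0$ is guaranteed by $|a| \leq \bar a < b^2/(4|c|)$. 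Since $t_1^+(a)$ is decreasing in $|a|$ (I would check this by differentiating or by monotonicity of the expression), the worst case over $|a| \leq \bar a$ is $|a| = \bar a$, giving the stated endpoint $t_1^+(\bar a)$. Thus $h(t,a) > 0$ on $[t_*, t_1^+(\bar a))$, provided $t_* < t_1^+(\bar a)$.

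\textbf{Non-emptiness.} Finally I must show $t_* < t_1^+(\bar a)$, so the interval is non-empty. Using $t_* = 2|c|/b$ and $t_1^+(\bar a) = \frac{b}{2\bar a}(1 + \sqrt{1 - 4\bar a|c|/b^2})$, I would substitute $s = 4\bar a |c|/b^2 \in (0,1)$ and reduce the inequality to $s < 1 + \sqrt{1-s}$, i.e. $\sqrt{1-s} > s - 1$, which holds trivially since the left side is $\geq 0$ and the right side is $\leq 0$ for $s \in (0,1)$. (More carefully: $t_* < t_1^+(\bar a) \iff \frac{2|c|}{b} < \frac{b}{2\bar a}(1 + \sqrt{1-s}) \iff s < 1 + \sqrt{1-s}$, which is immediate.) The main obstacle I anticipate is bookkeeping: keeping the sign of $c$, the sign of $a$, and the convex/concave orientation of $h$ straight across all four sub-cases, and confirming that in the $c<0$ sub-cases the interval $[t_*, t_1^+(\bar a))$ — whose right endpoint is defined by the $c>0$-type root — indeed lies inside the genuine positivity interval. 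This is routine via the quadratic formula but error-prone, so I would organize it as a short case table.
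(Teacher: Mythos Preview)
Your approach---factor $g(t,a)=t\cdot h(t,a)$ with $h(t,a)=at^2+bt+c$ and track the roots of $h$ across the sign cases---is exactly the paper's idea. But the execution has a genuine gap and a sign error, both in the place that matters.

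\textbf{The missing case.} For $c>0$ you only treat $t>0$. The case $c>0$, $a>0$, $t<0$ is the one where the endpoint $t_1^+(\bar a)$ is actually sharp: there $h$ has two negative real roots and the further one sits at $-t_1^+(a)$, so $h<0$ (and hence $g>0$) only on $(-t_1^+(a),-|t_2(a)|)$. You never verify that this interval contains $(-t_1^+(\bar a),-t_*]$; the one-line ``symmetric argument'' for $t<0$ in the $c<0$ case does not cover its mirror image here.

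\textbf{The sign error.} In the sub-case $c<0$, $a<0$ you wrote the discriminant as $b^2+4|a||c|$; in fact $ac>0$ so the discriminant is $b^2-4|a||c|$, and the larger positive root of $h$ is \emph{exactly} $t_1^+(a)$, not something larger. This is the sharp sub-case by symmetry, so the hand-wave ``the relevant root is larger, so we are safely inside'' is wrong---you are at the boundary, and you need the monotonicity $t_1^+(|a|)\ge t_1^+(\bar a)$ for $|a|\le\bar a$ to close it. Conversely, in $c>0$, $a<0$, $t>0$ the positive root of $h$ is $\tfrac{b}{2|a|}\bigl(1+\sqrt{1+4|a|c/b^2}\bigr)$, strictly larger than $t_1^+(|a|)$, not equal to it.

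The paper avoids this four-way bookkeeping with two devices you should use. First, the symmetry $g(t,a;c)=g(-t,-a;-c)$ lets you assume $c>0$ without loss of generality, halving the cases. Second, since $\partial_a g(t,a)=t^3$, for fixed $t<0$ the function $g$ is strictly decreasing in $a$; so once you check $g(t,\bar a)>0$ on the target interval in $t<0$, positivity for all $a\le\bar a$ follows immediately, with no need to track how the roots of $h$ move with $a$. These two observations reduce the whole proof to checking the single pair $(a,c)=(\bar a,|c|)$ on $t<0$ plus the easy $t>0$ side.
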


The consequences of this Lemma are illustrated in Figure \ref{fig:polynomials}. 

\begin{figure}
\includegraphics[width=0.48\linewidth]{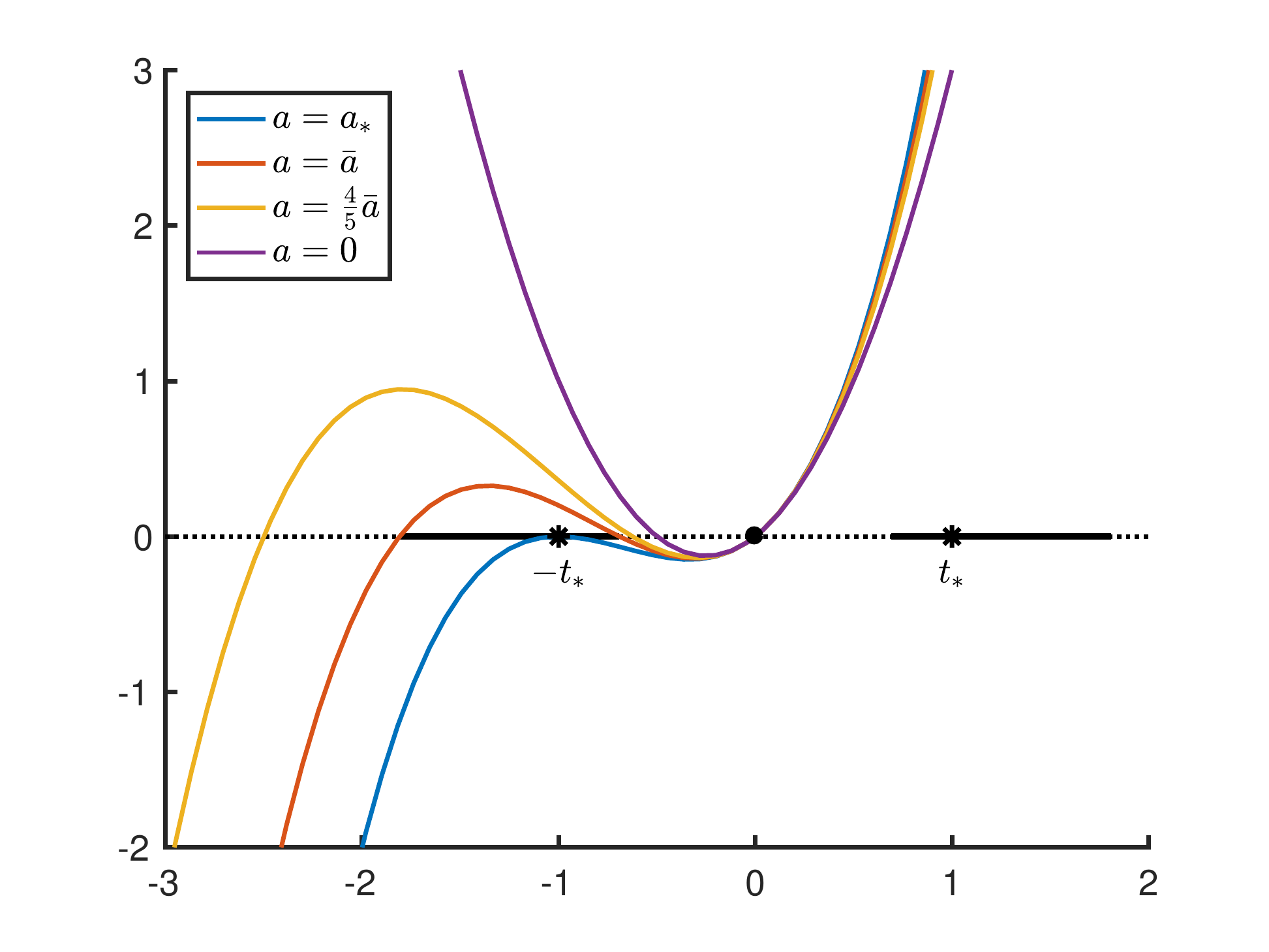}
\includegraphics[width=0.48\linewidth]{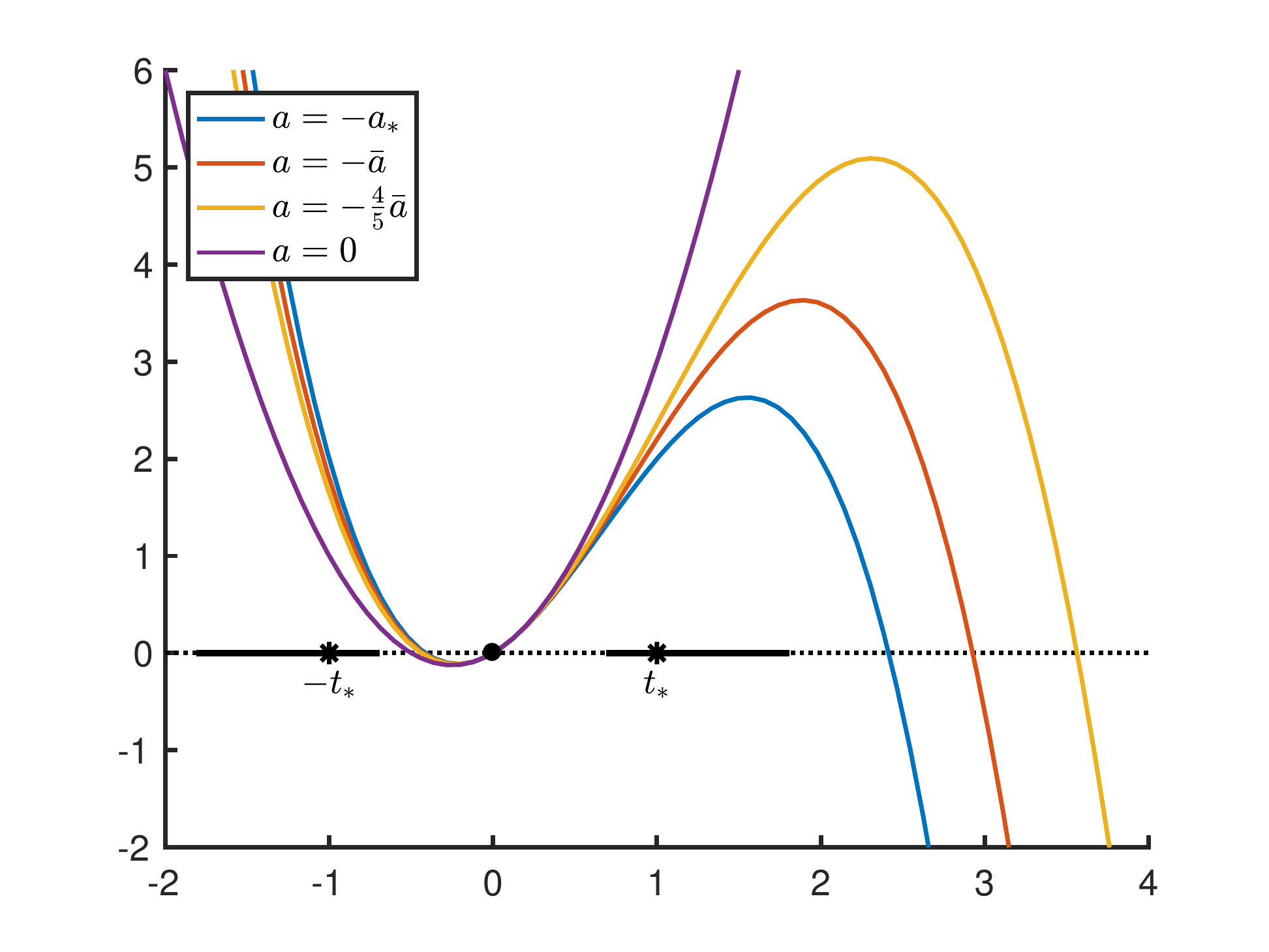}
\caption{Example polynomials $g(t,a)=at^3+bt^2+ct$ from Lemma \ref{lem:cubic}, with $b=2,c=1$, and varying $a$ as indicated in the legend. Here $a_* = b^2/4c$, $t_* = 2c/b$, and we chose $\bar a = 0.8a_*$. The intervals for $t$ from \eqref{interval} where $g(t,a)$ is positive for all $-\bar{a}\leq a\leq \bar{a}$ are shown in solid black lines. Left: $0\leq a\leq a_*$. For $0<a<a_*$, both roots $t_1(a)$, $t_2(a)$ (see \eqref{t12a}) are negative and $-t_*$ lies between them, so the polynomials are positive in some interval surrounding $-t_*$, as well as for all $t>0$. Right: $-a_*\leq a\leq 0$. For $a<0$, the smallest root is negative and satisfies $-t_*<t_1(a)<0$, so the polynomials are positive in an interval surrounding $-t_*$. The largest root is positive, $t_2(a)>0$, and decreases as $a$ decreases but does not hit $t_*$ until $a$ is well below $-\bar a$ (even $-a_*$), so there is also an interval surrounding $t_*$ where the polynomials are positive.}
\label{fig:polynomials}
\end{figure}

\begin{proof}
We can assume without loss of generality that $c>0$, since $g(t,a;c) = g(-t,-a;-c)$ (including here the dependence of $g$ on parameter $c$) and the conditions and statement of the lemma only depend on $|t|$, $|a|$, $|c|$. 

For fixed $a$, the solutions to the cubic polynomial equation $g(t,a)=0$ are $t=\{0,t_1(a),t_2(a)\}$, where
\begin{equation}\label{t12a}
t_1(a) = \frac{-b-\sqrt{b^2-4ac}}{2a}, \qquad t_2(a) = \frac{-b+\sqrt{b^2-4ac}}{2a}\,.
\end{equation} 
Notice that, for $a>0$ (and $c>0$), $t_1^+(a) = |t_1(a)|$, so our argument will proceed by analyzing the behaviour of $t_1(a)$. 
Let $a_* = b^2/4|c|$ be the critical value of $a$ where the discriminant of the roots above vanishes. 
The two varying roots above are imaginary for $a>a_*$, real for $a\leq a_*$, and at the critical value they are equal, $t_1(a_*) = t_2(a_*) = -t_*$. 
In addition we have that for $0 < a \leq a_*$ the varying roots are both negative, $t_1(a),t_2(a)<0$. 
 
We will establish the following result (which is actually stronger than \eqref{gta}):
\begin{equation}\label{interval}
g(t,a) > 0 \quad \text{ for } (t,a) \text{ such that }|t| \in (|t_2(\bar a)|,|t_1(\bar a)|), \;\; |a| \leq \bar a\,.
\end{equation}
This will imply \eqref{gta} after two short calculations. First, we must verify that $t_*\in (|t_2(\bar a)|,|t_1(\bar a)|)$. We do this by checking that $t_1(\bar a)<-t_*<t_2(\bar a)$. 
Differentiating the quadratic equation $at^2+bt+c=0$ implicitly with respect to $a$ gives $\partial_at_i = \frac{-t^2_i}{2at_i + b}$. Substituting the expressions for each root gives 
\begin{equation*}
\partial_a t_2 = \frac{-t_2^2}{\sqrt{b^2-4ac}}\quad < 0,\qquad 
\partial_a t_1 = \frac{-t_1^2}{-\sqrt{b^2-4ac}}\quad > 0\,.
\end{equation*}
Since $\bar a < a_*$ we have $t_2(\bar a) > t_2(a_*) = -t_*$, $t_1(\bar a) < t_1(a_*) = -t_*$. 

Second, we compute directly that $|t_1(\bar a)| - t_* = \frac{\delta+\sqrt{\delta}}{2\bar a/b}$, where $\delta = 1-4|\bar ac|/b^2 > 0$ by assumption. 
Therefore $\left[t_*, t_*+\frac{\delta+\sqrt{\delta}}{2\bar a/b}\right) \subset (|t_2(\bar a)|,|t_1(\bar a)|)$ so result \eqref{interval} implies result \eqref{gta}. 

Now we turn to showing \eqref{interval}. 
Our argument follows the sketch in Figure \ref{fig:polynomials}. We consider the two cases $0\leq a\leq \bar a$ and $-\bar a \leq a < 0$ in turn, and for each case, consider $t>0$ and $t<0$ separately. 

First consider $a \geq 0$ (Figure \ref{fig:polynomials}, left.)  When $t>0$ the cubic function $g(t,a)$ is a sum of nonnegative monomials, at least one of which is strictly positive (namely $bt^2>0$), so the cubic is positive, and \eqref{interval} holds for $a\geq 0$, $t>0$. It remains to consider $t<0$. For $0<a< a_*$,  the roots $t_1(a),t_2(a)$ are both negative and the cubic function is positive in between them, i.e. $g(t,a) > 0$ for $t \in (t_1(a),t_2(a))$. 
Therefore since $0<\bar a < a_*$, it is true that $g(t,\bar a)>0$ for $t \in (t_1(\bar a),t_2(\bar a))$. 
To extend this fixed interval to other values of $a$, we calculate $\partial_a g(t,a) = t^3 < 0$. 
Therefore  $g(t,a)$ strictly increases as $a$ decreases, so $g(t,a)>0$ for $t \in (t_1(\bar a),t_2(\bar a))$ whenever  $a\leq \bar a$, establishing \eqref{interval} for $a\leq\bar a$ and $t<0$. Note the latter argument includes the case $a<0$. 

Next consider $a<0$ (Figure \ref{fig:polynomials}, right). We have already established \eqref{interval} for $t<0$, so we need only consider $t>0$. 
Since $t_2(a)>0$, we see that $g(t,a)>0$ for $t\in (0,t_2(a))$, and $g(t,a)<0$ for $t>t_2(a)$. By direct calculation we find 
$g(|t_1(\bar a)|,-\bar a) = 2c >0$, 
 so $g(t,-\bar a) >0$ for $t\in(0,|t_1(\bar a)|]$. 
 Again we consider how $g$ varies with $a$, to find $\partial_a g(t,a) = t^3 > 0$ for $t>0$. Therefore $g(t,a)>0$ for all $t\in(0,|t_1(\bar a)|]$, $a\geq -\bar a$. Therefore \eqref{interval} holds for $a<0,t>0$ also. 
\end{proof}

 We need a similar lemma to deal with the case when $c=0$. 

 \begin{lemma}\label{lem:cubic0}
 Consider again the cubic function \eqref{gt} but now suppose $c=0$, in addition to $b>0$. 
 Let $0<\bar a < \infty$. 
 Then 
 \begin{equation}\label{gta0}
 g(t,a) > 0 \qquad \forall\: (t,a) \text{ such that }  |t| \in (0,\frac{b}{\bar a} )\,,\quad |a| \leq \bar a\,.
 \end{equation}
 \end{lemma}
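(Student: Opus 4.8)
\textbf{Proof plan for Lemma \ref{lem:cubic0}.}

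The plan is to handle the $c=0$ case directly, since it is genuinely simpler than Lemma \ref{lem:cubic}: with $c=0$ the cubic factors as $g(t,a) = t^2(at+b)$, so the only nontrivial root (besides the double root at $t=0$) is $t = -b/a$. First I would dispose of $t>0$ and $t<0$ separately. For $t>0$: we need $at+b>0$, i.e. $t < -b/a$ when $a<0$, and this is automatic when $a\geq 0$. Since $|a|\le\bar a$, the worst case is $a=-\bar a$, giving the constraint $t < b/\bar a$; so for any $|a|\le\bar a$ and $t\in(0,b/\bar a)$ we have $at+b \geq -\bar a t + b > 0$, hence $g(t,a)=t^2(at+b)>0$. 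For $t<0$: we need $at+b>0$, i.e. $t > -b/a$ when $a>0$, automatic when $a\le 0$; the worst case is $a=\bar a$, giving $t > -b/\bar a$, so for $t\in(-b/\bar a,0)$ and any $|a|\le\bar a$ we again get $at+b\geq \bar a t + b>0$ (using $t>-b/\bar a$), hence $g(t,a)>0$.

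Combining the two cases, $g(t,a)>0$ whenever $0<|t|<b/\bar a$ and $|a|\le\bar a$, which is exactly \eqref{gta0}. The interval $(0,b/\bar a)$ is nonempty since $b>0$ and $\bar a<\infty$. The one-line version of the whole argument is the uniform bound $at+b \ge b-|a|\,|t| \ge b-\bar a|t| > 0$ for $|t|<b/\bar a$, valid for all $|a|\le\bar a$; multiplying by $t^2>0$ (note $t\neq 0$) gives $g(t,a)>0$. I expect there to be no real obstacle here — the factorization $g(t,a)=t^2(at+b)$ makes everything transparent, and the only point requiring a word of care is that $|t|>0$ is needed so that $t^2>0$ strictly rather than just $t^2\ge 0$. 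I would write this as a short paragraph rather than invoking Figure \ref{fig:polynomials} or the quadratic-formula machinery of Lemma \ref{lem:cubic}, since none of that is needed when the linear term vanishes.
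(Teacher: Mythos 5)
Your proof is correct and follows essentially the same route as the paper: factor out $t^2$ and show the remaining linear factor is positive on $|t|<b/\bar a$ uniformly in $|a|\le\bar a$. Your uniform bound $at+b\ge b-\bar a|t|>0$ is a slightly cleaner packaging of the paper's case analysis on the sign of $a$, but the substance is identical.
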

 
 \begin{proof}
 We factor $g(t,a) = at^2(t+\frac{b}{a})$. As a function of $t$, this has a single root at $t=-\frac{b}{a}$ and a double root at $t=0$. If $a>0$ then $g(t,a)$ is negative for $t<-\frac{b}{a}$ and positive otherwise. If $a<0$ then $g(t,a)$ is negative for $t>-\frac{b}{a}$ and positive otherwise. If $a=0$ then $g(t,a)>0$ for all $t\neq 0$. Therefore $g(t,a) >0$ for all $|t| \in (0,b/a)$, and taking the intersection of the intervals over all $|a|\leq \bar a$ gives \eqref{gta0}. 
 \end{proof}

We need one more lemma which will be used to construct an energy barrier. 
\begin{lemma}\label{lem:cubicbounds}
Consider the cubic function \eqref{gt} with $b>0$ and where $\bar a\geq0$ is any nonnegative real number. Then for $t>0$, 
\begin{equation}\label{gbounds}
   g(\pm t,a)
   \quad\geq\quad f(t) \equiv -\bar a t^3 + bt^2 - |c|t  \qquad \text{for }\; |a|\leq \bar a\,.
\end{equation}
\end{lemma}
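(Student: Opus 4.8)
The plan is to prove Lemma \ref{lem:cubicbounds} by a direct termwise comparison between $g(\pm t,a)$ and the candidate lower bound $f(t)$, treating the two signs of the argument and the sign of $a$ uniformly. Writing out $g(t,a)=at^3+bt^2+ct$ and $g(-t,a)=-at^3+bt^2-ct$, I observe that in both cases the quadratic term is exactly $bt^2$, so the quadratic term matches $f(t)$ exactly and can be ignored in the comparison.

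First I would handle the cubic term. For $t>0$ and $|a|\le\bar a$, the cubic contribution to $g(t,a)$ is $at^3$ and to $g(-t,a)$ is $-at^3$; in either case this is $\ge -|a|t^3 \ge -\bar a t^3$, which is exactly the cubic term of $f(t)$. Next I would handle the linear term. The linear contribution to $g(t,a)$ is $ct$ and to $g(-t,a)$ is $-ct$; in either case this is $\ge -|c|t$ for $t>0$, which is exactly the linear term of $f(t)$. Adding the three termwise inequalities (cubic, quadratic with equality, linear) gives $g(\pm t,a)\ge -\bar a t^3+bt^2-|c|t=f(t)$ for all $t>0$ and all $|a|\le\bar a$, which is \eqref{gbounds}.

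There is essentially no obstacle here; the only thing to be careful about is that the bound must hold simultaneously for $g(t,a)$ and $g(-t,a)$ with the same right-hand side, which is why the argument is phrased so that each of the cubic and linear terms is bounded below by its absolute-value version, making the bound insensitive to the sign of the argument and to the sign of $a$. I would also remark that no positivity of $f(t)$ is claimed — the lemma is purely a lower-bound statement, valid for every $\bar a\ge 0$ including $\bar a=0$ — so there is nothing further to verify. The resulting $f(t)$ is the quantity whose positivity interval, when it has one, will later be analyzed via Lemma \ref{lem:cubic} to extract the energy barrier $\Delta H_{\min}(\eta)$.
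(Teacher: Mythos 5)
Your proof is correct and is essentially the same as the paper's: the paper simply writes the termwise comparison as the nonnegativity of the differences $g(t,a)-f(t)=(a+\bar a)t^3+(c+|c|)t$ and $g(-t,a)-f(t)=(\bar a-a)t^3+(|c|-c)t$, which is exactly your cubic-term and linear-term bounds combined.
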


\begin{proof}
For $a\geq -\bar a$, 
\[
g(t,a) - f(t) = (a+\bar a)t^3 + (c+|c|)t \;\;\geq\;\; 0\,.
\]
For $a\leq \bar a$,
\[
g(-t,a) - f(t) = (\bar a - a)t^3 + (|c|-c)t \;\;\geq\;\; 0\,.
\]
Therefore for $|a|\leq \bar a$, both $g(t,a)\geq f(t)$, and $g(-t,a)\geq f(t)$, hold. 
\end{proof}

\subsection{Cubic energy function}\label{sec:energypropositionscubic}

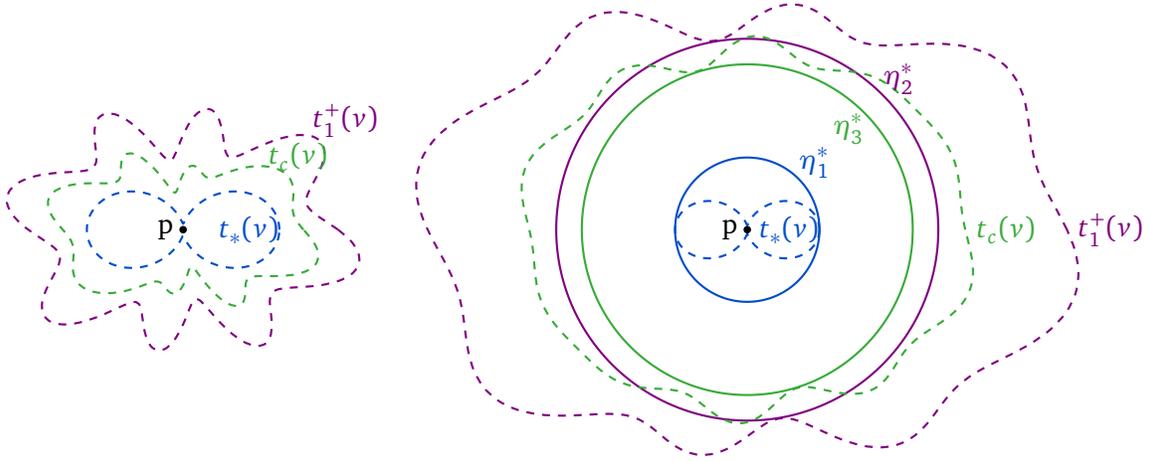
\begin{figure}
    \centering
    \pgfmathsetmacro{\etaA}{0.96}
    \pgfmathsetmacro{\etaB}{2.54}
    \pgfmathsetmacro{\etaC}{2.2}
    \pgfmathsetmacro{\xs}{7.5}
    \pgfmathsetmacro{\xscm}{\xs cm}
    \definecolor{myblue2}{rgb}{0, 0.3 0.8}
    \definecolor{mygreen2}{rgb}{0.22, 0.67, 0.22}
\begin{tikzpicture}
[declare function = {r1(\x) = 0.68 + 0.6*cos(2*\x);
                     r2(\x) = 1.66 + 0.4*cos(2*\x) + 0.35*cos(8*\x+180/2);
                     r3(\x) = 0.5*r1(\x) + 0.5*r2(\x) + 0.07*cos(10*\x+180);
                     r6(\x) = 0.51 + 0.45*cos(2*\x);
                     r4(\x) = 2.5*(1.4 + 0.3*cos(2*\x) + 0.1*cos(8*\x+180/2);
                     r5(\x) = 0.6*(2.1 + 0.1*cos(4*\x+180/2) - 0.07*cos(6*\x) + 0.06*cos(4*\x-180/3) - 0.1*cos(8*\x+5*180/6)+ 0.15*cos(10*\x+180)) + 0.4*r4(\x);}]
    \draw [myblue2,dashed,thick,domain=0:360,samples=100] plot [smooth] ({r1(\x)*cos(\x)},{r1(\x)*sin(\x)});
    \draw [violet,dashed,thick,domain=0:360,samples=100] plot [smooth] ({r2(\x)*cos(\x)},{r2(\x)*sin(\x)});
    \draw [mygreen2,dashed,thick,domain=0:360,samples=100] plot [smooth] ({r3(\x)*cos(\x)},{r3(\x)*sin(\x)});
    \fill [black] (0,0) circle (0.05) node[anchor=east] {p}; 
    \draw (1.28,0) node[myblue2,anchor=east, inner sep=0pt] {$t_*(v)$};
    \draw (2.15,1.45) node[violet] {$t_1^+(v)$};
    \draw (1.52,0.97) node[mygreen2] {$t_c(v)$};

    \draw[myblue2,thick,domain=0:360,samples=100,xshift=\xscm] plot [smooth] ({\etaA*cos(\x)},{\etaA*sin(\x)});
    \draw[violet,thick,domain=0:360,samples=100,xshift=\xscm] plot [smooth] ({\etaB*cos(\x)},{\etaB*sin(\x)});
    \draw[mygreen2,thick,domain=0:360,samples=100,xshift=\xscm] plot [smooth] ({\etaC*cos(\x)},{\etaC*sin(\x)});
    \draw [myblue2,dashed,thick,domain=0:360,samples=100,xshift=\xscm] plot [smooth] ({r6(\x)*cos(\x)},{r6(\x)*sin(\x)});
    \draw [violet,dashed,thick,domain=0:360,samples=100,xshift=\xscm] plot [smooth] ({r4(\x)*cos(\x)},{r4(\x)*sin(\x)});
    \draw [mygreen2,dashed,thick,domain=0:360,samples=100,xshift=\xscm] plot [smooth] ({r5(\x)*cos(\x)},{r5(\x)*sin(\x)});
    \fill [black] (\xs,0) circle (0.05) node[anchor=east] {p}; 
    \draw (\xs+0.96,0) node[myblue2,anchor=east,inner sep=0pt] {$t_*(v)$};
    \draw (\xs+4.25,0) node[violet,anchor=west] {$t_1^+(v)$};
    \draw (\xs+2.9,0) node[mygreen2,anchor=west] {$t_c(v)$};
    \draw ({\xs+\etaA*cos(45)},{\etaA*sin(45)}) node[myblue2,anchor=south west, inner sep = 0pt] {$\eta_1^*$};
    \draw ({\xs+\etaB*cos(45)},{\etaB*sin(45)}) node[violet,anchor=south west, inner sep = 0pt] {$\eta_2^*$};
    \draw ({\xs+\etaC*cos(45)},{\etaC*sin(45)}) node[mygreen2,anchor=north east, inner sep = 0pt] {$\eta_3^*$};
\end{tikzpicture}
\caption{Schematic for a cubic energy function illustrating the relation between $t_*(v)$, $t_c(v)$, $t_1^+(v)$, defined in \eqref{t1plus_cubic}, \eqref{eta3star_cubic}, where $v$ is a unit vector indicating the direction. 
Left: If a weak discriminant condition holds (see \eqref{disc0}), then the energy $H(p+tv)$ is always positive for $t\in (t_*(v),t_1^+(v))$ (Lemmas \ref{lem:cubic}, \ref{lem:cubic0}), with an interior maximum at approximately $t_c(v)$. 
Right: If a strong discriminant condition holds (see \eqref{disc1}), then $t_*(v)\leq \eta_1^* < \eta_2^* \leq t_1^+(v)$, so the energy is positive over the annulus $t\in (\eta_1^*,\eta_2^*)$. Under a slightly stronger discriminant condition (see \eqref{disc3}), we obtain an explicit lower bound on $H(q)-H(p)$ for $|q-p|$ up to some distance $\eta_3^* > (3/2)\eta_1^*$; see \eqref{eta3star_cubic}, \eqref{Hmin_cubic}; furthermore this stronger condition implies that $\eta_3^* < \eta_2^*$. 
}
    \label{fig:squiggly_lines}
\end{figure}

Consider a function $H\in C^3(\R^n)$ that is cubic, with $H(p) = 0$. 
The function can be written, for $v\in\Sball{n}$, $t\in \R$, as
\begin{equation}\label{Hcubic}
H(p+tv) = a(v)t^3 + b(v)t^2 + c(v)t\,,
\end{equation}
where
\begin{equation}\label{abc_cubic}
   a(v) = \oneover{3!}H'''|_{p}(v)\,,
    \quad
   b(v) = \half H''|_p(v)\,,
   \quad 
   c(v) = H'|_p(v)\,.
\end{equation}
Assume that $b(v) \geq \lambda$ for all $v$, and some $\lambda >0$. 
Let's define, for $|v|=1$, 
\begin{equation}\label{t1plus_cubic}
t_*(v) = \frac{2|c(v)|}{b(v)}\,,\qquad\quad 
t_1^+(v) = \frac{b(v)}{2|a(v)|}\left( 1+\sqrt{1-\frac{4 |a(v)c(v)|}{b^2(v)}}\right)\,.
\end{equation}
These are direction-dependent versions of $t_*,t_1^+$ from Lemma \ref{lem:cubic}. Notice that when $c(v)=0$ they also correspond to the endpoints of the interval in \eqref{gta0} in Lemma \ref{lem:cubic0}. 
We wish to use Lemmas \ref{lem:cubic}, \ref{lem:cubic0} to say something about the region where $H(q)>0$. 
Let's make the following assumption: 
\begin{equation}\label{disc0}
    \max_{|v|=1} \frac{|a(v)c(v)|}{b^2(v)} \;\;<\;\; \frac{1}{4}\,.
\end{equation}
We call this the \emph{weak discriminant condition with constant $1/4$}. 
It implies that $|a(v)| < b^2(v)/4|c(v)|$, and $t_1^+(v)\in \R$. 
Then we may apply Lemmas \ref{lem:cubic}, \ref{lem:cubic0} to say that 
\begin{equation}
    H(p+tv) > 0 \qquad \text{for } |t|\in I(v) \equiv (t_*(v),t_1^+(v))\,,
\end{equation}
and furthermore, the interval $I(v)$ is nonempty. See the schematic in Figure \ref{fig:squiggly_lines}.

Because $t_*(v),t_1^+(v)$ are continuous functions of $v$, they each sweep out a barrier in configuration space, that bounds configurations with the same energy as $p$. For example, if we have a continuous path $q(t)$ such that $H(q(t))\leq H(p)$, then we must either have that $|q(t)-p| \leq t_*(v_q)$ for all $t$, where $v_q=(q(t)-p)/|q(t)-p|$ is the unit vector in the direction $q(t)-p$. That is, the path lies inside the barrier formed by $t_*(v)$ over all directions $v$. Or, we must have that $|q(t)-p| \geq t_1^+(v_q)$, i.e. the path lies outside the surface formed by $t_1^+(v)$. 
With some care, (which we apply in proving the more general Proposition \ref{prop:energy}), one can obtain that a path within the inside barrier must satisfy $|q(t)-p| \leq \eta_1^*$, where $\eta_1^*$ is defined by 
\begin{equation}\label{eta1_cubic}
   \eta_1^* = \max_{|v|=1}\frac{2|c(v)|}{b(v)}\,.
\end{equation}

It is sometimes the case that these barriers contain an annulus 
between them. When is this the case? We need that
\[
\max_{|v|=1} t_*(v) < \min_{|v|=1} t_1^+(v)\,.
\]
A sufficient (but not necessary) condition for this to hold (cf. \eqref{t1plus_cubic}) is
\[
\max_{|v|=1}\frac{2|c(v)|}{b(v)} < \left(\min_{|v|=1}\frac{b(v)}{2|a(v)|}\right)\left( 1 + \sqrt{1-4\left(\max_{|v|=1}\frac{|a(v)|}{b(v)}\right)\left(\max_{|v|=1}\frac{|c(v)|}{b(v)}\right)}\right)\,.
\]
Rearranging gives a sufficient condition for an annulus,
\begin{equation}\label{disc1}
    \left(\max_{|v|=1}\frac{|a(v)|}{b(v)}\right)\left(\max_{|v|=1}\frac{|c(v)|}{b(v)}\right) \;\;<\;\; \frac{1}{4}\,.
\end{equation}
We call this the \emph{strong discriminant condition with constant $1/4$}. 
When it holds, we obtain a radius $\eta_2^*$, defined as
\begin{equation}\label{eta2_cubic}
    \eta_2^* = \eta_0^*\Big(1+\sqrt{1-\frac{\eta_1^*}{\eta_0^*}}\Big)\,,
    \end{equation}
with 
\begin{equation}\label{eta0_cubic}
\eta_0^* = \min_{|v|=1}\frac{b(v)}{2|a(v)|}\,,
\end{equation}
such that $H(q) > 0$ for $|q|\in (\eta_1^*,\eta_2^*)$. 
In this case, continuous paths $q(t)$ with energy no greater than $H(p)$, must either lie entirely in the $\eta_1^*$-ball, or entirely outside the $\eta_2^*$-ball. 
For this reason we may refer to $\eta_1^*$, $\eta_2^*$ as the \emph{inner} and \emph{outer} radius, respectively. See Figure \ref{fig:squiggly_lines} for a schematic. The quantity $\eta_0^*$ can also be interpreted as a radius.

\medskip

Now we return to the weak discriminant condition \eqref{disc0}, and ask what this says about the minimum energy barrier to move from $p$ to some point outside a ball of some radius $\eta$. 
We show that for small enough $\eta$, if $|q-p|=\eta$, then $H(q) \geq \Delta H_{\rm min}(\eta)$, where $\Delta H_{\rm min}(\eta)$ is the energy barrier, a function of $\eta$. 

For $t>0$, we have that
\[
H(p+tv) \geq f(v,t)\equiv -|a(v)|t^3 + b(v)t^2 - |c(v)|t\,,
\]
since to get the lower bound, we made negative some terms that may have been positive. This also follows from Lemma \ref{lem:cubicbounds} with fixed $a$. 
A useful way to write $f$ is to factor it as 
\begin{equation}\label{fcubic}
f(v,t) = b(v)t^2\left(1 - \frac{|a(v)|}{b(v)}t - \frac{|c(v)|}{b(v)}\frac{1}{t} \right)\,.
\end{equation}
Since $b(v)\geq \lambda$, and $-|c(v)|/b(v)\geq -\eta_1^*/2$, evaluating \eqref{fcubic} at $t=\eta$ and substituting the known bounds gives 
\[
f(v,\eta) \geq \lambda \eta^2\left(1 - \frac{|a(v)|}{b(v)}\eta - \frac{1}{2}\frac{\eta_1^*}{\eta} \right) \,.
\]
To bound the remaining term, suppose that 
\begin{equation}\label{etabound_cubic}
    \eta \leq \eta_3^*\,,
\end{equation}
where 
\begin{equation}\label{eta3star_cubic}
   \eta_3^* =\minv t_c(v)\,, \qquad\; \text{with} \quad  t_c(v) = \frac{2b(v)}{3|a(v)|}\,.
\end{equation}
We call $\eta_3^*$ the \emph{energy barrier} radius. 
The function $t_c(v)$ is an approximation to the local maximum of  $f(v,t)$ as a function of $t$. It comes from  simplifying the true expression by setting $c(v)=0$.\footnote{The true local maximum occurs at $\frac{b(v)}{3|a(v)|}\Bigg(1 + \sqrt{1-3\frac{|a(v)c(v)|}{b^2(v)}}\Bigg)$.} The motivation for defining this function is that we can't obtain a lower bound that is larger than the local maximum of $f$
in any direction. 

Under condition \eqref{etabound_cubic}, $\eta|a(v)|/b(v) \leq t_c(v)|a(v)|/b(v) = 2/3$, so
\begin{equation}\label{fbound}
f(v,\eta) \geq \frac{1}{3}\lambda\:\eta^2\left(1 - \frac{3}{2}\frac{\eta_1^*}{\eta} \right) \,.
\end{equation}
The right-hand side is a lower bound for the energy barrier when $\eta \leq \eta_3^*$. It strictly increases with $\eta$ and is largest when $\eta=\eta_3^*$. 
Therefore, we have shown that if $|q-p|=\eta\leq\eta_3^*)$, then
\begin{equation}\label{Hmin_cubic}
    H(q)\geq \Delta H_{\rm min}(\eta) = \frac{1}{3}\lambda \eta^2\left( 1 - \frac{3}{2}\frac{\eta_1^*}{\eta}\right)\,.
\end{equation}

So far to get an energy barrier we have only assumed that $b(v)>0$. 
Extra assumptions will now be used  to establish that 
$\Delta H_{\rm min}(\eta)$ is positive. Clearly, $\Delta H_{\rm min}(\eta)>0\Leftrightarrow\eta > \frac{3}{2}\eta_1^*$, which in turn requires that the set $((3/2)\eta_1^*,\eta_3^*]$ be nonempty, i.e.  $\eta_3^* > \frac{3}{2}\eta_1^*$. Direct calculation shows that 
\begin{equation}\label{disc3}
    \eta_3^* > \frac{3}{2}\eta_1^* \qquad \Leftrightarrow \qquad
    \left(\max_{|v|=1}\frac{|a(v)|}{b(v)}\right)\left(\max_{|v|=1}\frac{|c(v)|}{b(v)}\right) \;\;<\;\; \frac{2}{9}\,.
\end{equation}
Therefore, a strong discriminant condition with constant $2/9$ must hold exactly when the energy barrier in \eqref{Hmin_cubic} is positive. 

This is a slightly stronger than condition \eqref{disc1} required for $\eta_1^*<\eta_2^*$, which had a constant of $2/8$. It is stronger because we have approximated the local maximum of $f$, instead of using its true value, and also because we have bounded the terms in $f$ separately, instead of considering a more global optimization over the ball. 

We may also wish to know that $\eta_3^* < \eta_2^*$. This is not necessary for the energy barrier to exist and to be positive, however when it holds, it implies that to continuously move from $p$ to another point with the same energy but that lies outside the ball of radius $(3/2)\eta_1^*$, a path must cross an energy barrier of at least $\Delta H_{\rm min}(\eta_3^*)$. A sufficient condition for $\eta_3^* < \eta_2^*$ is that $t_c(v) < t_1^+(v)$ for each $v$. A necessary and sufficient condition for the latter is a weak discriminant condition with constant $2/9$: 
\begin{equation}\label{disc2}
\max_{|v|=1} \frac{|a(v)c(v)|}{b^2(v)} \;\;<\;\; \frac{2}{9}\,.
\end{equation}
This will clearly be satisfied if the stronger condition \eqref{disc3} holds. However, for a general, non-cubic, function $H$ we won't have such a simple implication. 

\medskip

Let's summarize what we have informally shown 
by some ``temporary'' propositions, which will each have an analogue in the propositions we will prove for a general energy function in the next section.

\paragraph{Proposition I$'$}
Given a cubic energy $H$ of the form \eqref{Hcubic}, with $b(v)>0$ for all $v$. Suppose the weak discriminant condition with constant $1/4$ holds, \eqref{disc0}. Suppose we are given a continuous path $q(t)$ with $q(0)=p$, and such that $H(q(t)) \leq H(p)$. Then $|q(t)-p| \leq \eta_1^*$, where $\eta_1^*$ is defined in \eqref{eta1_cubic}. 

\paragraph{Proposition II$'$}
Given a cubic energy $H$ of the form \eqref{Hcubic}, with $b(v)>0$ for all $v$. Suppose the strong discriminant condition with constant $1/4$ holds, \eqref{disc1}. Then $H(q) > 0$ for $|q|\in (\eta_1^*, \eta_2^*)$, where $\eta_1^*, \eta_2^*$ are defined in \eqref{eta1_cubic},\eqref{eta2_cubic}, and furthermore this annulus is nonempty. 
\bigskip

As a corollary, we obtain that if we have a continuous path $q(t)$ such that $H(q(t)) \leq H(p)$ and $|q(0)-p| > \eta_1^*$, then  $|q(t)-p| \geq \eta_2^*$, i.e. this path is sufficiently far away from $p$. 

\paragraph{Proposition III$'$}
Given a cubic energy $H$ of the form \eqref{Hcubic}, with $b(v)>0$ for all $v$. 
Suppose the strong discriminant condition with constant $2/9$ holds, \eqref{disc2}. Then if $|q-p| = \eta\in ((3/2)\eta_1^*,\eta_3^*]$, we must have $H(q) \geq \Delta H_{\rm min}(\eta)$, where $\Delta H_{\rm min}(\eta)$ is given in \eqref{Hmin_cubic}. Furthermore, $\Delta H_{\rm min}(\eta) > 0$, and the set  $((3/2)\eta_1^*,\eta_3^*]$ is nonempty.

\subsection{General energy function}\label{sec:energypropositionsgeneral}

Now we consider the general forms of the propositions, where we make no assumptions on the specific functional form of $H$ except that it is three times continuously differentiable. Our reasoning will follow that of the cubic case above, but in order to use the Taylor expansion, we must bound the third derivative everywhere in the
ball in which we wish to control the energy.

Consider a function $H\in C^3(\R^n)$ with $H(p)=0$. 
Assume that 
\begin{equation}\label{Hpp}
    \min_{|v|=1}H''|_p(v) \geq 2\lambda
\end{equation}
for some $\lambda >0$.  
Let us Taylor-expand the function along some particular direction $v\in \Sball{n}$ using Taylor's theorem for one-dimensional functions, as
\begin{equation}\label{Hcubic_expand}
H(p+tv) = t^3 a_t(v) + t^2 b(v) + tc(v)\,.
\end{equation}
Here
\begin{equation}\label{abc}
b(v) = \half H''|_p(v)\,,\quad c(v) = H'|_p(v)\,,\quad\text{and }\; a_t = \oneover{3!}H'''|_{p+sv}(v) \;\;\text{for some }s\in [0,t]\,.
\end{equation}
This expansion is reminiscent of \eqref{Hcubic}, but now the coefficient of the cubic term can vary with $t$. To handle this variation, it is convenient to define the function 
\begin{equation}\label{abar}
    \bar a(v,r) = \max_{|s|\leq r}\frac{1}{3!}\Big| H'''|_{p+sv}(v)\Big|\,.
\end{equation}
We have $\bar a(v,r)< \infty$ since the maximum is over a compact set, and, by construction, $|a_t(v)|\leq \bar a(v,r)$ for $t\leq r$. 
This function will play the role of $\bar a$ in Lemmas \ref{lem:cubic}, \ref{lem:cubic0}. It replaces $|a(v)|$ from Section \ref{sec:energypropositionscubic}, and introduces a radius dependence in many of the quantities we studied earlier. 
Notice that, by construction, $\bar a(v,r)$ is an increasing function of $r$, and it is also continuous in $v,r$. 

Let us define 
\begin{equation}\label{tstar}
    t_*(v) = \frac{2|c(v)|}{b(v)}\,.
\end{equation}
This is the same definition given in \eqref{t1plus_cubic} for a cubic energy. 
We also define, for $r$ such that $\max_{|v|=1}\frac{4\bar a(v,r)|c(v)|}{b^2(v)}<1$, 
\begin{equation}\label{t1plus}
t_1^+(v,r) = \frac{b(v)}{2\bar a(v,r)}\left( 1+\sqrt{1-\frac{4\bar a(v,r)|c(v)|}{b^2(v)}} \right)\,.
\end{equation}
This is the generalization of $t_1^+(v)$ in \eqref{t1plus_cubic}. 
In its domain of definition, it is a continuous function, and furthermore it is a decreasing function of $r$, since $\bar a(v,r)$ is an increasing function of $r$.

Continuing, we define
\begin{equation}\label{tc}
    t_c(v,r) = \frac{2b(v)}{3\bar a(v,r)}\,.
\end{equation}
This generalizes the approximate critical point $t_c(v)$ from \eqref{eta3star_cubic}. It is also a continuous in $v,r$, and a decreasing function of $r$. 

We now turn to generalizations of the inner, outer, and energy barrier radii, from \eqref{eta1_cubic}, \eqref{eta2_cubic}, \eqref{eta3star_cubic}. 
Finding these radii is no longer as straightforward, since they were previously computed from the cubic coefficient $|a(v)|$, but now this coefficient depends on the radius $r$ over which we are looking. Therefore, we must be careful to ensure that the values of these radii estimated using $\bar a(v,r)$, also lie within the radius $r$ over which we control the cubic coefficient. 

Let
\begin{equation}\label{eta1star}
    \eta_1^* = \max_{|v|=1} t_*(v)\,.
\end{equation}
This will be the inner radius. It has not changed from \eqref{eta1_cubic}. 

Let 
\begin{equation}\label{eta2star}
  \eta_2^* = \min_{t_1^+(r) \leq r} r\,, 
    \qquad \text{with } \quad 
    t_1^+(r) = \min_{|v|=1}t_1^+(v,r)\,.
\end{equation}
(Set $\eta_2^* = \infty$ if the minimum doesn't exist within the domain of $t_1^+(r)$.) 
The number $\eta_2^*$ will be the outer radius. It is no longer as simple to compute as in  \eqref{eta2_cubic}, because we must ensure that it is not bigger than the radius used to estimate it. 

Let 
\begin{equation}\label{eta3star}
    \eta_3^* = \min_{|v|=1} \eta_3^*(v)\,, 
    \qquad \text{with } \quad 
    \eta_3^*(v) = \min_{t_c(v,r)\leq r}r\,.
\end{equation}
(Set $\eta_3^*,\eta_3^*(v) = \infty$ whenever the minimum defining these quantities doesn't exist.)
The number $\eta_3^*$ will be the energy barrier radius, a generalization of \eqref{eta3star_cubic}. 
For each direction,  $\eta_3^*(v)$ is the maximum radius 
computed using the formula in \eqref{tc}, but such that the radius is still no larger than the radius $r$ over which we control the cubic coefficient. A useful way to control $\eta_3^*(v)$ is via the following lemma.

\begin{lemma}\label{lem:eta3starv}
If $t_c(v,\eta)< \eta$ for some $\eta>0$, then $t_c(v,\eta)\leq\eta_3^*(v)<\eta$. 
Furthermore $t_c(v,\eta_3^*(v)) = \eta_3^*(v)$.
\end{lemma}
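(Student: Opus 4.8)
\textbf{Proof plan for Lemma \ref{lem:eta3starv}.}

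The plan is to analyze the function $r \mapsto t_c(v,r)$ and its relationship to the diagonal $r \mapsto r$, using the monotonicity properties already established. Recall from \eqref{tc} that $t_c(v,r) = \frac{2b(v)}{3\bar a(v,r)}$, that $\bar a(v,r)$ is nonnegative, continuous, and \emph{increasing} in $r$ (by the discussion following \eqref{abar}), hence $t_c(v,r)$ is continuous and \emph{decreasing} in $r$. Meanwhile $r$ itself is strictly increasing. Therefore the difference $\phi(r) := t_c(v,r) - r$ is strictly decreasing and continuous on the domain where $\bar a(v,r) > 0$. First I would dispose of the degenerate case: if $\bar a(v,\eta) = 0$ then $H'''|_{p+sv}(v) = 0$ for all $|s| \le \eta$, so $t_c(v,\eta) = +\infty$, contradicting the hypothesis $t_c(v,\eta) < \eta$; hence $\bar a(v,\eta) > 0$ and $t_c(v,r)$ is genuinely finite and decreasing near $r = \eta$.

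Next I would locate $\eta_3^*(v)$. By definition \eqref{eta3star}, $\eta_3^*(v) = \min\{r : t_c(v,r) \le r\}$, i.e. $\eta_3^*(v) = \min\{r : \phi(r) \le 0\}$. The hypothesis says $\phi(\eta) = t_c(v,\eta) - \eta < 0$, so the set $\{r : \phi(r) \le 0\}$ is nonempty and contains $\eta$; since $\phi$ is strictly decreasing, this set is a half-line $[\eta_3^*(v), \infty)$ (intersected with the domain), and its infimum $\eta_3^*(v)$ is attained because $\phi$ is continuous. This immediately gives $\eta_3^*(v) \le \eta$, and in fact $\eta_3^*(v) < \eta$ since $\phi(\eta) < 0$ strictly while $\phi(\eta_3^*(v)) = 0$ by continuity and strict monotonicity (if we had $\eta_3^*(v) = \eta$ then $\phi$ would not be $\le 0$ on any smaller argument, consistent, but strictness of $\phi$ at $\eta$ forces the crossing to be strictly before $\eta$). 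The same continuity-plus-strict-monotonicity argument shows $\phi(\eta_3^*(v)) = 0$ exactly, i.e. $t_c(v,\eta_3^*(v)) = \eta_3^*(v)$, which is the final claim. Finally, $t_c(v,\eta) \le \eta_3^*(v)$ follows because $t_c(v,\cdot)$ is decreasing and $\eta_3^*(v) < \eta$, so $t_c(v,\eta) \le t_c(v,\eta_3^*(v)) = \eta_3^*(v)$.

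The only mild subtlety — and the step I would be most careful about — is making sure that $\eta_3^*(v)$ lies in the domain where $t_c(v,r)$ is defined and finite, i.e. that $\bar a(v,r) > 0$ on the relevant range. Since $\bar a(v,r)$ is increasing in $r$ and $\bar a(v,\eta) > 0$ (established above), we have $\bar a(v,r) \ge \bar a(v,\eta)/{\rm something}$... more precisely $\bar a(v,r) > 0$ for all $r$ with $\eta_3^*(v) \le r \le \eta$ only needs $\bar a(v,\eta_3^*(v)) > 0$; but if $\bar a(v,\eta_3^*(v)) = 0$ then $t_c(v,\eta_3^*(v)) = \infty \ne \eta_3^*(v)$, so once we know the fixed-point equation holds we are fine, and to get there we can restrict attention to $r \in (r_0, \eta]$ where $r_0 = \inf\{r : \bar a(v,r) > 0\}$; on this interval $\phi$ is strictly decreasing and continuous, $\phi(\eta) < 0$, and as $r \downarrow r_0$ we have $t_c(v,r) \uparrow \infty$ so $\phi(r) \to +\infty$, giving a unique zero in $(r_0,\eta)$, which is exactly $\eta_3^*(v)$. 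This confirms all three assertions: $t_c(v,\eta) \le \eta_3^*(v) < \eta$ and $t_c(v,\eta_3^*(v)) = \eta_3^*(v)$.
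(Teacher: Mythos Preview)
Your proof is correct and follows essentially the same approach as the paper: both define the difference $\phi(r)=t_c(v,r)-r$ (the paper calls it $g$), note it is continuous and strictly decreasing, observe it is negative at $r=\eta$ and positive for small $r$, and invoke the intermediate value theorem to locate $\eta_3^*(v)$ as the unique zero. You are somewhat more careful than the paper about the possibility that $\bar a(v,r)$ vanishes for small $r$, but this is a refinement of the same argument rather than a different route.
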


\begin{proof}
The function $g(r)\equiv t_c(v,r)-r$ is continuous and decreasing in $r$. 
We are given that $g(\eta) < 0$, and we also have that $g(0) = 2b(v) / (3\bar a(v,0)) > 2\lambda / (3\bar a(v,0)) > 0$, Therefore, $g(r)$ must first cross zero at some point $\eta_3^*(v) \in (0,\eta)$ at which the desired inequalities and equality hold.
\end{proof}

We will now study how the radii are related. There are two types of conditions needed. 




\begin{definition*}
A \emph{weak discriminant condition with constant $C$ in a ball of radius $r$}, call this $\weak{C,r}$, is a condition of the form 
\begin{equation}\label{disc_strong_general}
    \max_{|v|=1} \frac{\bar a(v,r)|c(v)|}{b^2(v)} \;\; < \;\; C\,.
\end{equation}
\end{definition*}

\begin{definition*}
A \emph{strong discriminant condition with constant $C$ in a ball of radius $r$}, call this $\strong{C,r}$, is a condition of the form 
\begin{equation}\label{disc_weak_general}
  \left(  \max_{|v|=1} \frac{\bar a(v,r)}{b(v)}\right)  \
  \left( \max_{|v|=1} \frac{|c(v)|}{b(v)} \right)\;\; < \;\; C\,.
\end{equation}
\end{definition*}

\begin{lemma}\label{lem:strongweakrelations}
We have the following implications:
\begin{enumerate}[(i),nosep]
    \item $\strong{C,r} \Rightarrow \weak{C,r}$;
    \item $\strong{C,r} \Rightarrow \strong{C,r'}$ for all $r' \leq r$;
    \item $\weak{C,r} \Rightarrow \weak{C,r'}$ for all $r' \leq r$;
    \item $\strong{C,r} \Rightarrow \strong{C',r}$ for all $C' \geq r$;
    \item $\weak{C,r} \Rightarrow \weak{C',r}$ for all $C' \geq r$;
    \item $\strong{C,r} \Rightarrow \strong{C,r^+}$ for some $r^+ > r$. 
    \item $\weak{C,r} \Rightarrow \weak{C,r^+}$ for some $r^+ > r$. 
\end{enumerate}
\end{lemma}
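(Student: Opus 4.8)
The plan is to reduce all seven implications to three facts about the ingredients, all recorded in the discussion around \eqref{abar}: (a) $b(v)\ge\lambda>0$ for every $v$, so every quotient that appears is well defined and bounded, and (being continuous in $v$) attains a finite maximum over the compact sphere $\Sball{n}$; (b) for fixed $v$, $\bar a(v,r)$ is nondecreasing in $r$; (c) $\bar a(v,r)$ is jointly continuous in $(v,r)$. I would open the proof by stating these, then dispatch (i)--(vii) in turn.

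For (i): for each $v$, $\tfrac{\bar a(v,r)\,|c(v)|}{b^2(v)}=\tfrac{\bar a(v,r)}{b(v)}\cdot\tfrac{|c(v)|}{b(v)}\le\big(\max_{|v'|=1}\tfrac{\bar a(v',r)}{b(v')}\big)\big(\max_{|v'|=1}\tfrac{|c(v')|}{b(v')}\big)<C$ by $\strong{C,r}$; taking the maximum over $v$ yields $\weak{C,r}$. For (ii) and (iii): by (b), $\bar a(v,r')\le\bar a(v,r)$ when $r'\le r$, so both maxima in the definition of $\strong{\cdot}$ and the single maximum in $\weak{\cdot}$ can only decrease when $r$ is lowered to $r'$, and the strict inequality $<C$ survives. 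For (iv) and (v): immediate, since the relevant defining quantity is $<C\le C'$. These four parts are pure bookkeeping; the only thing to watch is which factors carry an $r$-dependence (only $\bar a$ does) and in which direction it moves.

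The substance is in (vi) and (vii), which are handled identically. Put $\phi(r)=\big(\max_{|v|=1}\tfrac{\bar a(v,r)}{b(v)}\big)\big(\max_{|v|=1}\tfrac{|c(v)|}{b(v)}\big)$ and $\chi(r)=\max_{|v|=1}\tfrac{\bar a(v,r)\,|c(v)|}{b^2(v)}$. By (a) and (c) the maps $(v,r)\mapsto\bar a(v,r)/b(v)$ and $(v,r)\mapsto \bar a(v,r)|c(v)|/b^2(v)$ are jointly continuous, and $\Sball{n}$ is compact, so $r\mapsto\max_{|v|=1}(\,\cdot\,)$ is continuous in $r$; the second factor of $\phi$ is a finite constant independent of $r$, so $\phi$ is continuous as well. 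Then $\strong{C,r}$ reads $\phi(r)<C$ and $\weak{C,r}$ reads $\chi(r)<C$, and continuity of $\phi$ (resp. $\chi$) at $r$ yields some $r^+>r$ with $\phi(r^+)<C$ (resp. $\chi(r^+)<C$), i.e. $\strong{C,r^+}$ (resp. $\weak{C,r^+}$).

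I expect the only real obstacle to be the small auxiliary fact used in (vi)--(vii): the maximum of a jointly continuous function over a fixed compact set depends continuously on the parameter. I would prove this inline in a line or two from uniform continuity of the integrand on compact sets (or simply cite it), since the rest of the argument rests on it. Beyond that, the proof is entirely a matter of tracking the $r$-monotonicity and $r$-continuity of $\bar a$, and no estimate is delicate.
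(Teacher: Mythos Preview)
Your proposal is correct and follows the same reasoning as the paper, just with considerably more detail: the paper's proof is the two-line statement ``Most of these are obvious. The latter two implications follow because the weak and strong conditions are open, so we can always expand the radius a little bit.'' Your treatment of (i)--(v) unpacks ``obvious'' via the monotonicity of $\bar a(v,\cdot)$ and trivial inequalities, and your continuity argument for (vi)--(vii) is precisely what makes the conditions ``open'' in $r$; you also correctly read the evident typo $C'\ge r$ in (iv)--(v) as $C'\ge C$.
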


\begin{proof}
Most of these are obvious. The latter two implications follow because the weak and strong conditions are open, so we can always expand the radius a little bit. 
\end{proof}

A useful fact is that if $\weak{\frac{1}{4},r}$ holds for some $r$, then $t_1^+(v,r)$ is defined for that value of $r$. This follows directly from the definition of $t_1^+$, \eqref{t1plus}. 





\begin{lemma}\label{lem:eta1eta2}
Suppose that $\strong{\frac{1}{4},\eta_1^*}$ holds. Then $\eta_1^* < \eta_2^*$. 
\end{lemma}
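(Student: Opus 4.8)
The plan is to show that the distance $\eta_1^*$, computed from the first-order and second-order data at $p$, lies strictly inside the domain where the outer radius $\eta_2^*$ is defined, and that the defining inequality for $\eta_2^*$ is satisfied there. First I would use Lemma~\ref{lem:strongweakrelations}(i) to pass from $\strong{\frac14,\eta_1^*}$ to $\weak{\frac14,\eta_1^*}$, which by the remark just above this lemma guarantees that $t_1^+(v,\eta_1^*)$ is defined for every $|v|=1$, hence $t_1^+(\eta_1^*) = \min_{|v|=1}t_1^+(v,\eta_1^*)$ is a well-defined positive number. The goal is then to verify that $t_1^+(\eta_1^*) \leq \eta_1^*$ fails in the right direction, i.e.\ actually that $t_1^+(\eta_1^*) > \eta_1^*$, so the infimum defining $\eta_2^*$ in \eqref{eta2star} is attained at some radius strictly larger than $\eta_1^*$; combined with continuity/monotonicity this yields $\eta_2^* > \eta_1^*$.

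The key computation is to bound $t_1^+(v,\eta_1^*)$ from below for each fixed $v$. From \eqref{t1plus}, using that $\bar a(v,r)$ is increasing in $r$ and the strong condition $\strong{\frac14,\eta_1^*}$, I would write $t_1^+(v,\eta_1^*) \geq \dfrac{b(v)}{2\bar a(v,\eta_1^*)}$ (dropping the square-root term, which is nonnegative and real by the weak condition). Then I would estimate $\dfrac{b(v)}{2\bar a(v,\eta_1^*)}$ against $\eta_1^* = \max_{|w|=1} t_*(w) = \max_{|w|=1}\frac{2|c(w)|}{b(w)}$: the strong discriminant condition says $\left(\max_v \frac{\bar a(v,\eta_1^*)}{b(v)}\right)\left(\max_v \frac{|c(v)|}{b(v)}\right) < \frac14$, so $\max_v \frac{\bar a(v,\eta_1^*)}{b(v)} < \dfrac{1}{4\,\max_v\frac{|c(v)|}{b(v)}} = \dfrac{1}{2\eta_1^*}$, whence for every $v$, $\dfrac{b(v)}{2\bar a(v,\eta_1^*)} > \eta_1^*$. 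Taking the minimum over $v$ gives $t_1^+(\eta_1^*) > \eta_1^*$ (I should handle the degenerate case $\eta_1^*=0$ separately — there $t_*(v)\equiv 0$, $c(v)\equiv 0$, and $t_1^+(v,0) = b(v)/\bar a(v,0) > 0$, so trivially $\eta_2^* > 0 = \eta_1^*$).

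It remains to convert $t_1^+(\eta_1^*) > \eta_1^*$ into $\eta_2^* > \eta_1^*$. I would argue that $r \mapsto t_1^+(r) - r$ is continuous and strictly decreasing on its domain (decreasing because $t_1^+(v,r)$ decreases in $r$ for each $v$, a fact noted after \eqref{t1plus}, and $-r$ is strictly decreasing), is positive at $r=\eta_1^*$, so the set $\{r : t_1^+(r)\leq r\}$ — whose minimum is $\eta_2^*$ by \eqref{eta2star} — contains no point $\leq \eta_1^*$; if the set is empty then $\eta_2^* = \infty > \eta_1^*$ by convention, and otherwise its minimum (a crossing point of $t_1^+(r)-r$) exceeds $\eta_1^*$. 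Either way $\eta_2^* > \eta_1^*$. The main obstacle I anticipate is bookkeeping around the domain of definition of $t_1^+(v,r)$: one must check that the weak condition persists on a neighbourhood of $\eta_1^*$ (using Lemma~\ref{lem:strongweakrelations}(vii)) so that $t_1^+(r)$ is defined on an interval around $\eta_1^*$ and the monotonicity/continuity argument actually has a domain to run on; this is routine but needs to be stated carefully to avoid a vacuous or ill-posed infimum.
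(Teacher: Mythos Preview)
Your proposal is correct and follows essentially the same approach as the paper: both arguments reduce to showing $t_1^+(v,\eta_1^*) > \eta_1^*$ for every unit $v$, and both derive this from the strong discriminant condition at radius $\eta_1^*$. Your bounding step is in fact slightly cleaner than the paper's --- you simply drop the nonnegative square-root term in \eqref{t1plus} and use $\max_v \frac{\bar a(v,\eta_1^*)}{b(v)} < \frac{1}{2\eta_1^*}$ directly, whereas the paper retains the full expression and rearranges --- and you are more explicit than the paper about why $t_1^+(\eta_1^*) > \eta_1^*$ actually forces $\eta_2^* > \eta_1^*$ (the paper asserts this step without justification; your monotonicity argument fills it in, and in fact monotonicity of $t_1^+$ alone suffices, so you need not worry about continuity or the domain issue you flag at the end).
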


\begin{proof}
To show this, we must show that $t_1^+(v,\eta_1^*) > \eta_1^*$ for all $|v|=1$. But we have
\begin{align*}
t_1^+(v,\eta_1^*) &\geq \left(\min_{|v|=1}\frac{b(v)}{2\bar a(v,\eta_1^*)} \right) \left( 1+\sqrt{1-\max_{|v|=1}\frac{4\bar a(v,\eta_1^*)|c(v)|}{b^2(v)}}\right)\\
&\geq \left(\max_{|v|=1}\frac{2\bar a(v,\eta_1^*)}{b(v)} \right)^{-1} \left( 1+\sqrt{1-4\left(\max_{|v|=1}\frac{\bar a(v,\eta_1^*)}{b(v)}\right)\left(\max_{|v|=1}\frac{|c(v)|}{b(v)}\right)}\right)\,.
\end{align*}
Therefore, a sufficient condition for $\eta_2^*>\eta_1^*$ is that the final expression be $>\eta_1^*$ for all $v$. Rearranging to isolate $\left(\max_{|v|=1} \frac{\bar a(v,\eta_1^*)}{b(v)}\right)\left(\max_{|v|=1} \frac{|c(v)|}{b(v)}\right)$ gives the sufficient condition 
\[
    \left(\max_{|v|=1} \frac{\bar a(v,\eta_1^*)}{b(v)}\right)\left(\max_{|v|=1} \frac{|c(v)|}{b(v)}\right) < \frac{1}{4}\,.
\]
\end{proof}

\begin{lemma}\label{lem:eta1eta3}
Suppose that $\strong{\frac{2}{9},\frac{3}{2}\eta_1^*}$ holds. Then $\eta_3^* > \frac{3}{2}\eta_1^*$. 
\end{lemma}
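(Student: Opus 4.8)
The plan is to mimic the proof of Lemma \ref{lem:eta1eta2}, which showed $\eta_1^* < \eta_2^*$ from a strong discriminant condition at radius $\eta_1^*$, but now track the radius dependence of $\bar a$ more carefully, since we must evaluate everything at the candidate radius $\frac32\eta_1^*$ rather than at $\eta_1^*$. By the characterization \eqref{disc3} established in the cubic analysis (and its proof technique carries over verbatim here, since $\eta_3^*(v)$ is defined by the fixed-point equation $t_c(v,\eta_3^*(v)) = \eta_3^*(v)$), we know that $\eta_3^* > \frac32\eta_1^*$ will follow once we show $t_c(v, \frac32\eta_1^*) > \frac32\eta_1^*$ for every unit vector $v$; Lemma \ref{lem:eta3starv} then upgrades this pointwise inequality to $\eta_3^*(v) > \frac32\eta_1^*$ (applying the lemma with $\eta = \frac32\eta_1^*$, noting $g(\eta) = t_c(v,\eta) - \eta > 0$ forces the first zero crossing to occur at some radius $> \eta$), and taking the minimum over $v$ gives $\eta_3^* > \frac32\eta_1^*$.

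So the crux is to show $t_c(v, \frac32\eta_1^*) > \frac32\eta_1^*$ for all $|v|=1$. Unwinding \eqref{tc}, this is
\[
\frac{2 b(v)}{3\,\bar a(v, \frac32\eta_1^*)} \;>\; \frac32 \eta_1^*\,,
\qquad\text{i.e.}\qquad
\frac{\bar a(v,\frac32\eta_1^*)}{b(v)} \;<\; \frac{4}{9\,\eta_1^*}\,.
\]
Now I would use $\eta_1^* = \max_{|w|=1} \frac{2|c(w)|}{b(w)}$ from \eqref{eta1star}, so $\frac{1}{\eta_1^*} = \left(\max_{|w|=1}\frac{2|c(w)|}{b(w)}\right)^{-1}$. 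Substituting, the desired inequality becomes
\[
\frac{\bar a(v,\frac32\eta_1^*)}{b(v)} \cdot \max_{|w|=1}\frac{2|c(w)|}{b(w)} \;<\; \frac{4}{9}\,,
\]
which, after taking the maximum over $v$ on the left and dividing the $2$ across, is exactly
\[
\left(\max_{|v|=1}\frac{\bar a(v,\frac32\eta_1^*)}{b(v)}\right)\left(\max_{|w|=1}\frac{|c(w)|}{b(w)}\right) \;<\; \frac29\,,
\]
which is precisely the hypothesis $\strong{\frac29, \frac32\eta_1^*}$. (If $\eta_1^* = 0$ then $c \equiv 0$, $t_c(v,r) \geq 2\lambda/(3\bar a(v,r)) > 0$ for all $r$, so $\eta_3^*(v) = \infty$ by the convention in \eqref{eta3star}, and $\eta_3^* = \infty > 0 = \frac32\eta_1^*$ trivially; so assume $\eta_1^* > 0$.)

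The main obstacle — really the only subtlety — is making sure the radius at which $\bar a$ is evaluated in the strong condition matches the radius appearing in the fixed-point equation defining $\eta_3^*(v)$, and that we never need to control $\bar a$ at a radius larger than $\frac32\eta_1^*$. This works out because $t_c(v,r)$ is a \emph{decreasing} function of $r$ (since $\bar a(v,r)$ is increasing), so $g(r) = t_c(v,r) - r$ is strictly decreasing; hence once $g(\frac32\eta_1^*) > 0$, the crossing $\eta_3^*(v)$ satisfies $\frac32\eta_1^* < \eta_3^*(v)$ and we are done without ever evaluating $\bar a$ beyond $\frac32\eta_1^*$, exactly as in Lemma \ref{lem:eta3starv}. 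I would write the argument in that order: (1) dispose of $\eta_1^* = 0$; (2) rewrite the target $\eta_3^* > \frac32\eta_1^*$ via Lemmas \ref{lem:eta3starv} and a minimum over $v$ as the pointwise claim $t_c(v,\frac32\eta_1^*) > \frac32\eta_1^*$; (3) algebraically reduce that claim to $\strong{\frac29,\frac32\eta_1^*}$ using the definition \eqref{eta1star} of $\eta_1^*$.
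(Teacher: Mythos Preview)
Your proof is correct and follows essentially the same route as the paper's (very terse) argument: reduce $\eta_3^* > \tfrac32\eta_1^*$ to the pointwise inequality $t_c(v,\tfrac32\eta_1^*) > \tfrac32\eta_1^*$ via the monotonicity of $r\mapsto t_c(v,r)-r$, then unwind the definition of $t_c$ and $\eta_1^*$ to recover exactly $\strong{\tfrac29,\tfrac32\eta_1^*}$. One small slip: in the $\eta_1^*=0$ case you assert $\eta_3^*(v)=\infty$, but $t_c(v,r)>0$ for all $r$ does not preclude $t_c(v,r)\le r$ for large $r$; what you actually need (and have) is $t_c(v,0)>0$, which already gives $\eta_3^*(v)>0=\tfrac32\eta_1^*$.
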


\begin{proof}
A necessary and sufficient condition for the conclusion to hold is $t_c(v,\frac{3}{2}\eta_1^*) > \frac{3}{2}\eta_1^*$ for all $|v|=1$. This is equivalent to asking that 
\[
\left(\maxv \frac{\bar a(v,\eta_1^*)}{b(v)}\right)\left(\maxv \frac{|c(v)|}{b(v)}\right) < \frac{2}{9}\,,
\]
which is the condition given in the lemma. 
\end{proof}

\begin{lemma}\label{lem:eta3eta2}
Suppose that for some number $\eta_3$, we have 
$\eta_3\leq\eta_3^*$ and that $\weak{\frac{2}{9},\eta_3}$ holds. Then $\eta_2^* > \eta_3$. 
\end{lemma}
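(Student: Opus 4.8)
The plan is to read off $\eta_2^* > \eta_3$ straight from the definition \eqref{eta2star}: since $\eta_2^*$ is the smallest radius $r$ in the domain of $t_1^+(r)$ with $t_1^+(r)\le r$ (and $+\infty$ if no such $r$ exists), it suffices to prove that $t_1^+(r)$ is defined and satisfies $t_1^+(r) > r$ for every $r$ with $0 < r \le \eta_3$. First I would dispose of the domain issue: by Lemma~\ref{lem:strongweakrelations}(iii), $\weak{\frac{2}{9},\eta_3}$ gives $\weak{\frac{2}{9},r}$ for all $r\le\eta_3$, and since $2/9<1/4$ this gives $\weak{\frac{1}{4},r}$, so (by the remark after Lemma~\ref{lem:strongweakrelations}) $t_1^+(v,r)$, and hence $t_1^+(r)$, is well-defined throughout $(0,\eta_3]$, and $v\mapsto t_1^+(v,r)$ is continuous on all of $\Sball{n}$.

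The core is a pointwise-in-$v$ estimate. Fix $r\le\eta_3$ and a unit vector $v$. The condition $\weak{\frac{2}{9},r}$ says precisely that $\delta := 4\bar a(v,r)|c(v)|/b^2(v) < 8/9$, so $\sqrt{1-\delta} > 1/3$, and comparing \eqref{t1plus} with \eqref{tc} gives
\[
t_1^+(v,r) \;=\; \frac{b(v)}{2\bar a(v,r)}\bigl(1+\sqrt{1-\delta}\bigr) \;>\; \frac{b(v)}{2\bar a(v,r)}\cdot\frac{4}{3} \;=\; t_c(v,r).
\]
This is the non-cubic counterpart of the observation following \eqref{disc2}, that a weak condition with constant $2/9$ forces $t_c(v,\cdot) < t_1^+(v,\cdot)$.

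Next I would show $t_c(v,r)\ge r$. The function $\rho\mapsto t_c(v,\rho)-\rho$ is continuous and strictly decreasing (because $\bar a(v,\cdot)$ is increasing, as noted after \eqref{abar}), and at $\rho=0$ it equals $2b(v)/(3\bar a(v,0))\ge 2\lambda/(3\bar a(v,0)) > 0$ by \eqref{Hpp}; hence it stays nonnegative up to its first zero, which by the definition \eqref{eta3star} together with Lemma~\ref{lem:eta3starv} is exactly $\eta_3^*(v)$ (taken to be $+\infty$ if it never vanishes). Since $r\le\eta_3\le\eta_3^*\le\eta_3^*(v)$, we get $t_c(v,r)\ge r$. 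Combining with the previous step, $t_1^+(v,r) > t_c(v,r)\ge r$ for every $|v|=1$, and then compactness of $\Sball{n}$ with continuity of $v\mapsto t_1^+(v,r)$ upgrades this to $t_1^+(r)=\min_{|v|=1}t_1^+(v,r) > r$. Letting $r$ range over $(0,\eta_3]$ finishes the argument.

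I do not expect a genuine obstacle; the proof is short. The only points needing care are bookkeeping: (a) verifying that the weak constant $2/9$ is exactly what makes $t_1^+(v,r) > t_c(v,r)$ — the same arithmetic as in \eqref{disc2}; and (b) keeping the first of the two chained inequalities strict, so that the borderline case $r=\eta_3^*(v)$, where $t_c(v,r)=r$ by Lemma~\ref{lem:eta3starv}, still yields the strict conclusion $t_1^+(r) > r$ rather than merely $\ge$.
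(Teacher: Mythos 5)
Your proposal is correct and follows essentially the same route as the paper: reduce to the pointwise chain $t_1^+(v,r) > t_c(v,r) \ge r$, where the second inequality comes from $r \le \eta_3 \le \eta_3^*$ and the first is exactly the weak discriminant condition with constant $2/9$. The only cosmetic difference is that you verify $t_1^+(v,r) > r$ for every $r \le \eta_3$, whereas the paper checks only $r=\eta_3$ and implicitly uses the monotonicity of $t_1^+(r)$; both are fine.
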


\begin{proof}
We must show that $t_1^+(v,\eta_3)> \eta_3$ for all $v\in\Sball{n}$. Since $\eta_3\leq \eta_3^*$, we have that $t_c(v,\eta_3)\geq \eta_3$ for all $v$. 
Therefore, it is sufficient to show that $t_1^+(v,\eta_3) > t_c(v,\eta_3)$ for all $v$. Straightforward algebra gives that
\[
t_1^+(v,\eta_3) > t_c(v,\eta_3) \qquad \Longleftrightarrow \qquad 
   \frac{\bar a(v,\eta_3)|c(v)|}{b^2(v)} < \frac{2}{9}\,.
\]
\end{proof}



We are now in a position to state the energy propositions. The first proposition tells us when a point $p$ cannot be continuously deformed by very much, without increasing its energy. 

\begin{mainprop}\label{prop:energy}
Given an energy function $H(q)\in C^3(\R^{n})$ such that \eqref{Hpp} holds, and define $\eta_1^*$ as in \eqref{eta1star}. Suppose that $\weak{\frac{1}{4},\eta_1^*}$ holds:
\begin{equation}\label{weak1}
\max_{|v|=1,|s|\leq \eta_1^*}
\frac{|H'_p(v)H'''_{p+sv}(v)|}{(H''_p(v))^2} \quad <\;\;  \frac{3}{8}\,.
\end{equation}
Suppose we are given a continuous path $q(t)$ with $q(0)=p$, and such that $H(q(t)) \leq H(p)$. Then $|q(t)-p| \leq \eta_1^*$. 
\end{mainprop}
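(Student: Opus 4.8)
The plan is to reduce the general statement to the cubic analysis by Taylor-expanding $H$ along rays from $p$, and then to run a continuity/connectedness argument on the path $q(t)$. First I would argue, as in the ``temporary'' Proposition~I$'$, that along any fixed direction $v\in\Sball{n}$ the Taylor expansion \eqref{Hcubic_expand} writes $H(p+tv)=t^3a_t(v)+t^2b(v)+tc(v)$ with $|a_t(v)|\le\bar a(v,\eta_1^*)$ for all $|t|\le\eta_1^*$, and $b(v)=\tfrac12 H''|_p(v)\ge\lambda>0$. The weak discriminant hypothesis \eqref{weak1} says exactly that $\max_{|v|=1}\bar a(v,\eta_1^*)|c(v)|/b^2(v)<\tfrac14$ (note $\weak{\tfrac14,\eta_1^*}$ in the notation of \eqref{disc_strong_general}, after absorbing the factor $\tfrac12$ from $b=\tfrac12 H''$ and the factor $\tfrac1{3!}$ from $\bar a$ into the stated constant $\tfrac38$), so for each $v$ Lemma~\ref{lem:cubic} (or Lemma~\ref{lem:cubic0} if $c(v)=0$) applies with $\bar a\mathrel{:=}\bar a(v,\eta_1^*)$, giving $H(p+tv)>0$ for $|t|\in(t_*(v),\,t_1^+(v,\eta_1^*))$, a nonempty interval, provided we know this interval lies inside the ball of radius $\eta_1^*$ where the bound on $a_t$ is valid. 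By definition $t_*(v)=2|c(v)|/b(v)\le\eta_1^*$ for every $v$, so the crucial point $t=\eta_1^*$ itself satisfies $\eta_1^*\ge t_*(v)$ for all $v$; the positivity conclusion I actually need is just that $H$ is strictly positive on a punctured neighbourhood of the sphere $\{|q-p|=\eta_1^*\}$ on its outer side, or more precisely: for every $v$, $H(p+\eta_1^* v)>0$ unless $\eta_1^*=t_*(v)$, and in the boundary case a short separate check (the polynomial at $t=t_*(v)$ equals $2|c(v)|^3/b(v)^2\cdot(\cdots)$, still $\ge0$, and $>0$ once $\bar a$ is strictly subcritical) handles it. I would phrase this cleanly as: $H(p+tv)>0$ whenever $t\in(t_*(v),\eta_1^*]$, and $H(p+tv)\ge0$ at $t=t_*(v)$, with the set $\{t:0\le t\le\eta_1^*,\ H(p+tv)\le 0\}$ contained in $[0,t_*(v)]$.

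Now I would set up the path argument. Write $r(t)=|q(t)-p|$; this is continuous, $r(0)=0$. Suppose for contradiction that $r(t_0)>\eta_1^*$ for some $t_0$. For each $s$ with $q(s)\ne p$, let $v(s)=(q(s)-p)/r(s)$, so $q(s)=p+r(s)v(s)$. The hypothesis $H(q(s))\le H(p)=0$ together with the ray analysis forces, for every such $s$, that $r(s)\le t_*(v(s))\le\eta_1^*$ — because if $r(s)\in(t_*(v(s)),\eta_1^*]$ then $H(q(s))>0$, and if $r(s)>\eta_1^*$ the ray analysis as stated does not directly apply since we only controlled $a_t$ for $|t|\le\eta_1^*$. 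So the genuine work is to rule out $r(s)>\eta_1^*$. For this I would use intermediate value: by continuity of $r$ there is a first time $s^*$ with $r(s^*)=\eta_1^*$ (using $r(0)=0<\eta_1^*<r(t_0)$ and that WLOG $\eta_1^*>0$; if $\eta_1^*=0$ then $c\equiv0$, every ray is strictly increasing from $0$, and $H(q(t))\le0$ forces $q(t)\equiv p$, done). At $s=s^*$ we have $r(s^*)=\eta_1^*$, and since $\eta_1^*\ge t_*(v(s^*))$ with equality only in the boundary case just discussed, the ray value $H(q(s^*))=H(p+\eta_1^*v(s^*))>0$ (or, in the equality-boundary case, still $>0$ under the strict inequality in \eqref{weak1}), contradicting $H(q(s^*))\le0$. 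Hence no such $t_0$ exists and $r(t)\le\eta_1^*$ for all $t$.

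The main obstacle I anticipate is the bookkeeping at the boundary radius $t=\eta_1^*$: the cubic lemmas give strict positivity only on the \emph{open} interval $(t_*(v),t_1^+(v,\eta_1^*))$, and one must confirm (a) that $\eta_1^*$ lies in the closure of this interval for the extremal direction realising the max in \eqref{eta1star}, hence in its interior for all other directions, and (b) that at the single direction where $\eta_1^*=t_*(v)$ the value $H(p+\eta_1^* v)$ is still strictly positive — which it is, since the strict inequality \eqref{weak1} makes $\bar a(v,\eta_1^*)$ strictly subcritical and one computes $g(t_*,\bar a)=\tfrac{2|c|}{b^2}(b^2-|ac|)\,|c|/b\cdot(\text{positive})>0$ directly, or equivalently invokes that $t_*<t_1^+$ strictly under the open condition. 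A secondary subtlety is that $v(s)$ is only defined when $q(s)\ne p$; but near $s=0$ the path may return to $p$, which is harmless since $H(p)=0$ is consistent with the bound, and the first-exit-time argument only uses $s^*$ where $r(s^*)=\eta_1^*>0$ so $v(s^*)$ is well defined. Finally, I would remark that the constant $\tfrac38$ in \eqref{weak1} rather than $\tfrac14$ reflects precisely the $\tfrac12$ in $b(v)=\tfrac12 H''|_p(v)$ versus the $\tfrac{1}{3!}=\tfrac16$ in $a_t(v)$: writing out $\bar a(v,\eta_1^*)|c(v)|/b^2(v)$ in terms of $H',H'',H'''$ gives $\tfrac{(1/6)|H'''_{p+sv}(v)|\,|H'_p(v)|}{(1/4)(H''_p(v))^2}=\tfrac23\cdot\tfrac{|H'_p H'''_{p+sv}|}{(H''_p)^2}$, and $\tfrac23\cdot\tfrac38=\tfrac14$ as required, so \eqref{weak1} is indeed $\weak{\tfrac14,\eta_1^*}$.
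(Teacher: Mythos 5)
Your reduction to the cubic lemmas and your accounting of the constant ($\tfrac23\cdot\tfrac38=\tfrac14$, so \eqref{weak1} is indeed $\weak{\tfrac14,\eta_1^*}$) are both correct, and your first-crossing-time idea is the right general shape. But there is a genuine gap at the key step: you claim that $H(p+\eta_1^*v)>0$ for every direction $v$ with $t_*(v)<\eta_1^*$, justifying this only by noting $\eta_1^*\ge t_*(v)$. That puts $\eta_1^*$ above the \emph{left} endpoint of the positivity interval, but says nothing about the right endpoint: Lemmas \ref{lem:cubic}--\ref{lem:cubic0} give positivity only on $[t_*(v),\min(t_1^+(v,\eta_1^*),\eta_1^*))$, and the weak condition $\weak{\tfrac14,\eta_1^*}$ only guarantees $t_1^+(v,\eta_1^*)>t_*(v)$, not $t_1^+(v,\eta_1^*)>\eta_1^*$. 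Concretely, a direction $v$ with $c(v)=0$ has $t_*(v)=0$ and contributes nothing to the maximum in \eqref{disc_strong_general}, so $\bar a(v,\eta_1^*)$ is unconstrained there and $t_1^+(v,\eta_1^*)=b(v)/\bar a(v,\eta_1^*)$ can be far smaller than $\eta_1^*$ (which is set by a different, maximizing direction); then $H(p+tv)$ may dip back below $0$ well before $t=\eta_1^*$, and your contradiction at the first time $r(s^*)=\eta_1^*$ evaporates. The statement that the whole sphere of radius $\eta_1^*$ (indeed an annulus beyond it) has positive energy is precisely what Proposition \ref{prop:energy2} proves, and it needs the \emph{strong} condition $\strong{\tfrac14,\eta_1^*}$ (Lemma \ref{lem:eta1eta2}); you are implicitly importing that conclusion under the weaker hypothesis.

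The paper's proof repairs exactly this by replacing the sphere $\{|q-p|=\eta_1^*\}$ with a direction-dependent barrier $S=\{p+(t_*(v)+\epsilon(v))v\}$, with $\epsilon(v)=\min\bigl(\tfrac{t_1^+(v,\eta_1^*)-t_*(v)}{2},\tfrac{\eta_1^*-t_*(v)}{2}\bigr)$, so that $S$ always sits strictly inside the positivity window of each ray while remaining inside $\bar B_{\eta_1^*}(p)$; the Jordan--Brouwer separation theorem then forces any path from $p$ to the exterior of $\bar B_{\eta_1^*}(p)$ to cross $S$, where $H>0$. If you want to keep a scalar intermediate-value argument instead of citing separation, you would have to apply it to $|q(s)-p|-\bigl(t_*(v(s))+\epsilon(v(s))\bigr)$ rather than to $|q(s)-p|-\eta_1^*$, which is essentially the same construction (and still needs care where $q(s)=p$, since $v(s)$ is undefined there). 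Your side remarks --- the strictness at $t=t_*(v)$ and the degenerate case $\eta_1^*=0$ --- are fine, but they are not where the difficulty lies.
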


\begin{proof}
Taylor-expand the energy $H(p+vt)$ in a particular direction $v\in\Sball{n}$ with $|t|\leq \eta_1^*$ as in \eqref{Hcubic_expand}. Since, by the weak discriminant assumption, $|a_t(v)|\leq |a(v,t)|\leq\bar a(v,\eta_1^*) < b^2(v) / 4|c(v)|$ for $|t|\leq \eta_1^*$, we may apply Lemmas \ref{lem:cubic}, \ref{lem:cubic0} to obtain intervals in which the energy is positive. 

Lemma \ref{lem:cubic} says that, for $c(v)\neq 0$, 
\[
H(p+vt) > 0 \quad\text{for}\quad |t|\in \big[t_*(v),\min(t_1^+(v,\eta_1^*), \eta_1^*)\big)\,,
\]
where $t_1^+(v,r)$ is defined in \eqref{t1plus}. We take a minimum because the lemma only applies in regions where we control $a_t$. 
When $c(v)=0$, we have  $t_1^+(v,\eta_1^*) = b/\bar a(v,\eta_1^*)$, so Lemma \ref{lem:cubic0} gives that 
\[
H(p+vt) > 0 \quad\text{for}\quad |t|\in \big(t_*(v),\min(t_1^+(v,\eta_1^*), \eta_1^*)\big)\,. 
\]

We now wish to construct a continuous topological sphere $S$, contained in $\bar B_{\eta_1^*}(p)$ but containing $p$ in its interior, such that $H(q) > 0$ for $q\in S$. It will follow from the Jordan-Brouwer separation theorem, a generalization of the Jordan curve theorem to dimensions greater than 2, that any continuous curve which starts at $p$ and reaches the exterior of the surface must cross $S$ at some point, where $H(q)>0$.

It is tempting to construct our spherical surface $S^{\rm wrong}$ as the set of points $q = p+t_*(v)v$ for all $|v|=1$. 
Unfortunately this construction won't give us a manifold due to a small technicality:
in the directions where $c(v)=0$, we have $t_*(v)=0$, so $S^{\rm wrong}$
collapses to $p$, where $\Delta H(v,t)=0$. 
We deal with this technicality by expanding out a little bit in the directions where $t_*(v)=0$, which won't affect the resulting bounding radius. 
In directions where $t^*(v)\neq 0$, we don't need to expand the surface, but we do anyways, expanding by less as $t^*(v)$ increases, to ensure that the surface $S$ is continuous. 


We therefore construct a surface as
\begin{equation}\label{Ssurface}
S = \{q : q = p+(t_*(v)+\epsilon(v))v \}\,, 
\end{equation}
where
\[
\epsilon(v) = \min\left( \frac{t_1^+(v,\eta_1^*)-t_*(v)}{2}, \frac{\eta_1^*-t_*(v)}{2} \right)\,. 
\]
The function $\epsilon(v)$ is continuous, and hence the surface $S$ is continuous, since it is constructed parameterically from continuous functions.
For $c(v)\neq 0$, the function is such that $t_*(v)+\epsilon(v)\in [t_*(v),\min(t_1^+(v,\eta_1^*), \eta_1^*))$  and for $c(v)=0$, it is such that $t_*(v)+\epsilon(v)\in (t_*(v),\min(t_1^+(v,\eta_1^*),\eta_1^*))$. Therefore, $H(q) > 0$ on $S$. In addition, $S\subset \bar B_{\eta_1^*}(p)$, and $p$ is contained in the interior of $S$. 

Then by the Jordan-Brouwer separation theorem, any continuous path which starts at $p$ and hits the set $R^n-\bar B_{\eta_1^*}(p)$, must cross surface $S$ at least once, at which point the energy is strictly positive. 
Therefore a path whose energy does not increase must be contained in $\bar B_{\eta_1^*}(p)$, as claimed. 
\end{proof}

A second proposition tells us when the point and the low-energy ball surrounding it, are sufficiently isolated from other low-energy parts of the landscape. 

\begin{mainprop}\label{prop:energy2}
Given the setup in Proposition \ref{prop:energy}, and define $\eta_2^*$ as in \eqref{eta2star}. Suppose that $\strong{\frac{1}{4},\eta_1^*}$ holds:
\begin{equation}\label{strong1}
    \left( \max_{|v|=1,|s|\leq \eta_1^*} \frac{\big|H'|_p(v)\big|}{H''|_p(v)}\right) \left(\max_{|v|=1,|s|\leq \eta_1^*} \frac{\big|H'''|_{p+sv}(v)\big|}{H''|_p(v)}\right) \quad <\;\;  \frac{3}{8}\,.
\end{equation}
Then, $\eta_2^* > \eta_1^*$, and furthermore, there exists a number $\eta_2'$  such that $\eta_1 < \eta_2' \leq \eta_2^*$ and such that $\weak{\frac{1}{4},\eta_2'}$ holds. For any such number $\eta_2'$, $H(q) > H(p)$ for $|q|\in (\eta_1^*,\eta_2')$. 
\end{mainprop}

This proposition is weaker than that for a cubic function, because it may be the case that we can't compute $\eta_2^*$, or, even if we can, that $\weak{\frac{1}{4},\eta_2^*}$ does not hold. However, the proposition still gives an outer radius $\eta_2'$. 
We consider the issue of actually computing $\eta_2^*$, from bounds on the energy function and its derivatives, in Section \ref{sec:approxbounds}.



This proposition implies an easy corollary, that if there is some configuration $q$ such that $H(q) = H(p)$, and $|q|>\eta_1^*$, then $|q|>\eta_2$. Therefore, it gives a radius separating our current configuration $p$ and its continuous deformations, from any other embedding that is not within a radius $\eta_1^*$ of $p$. 

\begin{proof}
That $\eta_1^* < \eta_2^*$ follows directly from Lemma \ref{lem:eta1eta2}. 

Now we show that the desired number $\eta_2$ exists. By Lemma \ref{lem:strongweakrelations}, $\strong{\frac{1}{4},\eta_1^*}\Rightarrow \weak{\frac{1}{4},\eta_1^*}$, and $\weak{\frac{1}{4},\eta_1^*}\Rightarrow \weak{\frac{1}{4},\eta_2}$ for some $\eta_2 > \eta_1^*$. Therefore such a number exists. 

Now we assume we have a number $\eta_2\leq \eta_2^*$ that satisfies $\weak{\frac{1}{4},\eta_2}$, and we show the statement of the proposition. As in the proof of Proposition \ref{prop:energy}, Lemmas \ref{lem:cubic},\ref{lem:cubic0} imply that 
\[
H(p+vt) > 0 \quad\text{for}\quad |t|\in (t_*(v),\min(t_1^+(v,\eta_2), \eta_2))\,.
\]
We also must have that $t_1^+(v,\eta_2)\geq \eta_2$, since $t_1^+(v,\eta_2)\geq t_1^+(\eta_2) \geq t_1^+(\eta_2^*)=\eta_2^*\geq \eta_2$ (recall $t_1^+(r)$ is decreasing.) 
Therefore $\min(t_1^+(v,\eta_2), \eta_2)=\eta_2$, and 
$(\eta_1^*,\eta_2) \subset (t_*(v),\min(t_1^+(v,\eta_2), \eta_2))$ for all $v\in\Sball{n}$, so $H(p+vt) > 0$ for $|t|\in (\eta_1^*,\eta_2)$, as claimed.
\end{proof}

We end with a proposition that explicitly computes a lower bound for the energy barrier when we move a given distance $\eta$ away from $p$. 

\begin{mainprop}\label{prop:energy3}
Given the setup in Proposition \ref{prop:energy}, and
suppose that $\strong{\frac{2}{9},\frac{3}{2}\eta_1^*}$ holds: 
\begin{equation}\label{strong2}
    \left( \max_{|v|=1,|s|\leq \frac{3}{2}\eta_1^*} \frac{\big|H'|_p(v)\big|}{H''|_p(v)}\right) 
    \left(\max_{|v|=1,|s|\leq \frac{3}{2}\eta_1^*} \frac{\big|H'''|_{p+sv}(v)\big|}{H''|_p(v)}\right) \quad <\;\;  \frac{3}{8}\,.
\end{equation}
Then $\eta_3^* > \frac{3}{2}\eta_1^*$, where $\eta_3^*$ is defined in \eqref{eta3star}. 
Furthermore, if $|q-p|=\eta\in(\frac{3}{2}\eta_1^*,\eta_3^*]$, then 
\begin{equation}\label{DeltaH}
H(q)\geq \Delta H_{\rm min}(\eta) = \frac{1}{3}\lambda \: \eta^2\:\Big( 1-\frac{3}{2}\frac{\eta_1^*}{\eta}\Big)\,.
\end{equation}
For such values of $\eta$ the function $\Delta H_{\rm min}(\eta) >0$. 
\end{mainprop}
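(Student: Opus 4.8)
The plan is to prove the three assertions in turn, with essentially all the work concentrated in the pointwise lower bound \eqref{DeltaH}. The first assertion, $\eta_3^* > \tfrac{3}{2}\eta_1^*$, is precisely the conclusion of Lemma~\ref{lem:eta1eta3}, whose hypothesis $\strong{\frac{2}{9},\frac{3}{2}\eta_1^*}$ is exactly what \eqref{strong2} asserts; in particular the half-open interval $(\tfrac{3}{2}\eta_1^*,\eta_3^*]$ is nonempty. The final claim, that $\Delta H_{\rm min}(\eta)>0$ on that interval, will be immediate once \eqref{DeltaH} is established, since $\eta>\tfrac{3}{2}\eta_1^*$ forces $\tfrac{3}{2}\,\eta_1^*/\eta<1$ while $\lambda>0$ and $\eta^2>0$.

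For the main estimate, fix $q$ with $|q-p|=\eta\in(\tfrac{3}{2}\eta_1^*,\eta_3^*]$ and write $q=p+\eta v$ with $v\in\Sball{n}$. First I would Taylor-expand $H$ along $v$ as in \eqref{Hcubic_expand}, so that $H(q)=\eta^3 a_\eta(v)+\eta^2 b(v)+\eta c(v)$ with $|a_\eta(v)|\le\bar a(v,\eta)$ by the definition \eqref{abar} of $\bar a$. Since $b(v)=\tfrac{1}{2}H''|_p(v)\ge\lambda>0$ by \eqref{Hpp}, Lemma~\ref{lem:cubicbounds} applies (with cubic coefficient $a_\eta(v)$, bound $\bar a=\bar a(v,\eta)$, and argument $t=\eta>0$) and gives
\begin{equation*}
H(q)\;\ge\;-\bar a(v,\eta)\,\eta^3+b(v)\,\eta^2-|c(v)|\,\eta\;=\;b(v)\,\eta^2\left(1-\frac{\bar a(v,\eta)}{b(v)}\,\eta-\frac{|c(v)|}{b(v)}\,\frac{1}{\eta}\right).
\end{equation*}
It then remains to bound the three pieces of the bracket: $b(v)\ge\lambda$ by \eqref{Hpp}; $|c(v)|/b(v)=\tfrac{1}{2}t_*(v)\le\tfrac{1}{2}\eta_1^*$ by the definition of $t_*$ and of $\eta_1^*=\maxv t_*(v)$ in \eqref{eta1star}; and, the crucial one, $\tfrac{\bar a(v,\eta)}{b(v)}\,\eta\le\tfrac{2}{3}$.

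The crucial bound comes straight from the definition of $\eta_3^*$. Because $\eta_3^*=\minv\eta_3^*(v)$ we have $\eta\le\eta_3^*\le\eta_3^*(v)$, so Lemma~\ref{lem:eta3starv} forces $t_c(v,\eta)\ge\eta$ (its statement says $t_c(v,\eta)<\eta$ would entail $\eta_3^*(v)<\eta$); unwinding the definition \eqref{tc} of $t_c$, this reads $\tfrac{2b(v)}{3\bar a(v,\eta)}\ge\eta$, i.e.\ $\tfrac{\bar a(v,\eta)}{b(v)}\eta\le\tfrac{2}{3}$ (and trivially so when $\bar a(v,\eta)=0$). Substituting the three bounds into the bracket and using $\eta>\tfrac{3}{2}\eta_1^*$, the bracket is at least $1-\tfrac{2}{3}-\tfrac{1}{2}\,\eta_1^*/\eta=\tfrac{1}{3}\bigl(1-\tfrac{3}{2}\,\eta_1^*/\eta\bigr)>0$; because the bracket is then nonnegative it is legitimate to replace $b(v)$ by the smaller $\lambda$, which yields $H(q)\ge\tfrac{1}{3}\lambda\eta^2\bigl(1-\tfrac{3}{2}\,\eta_1^*/\eta\bigr)=\Delta H_{\rm min}(\eta)$, i.e.\ \eqref{DeltaH} (recall $H(p)=0$). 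Positivity of $\Delta H_{\rm min}(\eta)$ and nonemptiness of the interval then follow as noted above.

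I expect the only genuinely delicate point to be the bookkeeping around the radius-dependence of $\bar a(v,\cdot)$: one must be sure that the radius at which Lemma~\ref{lem:cubicbounds} is invoked does not exceed the radius used to define the bound $\bar a(v,\eta)$ on the cubic coefficient, and here the two coincide (both equal $\eta$), which is exactly why no circularity arises -- in contrast with the subtler argument for $\eta_2^*$. One should also recall that $\eta_3^*(v)$ may equal $+\infty$ for some directions $v$, in which case $t_c(v,\eta)\ge\eta$ holds automatically and the argument is unaffected. Everything else reduces to routine arithmetic once Lemmas~\ref{lem:cubicbounds}, \ref{lem:eta3starv}, and \ref{lem:eta1eta3} are in hand.
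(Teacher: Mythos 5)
Your proposal is correct and follows essentially the same route as the paper: Lemma \ref{lem:eta1eta3} for $\eta_3^* > \tfrac{3}{2}\eta_1^*$, then Taylor expansion plus Lemma \ref{lem:cubicbounds}, the factorization as in \eqref{fcubic}, and the three term-by-term bounds $b(v)\ge\lambda$, $|c(v)|/b(v)\le\eta_1^*/2$, and $\bar a(v,\eta)\,\eta/b(v)\le 2/3$ via Lemma \ref{lem:eta3starv}. The only cosmetic difference is that you invoke Lemma \ref{lem:eta3starv} through its contrapositive rather than through the fixed-point identity $t_c(v,\eta_3^*(v))=\eta_3^*(v)$ combined with monotonicity of $t_c$ in $r$, which is an equally valid way to reach $t_c(v,\eta)\ge\eta$.
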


\begin{proof}
Assumption \eqref{strong2} and Lemma \ref{lem:eta1eta3} imply that $\eta_3^* > \frac{3}{2}\eta_1^*$. 
Clearly $\Delta H_{\rm min}(\eta) > 0\Leftrightarrow \min > (3/2)\eta_1^*$, which holds by assumption on $\eta$. 

Suppose now that an appropriate value of $\eta$ is given. 
We wish to show that $H(q) \geq \Delta H_{\rm min}(\eta)$ whenever $|q-p|=\eta$, for sufficiently small $\eta$. 
Taylor-expand the energy as in \eqref{Hcubic_expand}. Since $|a_t(v)| \leq \bar a(v,t)$, 
we may apply  Lemma \ref{lem:cubicbounds} to say that, for $t>0$, 
\[
H(p+tv) \geq f(v,t) \equiv -t^3\bar a(v,t) + t^2b(v) - t|c(v)|\,,
\]
where we defined an auxiliary function $f(v,t)$ above. 
As in \eqref{fcubic} we factor $f$ as
\[
f(v,t) = b(v)t^2\left(1 - \frac{\bar a(v,t)}{b(v)}t - \frac{|c(v)|}{b(v)}\frac{1}{t} \right)\,.
\]
We now evaluate $f(v,t)$ at $t=\eta$ and bound each term. 
Using $b(v) \geq \lambda$, $-|c(v)|/b(v) \geq -\eta_1^*/2$, gives
\[
f(v,\eta) \geq \lambda\eta^2\left(1 - \frac{\bar a(v,t)}{b(v)}\eta - \frac{\eta_1^*}{2\eta} \right)\,.
\]
To bound the remaining term, note that $\eta\leq \eta_3^*$, so, by definition \eqref{eta3star},  $\eta\leq \eta_3^*(v)$. Lemma \ref{lem:eta3starv} says that $\eta_3^*(v) =t_c^*(v,\eta_3^*(v)) \leq t_c^*(v,\eta)$. Therefore $\eta \leq t_c^*(v,\eta)$, so 
 $-\frac{\bar a(v,t)}{b(v)}\eta\leq -\frac{\bar a(v,t)}{b(v)}t_c^*(v,\eta) = 2/3$. 
 Substituting this bound gives 
 \[
 f(v,\eta) \geq \lambda \eta^2\left( \frac{1}{3} - \frac{\eta_1^*}{2}\frac{1}{\eta}\right)\,.
 \]
 The lower bound for $f(v,\eta)$ above is exactly $\Delta H_{\rm min}(\eta)$. 
\end{proof}

\begin{remark}
Under the setup to Proposition \ref{prop:energy3}, we may wish to know when the energy radius is such that $\eta_3^* < \eta_2^*$, i.e. it is not larger than the outer radius. 
If $\weak{\frac{2}{9},\eta_3^*}$ holds, then Lemma \ref{lem:eta3eta2} implies that $\eta_3^* < \eta_2^*$. 
However, condition \eqref{strong2} of Proposition \ref{prop:energy3} does not guarantee that $\weak{\frac{2}{9},\eta_3^*}$ holds. Therefore, whether or not $\eta_3^* > \eta_2^*$ must be checked separately.
%
\end{remark}

\subsubsection{Approximate bounds}\label{sec:approxbounds}

In practice we may not be able to compute $\eta_1^*,\eta_2^*,\eta_3^*$ analytically, since we may only have upper or lower bounds for the energy derivatives and their ratios. 
How can we still apply Propositions \ref{prop:energy}, \ref{prop:energy2}, \ref{prop:energy3}?
Suppose we have numbers $\eta_1,\eta_2,\eta_3$ such that
\begin{equation}\label{eta123}
  \eta_1\geq \eta_1^*\,,\qquad  \eta_2 \leq \eta_2^*\,,\qquad \eta_3\leq \eta_3^*\,.
\end{equation}
Suppose in addition we have a function $\eta_0(r)$ such that 
\begin{equation}\label{eta0r}
    \minv \frac{b(v)}{2\bar a(v,r)} \geq \eta_0(r)\,.
\end{equation}
The notation parallels that used in \eqref{eta0_cubic}, but our nonoptimal radii here are unstarred. We assume that  $\eta_0(r)$ is a continuous, positive, strictly decreasing function on $[0,\infty)$. 

The strong discriminant condition may be formulated in terms of $\eta_1,\eta_0(r)$. Since, by \eqref{eta123},\eqref{eta0r},
\[
\left(\maxv\frac{|c(v)|}{b(v)}\right)\left(\maxv \frac{\bar a(v,r)}{b(v)}\right) < \frac{\eta_1}{4\eta_0(r)}\,,
\]
we have that, using also Lemma \ref{lem:strongweakrelations}, 
\begin{equation}\label{disc_eta01}
    \frac{\eta_1}{\eta_0(r)} < 4C \quad \Rightarrow\quad \strong{C,r} 
    \quad \Rightarrow\quad 
    \weak{C,r}\,.
\end{equation}

We have the following corollaries to the energy propositions, in terms of the bounds $\eta_0(r),\eta_1,\eta_2,\eta_3$. 

\begin{corollary}\label{cor1}
Suppose that 
$\eta_1 / \eta_0(\eta_1) < 1$, where $\eta_1$ satisfies \eqref{eta123} and $\eta_0(r)$ satisfies \eqref{eta0r}. Then the conclusion of Proposition \ref{prop:energy} holds with $\eta_1$ replacing $\eta_1^*$.
\end{corollary}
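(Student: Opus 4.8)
The plan is to deduce the hypothesis $\weak{\tfrac14,\eta_1^*}$ of Proposition \ref{prop:energy} from the given bound on $\eta_1/\eta_0(\eta_1)$, and then to read off the conclusion directly from that proposition, using $\eta_1^*\le\eta_1$ to relax the containment radius from $\eta_1^*$ to $\eta_1$.

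First I would feed the assumption into the implication chain \eqref{disc_eta01}. Taking $C=\tfrac14$ and $r=\eta_1$ there, the hypothesis $\eta_1/\eta_0(\eta_1)<1=4\cdot\tfrac14$ gives $\strong{\tfrac14,\eta_1}$, and hence $\weak{\tfrac14,\eta_1}$. Since $\eta_1\ge\eta_1^*$ by \eqref{eta123}, Lemma \ref{lem:strongweakrelations}(iii) — monotonicity of the weak condition in the radius — upgrades this to $\weak{\tfrac14,\eta_1^*}$, which is precisely condition \eqref{weak1}.

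With \eqref{weak1} established, Proposition \ref{prop:energy} applies without change: any continuous path $q(t)$ with $q(0)=p$ and $H(q(t))\le H(p)$ satisfies $|q(t)-p|\le\eta_1^*$. Combining with $\eta_1^*\le\eta_1$ gives $|q(t)-p|\le\eta_1$, which is the conclusion of Proposition \ref{prop:energy} with $\eta_1$ in place of $\eta_1^*$.

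There is no real obstacle here beyond bookkeeping; the one point requiring care is that one must pass through $\weak{\tfrac14,\eta_1}$ at the radius $\eta_1$ — the radius at which $\eta_0$ is actually evaluated — before descending to $\weak{\tfrac14,\eta_1^*}$ via monotonicity. Alternatively, one could bypass Proposition \ref{prop:energy} and re-run its proof with $\eta_1$ everywhere in place of $\eta_1^*$: the surface construction \eqref{Ssurface} and the positivity intervals coming from Lemmas \ref{lem:cubic} and \ref{lem:cubic0} used only that $t_*(v)\le\eta_1^*\le\eta_1$ for all $v$ and that the weak discriminant condition holds at the ambient radius, both of which persist with $\eta_1$; the Jordan--Brouwer argument then confines the path to $\bar B_{\eta_1}(p)$.
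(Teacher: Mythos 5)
Your proposal is correct and follows essentially the same route as the paper: apply \eqref{disc_eta01} with $C=\tfrac14$, $r=\eta_1$ to get $\strong{\tfrac14,\eta_1}\Rightarrow\weak{\tfrac14,\eta_1}$, descend to $\weak{\tfrac14,\eta_1^*}$ via Lemma \ref{lem:strongweakrelations} using $\eta_1\geq\eta_1^*$, invoke Proposition \ref{prop:energy}, and enlarge the containment ball from $\bar B_{\eta_1^*}(p)$ to $\bar B_{\eta_1}(p)$. No gaps.
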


\begin{proof}
By \eqref{disc_eta01}, $\eta_1 / \eta_0(\eta_1) < 1 \Rightarrow \strong{\frac{1}{4},\eta_1}\Rightarrow \weak{\frac{1}{4},\eta_1}$, and since $\eta_1\geq \eta_1^*$, Lemma \ref{lem:strongweakrelations} implies $\weak{\frac{1}{4},\eta_1^*}$. 
Therefore Proposition \ref{prop:energy} holds. But, since $B_{\eta_1^*}(p)\subset B_{\eta_1}(p)$, if a continuous path is constrained to stay in $\bar B_{\eta_1^*}(p)$, then it is also constrained to stay in $\bar B_{\eta_1}(p)$. 
\end{proof}

We see from the proof that Corollary \ref{cor1} is a weakening of Proposition \ref{prop:energy}, since it requires a strong discriminant condition to hold, not a weak one. However, with no bounds other than $\eta_0,\eta_1(r)$, we cannot prove a weak condition that is distinct from a strong condition. 

\begin{corollary}\label{cor2}
Suppose that $\eta_2 > \eta_1$ and that
$\eta_1 / \eta_0(\eta_2) < 1$, where $\eta_1,\eta_2$ satisfy \eqref{eta123} and $\eta_0(r)$ satisfies \eqref{eta0r}. Then $H(q) > H(p)$ for $|q| \in (\eta_1,\eta_2)$. 
\end{corollary}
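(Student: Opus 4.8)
The plan is to deduce this statement directly from Proposition~\ref{prop:energy2}, in exactly the same spirit in which Corollary~\ref{cor1} was deduced from Proposition~\ref{prop:energy}. Concretely, the quantitative hypothesis $\eta_1/\eta_0(\eta_2) < 1$ will be converted into the strong discriminant hypothesis \eqref{strong1} of that proposition, and then $\eta_2$ itself will be used as the outer radius ``$\eta_2'$'' whose existence the proposition asserts abstractly.

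First I would record the chain of inequalities $\eta_1^* \le \eta_1 < \eta_2 \le \eta_2^*$, which follows from \eqref{eta123} together with the standing assumption $\eta_2 > \eta_1$. Next I would invoke the implication \eqref{disc_eta01} with $C=\frac{1}{4}$ and $r=\eta_2$: since $\eta_1/\eta_0(\eta_2) < 1 = 4C$, this yields $\strong{\frac{1}{4},\eta_2}$, and hence also $\weak{\frac{1}{4},\eta_2}$. Because $\eta_1^* < \eta_2$, part~(ii) of Lemma~\ref{lem:strongweakrelations} upgrades $\strong{\frac{1}{4},\eta_2}$ to $\strong{\frac{1}{4},\eta_1^*}$, which is precisely condition \eqref{strong1}. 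Thus every hypothesis of Proposition~\ref{prop:energy2} is in force.

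Then I would apply Proposition~\ref{prop:energy2}. It guarantees $\eta_2^* > \eta_1^*$ and, crucially, that \emph{for any} number $\eta_2'$ with $\eta_1^* < \eta_2' \le \eta_2^*$ and $\weak{\frac{1}{4},\eta_2'}$ one has $H(q) > H(p)$ whenever $\eta_1^* < |q-p| < \eta_2'$. Taking $\eta_2' := \eta_2$ is permissible, because $\eta_1^* < \eta_2 \le \eta_2^*$ and $\weak{\frac{1}{4},\eta_2}$ holds by the previous step; this gives $H(q) > H(p)$ on the annulus $\eta_1^* < |q-p| < \eta_2$. Finally, since $\eta_1 \ge \eta_1^*$, the annulus $\eta_1 < |q-p| < \eta_2$ is contained in that one, which is the claimed conclusion.

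I do not anticipate any real obstacle here: all of the analytic substance is already packaged inside Proposition~\ref{prop:energy2} and the auxiliary implications \eqref{disc_eta01} and Lemma~\ref{lem:strongweakrelations}. The one subtlety worth flagging is that Proposition~\ref{prop:energy2} only promises that \emph{some} admissible outer radius exists, whereas the corollary needs the particular number $\eta_2$ to be admissible; this is handled by the ``for any such $\eta_2'$'' clause of the proposition, so the only thing one must verify is that $\eta_2$ satisfies the two requirements $\eta_1^* < \eta_2 \le \eta_2^*$ and $\weak{\frac{1}{4},\eta_2}$ — and supplying exactly these two facts is what the hypotheses $\eta_2 > \eta_1$, \eqref{eta123}, \eqref{eta0r} and $\eta_1/\eta_0(\eta_2) < 1$ buy us, via \eqref{disc_eta01}.
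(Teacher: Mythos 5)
Your proposal is correct and follows essentially the same route as the paper's own proof: convert $\eta_1/\eta_0(\eta_2)<1$ into $\strong{\tfrac14,\eta_2}$ via \eqref{disc_eta01}, shrink the radius to get $\strong{\tfrac14,\eta_1^*}$ by Lemma \ref{lem:strongweakrelations}, apply Proposition \ref{prop:energy2} with $\eta_2$ itself as the admissible outer radius, and restrict the resulting annulus from $(\eta_1^*,\eta_2)$ to $(\eta_1,\eta_2)$. Your explicit verification that $\eta_2$ qualifies as the ``$\eta_2'$'' in the proposition's ``for any such number'' clause is exactly the point the paper's terser proof makes implicitly.
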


\begin{proof}
The condition $\eta_1 / \eta_0(\eta_2) < 1$ implies  $\strong{\frac{1}{4},\eta_2}$, which in turn implies $\strong{\frac{1}{4},\eta_1^*}$, by Lemma \ref{lem:strongweakrelations}. 
By definition we have $\eta_2\leq \eta_2^*$. 
Therefore, $\eta_2$ satisfies the conditions of Proposition \ref{prop:energy2}, so we may apply this proposition directly to say that $H(q) > H(p)$ for $|q| \in (\eta_1^*,\eta_2)$. Since $(\eta_1,\eta_2)\subset(\eta_1^*,\eta_2)$, it follows that $H(q) > H(p)$ for $|q| \in (\eta_1,\eta_2)$.
\end{proof}

\begin{corollary}\label{cor3}
Suppose that $\eta_3 > \frac{3}{2}\eta_1$ and that $\eta_1 / \eta_0(\frac{3}{2}\eta_1) < \frac{8}{9}$, where $\eta_1,\eta_3$ satisfy \eqref{eta123} and $\eta_0(r)$ satisfies \eqref{eta0r}.
Suppose that $|q-p|=\eta\in(\frac{3}{2}\eta_1,\eta_3]$. Then 
\begin{equation}\label{DeltaHb}
H(q) \geq \Delta H_{\rm min}^{\rm est}(\eta) = \frac{1}{3}\lambda \: \eta^2\:\Big( 1-\frac{3}{2}\frac{\eta_1}{\eta}\Big)\,.
\end{equation}
Furthermore $\Delta H_{\rm min}^{\rm est}(\eta)>0$ over the range of appropriate values of $\eta$. 
\end{corollary}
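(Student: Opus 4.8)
The plan is to deduce Corollary \ref{cor3} from Proposition \ref{prop:energy3} in exactly the manner Corollaries \ref{cor1} and \ref{cor2} were deduced from Propositions \ref{prop:energy} and \ref{prop:energy2}: first convert the hypothesis, which is phrased in terms of $\eta_0(r)$ and $\eta_1$, into the strong discriminant condition that Proposition \ref{prop:energy3} actually requires, and then compare the energy bound produced from $\eta_1^*,\eta_3^*$ with the one asserted for $\eta_1,\eta_3$.

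For the first step I would apply \eqref{disc_eta01} with $C=\frac{2}{9}$ and $r=\frac{3}{2}\eta_1$: the hypothesis $\eta_1/\eta_0(\frac{3}{2}\eta_1) < \frac{8}{9} = 4\cdot\frac{2}{9}$ yields $\strong{\frac{2}{9},\frac{3}{2}\eta_1}$. Since $\eta_1\geq\eta_1^*$ by \eqref{eta123}, we have $\frac{3}{2}\eta_1^*\leq\frac{3}{2}\eta_1$, so part (ii) of Lemma \ref{lem:strongweakrelations} upgrades this to $\strong{\frac{2}{9},\frac{3}{2}\eta_1^*}$, which is precisely hypothesis \eqref{strong2} of Proposition \ref{prop:energy3}. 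That proposition then gives $\eta_3^* > \frac{3}{2}\eta_1^*$ together with the bound $H(q)\geq\frac{1}{3}\lambda\eta^2\bigl(1-\frac{3}{2}\eta_1^*/\eta\bigr)$ whenever $|q-p|=\eta\in(\frac{3}{2}\eta_1^*,\eta_3^*]$.

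It remains to pass to the unstarred radii. By the hypotheses $\eta_1\geq\eta_1^*$ and $\eta_3\leq\eta_3^*$, the interval $(\frac{3}{2}\eta_1,\eta_3]$ is contained in $(\frac{3}{2}\eta_1^*,\eta_3^*]$, and it is nonempty because $\eta_3 > \frac{3}{2}\eta_1$ is assumed, so for any $\eta$ in it the previous bound applies. Using $\eta_1\geq\eta_1^*$ again gives $1-\frac{3}{2}\eta_1/\eta\leq 1-\frac{3}{2}\eta_1^*/\eta$, hence $\Delta H_{\rm min}^{\rm est}(\eta)=\frac{1}{3}\lambda\eta^2\bigl(1-\frac{3}{2}\eta_1/\eta\bigr)\leq\frac{1}{3}\lambda\eta^2\bigl(1-\frac{3}{2}\eta_1^*/\eta\bigr)\leq H(q)$, which is \eqref{DeltaHb}. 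Positivity is immediate: for $\eta>\frac{3}{2}\eta_1$ one has $\frac{3}{2}\eta_1/\eta<1$, so $\Delta H_{\rm min}^{\rm est}(\eta)>0$ on the stated range.

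The argument is essentially bookkeeping, and I do not anticipate a genuine obstacle. The one place to take care is keeping the three inequality directions straight — $\eta_1\geq\eta_1^*$, $\eta_3\leq\eta_3^*$, and $\eta_0(r)$ decreasing — since a sign slip there would either break the containment $(\frac{3}{2}\eta_1,\eta_3]\subset(\frac{3}{2}\eta_1^*,\eta_3^*]$ or reverse the final comparison of the two energy bounds.
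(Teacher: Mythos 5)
Your proposal is correct and follows essentially the same route as the paper's proof: convert the hypothesis into $\strong{\frac{2}{9},\frac{3}{2}\eta_1}$ via \eqref{disc_eta01}, downgrade the radius to $\frac{3}{2}\eta_1^*$ with Lemma \ref{lem:strongweakrelations}, apply Proposition \ref{prop:energy3}, and conclude via $\Delta H_{\rm min}^{\rm est}(\eta)\leq\Delta H_{\rm min}(\eta)$. Your write-up simply makes the interval containment and the inequality directions more explicit than the paper does.
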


The difference between $\Delta H_{\rm min}^{\rm est}(\eta)$ and $\Delta H_{\rm min}(\eta)$, is that in the former we have substituted $\eta_1^*\to\eta_1$, $\eta_3^*\to\eta_3$. 

\begin{proof}
If $\eta_3 > (3/2)\eta_1$ then $\Delta H_{\rm min}^{\rm est}(\eta)>0$ for $\eta \in (\frac{3}{2}\eta_1,\eta_3)$. 
The condition $\eta_1 / \eta_0(\frac{3}{2}\eta_1) < \frac{8}{9}$ implies $\strong{\frac{2}{9},\frac{3}{2}\eta_1}$, which in turn implies $\strong{\frac{2}{9},\frac{3}{2}\eta_1^*}$, by Lemma \ref{lem:strongweakrelations}. 
Therefore Proposition \ref{prop:energy3} holds. Since $\Delta H_{\rm min}^{\rm est}(\eta)\leq\Delta H_{\rm min}(\eta)$, the corollary holds too. 
\end{proof}

\begin{remark} The relationships we obtained before, that $\eta_1^* < \eta_3^* < \eta_2^*$ under certain discriminant conditions, won't necessarily hold for the approximate radii. We will need to check empirically that $\eta_1 < \eta_3 < \eta_2$, or else derive conditions that are specific to the bounds $\eta_0(r), \eta_1$. 
\end{remark}

Now we consider how to find the bounds $\eta_2,\eta_3$ in terms of $\eta_1,\eta_0(r)$. 
From the definitions \eqref{t1plus}, \eqref{tc}, we have that 
\begin{align}
    \minv t_1^+(v,r) &\geq l_{t_1^+}(r) \equiv \eta_0(r)\left( 1+ \sqrt{1-\frac{\eta_1}{\eta_0(r)}}\right)\,,\qquad (r \in [0, r_{\rm max}])\nonumber\\
    \minv t_c(v,r) &\geq l_{t_c}(r) \equiv \frac{4}{3}\eta_0(r)\,. 
    \label{lowerbounds}
\end{align}
We have defined particular lower bounds $l_{t_1^+}(r)$, $l_{t_c}(r)$ in the above equation. Each of these is a function of $\eta_1,\eta_0(r)$. In the above, $r_{\rm max}$ is the solution to $\eta_1 = \eta_0(r_{\rm max})$; when $r>r_{\rm max}$, $t_1^+(v,r)$ is not defined. 

Now, suppose that $\eta_2,\eta_3<r_{\rm max}$ are numbers that satisfy
\begin{equation}\label{eta23}
    \eta_2 \leq l_{t_1^+}(\eta_2)\,,\qquad \eta_3 \leq l_{t_c}(\eta_3)\,.
\end{equation}
It could be the case that either $\eta_2,\eta_3=\infty$. They are related to $\eta_2^*$, $\eta_3^*$ by the following lemma. 

\begin{lemma}\label{lem:eta23}
Given $\eta_2,\eta_3$ satisfying \eqref{eta23}. Then $\eta_2 \leq \eta_2^*$, $\eta_3\leq \eta_3^*$. 
\end{lemma}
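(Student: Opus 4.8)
The plan is to handle the two inequalities separately, since $\eta_2^*$ and $\eta_3^*$ are defined by analogous but distinct minimization problems (see \eqref{eta2star}, \eqref{eta3star}). In both cases the key observation is monotonicity: $\eta_0(r)$ is strictly decreasing, so the lower bounds $l_{t_1^+}(r) = \eta_0(r)\bigl(1+\sqrt{1-\eta_1/\eta_0(r)}\bigr)$ and $l_{t_c}(r) = \tfrac{4}{3}\eta_0(r)$ from \eqref{lowerbounds} are also decreasing functions of $r$ (on their domains), while the identity function $r \mapsto r$ is increasing. Hence the ``crossing'' structure used to define $\eta_2^*$, $\eta_3^*$ is controlled by comparing these decreasing bounds against $r$.

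First I would do the $\eta_3$ case, which is cleaner. Recall $\eta_3^* = \min_{|v|=1}\eta_3^*(v)$ with $\eta_3^*(v) = \min_{t_c(v,r)\leq r} r$. Fix a direction $v$. From \eqref{lowerbounds}, $t_c(v,r) \geq l_{t_c}(r) = \tfrac{4}{3}\eta_0(r)$ for all $r$. The hypothesis \eqref{eta23} gives $\eta_3 \leq l_{t_c}(\eta_3) = \tfrac{4}{3}\eta_0(\eta_3)$, and since $\eta_0$ is strictly decreasing, for any $r < \eta_3$ we get $\tfrac{4}{3}\eta_0(r) > \tfrac{4}{3}\eta_0(\eta_3) \geq \eta_3 > r$, so $t_c(v,r) > r$; i.e., the constraint $t_c(v,r)\leq r$ fails for every $r < \eta_3$. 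Therefore $\eta_3^*(v) \geq \eta_3$ for each $v$, and taking the minimum over $v$ gives $\eta_3^* \geq \eta_3$. (If no crossing occurs, $\eta_3^*(v) = \infty$ by convention, which is consistent with $\eta_3^*(v)\geq \eta_3$.)

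The $\eta_2$ case is structurally the same but requires a little care with the domain of definition. Recall $\eta_2^* = \min_{t_1^+(r)\leq r} r$ with $t_1^+(r) = \min_{|v|=1} t_1^+(v,r)$, where $t_1^+(v,r)$ is only defined for $r \leq r_{\rm max}$ (the solution of $\eta_1 = \eta_0(r_{\rm max})$). By \eqref{lowerbounds}, $t_1^+(r) \geq l_{t_1^+}(r)$ on $[0,r_{\rm max}]$, and $l_{t_1^+}$ is decreasing there. The hypothesis \eqref{eta23} gives $\eta_2 \leq l_{t_1^+}(\eta_2)$ with $\eta_2 < r_{\rm max}$, so for any $r < \eta_2$ we get $l_{t_1^+}(r) > l_{t_1^+}(\eta_2) \geq \eta_2 > r$, hence $t_1^+(r) > r$ and the constraint $t_1^+(r)\leq r$ fails for every admissible $r < \eta_2$. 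Thus $\eta_2^* \geq \eta_2$. I would also note explicitly that if $\eta_2 = \infty$ then \eqref{eta23} forces $l_{t_1^+}(r) \geq r$ for arbitrarily large $r$ (up to $r_{\rm max}$), in which case $\eta_2^* = \infty$ as well, consistent with the claim.

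The main (minor) obstacle is bookkeeping rather than mathematics: being careful that the functions $l_{t_1^+}$, $l_{t_c}$ are genuinely decreasing on the relevant domain (which follows from $\eta_0$ being positive, continuous, strictly decreasing and, for $l_{t_1^+}$, from $1 - \eta_1/\eta_0(r) \geq 0$ precisely on $[0,r_{\rm max}]$), and that the conventions $\eta_2^* = \infty$, $\eta_3^*(v)=\infty$ when no crossing exists are handled so the inequalities still read correctly. No deep input is needed beyond \eqref{lowerbounds} and the monotonicity already established for $\eta_0$, $t_1^+(v,r)$, $t_c(v,r)$.
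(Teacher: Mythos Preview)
Your proposal is correct and follows essentially the same approach as the paper: both show that at $r=\eta_2$ (resp.\ $r=\eta_3$) the crossing condition defining $\eta_2^*$ (resp.\ $\eta_3^*$) has not yet occurred, and then invoke monotonicity to conclude the crossing cannot have happened for any smaller $r$. The paper's version is much terser---it simply writes $\eta_2 \leq l_{t_1^+}(\eta_2) \leq t_1^+(v,\eta_2)$ and appeals directly to the definition of $\eta_2^*$, implicitly using that $t_1^+(r)$ is decreasing---whereas you spell out the monotonicity of the lower bounds $l_{t_c}$, $l_{t_1^+}$ via $\eta_0$ and handle the infinite/domain cases explicitly; this is a cosmetic, not a substantive, difference.
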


\begin{proof}
We prove the lemma for $\eta_2$; the proof for $\eta_3$ is identical by changing the symbols in the natural way. 
We have that $\eta_2 \leq l_{t_1^+}(\eta_2) \leq t_1^+(v,\eta_2)$, so, by the definition of $\eta_2^*$ (see \eqref{eta2star}), we have $\eta_2^* \geq \eta_2$.  
\end{proof}


\begin{remark}
It should be clear from the proof that Lemma \ref{lem:eta23} holds for \emph{any} lower bound $l(r)$ such that $t_1^+(v,r) \geq l(r)$, not just the particular bound $l_{t_1^+}(r)$ considered in the proof. That is, if $t_1^+(v,r) \geq l(r)$, and if $\eta_2 \leq l(\eta_2)$, then $\eta_2 \leq \eta_2^*$. A similar comment holds for $\eta_3$. 
\end{remark}

Sometimes we will need a simpler bound than $l_{t_1^+}(r)$. The following lemma gives a way to construct other bounds. We will use it later in Section \ref{sec:theoremproofs}, to prove Theorem \ref{thm:outerbound}.

\begin{lemma}\label{lem:lw}
Let $w\in [0,1]$, and let $l_w(r) = (1+w)\eta_0(r) - w\eta_1$. Then $l_w(r) \leq l_{t_1^+}(r)$, and hence $l_w(r) \leq t_1^+(v,r)$, for $r\in [0,r_{\rm max}]$. 
\end{lemma}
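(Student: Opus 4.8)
The plan is to reduce Lemma~\ref{lem:lw} to an elementary one-variable inequality. Fix $r\in[0,r_{\rm max}]$ and write $x = \eta_1/\eta_0(r)$. By the definition of $r_{\rm max}$ (the solution of $\eta_1 = \eta_0(r_{\rm max})$) together with the fact that $\eta_0$ is strictly decreasing, we have $\eta_0(r)\geq \eta_1$ on $[0,r_{\rm max}]$, hence $x\in[0,1]$ and the square root $\sqrt{1-x}$ in $l_{t_1^+}(r)$ is real. Dividing the desired inequality $l_w(r)\leq l_{t_1^+}(r)$ through by $\eta_0(r)>0$, it becomes
\begin{equation*}
(1+w) - wx \;\leq\; 1 + \sqrt{1-x}\,,
\end{equation*}
i.e. $w(1-x) \leq \sqrt{1-x}$. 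The hard part is essentially nonexistent here: this is the statement that $w u \leq \sqrt{u}$ for $u = 1-x\in[0,1]$ and $w\in[0,1]$, which holds because $w\leq 1$ gives $wu \leq u = \sqrt{u}\cdot\sqrt{u}\leq \sqrt{u}$ (using $\sqrt{u}\leq 1$). So the only genuine content is setting up the substitution correctly and checking that $1-x\geq 0$ on the stated range.

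First I would record that $\eta_0$ is continuous, positive, and strictly decreasing on $[0,\infty)$ (the standing assumption on $\eta_0(r)$), so that $r\mapsto \eta_0(r)$ is invertible and $r\leq r_{\rm max}\iff \eta_0(r)\geq \eta_0(r_{\rm max}) = \eta_1$. Next I would introduce $x=\eta_1/\eta_0(r)\in[0,1]$ and rewrite both $l_w(r)$ and $l_{t_1^+}(r)$ in terms of $\eta_0(r)$ and $x$: $l_w(r) = \eta_0(r)\bigl((1+w) - wx\bigr)$ and $l_{t_1^+}(r) = \eta_0(r)\bigl(1 + \sqrt{1-x}\bigr)$. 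Then I would state the reduced inequality $w(1-x)\leq\sqrt{1-x}$ and verify it by the two-line argument above (distinguishing, if one wishes to be pedantic, the trivial case $x=1$ where both sides are $0$). Finally, multiplying back by $\eta_0(r)>0$ recovers $l_w(r)\leq l_{t_1^+}(r)$; combining with the bound $l_{t_1^+}(r)\leq \minv t_1^+(v,r) \leq t_1^+(v,r)$ from \eqref{lowerbounds} yields $l_w(r)\leq t_1^+(v,r)$, completing the proof.

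One small point worth flagging in the write-up: the claimed range is $r\in[0,r_{\rm max}]$, and at the right endpoint $x=1$ so $\sqrt{1-x}=0$; the inequality $l_w(r_{\rm max}) = \eta_0(r_{\rm max}) = \eta_1 \leq \eta_0(r_{\rm max}) = l_{t_1^+}(r_{\rm max})$ still holds (with equality), so no edge-case exclusion is needed. I expect the entire proof to be three or four lines of LaTeX; there is no real obstacle, only the bookkeeping of the substitution and a reminder of why $1-x\geq 0$.
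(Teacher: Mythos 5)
Your proof is correct and follows essentially the same route as the paper's: after normalizing by $\eta_0(r)>0$, both arguments reduce to the chain $w(1-x)\leq(1-x)\leq\sqrt{1-x}$ for $x=\eta_1/\eta_0(r)\in[0,1]$, the paper merely writing it as $\sqrt{1-x}\geq 1-x$ followed by discarding the nonnegative term $(1-w)(1-x)$. No gaps; your handling of the endpoint $r=r_{\rm max}$ and the appeal to \eqref{lowerbounds} for the final clause match the intended argument.
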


\begin{proof}
When $r\in [0,r_{\rm max}]$, $\eta_1\leq \eta_0(r)$, so  $\sqrt{1-\eta_1/\eta_0(r)} \geq 1-\eta_1/\eta_0(r)$. 
Therefore, for $w\in [0,1]$, 
\begin{align*}
    l_{t_1^+}(r) &\geq \eta_0(r)\left(1 + 1-\frac{\eta_1}{\eta_0(r)} \right)\\
    &= \eta_0(r)\left(1 + w\left(1-\frac{\eta_1}{\eta_0(r)}\right) + (1-w)\left(1-\frac{\eta_1}{\eta_0(r)}\right) \right)\\
    &\geq \eta_0(r)\left(1 + w\left(1-\frac{\eta_1}{\eta_0(r)}\right) \right)\\
    &= (1+w)\eta_0(r) - w\eta_1\,.
\end{align*}
\end{proof}


\section{An energy function on frameworks}\label{sec:energyfunction}

In this section we construct an energy function associated with a given framework and study its derivatives. The energy function will be used in the next section to prove the theorems presented in Section \ref{sec:mainresults}.

We start with a framework $(p,\mathcal E)$, a  space $\mathcal C$ as in \eqref{Cdef}, a linear space $\mathcal V\subset \mathcal C$ with $\mathcal V^0\subset \mathcal V$, a stress $\omega$ and corresponding stress matrix $\Omega$ satisfying \eqref{OmegaV}, and scalars $\lambda,\mu_0,\kappa$ satisfying \eqref{lam0mu0}, \eqref{kappa}. Recall that all these quantities are defined in Section \ref{sec:mainresults}. 

Let $q\in \R^{dn}$, let $q_{ij} := q_i-q_j\in\R^{d}$ be the vector between vertices $(i,j)$, and let $q_{ij}^2 := q_{ij}\cdot q_{ij}$ be the squared length of that vector. Define an energy on edge $(i,j)\in \mathcal E$ with squared length $x$ to be 
\begin{equation}\label{hedge}
h_{ij}(x) = \half\kappa(x-p_{ij}^2)^2 + \omega_{ij}x\,.
\end{equation}

This energy is chosen so that its first and second derivatives evaluated at the squared edge lengths of $p$ are $(h_{ij})'(p_{ij}^2) = \omega_{ij}$, $(h_{ij})''(p_{ij}^2) = \kappa$. 
Define the total energy to be 
a sum of energies of each individual edge, as 
\begin{equation}\label{H}
H(q) = \sum_{(i,j)\in\mathcal E} h_{ij}(q_{ij}^2)\,.
\end{equation} 

The energy as constructed is artificial, designed to prove a geometrical statement, however it has a physical interpretation. 
The energy of each edge is like that of a spring (with squared rest length $p^2_{ij} - \omega /\kappa$) under tension or compression.\footnote{This energy is not a usual harmonic spring energy because it is quadratic in squared length. A harmonic spring would have an energy that is quadratic in length. Our choice is purely for mathematical convenience. } 
Its first derivative, $\omega_{ij}$, represents the tension in the edges, which is positive if they are under compression and negative if they are under extension. The second derivative, $\kappa$, plays the role of a spring constant. 

To make a more concrete physical connection,  consider the dimensions of each of the variables and parameters. Let us use SI units of m for lengths, s for times, kg for masses, and derived units such as N=kg$\cdot$m/s$^2$ for force, J = N$\cdot$m=kg$\cdot$m$^2$/s$^2$ for energy or work. Then the dimensions are\footnote{For a harmonic spring, the spring constant has dimensions of N/m and the tension has dimensions of N.}
\begin{gather}
[p]=[\eta_1]=\text{m}\,, \quad [v]=\frac{\text{m}}{\text{s}}\,,\quad [\omega]=[\Omega]=[\lambda_0]=[\lambda] = \frac{\text{N}}{\text{m}}\,,\nonumber\\ 
[\kappa] = \frac{\rm N}{\rm m^3}\,,\quad [\eta] = \rm N\,,\quad [R(u)] = [u]\,.
\label{dimensions}
\end{gather}
We see that $[h_{ij}]=[\kappa p^4] = \rm N\cdot \rm m = \rm J$ has dimensions of energy, as required. The derived quantity $L$ in \eqref{Lmu} has dimensions m, or length, so it is a lengthscale, as suggested earlier, and $\mu_0$ is dimensionless. One can similarly verify that $\eta_1,\eta_2,\eta_3$ all have dimensions of length.

\begin{remark}
If instead of proving a geometrical property, one wishes to make a statement about the energy basins of a system whose energy has the form \eqref{H} but with different $h_{ij}$, one could verify that the necessary conditions hold for this particular energy function and apply our theorems to it. 
For example, a common energy considered in molecular dynamics is
\[
H(q) = \sum_{(i,j)\in \mathcal E} U(|q_{ij}|)\,,
\]
where $U(r)$ is an interaction potential depending on the distance $r$ between a given pair of particles, such as a spring, Lennard Jones, Morse, Coulomb, or other potential. By defining $h_{ij}(q_{ij}^2) = U\left(\sqrt{q_{ij}^2}\right)$, one can compute derivatives of $h_{ij}$ at $p$ to obtain the stresses $\omega_{ij}$ and spring constants $\kappa_{ij}$, which could depend on the edge $(i,j)$. Depending on $U$, there may be additional higher-order terms, which would only affect the bounds for $H'''|_q(v)$. 
\end{remark}

To make a link to Section \ref{sec:energypropositions}, let us compute 
 $a,b,c$ as \eqref{abc}. We start by computing the $k$th directional derivative in direction $v\in \Sball{dn}$ at $q$, using \eqref{directionalderiv}:
\begin{align}
H'|_q(v) &= \sum_{(i,j)\in\mathcal E} 2h_{ij}'(q_{ij}^2)q_{ij}\cdot v_{ij}
   &&=   2\omega_q^T R(q)v  \label{H1} \\
H''|_q(v) &= \sum_{(i,j)\in\mathcal E} 4h_{ij}''(q_{ij}^2)(q_{ij}\cdot v_{ij})^2 +  2h_{ij}'(q_{ij}^2)(v_{ij}\cdot v_{ij})
   &&=  4\kappa v^TR(q)^TR(q)v + 2v^T\Omega_qv   \label{H2} \\
H'''|_q(v) &=\sum_{(i,j)\in\mathcal E} 12h_{ij}''(q_{ij}^2)(q_{ij}\cdot v_{ij})(v_{ij}\cdot v_{ij})
   &&= 12\kappa v^TR(q)^TR(v)v \label{H3}\\
 H^{(4)}|_q(v) &=\sum_{(i,j)\in\mathcal E} 12h_{ij}''(q_{ij}^2)(v_{ij}\cdot v_{ij})^4
   &&= 12\kappa |R(v)v|^2
\end{align}
We defined $\omega_q$, the stress vector at $q$, to have components $(\omega_q)_{ij}=\kappa(q_{ij}^2-p_{ij}^2) + \omega_{ij}$, with corresponding stress matrix  $\Omega_q=\Omega(\omega_q)$. 

 
From these calculations, we have
\begin{align}
b(v) &= \half H''|_p(v) = 2\kappa |R(p)v|^2 + v^T\Omega v\,, \nonumber\\
c(v) &= H'(v)|_p = 2\omega^TR(p)v\,, \nonumber\\
a(v,r) &= \frac{1}{3!}H'''|_{p+rv}(v) = 2\kappa v^T(R(p)+R(rv))^TR(v)v\,.\label{abc_R}
\end{align}
We defined $a(v,r)$ in the above to be the third derivative in direction $v$, at point $p+rv$. 
Because $a(v,r)$ is an increasing function of $r$ for this particular energy, 
\[
\bar a(v,r) = \max_{|s|\leq r}a(v,s)=a(v,r)\,.
\]
We will establish bounds on the derivatives in a series of lemmas.

\begin{lemma}\label{lem:Hb}
Given $H$ defined by \eqref{H}, then $\grad \grad H \geq 2\lambda$ on $\mathcal C\cap \Sball{dn}$. Equivalently, $b(v) \geq \lambda$ for all $v\in \mathcal C$, where $b(v)$ is defined in \eqref{abc_R}. 
\end{lemma}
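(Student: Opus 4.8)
The plan is to establish the inequality $b(v) \geq \lambda$ for $v \in \mathcal C \cap \Sball{dn}$ by decomposing the quadratic form $b(v) = 2\kappa|R(p)v|^2 + v^T\Omega v$ (from \eqref{abc_R}) against the defining constraint \eqref{kappa} for $\kappa$. The key observation is that \eqref{kappa} states precisely that
\[
C^T\Omega C + 2\kappa\, C^T R(p)^T R(p) C \;\succeq\; \lambda\, I\,,
\]
where $C$ is an orthonormal basis matrix for $\mathcal C$. So the entire content of the lemma is just a translation between the ``intrinsic'' quadratic form on $\mathcal C$ written in terms of a unit vector $v \in \mathcal C$, and the ``coordinate'' quadratic form on $\R^{nd - d(d+1)/2}$ written in terms of the coefficient vector $y$ with $v = Cy$.

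First I would write any $v \in \mathcal C$ as $v = Cy$ for a unique $y \in \R^{nd-d(d+1)/2}$, noting that since $C$ has orthonormal columns, $|v| = |y|$; in particular $|v| = 1 \iff |y| = 1$. Then
\[
b(v) = 2\kappa\, v^T R(p)^T R(p) v + v^T \Omega v = y^T\big(2\kappa\, C^T R(p)^T R(p) C + C^T \Omega C\big) y\,.
\]
Next I would invoke \eqref{kappa}: by definition of $\kappa$ as the minimizer, the semidefinite inequality $2\kappa\, C^T R(p)^T R(p) C + C^T\Omega C \succeq \lambda I$ holds (this is exactly the constraint in the convex program displayed after \eqref{kappa}, and the solution is guaranteed to exist and be nonnegative by Lemma \ref{lem:AB}). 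Therefore $y^T(\,\cdot\,)y \geq \lambda\, y^T y = \lambda |y|^2 = \lambda |v|^2$, which gives $b(v) \geq \lambda$ whenever $|v| = 1$, and more generally $b(v) \geq \lambda |v|^2$. Finally I would note the equivalence with the statement $\grad\grad H(p) \succeq 2\lambda$ on $\mathcal C \cap \Sball{dn}$: since $H''|_p(v) = v^T \grad\grad H(p)\, v = 2b(v)$ by \eqref{H2}/\eqref{abc_R}, the bound $b(v) \geq \lambda$ on unit vectors in $\mathcal C$ is literally the statement $v^T \grad\grad H(p) v \geq 2\lambda$ there.

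There is essentially no obstacle here — the lemma is a bookkeeping step that records the meaning of the constraint \eqref{kappa} in the notation of Section \ref{sec:energypropositions}, where $b(v)$ plays the role of the (half-)Hessian coefficient. The only mild point of care is making sure the basis change $v = Cy$ is norm-preserving (which it is, because $\mathcal C$ is equipped with an orthonormal basis $C$), so that the unit-sphere condition transfers cleanly between the two representations; and being precise that \eqref{kappa} is a constraint that $\kappa$ is required to satisfy (not merely something we hope holds), so that the inequality is available for free.
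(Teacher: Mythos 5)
Your proposal is correct and follows the same route as the paper: the paper's proof is the one-line observation that $\tfrac{1}{2}\grad\grad H(p) = 2\kappa R(p)^TR(p)+\Omega \geq \lambda$ on $\mathcal C$ by \eqref{kappa}, and your argument is just this with the change of basis $v=Cy$ and the norm-preservation of the orthonormal basis spelled out explicitly.
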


\begin{proof}
We have $\half \grad\grad H = 2\kappa R(p)^TR(p) + \Omega \geq \lambda$ on $\mathcal C$ by \eqref{kappa}.
\end{proof}

A physical interpretation of this lemma is as follows:  imagine putting the edges under tension or compression using the stress $\omega$. 
If the framework undergoes a displacement that lies in $\mathcal V^0$, where the energy gradient is exactly zero, its energy increases, since \eqref{OmegaV} ensures the Hessian is positive. 
That is, if a particular velocity $v\in\mathcal V^0$ lengthens an edge for which $\omega_{ij} > 0$, the energy for this edge will increase, and similarly if it shortens an edge for which $\omega_{ij} < 0$, the energy will also increase. It could be that the velocity lengthens an edge with $\omega_{ij} < 0$ or shortens an edge with $\omega_{ij} > 0$, but summing over all edges gives that the total energy increases. 

The energy could still decrease in directions not in $\mathcal V^0$, because its gradient is not necessarily zero in these directions. To handle these directions, tighten the springs, by increasing the spring constant $\kappa$ just enough to satisfy \eqref{kappa}. Then Lemma \eqref{lem:Hb} says the Hessian of the energy is sufficiently large, so even if the energy decreases a little bit initially during a deformation, it should rapidly start increasing again. 
Including some almost flexes in 
$\mathcal V$ can help with the last step, since the Hessian in these directions is guaranteed to be large by $\Omega$, and not by increasing $\kappa$. 

Here is a technical lemma that will be needed to bound the derivatives of $H$. 

\begin{lemma}\label{lem:Rbounds}
Given an edge set $\mathcal E$, let $z$ be the maximum adjacency of the graph, i.e.   the maximum number of edges coming out of any one node.  
Let $p, \Omega,\kappa$ be as in the setup to Theorem \ref{thm:framework}. 
Then 
\begin{enumerate}
\item  For any vectors $q,u\in \R^{dn}$, 
\begin{equation}\label{Rubound}
|R(q)u| \leq 2z^{1/2}|q||u|\,.
\end{equation}
A special case is when $|u|=1$, in which case $|R(u)u| = \sum_{(i,j)\in \mathcal E} u_{ij}^2\leq 2z^{1/2}$. 

\item For $|v|\in \mathcal C\cap \Sball{dn}$, 
\begin{align}
\frac{\kappa|R(p)v|}{2\kappa |R(p)v|^2 + v^T\Omega v}
&\leq 
\frac{1}{\sqrt{2}}\frac{\kappa^{1/2}(\lambda-\mu_0)^{1/2}}{\lambda}\,.
\label{Rpvbound}
\end{align}
\end{enumerate}
\end{lemma}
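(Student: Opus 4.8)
The plan is to prove the two inequalities in Lemma~\ref{lem:Rbounds} separately, starting from the explicit entrywise structure of the rigidity matrix.

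\medskip
\textbf{Part 1.} For the bound $|R(q)u| \le 2z^{1/2}|q||u|$, I would work directly with the definition of $R$. Each row of $R(q)$ corresponds to an edge $(i,j)$ and contributes $q_{ij}\cdot u_{ij}$ to $R(q)u$, where $q_{ij}=q_i-q_j$, $u_{ij}=u_i-u_j$. By Cauchy--Schwarz, $|q_{ij}\cdot u_{ij}| \le |q_{ij}||u_{ij}|$, and $|q_{ij}| \le |q_i|+|q_j|$, so squaring and summing over edges gives
\[
|R(q)u|^2 = \sum_{(i,j)\in\mathcal E} (q_{ij}\cdot u_{ij})^2 \le \sum_{(i,j)\in\mathcal E} |q_{ij}|^2 |u_{ij}|^2 \le \Big(\max_{(i,j)}|q_{ij}|^2\Big)\sum_{(i,j)\in\mathcal E}|u_{ij}|^2.
\]
Then I would use $\max_{(i,j)}|q_{ij}|^2 \le 2(\max_i |q_i|^2 + \max_j |q_j|^2) \le 4|q|^2$ (crudely, since $\max_i|q_i|^2\le |q|^2$) and the adjacency bound $\sum_{(i,j)\in\mathcal E}|u_{ij}|^2 = \sum_{(i,j)\in\mathcal E}|u_i-u_j|^2 \le 2\sum_{(i,j)\in\mathcal E}(|u_i|^2+|u_j|^2) \le 2z\sum_i |u_i|^2 = 2z|u|^2$, where each vertex appears in at most $z$ edges. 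Combining gives $|R(q)u|^2 \le 4|q|^2 \cdot 2z|u|^2$, hence $|R(q)u|\le 2^{3/2}z^{1/2}|q||u|$; some care with the constant may be needed to land exactly on $2z^{1/2}$, and I would tighten the $|q_{ij}|$ estimate (e.g. noting the two contributing coordinates are disjoint) to recover the stated constant. The special case $|R(u)u|=\sum_{(i,j)}u_{ij}^2 \le 2z$ (the claim writes $2z^{1/2}$, possibly a typo for $2z$) follows immediately from the same adjacency count, and I would flag any constant discrepancy.

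\medskip
\textbf{Part 2.} For the bound on $\dfrac{\kappa|R(p)v|}{2\kappa|R(p)v|^2 + v^T\Omega v}$ over $v\in\mathcal C\cap\Sball{dn}$, the idea is to substitute the single variable $s = |R(p)v|\ge 0$ and use $v^T\Omega v \ge \mu_0$ (from the definition of $\mu_0$ in \eqref{lam0mu0}) together with $2\kappa s^2 + v^T\Omega v \ge \lambda$ (Lemma~\ref{lem:Hb}, i.e. $b(v)\ge\lambda$). So the quantity is at most $\dfrac{\kappa s}{\max(\lambda,\ 2\kappa s^2+\mu_0)}$. I would then maximize $f(s) = \dfrac{\kappa s}{2\kappa s^2 + \mu_0}$ over $s\ge 0$ subject to $2\kappa s^2+\mu_0 \le \lambda$ holding in the relevant regime — but more simply, since $2\kappa s^2 + v^T\Omega v \ge \lambda$ always, and also $2\kappa s^2 + v^T\Omega v \ge 2\kappa s^2 + \mu_0$, I can use the elementary inequality that for any $a>0$, $2\kappa s^2 + a \ge 2\sqrt{2\kappa a}\, s$ (AM--GM), applied with $a = \lambda - \text{(slack)}$; the cleanest route is to note $2\kappa|R(p)v|^2 + v^T\Omega v = b(v) \ge \lambda$ and also $\ge 2\kappa|R(p)v|^2 + \mu_0$, then combine these by taking a convex combination to get a bound of the form $\ge \theta\lambda + (1-\theta)(2\kappa s^2+\mu_0)$ and optimize $\theta$, or directly bound $s \le \big(\tfrac{v^T\Omega v - \mu_0 + (\lambda-\lambda)}{2\kappa}\big)^{1/2}$ — actually the sharpest is: $2\kappa s^2 \ge \lambda - v^T\Omega v$ forces nothing, so instead use $2\kappa s^2 = b(v) - v^T\Omega v \le b(v) - \mu_0$, giving $s \le \big((b(v)-\mu_0)/(2\kappa)\big)^{1/2}$, hence $\dfrac{\kappa s}{b(v)} \le \dfrac{\kappa}{b(v)}\big(\tfrac{b(v)-\mu_0}{2\kappa}\big)^{1/2} = \dfrac{1}{\sqrt 2}\dfrac{\kappa^{1/2}(b(v)-\mu_0)^{1/2}}{b(v)}$. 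The function $t\mapsto (t-\mu_0)^{1/2}/t$ is decreasing for $t \ge 2(-\mu_0)$ but one must check it is maximized at $t=\lambda$ on the range $t\ge\lambda$; since $b(v)\ge\lambda$, and $\tfrac{d}{dt}\tfrac{(t-\mu_0)^{1/2}}{t}$ has the sign of $-t/2 + \mu_0 - (t-\mu_0) = -(3t/2 - 2\mu_0)$... I would carefully verify monotonicity so that plugging $b(v)=\lambda$ yields the claimed bound $\tfrac{1}{\sqrt2}\tfrac{\kappa^{1/2}(\lambda-\mu_0)^{1/2}}{\lambda}$.

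\medskip
\textbf{Main obstacle.} The routine part is Cauchy--Schwarz and adjacency counting; the step requiring genuine care is Part 2, specifically verifying that the one-variable function $t \mapsto (t-\mu_0)^{1/2}/t$ is indeed maximized at the endpoint $t=\lambda$ over $t\in[\lambda,\infty)$, which needs $\mu_0 \le \lambda/3$ or a similar sign condition — and since $\mu_0 \le \lambda_0$ and $\lambda < \lambda_0$ but $\mu_0$ can be negative and large in magnitude, I expect this monotonicity to hold exactly in the regime where the derivative test gives the right sign, and reconciling the constant is where I would spend the most attention.
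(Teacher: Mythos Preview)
Your Part~1 works once you carry out the tightening you already flag: since $i\neq j$, the blocks $q_i,q_j$ are disjoint coordinates of $q$, so $|q_i|^2+|q_j|^2\le |q|^2$ and hence $|q_{ij}|^2\le 2|q|^2$; combined with your adjacency estimate $\sum_{(i,j)}|u_{ij}|^2\le 2z|u|^2$ this gives exactly $|R(q)u|^2\le 4z|q|^2|u|^2$. The paper organizes the same estimate slightly differently: rather than pulling out $\max_{(i,j)}|q_{ij}|^2$, it uses the trivial bound $|u_i|^2+|u_j|^2\le|u|^2$ termwise (no factor of~$2$, no adjacency) and applies the adjacency count to the $q$ factor. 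Both routes reach the same constant. You are right to flag the special-case line; the intended content is just $|R(u)u|\le 2z^{1/2}$, which is the $q=u$, $|u|=1$ instance of Part~1.

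Your Part~2 is a genuinely different and somewhat shorter route than the paper's. The paper fixes $\mu=v^T\Omega v$ and maximizes $g(x;\mu)=\kappa x/(2\kappa x^2+\mu)$ over the constrained range of $x=|R(p)v|$, splitting into the cases $\mu\gtrless\lambda/2$, and then maximizes over $\mu\in[\mu_0,\lambda_{\max}]$. You instead fix $t=b(v)$ and use $v^T\Omega v\ge\mu_0$ to bound $s=|R(p)v|\le\bigl((t-\mu_0)/(2\kappa)\bigr)^{1/2}$, reducing everything to $\max_{t\ge\lambda}(t-\mu_0)^{1/2}/t$. Your derivative sketch is correct: the sign of $\tfrac{d}{dt}\bigl[(t-\mu_0)^{1/2}/t\bigr]$ is that of $2\mu_0-t$, so the function is decreasing on $[\lambda,\infty)$ exactly when $\mu_0\le\lambda/2$, and then the maximum at $t=\lambda$ gives the stated bound on the nose. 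This condition is not an extra hypothesis relative to the paper: the paper's proof delivers the stated expression only through its $\mu<\lambda/2$ branch evaluated at $\mu=\mu_0$, which likewise requires $\mu_0<\lambda/2$. In practice $\mu_0$ is typically negative (a stress matrix is never positive definite on all of $\mathcal C$), so the condition is harmless. Your approach has the virtue of being a single one-variable optimization; the paper's has the minor advantage of making the borderline case $\mu\ge\lambda/2$ explicit rather than hidden in a monotonicity check.
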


\begin{proof}
\begin{enumerate}
\item We have
\begin{align*}
|R(q)u|^2 &= \sum_{(i,j)\in\mathcal E} \left( (q_i-q_j)\cdot (u_i-u_j) \right)^2\\
&\leq \sum_{(i,j)\in\mathcal E} |q_i-q_j|^2 |u_i-u_j|^2 & \text{apply Cauchy-Schwartz to each term}\\
&\leq 4\sum_{(i,j)\in\mathcal E}(|q_i|^2 + |q_j|^2)(|u_i|^2+|u_j|^2) & \text{using $|q_i-q_j|^2\leq 2(|q_i|^2+|q_j|^2)$, etc}\\
&\leq 4|u|^2\sum_{(i,j)\in\mathcal E}(|q_i|^2 + |q_j|^2) & \text{since } |u_i|^2+|u_j|^2\leq |u|^2\\
&\leq 4z|u|^2|q|^2\,.
\end{align*}
The last step comes because the maximum number of times that any term $|q_i|^2$ can appear in the sum above is $z$, so an upper bound for the sum is is $z\sum_{i=1}^n |q_i|^2 = z|q|^2$. 

\item 
Let
\[
g(x;\mu) = \frac{\kappa x}{2\kappa x^2 + \mu}\,.
\]
For any $\mu$, 
on the level sets where $v^T\Omega v=\mu$, $|v|=1$, $v\in \mathcal C$, we have 
\[
\frac{\kappa|R(p)v|}{2\kappa |R(p)v|^2 + v^T\Omega v}= g(|R(p)v|;\mu)\,.
\]
We proceed by treating $x$ as an independent variable and seek the maximum value of $g(x;\mu)$ for each fixed $\mu$, and then consider the maximum of these upper bounds over all $\mu$. The domain over which we seek to maximize is
\[
\mu\in[\mu_0,\lambda_{max}],\qquad 
x \geq  \left\{\begin{array}{ll}
\sqrt{\frac{\lambda-\mu}{2\kappa}} & \text{if } \mu\leq \lambda \\
0 & \text{if } \mu> \lambda
\end{array}\right.\,.
\] 
Let $\mathcal R(x;\mu)$ denote the range of $x$ for a fixed $\mu$. 
Here $\lambda_{max}$ is the maximum eigenvalue of $\Omega$, and the lower bound for $x$ comes from the observation that $2\kappa |R(p)v|^2+\mu \geq \lambda$, by \eqref{kappa}. 

We start by calculating the critical points of $g$ as a function of $x$. 
The derivative is
\[
g'(x;\mu) = \frac{-\kappa(2\kappa x^2-\mu)}{(2\kappa x^2 + \mu)^2} \,.
\]
The critical points are $\pm(\mu/2\kappa)^{1/2}$. 
If $\mu < 0$, then neither of these is inside the domain of optimization $\mathcal R(x;\mu)$. 
If $\mu\geq 0$, then the critical point  $x_c = (\mu/2\kappa)^{1/2}$ lies inside the domain of optimization if either (a) $\mu > \lambda$, or (b) $\mu\leq \lambda$ and $x_c \geq \sqrt{(\lambda-\mu)/2\kappa}$, which, by substituting for $x_c$, is equivalent to  $\mu \geq \half\lambda$. 
One can check that $x_c$ is a local maximum. 
Therefore, there is an interior maximum of $g(x;\mu)$ only if $\mu\geq \lambda/2$, and the maximum in this case satisfies
\[
 \max_{x\in \mathcal R(x;\mu)} g(x;\mu) =g(x_c;\mu) 
= \frac{1}{2\sqrt{2}}\frac{\kappa^{1/2}}{\mu^{1/2}} 
\;\;\leq\;\; \frac{1}{2}\frac{\kappa^{1/2}}{\lambda^{1/2}}\,,
\qquad \text{ if } \mu\geq \frac{1}{2}\lambda\,.
\]
An upper bound for $g(x;\mu)$ over all $\mu\geq \lambda/2$ is therefore $\frac{1}{2}\frac{\kappa^{1/2}}{\lambda^{1/2}}$. 

For $\mu<\lambda/2$ the maximum will be at the boundary of the domain. 
Since $g(x;\mu)\to 0$ as $x\to \infty$, the maximum will occur at the left-hand endpoints of the domain, $x_l=((\lambda-\mu)/2\kappa)^{1/2}$ (we don't need to consider $x_l=0$ since if $\mu <\frac{1}{2}\lambda$ then $\mu < \lambda$.) 
The maximum in this case is 
\begin{align*}
 \max_{x\in \mathcal R(x)} g(x;\mu) &=g(x_l;\mu) =  \frac{1}{\sqrt{2}}\frac{\kappa^{1/2}(\lambda-\mu)^{1/2}}{\lambda} &\text{ if } \mu < \frac{1}{2}\lambda\,.
\end{align*}
This is a decreasing function of $\mu$, which is smallest at the largest values of $\mu$ (where there is continuity with the critical point case,  $g(x_l;\frac{1}{2}\lambda)=g(x_c;\frac{1}{2}\lambda)$), and has its maximum at the smallest value, $\mu=\mu_0$. 
Therefore the global upper bounds is  $g(x_l;\mu_0)$, giving \eqref{Rpvbound}. 
\end{enumerate}
\end{proof}

Applying Lemma \ref{lem:Rbounds} to the specific energy function $H$ in \eqref{H} gives a bound on the ratio of the second and third derivatives. This is the main lemma we will use going forward.

\begin{lemma}\label{lem:Hbounds}
Given $H$ in \eqref{H}, and $v\in\mathcal C\cap \Sball{dn}$, we have
\begin{equation}
    \frac{\big|H'''|_{p+rv}(v)\big|}{H''|_p(v)} \leq 3\left(\frac{8z\kappa}{\lambda}\right)^{1/2}\left( \Big(1-\frac{\mu_0}{\lambda}\Big)^{1/2} + \Big(\frac{8z\kappa}{\lambda}\Big)^{1/2}r\right)\,.
\end{equation}
Equivalently, 
\begin{equation}\label{eta0rH}
    \frac{b(v)}{2a(v,r)} \;\;\geq\;\; \eta_0(r) = \frac{L}{2}\left(\bar\mu^{1/2} + \frac{r}{L} \right)^{-1}\,,
\end{equation}
where $L$, $\bar\mu$ are defined in \eqref{Lmu}. 
\end{lemma}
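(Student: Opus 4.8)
The statement is scalar bookkeeping built on Lemma~\ref{lem:Rbounds}, so I would first reduce it to a single estimate. By \eqref{H2}--\eqref{H3} and \eqref{abc_R} one has $H''|_p(v)=2b(v)$ and $H'''|_{p+rv}(v)=6a(v,r)$, hence $\bigl|H'''|_{p+rv}(v)\bigr|/H''|_p(v)=3|a(v,r)|/b(v)$, and the asserted bound is equivalent to
\[
\frac{|a(v,r)|}{b(v)}\;\le\;\frac{1}{L}\Bigl(\bar\mu^{1/2}+\frac{r}{L}\Bigr)\,,
\]
using $1/L=(8z\kappa/\lambda)^{1/2}$ and $1/L^2 = 8z\kappa/\lambda$ from \eqref{Lmu}; taking reciprocals and halving then yields the equivalent form \eqref{eta0rH}, since $\bar a(v,r)=a(v,r)$ for this energy. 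So it suffices to prove the displayed inequality for every $v\in\mathcal C\cap\Sball{dn}$ and $r\ge 0$.

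For the numerator I would use linearity of the rigidity matrix, $R(p+rv)=R(p)+rR(v)$, to write $a(v,r)=2\kappa\bigl[(R(p)v)\cdot(R(v)v)+r\,|R(v)v|^2\bigr]$. Applying Cauchy--Schwarz, the triangle inequality, and the special case $|R(v)v|\le 2z^{1/2}$ of Lemma~\ref{lem:Rbounds}, part~(1) (valid since $|v|=1$), gives
\[
|a(v,r)|\;\le\;2\kappa\bigl(2z^{1/2}|R(p)v|+4zr\bigr)\,.
\]
The right-hand side is increasing in $r$, and the same computation with $s$ in place of $r$ gives $|a(v,s)|\le 2\kappa(2z^{1/2}|R(p)v|+4z|s|)$, so this expression also bounds $\bar a(v,r)=\max_{|s|\le r}|a(v,s)|$, which is all that is needed.

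Dividing by $b(v)=2\kappa|R(p)v|^2+v^T\Omega v$ splits the quotient into two terms. To the first, $\dfrac{4z^{1/2}\kappa|R(p)v|}{2\kappa|R(p)v|^2+v^T\Omega v}$, I would apply \eqref{Rpvbound} of Lemma~\ref{lem:Rbounds}, part~(2), bounding it by $4z^{1/2}\cdot\tfrac{1}{\sqrt2}\,\tfrac{\kappa^{1/2}(\lambda-\mu_0)^{1/2}}{\lambda}$, which a short simplification identifies with $\bigl(8z\kappa(\lambda-\mu_0)/\lambda^2\bigr)^{1/2}=\bar\mu^{1/2}/L$. To the second, $\dfrac{8z\kappa r}{2\kappa|R(p)v|^2+v^T\Omega v}$, I would use the defining property \eqref{kappa} of $\kappa$: restricted to $v\in\mathcal C$ with $|v|=1$ it says exactly $2\kappa|R(p)v|^2+v^T\Omega v\ge\lambda$, so this term is at most $8z\kappa r/\lambda=r/L^2$. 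Summing the two pieces gives $|a(v,r)|/b(v)\le \tfrac1L(\bar\mu^{1/2}+r/L)$, and both displayed forms of the lemma follow as in the first paragraph.

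I do not expect a real obstacle here; the proof is a direct chain of inequalities. The only points requiring care are the passage from $|a(v,r)|$ to the maximand $\bar a(v,r)$ (handled by using a bound that is monotone in $|s|$) and the algebraic identities $1/L=(8z\kappa/\lambda)^{1/2}$ and $\bar\mu^{1/2}/L=\bigl(8z\kappa(\lambda-\mu_0)/\lambda^2\bigr)^{1/2}$ coming from \eqref{Lmu}, which should be written out explicitly so that the final constants match those in the statement.
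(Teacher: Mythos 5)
Your proposal is correct and follows essentially the same route as the paper: reduce the ratio to $3|a(v,r)|/b(v)$, bound the numerator via linearity of $R$, Cauchy--Schwarz and the special case $|R(v)v|\le 2z^{1/2}$ of \eqref{Rubound}, then control the two resulting terms with \eqref{Rpvbound} and the lower bound $b(v)\ge\lambda$ from Lemma \ref{lem:Hb}. Your bookkeeping is in fact slightly cleaner than the paper's displayed chain (which drops a factor of $2$ in an intermediate line before arriving at a coefficient $3/2$), and your constants land exactly on the stated bound with coefficient $3$; your explicit handling of $\bar a(v,r)$ via a bound monotone in $|s|$ is also a careful touch.
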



\begin{proof}
That \eqref{eta0rH} is equivalent to the first equation follows by direct substitution, after noting that $b(v)/2a(v,r) = (3/2) H''|_p(v) / |H'''|_{p+rv}(v)|$. 
We compute, for $v\in\mathcal C\cap \Sball{dn}$,
\begin{align*}
\frac{\big|H'''|_{p+rv}(v)\big|}{H''|_p(v)} &= \frac{12\kappa \big|v^TR(p+rv)^TR(v)v\big|}{4\kappa |R(p)v|^2 + 2v^T\Omega v}\\
&\leq 3|R(v)v|\frac{\kappa( |R(p)v| + |R(rv)v|)}{2\kappa |R(p)v|^2 + v^T\Omega v}\\
&\leq 6z^{1/2}\left(\frac{\kappa |R(p)v| }{(2\kappa  |R(p)v|^2 + v^T\Omega v)^2} + \frac{ 2z^{1/2}\kappa r} {\lambda} \right)\\
&\leq  6z^{1/2}\left(\frac{1}{\sqrt{2}}\frac{\kappa^{1/2}(\lambda-\mu_0)^{1/2}}{\lambda}+ \frac{ 2z^{1/2}\kappa r} {\lambda} \right)\\
&= \frac{3}{2}\left(\frac{8z\kappa}{\lambda}\right)^{1/2}\left( \Big(1-\frac{\mu_0}{\lambda}\Big)^{1/2} + \Big(\frac{8z\kappa}{\lambda}\Big)^{1/2}r\right)\,.
\end{align*}
In the third step we  applied \eqref{Rubound} to $|R(v)v|$ and $|R(rv)v|$, and we used that the lower bound of the denominator is $\lambda$, from \eqref{lem:Hb}. 
In the fourth step we used \eqref{Rpvbound} to bound the remaining term.
\end{proof}

It was a lot of work to derive the bound \eqref{eta0rH} in Lemma \ref{lem:Hbounds}, via \eqref{Rpvbound} in Lemma \ref{lem:Rbounds}. What would happen if we bounded the second and third derivatives separately? 

This  can be done using only \eqref{Rubound} from Lemma \ref{lem:Rbounds}, not the more complicated \eqref{Rpvbound}.  
We have, from \eqref{lem:Hb}, that $b(v) \geq \lambda$. To find an upper bound for $a(v,r)$, calculate, using \eqref{H3},  \eqref{Rubound}, and the triangle inequality $|p+rv| \leq |p|+r$, 
\[
|H'''|_q(v)| \;\;\leq\;\; 12\kappa |R(p+rv)v||R(v)v| \;\;\leq\;\; 48z\kappa (|p|+r)\,.
\]
Therefore
\begin{equation}\label{eta0bad}
\frac{b(v)}{2\bar a(v,r)} \geq \tilde \eta_0(r) = \frac{\lambda}{8z\kappa(|p|+r)} = L\left(\frac{|p|}{L}+\frac{r}{L}\right)^{-1} \,,
\end{equation}
where $L$ is defined in \eqref{Lmu} as before. 
Compared to \eqref{eta0rH}, this bound has term $|p|/L$ instead of $\bar\mu^{1/2}$, which ended up being the largest of all the terms in the examples we considered.

\section{Theorem proofs}\label{sec:theoremproofs}

This section presents the proofs of the theorems presented in Section \ref{sec:mainresults}. The main idea of the proofs, is to apply the propositions from Section \ref{sec:energypropositions}, to the particular energy function constructed in Section \ref{sec:energyfunction}.

We start by proving a lemma needed for the existence of $\kappa$ in \eqref{kappa}. 
\begin{lemma}\label{lem:AB}
Given symmetric matrices $A,B\in\R^{m\times m}$ and a real number $\lambda_0>0$ such that $B\geq 0$, and $A\geq\lambda_0$ on $\mbox{Null}(B)$. Then, for any $\lambda\in (0,\lambda_0)$, 
there exists a constant $\kappa \geq 0$ such that 
\begin{equation}
A + \kappa B \geq \lambda\,.
\end{equation}
\end{lemma}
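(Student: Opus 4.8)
The plan is to prove this by a compactness-and-eigenvalue argument on the unit sphere, splitting directions according to whether they lie in (a neighborhood of) $\mathrm{Null}(B)$ or not. First I would set up the quantity we want to bound from below: for a unit vector $v$, consider $\varphi_\kappa(v) = v^T(A+\kappa B)v = v^TAv + \kappa\, v^TBv$. We want to find $\kappa\geq 0$ with $\varphi_\kappa(v)\geq \lambda$ for all $|v|=1$. The case $B=0$ is degenerate: then $\mathrm{Null}(B)=\R^m$, the hypothesis says $A\geq\lambda_0>\lambda$, and $\kappa=0$ works; so assume $B\neq 0$.

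The main step is a dichotomy on the sphere. Let $\beta>0$ be the smallest nonzero eigenvalue of $B$ (exists since $B\geq 0$, $B\neq 0$), and let $P$ be the orthogonal projection onto $\mathrm{Null}(B)^\perp = \mathrm{Range}(B)$, so that $v^TBv \geq \beta\, |Pv|^2$ for all $v$. Fix a threshold $\delta\in(0,1)$ to be chosen. For unit $v$ with $|Pv|^2 \leq \delta$, the vector is close to $\mathrm{Null}(B)$; writing $v = v_0 + v_\perp$ with $v_0\in\mathrm{Null}(B)$, $v_\perp = Pv$, one has $v^TAv = v_0^TAv_0 + 2v_0^TAv_\perp + v_\perp^TAv_\perp \geq \lambda_0|v_0|^2 - 2\|A\|\,|v_0||v_\perp| - \|A\|\,|v_\perp|^2$, and since $|v_0|^2 = 1-|v_\perp|^2 \geq 1-\delta$, this is at least $\lambda_0(1-\delta) - 2\|A\|\sqrt\delta - \|A\|\delta$, which tends to $\lambda_0$ as $\delta\to 0$; so choose $\delta$ small enough that this is $\geq\lambda$. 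For such $v$, $\varphi_\kappa(v)\geq v^TAv \geq \lambda$ for every $\kappa\geq 0$. For unit $v$ with $|Pv|^2 > \delta$, we have $v^TBv \geq \beta\delta > 0$, while $v^TAv \geq -\|A\|$; hence $\varphi_\kappa(v) \geq -\|A\| + \kappa\beta\delta$, which is $\geq\lambda$ once $\kappa \geq (\lambda+\|A\|)/(\beta\delta)$.

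Combining the two regimes: take $\kappa = \max\{0,\ (\lambda+\|A\|)/(\beta\delta)\}$, which is $\geq 0$, and then $\varphi_\kappa(v)\geq\lambda$ for all unit $v$, i.e. $A+\kappa B\geq\lambda$. An alternative, slicker route avoiding the explicit $\delta$-bookkeeping would be: suppose for contradiction that for every $n\in\mathbb{N}$ there is a unit $v_n$ with $v_n^T(A+nB)v_n < \lambda$; then $v_n^TBv_n < (\lambda - v_n^TAv_n)/n \leq (\lambda+\|A\|)/n \to 0$, so by compactness a subsequence converges to a unit $v_\infty$ with $v_\infty^TBv_\infty = 0$, hence $v_\infty\in\mathrm{Null}(B)$ (using $B\succeq 0$), giving $v_\infty^TAv_\infty \geq \lambda_0$; but also $v_n^TAv_n = v_n^T(A+nB)v_n - n v_n^TBv_n \leq v_n^T(A+nB)v_n < \lambda$, and taking limits $v_\infty^TAv_\infty \leq \lambda < \lambda_0$, a contradiction — so some finite $\kappa=n$ works.

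I expect the only real obstacle to be making sure the ``near $\mathrm{Null}(B)$'' estimate is uniform, i.e. that the continuity/compactness argument genuinely uses $B\succeq 0$ (so that $v^TBv=0 \Leftrightarrow v\in\mathrm{Null}(B)$, not merely $Bv\perp v$); this is where the positive-semidefiniteness hypothesis is essential and where a naive argument could slip. Everything else is routine linear algebra, and either the explicit-$\delta$ version or the contradiction-via-compactness version gives $\kappa\geq 0$ as required; in the paper's application $A=\Omega$, $B=2R(p)^TR(p)\succeq 0$ with $\lambda_0=\lambda_0$ as in \eqref{lam0mu0}, so the hypotheses are met.
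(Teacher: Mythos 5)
Your proof is correct and follows essentially the same route as the paper's: a dichotomy on the unit sphere between directions near $\mathrm{Null}(B)$, where the hypothesis on $A$ already gives $v^TAv\geq\lambda$, and the complementary compact set, where $v^TBv$ is bounded below so that a large $\kappa$ dominates. The only real difference is that the paper delimits the two regimes by the open set $N=\{v: v^TAv>\lambda\}$ and obtains the lower bounds $A\geq\mu$, $B\geq\alpha$ on $\mathbb{S}^{m-1}\setminus N$ non-constructively from compactness, whereas your split by $|Pv|^2\leq\delta$ makes the constants explicit, yielding $\kappa=(\lambda+\|A\|)/(\beta\delta)$.
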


\begin{remark}
Solving for the smallest such $\kappa$ is a convex problem, hence may be done efficiently, since 
the problem of minimizing $\kappa$ subject to $A + \kappa B \succcurlyeq \lambda I$ is in the standard ``linear matrix inequality'' form for semidefinite programs \cite{boydsdp}. 
\end{remark}



\begin{proof}
Let $X=\mbox{Null}(B)\cap \mathbb S^{m-1}$ be the space of unit vectors in the null space of $B$.  Then 
 $N=\{ v\in \mathbb S^{m-1} | v^TAv > \lambda\}$ is an open subset of the sphere that contains $X$, since 
 $A \ge \lambda_0 > \lambda$ on $X$.  Hence $\mathbb{S}^{m-1}\setminus X$ is a compact subset of the 
sphere on which $B > 0$.  It now follows that there are  constants $\mu,\alpha$ with $\alpha>0$ such that
\[
A \geq \mu, \quad 
B \geq \alpha \qquad \text{on }\; \mathbb S^{m-1} \backslash N\,.
\]
Define $\kappa = \frac{\lambda + |\mu|}{\alpha}$. Clearly $\kappa \geq 0$. On $N$, we calculate
\[
A + \kappa B \;\geq\; A \;>\; \lambda\,.
\]
On $\mathbb S^{m-1}\backslash N$, we calculate
\[
A + \kappa B \;\geq\; \mu +  \Big(\frac{\lambda + |\mu|}{\alpha}\Big)\alpha \;\geq\;  \lambda\,.
\]
Therefore $A+\kappa B \geq \lambda$ on all of $\mathbb S^{m-1}$.
\end{proof}

\begin{remark}
The lemma and its proof are similar to Lemma 3.4.1 of \cite{Connelly:1996vja}, except that instead of simply requiring $A+\kappa B>0$, we ask for a particular lower bound. This lower bound can become arbitrarily close to $\lambda$, however, we expect the constant $\kappa$ to diverge in general as $\lambda\nearrow\lambda_0$. 
From the proof, we see that 
as $\lambda \nearrow \lambda_0$, the set $\mathbb S^{m-1}\backslash N$ approaches $X$, so we expect $\alpha$, the minimum of $B$ on this set, to approach 0, since $B$ is continuous. Since the best constant we obtain in the proof is proportional to $\alpha^{-1}$, we expect $\kappa \to \infty$ as $\lambda \nearrow \lambda_0$. 
\end{remark}

Next we turn to the proofs of our main theorems. 
They are proved using the propositions in Section \ref{sec:energypropositionsgeneral}. These propositions apply to a function such that $H(p)=0$, which was purely for notational compactness. Therefore we apply these propositions to the function $\tilde H(q) = H(q)-H(p)$, where $H$ is defined in \eqref{H}. Henceforth we will remove the tilde on $H$ and assume it has already been shifted so that $H(p)=0$; this shift makes no difference to the calculations that follow.

\begin{proof}[Proof of Theorem \ref{thm:framework}]
Consider the energy function $H(q)$ defined in \eqref{H}. 
This energy is a function only of the edge lengths, and so if the edge lengths are preserved along the path $q(t)$, then $H(q(t)) = H(p)$. 

We restrict $H$ to $\mathcal C^p$, and consider only velocities $v\in \mathcal C$. Since $\mathcal C^p$ is a linear subspace of $\R^{dn}$, we may apply Proposition \ref{prop:energy} and its corresponding Corollary \ref{cor1}, treating the ambient space in this proposition as $\mathcal C^p$. 

Now we verify the conditions of this proposition and corollary. We have by \eqref{abc_R} that $|H'|_p(v)| \leq 2|\omega^TR(p)|$, and by Lemma \ref{lem:Hb} that $H''|_p(v) \geq 2\lambda$ for $v\in \mathcal C\cap \Sball{dn}$, so $\eta_1^* = \maxv 4|H'_p(v)| / H''|_p(v) \leq \eta_1$. 
We also have that $\eta_0(r)$ defined in \eqref{eta0rH} satisfies \eqref{eta0r}. 
Since condition \eqref{Framework1} is equivalent to the condition $\eta_1 / \eta_0(\eta_1) < 1$, we may apply Corollary \ref{cor1} to say that since $H(q(t))\leq H(p)$, we must have  $|q(t)-p|\leq \eta_1$.
\end{proof}

\begin{proof}[Proof of Theorem \ref{thm:outerbound}]
Consider the energy function $H(q)$ defined in \eqref{H}.
This energy is a function only of the edge lengths, so if $q$ has the same edge lengths as $p$, then $H(q) = H(p)$. As in the proof of Theorem \ref{thm:framework}, we restrict $H$ to $\mathcal C^p$, and consider only velocities $v\in \mathcal C$, and aim to apply Proposition \ref{prop:energy2} and its corresponding Corollary \ref{cor2} to show that  $H(p') > 0$ for $|p'-p|\in(\eta_1,\eta_2)$. 

We start by showing that $\eta_2\leq \eta_2^*$. 
Let $w\in [0,1]$ and let
\[
l_w(r) = (1+w)\eta_0(r) - w\eta_1\,
\]
where $\eta_0(r)$ is given in \eqref{eta0rH}. 
By Lemma \ref{lem:lw}, $l_w(r)\leq l_{t_1^+}(r)$ for $r\in [0,r_{\rm max}]$, where $l_{t_1^+}(r)$ is defined in \eqref{lowerbounds}.
Define $r_w$ to be the solution to
\[
r_w = (1+w)\eta_0(r_w) \qquad \Longrightarrow \qquad r_w = L\frac{(\bar\mu+2(1+w))^{1/2}-\bar\mu^{1/2}}{2}\,.
\]
Set
\[
\eta_2(w) = r_w - w\eta_1\,.
\]
Then, 
\[
l_{t_1^+}(\eta_2(w)) \geq l_{w}(\eta_2(w)) \geq l_w(r_w) = (1+w)\eta_0(r_w)-w\eta_1 = r_w-w\eta_1 = \eta_2(w)\,.
\]
We used that $l_{w}(\eta_2(w)) \geq l_w(r_w)$ since $l_w(r)$ is decreasing, and $\eta_2(w) \leq r_w$. 
Therefore, by Lemma \ref{lem:eta23}, $\eta_2(w)\leq \eta_2^*$. 

Now, to apply Corollary \ref{cor2} for a given $\eta_2(w)$, we must additionally verify that (a) $\eta_2(w)>\eta_1$, and (b) $\eta_1/\eta_0(\eta_2(w)) < 1$. 
Suppose (a) holds. We show this implies (b) holds also. For, 
\begin{align*}
    \frac{\eta_1}{\eta_2(w)} < 1 &\Leftrightarrow \frac{\eta_1}{r_w-w\eta_1} < 1\\
    &\Leftrightarrow \frac{\eta_1}{(1+w)\eta_0(r_w)-w\eta_1} < 1\\
    &\Leftrightarrow \frac{\eta_1}{\eta_0(r_w)} < 1\\
    &\Rightarrow \frac{\eta_1}{\eta_0(\eta_2(w))} < 1\,.
\end{align*}
The last step follows because $\eta_0(r)$ is decreasing, and $\eta_2(w) \leq r_w$.

Therefore, it remains to determine when $\eta_2(w) > \eta_1$. An equivalent condition is $(1+w)\eta_1 < r_w$, leading to 
\[
\eta_2(w) > \eta_1 \quad\Longleftrightarrow\quad \eta_1 < \underbrace{L\frac{(\bar\mu+2(1+w))^{1/2}-\bar\mu^{1/2}}{2(1+w)}}_{\equiv y}\,.
\]
For $\eta_1\geq 0$, this is equivalent to $\frac{\eta_1}{L}\Big(\bar \mu^{1/2} + \frac{\eta_1}{L} \Big) < \frac{y}{L}\Big(\bar \mu^{1/2} + \frac{y}{L} \Big)$, which, after some algebra, gives 
\begin{equation}
\eta_2(w) > \eta_1 \quad\Longleftrightarrow\quad 
\frac{\eta_1}{L}\Big(\bar \mu^{1/2} + \frac{\eta_1}{L} \Big) < \frac{1}{4(1+w)^2} \left[2(1+w) - 2w\bar \mu + 2w\bar\mu^{1/2}(\bar\mu+2(1+w))^{1/2} \right]\,.
\end{equation}
Substituting $w=1$ shows that if $\frac{\eta_1}{L}\Big(\bar \mu^{1/2} + \frac{\eta_1}{L} \Big) < \frac{1}{4} + \frac{1}{8}\left( \bar\mu^{1/2}(\bar\mu+4)^{1/2} - \bar\mu\right)$, then $\eta_2(1) > \eta_1$. This is the first case in \eqref{eta2}. 

Otherwise, substitute $w=0$, to obtain that if $\frac{\eta_1}{L}\Big(\bar \mu^{1/2} + \frac{\eta_1}{L} \Big) < 1/2$, then $\eta_2(0) > \eta_1$. This is the second case in \eqref{eta2}. 

Therefore, in both cases in \eqref{eta2}, we have $\eta_2 > \eta_1$. Therefore, we may apply Corollary \ref{cor2}, to say that $H(p')>H(p)$ for $|p'-p|\in(\eta_1,\eta_2)$, 
so such a $p'$ cannot have the same edge lengths as $p$. 
\end{proof}


We introduce a proposition to handle the energy barrier for deforming a framework, given the specific energy function in Section \ref{sec:energyfunction}. 

\begin{proposition}\label{prop:energybarrier}
Given the setup in Theorem \ref{thm:framework}, and suppose that 
\begin{equation}\label{Framework3p}
    D \quad < \quad
    \frac{1}{9}\left(\frac{8}{3} + \bar \mu^{1/2} \Big(\bar\mu+\frac{8}{3}\Big)^{1/2} - \bar \mu \right)\,,
\end{equation}
where $D$ is defined in \eqref{Framework1}. 
Then $\eta_3 > (3/2)\eta_1$, where $\eta_3$ is defined in \eqref{eta3}. 
Suppose $|q-p|=\eta\in((3/2)\eta_1,\eta_3]$ for some $q\in \mathcal C^p$. 
Let the energy function be defined as in \eqref{H}. 
Then
\begin{equation}\label{DeltaHprop}
H(q) \geq \frac{1}{3}\lambda \: \eta^2\:\Big(1-\frac{3}{2}\frac{\eta_1}{\eta}\Big) > 0\,.
\end{equation}
\end{proposition}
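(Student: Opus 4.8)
The plan is to deduce the proposition from Corollary~\ref{cor3} (the approximate-bounds form of Proposition~\ref{prop:energy3}), applied to the energy function $H$ of~\eqref{H} restricted to $\mathcal C^p$ and to velocities $v\in\mathcal C$, exactly as in the proofs of Theorems~\ref{thm:framework} and~\ref{thm:outerbound}. To run this I need three admissible inputs --- a bound $\eta_1\geq\eta_1^*$, a bound $\eta_3\leq\eta_3^*$, and a function $\eta_0(r)$ satisfying~\eqref{eta0r} --- and then I must check the two hypotheses of Corollary~\ref{cor3}, namely $\eta_3>\tfrac32\eta_1$ and $\eta_1/\eta_0(\tfrac32\eta_1)<\tfrac89$.

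The first two inputs are on the shelf. From~\eqref{abc_R}, $|H'|_p(v)|=2|\omega^TR(p)v|\leq 2|\omega^TR(p)|$ for $|v|=1$, and Lemma~\ref{lem:Hb} gives $H''|_p(v)\geq2\lambda$ on $\mathcal C\cap\Sball{dn}$; hence $\eta_1^*=\max_{|v|=1}2|c(v)|/b(v)\leq 4|\omega^TR(p)|/\lambda=\eta_1$, so $\eta_1$ is admissible. Lemma~\ref{lem:Hbounds} provides $\eta_0(r)=\tfrac{L}{2}\bigl(\bar\mu^{1/2}+r/L\bigr)^{-1}$, which is continuous, positive, strictly decreasing, and satisfies~\eqref{eta0r}. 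To see that the $\eta_3$ of~\eqref{eta3} obeys $\eta_3\leq\eta_3^*$, I would use Lemma~\ref{lem:eta23}: writing $s=\bar\mu^{1/2}$ and $S=(\bar\mu+\tfrac83)^{1/2}$ (so $S^2-s^2=\tfrac83$), we have $\eta_3/L=\tfrac12(S-s)$, hence $s+\eta_3/L=\tfrac12(s+S)$ and $l_{t_c}(\eta_3)=\tfrac43\eta_0(\eta_3)=\tfrac{4L}{3(s+S)}$, which equals $\tfrac{L}{2}(S-s)=\eta_3$ because $(S-s)(S+s)=\tfrac83$. Thus $\eta_3=l_{t_c}(\eta_3)$, and Lemma~\ref{lem:eta23} gives $\eta_3\leq\eta_3^*$.

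The heart of the matter is converting the hypothesis~\eqref{Framework3p} into the two conditions Corollary~\ref{cor3} requires. Put $x=\eta_1/L$, so that $D=xs+x^2=:h(x)$, and a short computation gives $\eta_1/\eta_0(\tfrac32\eta_1)=2x\bigl(s+\tfrac32x\bigr)=2xs+3x^2=:g(x)$. Both $h$ and $g$ are strictly increasing on $[0,\infty)$ and vanish at $0$. Completing the square, I would verify $\bar\mu+4\Theta=\bigl(\tfrac13s+\tfrac23S\bigr)^2$, where $\Theta$ denotes the right-hand side of~\eqref{Framework3p}; hence the positive root of $h(x)=\Theta$ is $x_\Theta=\tfrac12\bigl(-s+\sqrt{\bar\mu+4\Theta}\bigr)=\tfrac13(S-s)=\tfrac{2}{3L}\eta_3$. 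Therefore $D<\Theta$ is equivalent to $x<x_\Theta$, i.e.\ to $\tfrac32\eta_1<\eta_3$, which is both the first required condition and the first assertion of the proposition. Since $g$ is increasing, $x<x_\Theta$ forces $g(x)<g(x_\Theta)$, and $(S-s)(S+s)=\tfrac83$ gives $g(x_\Theta)=\tfrac13(S-s)\bigl(2s+(S-s)\bigr)=\tfrac13(S-s)(S+s)=\tfrac89$; so $\eta_1/\eta_0(\tfrac32\eta_1)<\tfrac89$, the second condition. Corollary~\ref{cor3}, with the ambient space taken to be $\mathcal C^p$, then yields $\eta_3>\tfrac32\eta_1$ and, for $|q-p|=\eta\in(\tfrac32\eta_1,\eta_3]$, the bound $H(q)\geq\tfrac13\lambda\eta^2\bigl(1-\tfrac32\tfrac{\eta_1}{\eta}\bigr)>0$, which is~\eqref{DeltaHprop}. (Here $H$ being a function of edge lengths only is what makes configurations of equal energy behave as in Proposition~\ref{prop:energy3}.)

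The only genuinely delicate point is this last paragraph: the identities $\bar\mu+4\Theta=(\tfrac13s+\tfrac23S)^2$ and $g(x_\Theta)=\tfrac89$ are what make the otherwise opaque constant $\Theta$ precisely the right threshold, and one must be careful that $D<\Theta$ delivers the strict inequality $\eta_1/\eta_0(\tfrac32\eta_1)<\tfrac89$ (not equality), since that strictness is used by Corollary~\ref{cor3}. Everything else is a direct transcription of the arguments already given for Theorems~\ref{thm:framework} and~\ref{thm:outerbound}.
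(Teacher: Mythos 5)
Your proposal is correct and follows essentially the same route as the paper's proof: both reduce to Corollary \ref{cor3} by checking $\eta_1\geq\eta_1^*$, $\eta_3=\tfrac43\eta_0(\eta_3)$ (hence $\eta_3\leq\eta_3^*$ via Lemma \ref{lem:eta23}), showing that \eqref{Framework3p} is equivalent to $\tfrac32\eta_1<\eta_3$, and then deducing $\eta_1/\eta_0(\tfrac32\eta_1)<\tfrac89$. Your explicit completing-the-square computation of the threshold and the auxiliary functions $h,g$ is just a more detailed write-up of the same algebra the paper does by direct manipulation of the inequalities.
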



\begin{proof}
We verify that the conditions of Corollary \ref{cor3} are satisfied. 
By construction, $\eta_1\geq \eta_1^*$, and it is trivial to verify that $\eta_3 = (4/3)\eta_0(\eta_3)$, where $\eta_0(r)$ is defined in \eqref{eta0rH}. Therefore, by Lemma \ref{lem:eta23}, $\eta_3 \leq \eta_3^*$. 
It remains to verify that (a) $\eta_3 > (3/2)\eta_1$, and (b) $\eta_1 / \eta_0((3/2)\eta_1)) < 8/9$. 

Inequality (a) is true, if and only if
\[
\frac{\eta_1}{\eta_3} < \frac{2}{3} \quad \Leftrightarrow\quad  \eta_1 < \underbrace{\frac{L}{3}\Big(\Big(\bar\mu+\frac{8}{3}\Big)^{1/2}-\bar\mu^{1/2}\Big)}_{=y}\,,
\]
by direct substitution. 
Since the last expression (call it $y$) is strictly positive for $\bar\mu\geq 0$, the second inequality is equivalent (for $\eta_1\geq 0$) to 
\[
\frac{\eta_1}{L} \big(\bar\mu^{1/2}+\frac{\eta_1}{L} \big) \quad < \quad
\frac{y}{L} \big(\bar\mu^{1/2}+\frac{y}{L} \big)\,,
\]
which is equivalent to \eqref{Framework3}. 

Next we must show (b). Since $\eta_0(r)$ is a decreasing function, we have by (a) that 
\[
\frac{\eta_1}{\eta_0(\frac{3}{2}\eta_1)} < \frac{\eta_1}{\eta_0(\eta_3)} = \frac{\eta_1}{\frac{3}{4}\eta_3} < \frac{2}{3}\cdot\frac{4}{3} = \frac{8}{9}\,.
\]
Therefore the conditions of Corollary \ref{cor3} are satisfied, and so we apply this corollary to claim the statement of the proposition. 
\end{proof}

\begin{proof}[Proof of Theorem \ref{thm:emin}]
Consider the energy function defined in \eqref{H}, again restricted to $\mathcal C^p$. 
The energy change at $q$ is 
\begin{align*}
H(q)-H(p) &= \sum_{(i,j)\in\mathcal E}\frac{1}{2}\kappa(e_{ij}(q)-e_{ij}(p))^2 + \omega_{ij}(e_{ij}(q)-e_{ij}(p)) \\
&=\frac{1}{2}\kappa|e(q)-e(p)|^2 + \omega\cdot (e(q)-e(p))\\
&\leq \frac{1}{2}\kappa|e(q)-e(p)|^2 + |\omega||e(q)-e(p)|\,,
\end{align*}
where the last step follows by the Cauchy-Schwartz inequality. 
By Proposition \ref{prop:energybarrier},  $H(q)-H(p) \geq \frac{1}{3}\lambda \: \eta^2\:\Big(1-\frac{3}{2}\frac{\eta_1}{\eta}\Big)$, so
\[
\frac{1}{2}\kappa|e(q)-e(p)|^2 + |\omega||e(q)-e(p)| \geq \frac{1}{3}\lambda \: \eta^2\:\Big(1-\frac{3}{2}\frac{\eta_1}{\eta}\Big)\,.
\]
Solving for the minimum value of $|e(q)-e(p)|$ such that this inequality holds gives \eqref{emin}. 

For the remaining statement, notice that if $e(q)=e(p)$, then $H(q)=H(p)$. But this can only happen if $|q-p|\leq (3/2)\eta_1$ or if $|q-p|  > \eta_3$, since otherwise $H(q)-H(p) > 0$, by Proposition \ref{prop:energybarrier}. Therefore there is some point $q'$on the deformation path such that $|q'-p| = \eta_3$, at which point $|e(q') - e(p)| \geq e_{\rm min}(\eta_3)$. 
\end{proof}

\begin{proof}[Proof of Theorem \ref{thm:pss}]
Construct the energy function $H(q)\equiv H(q)$ as in \eqref{H}, 
and restrict it to $\mathcal C^p$. 
The proof of Theorem \ref{thm:framework} showed that $H$ satisfies
condition \eqref{weak1} of Proposition \ref{prop:energy}, and 
therefore the result of the Proposition applies. Let $S$ be the surface
\eqref{Ssurface} constructed in the proof of Proposition \ref{prop:energy},
which surrounds $p$ and is contained in $\bar B_{\eta_1}(p)$. Recall that
$\Delta H(q) \equiv H(q)-H(p) > 0$ for $q\in S$. 

Let $\mbox{int}(S)$ denote the (path) connected component of the 
complement of $S$ that contains $p$
and let $\overline{\mbox{int}(S)}$ be its closure.
We showed in the proof of Proposition 1 that $\overline{\mbox{int}(S)}\subset
\bar B_{\eta_1}(p)$. Since $\overline{\mbox{int}(S)}$ is a compact set,
$\Delta H$(q) achieves its minimum somewhere in the set, say at point
$q=p_{\rm pss}$. 

We know that $p_{\rm pss}\in \mbox{int}(S)$, i.e. the minimum is not on the boundary $S$, because $\Delta H(p) = 0$ so the minimum is at least as small as $0$, but $\Delta H(q)>0$ on the boundary.  
Therefore $\grad H(p_{\rm pss})=0$. Clearly $p_{\rm pss} \in \bar B_{\eta_1}(p)$. 
To show that $p_{\rm pss}$ is prestress stable, we must show that this local minimum is isolated, i.e. that $\grad\grad H(p_{pss}) >0$. Definition 3.3.1 of \cite{Connelly:1996vja} then implies that $p_{\rm pss}$ is prestress stable, since the energy function is quadratic. 


We show $\grad\grad H(p_{pss}) > 0$ by considering how the directional second derivative $H''|_q(v)$ varies with $q$ for fixed $v$, and showing that it cannot drop below zero. Let $v\in\mathcal C\cap \Sball{dn}$ be fixed, and let $q=p+tu$ with $u\in\mathcal C\cap \Sball{dn}$, $|t|\leq \eta_1$. 
Define $f(t) = H''|_{p+tu}(v)$. 
By Taylor's theorem, we have 
\[
H''|_q(v) = H''|_p(v) + t f'(s)
\]
for some $s\in[0,t]$. To show that $H''|_q(v) > 0$ it is sufficient to show that 
\begin{equation}\label{pssSufficient}
\max_{s\in[0,\eta_1]}\frac{\eta_1\big|f'(s)\big|} { H''|_p(v)} < 1\,.
\end{equation}
We calculate the required derivative explicitly: 
\begin{align*}
f'(t) &= \sum_{(i,j)\in\mathcal E}8h'_{ij}(q_{ij}^2)(q_{ij}\cdot v_{ij}) (v_{ij}\cdot v_{ij})
+ 4h''(q_{ij}^2)(q_{ij}\cdot u_{ij})(v_{ij}\cdot v_{ij})\\
&= 4\kappa \left( 2(R(q)v)^TR(u)v + (R(q)u)^TR(v)v \right)\,.
\end{align*}
Therefore, for $s\in[0,\eta_1]$,
\begin{align*}
\frac{\eta_1\big|f'(s)\big|} { H''|_p(v)} &= 
\frac{\eta_1 2\kappa \Big| 2(R(q)v)^TR(u)v + (R(q)u)^TR(v)v\Big|}{2\kappa|R(p)v|^2+v^T\Omega v}    \\
&\leq 2\eta_1\cdot 2z^{1/2} \cdot \left( \frac{2\kappa(|R(p)v| + |R(su)v|)}{2\kappa|R(p)v|^2+v^T\Omega v}
+ \frac{\kappa(|R(p)u|+|R(su)u|)}{2\kappa|R(p)v|^2+v^T\Omega v} \right)   \\
&\leq 4z^{1/2}\eta_1 \cdot \left( 2\cdot\frac{1}{\sqrt{2}}\frac{\kappa^{1/2}}{\lambda^{1/2}}\big(1-\frac{\mu_0}{\lambda}\big)^{1/2} + \frac{2z^{1/2}\cdot 2\kappa s}{\lambda} + \frac{2z^{1/2}\cdot \kappa (|p|+s)}{\lambda}   \right)  \\
&\leq \frac{\eta_1}{L} \cdot \left( 2\bar\mu^{1/2} + \frac{3\eta_1}{L} +  \frac{|p|}{L}\right)  \,.
\end{align*}
In the second step we used \eqref{Rubound} to bound $|R(u)v|$, $|R(v)v|$. 
In the third step we used \eqref{Rpvbound} to bound the first term, and \eqref{Rubound}, Lemma \ref{lem:Hb} to bound the numerators and denominators of the remaining terms, respectively.
In the final step we used $s\leq \eta_1$, and the definitions of $L,\bar\mu$ from \eqref{Lmu}. 
\end{proof}

\begin{remark}
When condition \eqref{PssBound} fails, we still obtain a local minimum value of the energy inside $\bar B_{\eta_1}(p)$, however this local minimum may not be isolated -- there could be a locus of dimension 1 or higher where the energy is constant, hence, where the framework could be deformed with constant edge lengths. 
\end{remark}

The proof of Theorem \ref{thm:pss} shows why we are unable to obtain a weaker condition than \eqref{PssBound}. The problematic term is $\frac{\kappa|R(p)u|}{2\kappa|R(p)v|^2 + v^T\Omega v}$, which comes from controlling
\[
\frac{\dd{}{t}H''|_{p+tu}(v)}{H''|_p(v)} \,.
\]
That is, we must control how much the second derivative in direction $v$ can vary, as we move along a line in direction $u$. When $u=v$, we have good control, because even when $v$ is an eigenvector corresponding to a small eigenvalue $\lambda_1$ of $\grad\grad H$, the change of this eigenvalue in its own eigendirection is controlled by $\lambda_1$ itself, hence will also be small. However, if $v$ is an eigenvector corresponding to a small eigenvalue $\lambda_1$, and $u$ is an eigenvalue corresponding to a large eigenvalue $\lambda_2$, then the rate of change of $\lambda_1$ may be determined by $\lambda_2$, hence could be large enough to cause $\lambda_1$ to cross zero. 


\section{Extension to tensegrities}\label{sec:tens}

We remark briefly on how to extend the theorems to tensegrities, leaving the details for future work. A tensegrity is a framework such that each edge is labelled either a cable, strut, or bar.  Cables cannot extend, struts cannot compress, and bars cannot change lengths (they cannot extend or compress.) A tensegrity can be represented by a triplet $(p,\mathcal E, l)$ such that $(p,\mathcal E)$ is a framework, and $l\in \{-1,0,1\}^m$ is a set of edge labels such that $l_{ij} = 1$ if edge $(i,j)$ is a cable, $l_{ij}=-1$ if edge $(i,j)$ is a strut, and $l_{ij}=0$ if edge $(i,j)$ is a bar.  
The frameworks studied in the rest of this paper are tensegrities where all edges are bars.

A tensegrity is rigid if it cannot be continuously deformed without violating one of the edge length conditions, except by rigid body motions. 
Specifically, it is rigid for edge lengths $\{d_{ij}\}_{(i,j)\in\mathcal E}$ if $p$ is an isolated solution (modulo rigid body motions) to the following equations (recall \eqref{fij}):
\begin{align}\label{fijtens}
    |p_i-p_j|^2 &\leq d_{ij}^2 && \text{if } l_{ij} = 1 \text{ (cable) }\nonumber \\
    |p_i-p_j|^2 &\geq d_{ij}^2 && \text{if } l_{ij} = -1 \text{ (strut) }\\
    |p_i-p_j|^2 &= d_{ij}^2 && \text{if } l_{ij} = 0 \text{ (bar) }\,. \nonumber
\end{align}

Connelly \& Whiteley \cite{Connelly:1996vja} define the concept of prestress stability for a tensegrity, and show that it implies rigidity of the tensegrity. They additionally give a necessary and sufficient test for the prestress stability of a tensegrity, which works as follows:  
suppose there exists a self-stress $\omega$ solving \eqref{pss} (with $\mathcal V_0$, $\mathcal W_0$ computed assuming that all edges are bars.) 
Suppose furthermore that 
\begin{equation}
\omega_{ij} > 0 \text { if edge $(i,j)$ is a cable}, \quad \omega_{ij} < 0 \text{ if edge $(i,j)$ is a strut.}
\end{equation}
Such a self-stress stress is called a \emph{strict, proper self-stress}. Then 
$(p,\mathcal E, l)$ is prestress stable, and hence rigid. 

For the converse, if $(p,\mathcal E, l)$ is prestress stable, then there exists a strict proper self-stress solving \eqref{pss} for a tensegrity $(p,\mathcal E', l')$ with some edges removed, where $\mathcal E'\subset \mathcal E$, and $l'$ is the corresponding set of labels. (To determine which edges to remove, one solves for a self-stress and removes cables or struts on which the stress is zero.) 

Testing for prestress stability of a tensegrity is similar to testing for prestress stability of a framework; an algorithm is given at the end of the section. With an $\omega$ in hand that demonstrates prestress stability (or an almost- version of it, using $\mathcal V, \mathcal W$ in place of $\mathcal V_0, \mathcal W_0$), one can then apply our setup and Theorems \ref{thm:framework}-\ref{thm:pss} nearly verbatim to the tensegrity. 
The only caveat, is that the theorems only hold for configurations  $q\in \mathcal C$ that  additionally satisfy
\begin{equation}\label{tensconstraint}
    \max_{(i,j)} |q_{ij}^2 - p_{ij}^2|  < 
    \min_{(i,j)\in \mathcal E} \frac{2|\omega_{ij}|}{\kappa}\,.
\end{equation}
That is, each edge length of $q$ cannot be too different from the corresponding edge length of $p$, in order for the theorems to make a statement about how $q$ may be related to $p$. 

Here is a brief explanation for where \eqref{tensconstraint} arises. 
The key step in proving our theorems is constructing an energy function $H(q)$ such that if $H(q) > H(p)$, then $q$ cannot satisfy the edge equations, \eqref{fij}. 

We use the same energy function for a tensegrity, see \eqref{H}. This energy is constructed as a sum of energies $h_{ij}(x)$ of each squared edge length, \eqref{hedge}. When edge $(i,j)$ is a cable, then $\omega_{ij}>0$, so $h_{ij}(x) $ increases beyond $h_{ij}(p_{ij}^2)$ if $x$ increases beyond $x=p_{ij}^2$, as desired. It also decreases initially if $x$ decreases below $x=p_{ij}^2$. 
However, it eventually increases when $x$ is small enough, and this no
longer models a tensegrity, since cables may decrease their length unrestrictedly. We have that $h_{ij}(x) > h_{ij}(p_{ij}^2)$ for $x < p_{ij}^2 - 2\omega_{ij} / \kappa$, and therefore we cannot use the overall energy function when $q_{ij}^2 < p_{ij}^2 - 2\omega_{ij} / \kappa$. A similar argument for struts implies that the energy argument breaks down for $q_{ij}^2 > p_{ij}^2 - 2\omega_{ij} / \kappa$. 

In practice we don't expect \eqref{tensconstraint} to be very restrictive. We calculated the right-hand side for examples (a), (b), (g), (h), and obtained 0.1, 0.1, 0.5, 1.0 respectively. This is a much greater edge stretching than is allowed by $e_{\rm min}^*$, for example, so is beyond the region in configuration space where the theorems apply.

\paragraph{An algorithm to test for prestress stability of a tensegrity}\label{sec:tensalgorithm}
Based on~\cite{Connelly:1996vja}, 
here is one algorithm to test for prestress stability of a tensegrity, using a slight modification of \eqref{pssalg}.
Define variables $s,t\in \R$, $a\in \R^{n_w}$, $X\in \R^{n_v\times n_v}$ as before, and furthermore construct a matrix $E\in \R^{n_t\times m}$ such that $(Ew)_{k} = l_{i_k}w_{i_k}$ where $i_k$ is the index of the $k$th cable or strut; $n_t$ is the total number of cables and struts. $E$ can be constructed as the matrix with $l$ on its diagonal, and then removing rows corresponding to bars.
Solve 
\begin{equation}\label{tensalg}
\begin{aligned}
& \underset{t,a,X}{\text{maximize}}
& & \min(s,t) \\
& \text{subject to}
& & X = \sum_{i=1}^{n_w}a_iM_i - t I_{n_v}, \\
&&& X \succeq 0, \\
&&& \left( \sum_{i=1}^{n_w}a_i^2\right) ^{1/2} \leq 1 ,\\
&&& EWa - s \mathbf{1}\geq 0.
\end{aligned}
\end{equation}
Here $\mathbf{1}\in \R^{n_t}$ is the vector of ones.
Once a solution is found, it must be checked that $t>0$ and $s>0$. If $s=0$, one can remove edges where the inequality is not strict, and try again. The self stress that demonstrates prestress stability is extracted as $\omega = \sum_{i=1}^{n_w}a_iw_i$. 


When $n_v=0$, one may solve a simpler problem, obtained by considering \eqref{tensalg} when $\mathcal V$ is empty. Solve
\begin{equation}\label{tensalg0}
\begin{aligned}
& \underset{a,s}{\text{maximize}}
& & s \\
& \text{subject to}
&&& \left( \sum_{i=1}^{n_w}a_i^2\right) ^{1/2} \leq 1 ,\quad
 EWa - s \mathbf{1}\geq 0.
\end{aligned}
\end{equation}
The program is successful if $s>0$. 
If $s=0$, one may remove edges where the inequality is not strict and try again, recalculating $n_v$ and running either \eqref{tensalg} or \eqref{tensalg0}. 
This algorithm is equivalent to the first-order test given in Theorem 2.3.2 of \cite{Connelly:1996vja}.

\section{Conclusion}\label{sec:conclusion}
We defined and studied an approximate notion of local rigidity 
called \textit{almost rigidity}. Our initial motivation was a 
quantitative alternative to the
linear perturbation analysis of the singular values  of the rigidity and stress
matrices, which are used in exact local rigidity tests such as infinitesimal 
rigidity and prestress stability.  

Our main results, Theorems \ref{thm:framework}--\ref{thm:emin}, provide such an alternative.
Using the spectral data associated associated with approximate 
rigidity and stress matrices of a framework, we can provide:
an upper bound on the distance from the initial configuration
reachable by a continuous flex ($\eta_1$ from Theorem \ref{thm:framework}); a lower bound on the distance 
from the initial configuration to any equivalent one that is not 
reachable by a continuous deformation that preserves edge lengths $(\eta_2$ from Theorem \ref{thm:outerbound}); 
and a lower bound on the minimum change in edge lengths required
to move continuously between distant equivalent configurations $(e_{\text{min}^*}$
from Theorem \ref{thm:emin}).  Together, these theorems provide an 
efficient way to certify that a framework does not move very much, and they begin to quantify the geometric barrier to deforming it.

Moreover, Theorem \ref{thm:pss}
gives a quantitative test for an almost rigid framework to be 
near a pre-stress stable, and hence, rigid framework.  One can 
interpret this as saying that the initial framework is a 
perturbation of a rigid framework.

Towards this goal, we have explored an extension of the classical second-derivative test to
the situation of nearly critical points using bounds on the first three derivatives. This may have other uses.

We close with some open problems, remaining issues, and other related
work.

\paragraph{Applications to tensegrity design}
Our results are focused on analyzing the rigidity properties 
of a given framework.  A related question is to design 
frameworks or tensegrities with pre-specified geometric, rigidity, and 
flexibility properties.  One design technique, known 
as ``form finding'' \cite{VB-form-finding,Demaine,Connelly-Back}, (loosely)
attempts to find configurations with the desired geometry within 
a space of configurations satisfying certain equilibrium stresses.

We speculate that the theory presented here can be applied to 
tensegrity design problems, with almost stresses and almost 
flexes replacing numerical approximations of exact equilibrium stresses
and infinitesimal motions. 
Furthermore, one challenge that arises in tensegrity design, is that a tensegrity may be designed to be rigid, but when it is built in practice with non-ideal edge lengths it may not be as rigid \cite{Pietroni:2017epa}. Our hope is that $\eta_2$ might correlate with how sloppy one can be and still retain rigidity, so that by incorporating $\eta_2$ into the optimization procedure itself, one can design tensegrities that are rigid under a wider variety of perturbations. 

\paragraph{Sharper constants}
Our results, while effective, are pessimistic.  The examples of the 
Siamese dipyramid and $K_{3,4}$ both illustrate that small $\eta_2$
may correlate with a first-order rigid structure feeling unusually flexible,
but not necessarily that there are other, nearby configurations with 
the same edge lengths.

While any effective estimates on the distance to the 
nearest configuration with the same edge lengths will, necessarily, be pessimistic, we haven't investigated in detail whether our 
methods can be improved.  Example  where our predicted $\eta_2$ is 
close to the distance to the nearest equivalent configuration would
be interesting in this context.

\paragraph{The effect of pinning}
To remove translations and rotations of the original configuration, 
we first make a choice of $\mathcal{C}^p$ and then perform 
our analysis.  The role of $\mathcal{C}^p$ is to ``pin down''
our original framework.  Near $p$, any choice of 
$\mathcal{C}^p$ approximates the space of point 
configurations modulo congruences, so every choice yields a 
correct analysis.  However, we don't have a geometric or 
quantitative understanding of how changing $\mathcal{C}^p$ 
changes the estimates we get.

\paragraph{Relationship to polynomial separation bounds}
Questions about the distance between isolated roots 
of univariate polynomials have been studied classically.
Generalizations for zero-dimensional varieties, starting 
with Canny's celebrated ``gap theorem'' \cite{canny}
and more recent results such as \cite{separation} also 
exist.

Translated to our setting, we could attempt to compute 
a radius analogous to 
$\eta_2$ from Theorem \ref{thm:outerbound}
using these general methods.
Given the state of the art, this would require us 
to make the (very restrictive) assumption that our input 
framework is locally rigid.  Moreover, the
bounds from \cite{separation} 
decay at least exponentially in the number of nodes 
and edges.


\paragraph{Algebraic approach}
An alternative approach, proposed in \cite{epsilon}, 
is to fix an $\epsilon > 0$ and formulate an algebraic
system that has as its solutions: frameworks equivalent to the 
input and 
that are at a distance of $\epsilon$ to it.  If no such solutions
exist, then this $\epsilon$ can be interpreted similarly to 
$\eta_1$ from Theorem \ref{thm:framework}.  
That paper attempts to use homotopy 
continuation methods \cite{wampler} to solve the
algebraic systems. 
Doing this, they are able to solve
the systems 
associated with some small examples.  It is not clear
that this approach can scale to larger examples.
In contrast, 
our methods are based on linear algebra and convex programming, 
and so we can apply them to  larger examples.

\subsection*{Acknowledgements}
M. H.-C. would like to thank Michael Overton for helpful conversations. 
SJG would like to thank Bob Connelly for discussions about frameworks and geometric energies.
The authors acknowledge support from NSF-FRG Grants DMS-1564487
and
DMS-1564473.
M. H.-C. acknowledges support from the US Department of Energy DE-SC0012296, and the Alfred P. Sloan Foundation. 

\bibliographystyle{plainnat}

\end{document}